\DeclareMathOperator*{\esssup}{ess\,sup}
\numberwithin{equation}{section}
\newtheorem{theorem}{\sc Theorem}[section]
\newtheorem{definition}[theorem]{\sc Definition}
\newtheorem{lemma}[theorem]{\sc Lemma}
\theoremstyle{plain}
\newtheorem{prop}[theorem]{\sc Proposition}
\theoremstyle{remark}
 \newcommand{\ol}[1]{\overline{#1}}
 \newcommand{\ul}[1]{\underline{#1}}
\newcommand{\F}{\mathcal F}
\newcommand{\M}{\mathcal M}
\newcommand{\R}{\mathbb{R}}
\newcommand{\Sol}{\mathcal S}
\newcommand{\superSol}{\overline{\mathfrak{S}}}
\newcommand{\Z}{\mathbb{Z}}
\newcommand{\N}{\mathbb{N}}
\renewcommand{\P}{\mathbb{P}}
\newcommand{\EE}{\mathbb{E}}
\newcommand{\cal}[1]{\mathcal #1}
\newcommand{\1}{\mathbbm1}
\newcommand{\HV}{{\mathcal H}}
\newcommand{\HF}{{\mathcal H}}
\newcommand{\eps}{\varepsilon}
\renewcommand{\epsilon}{\varepsilon}
\renewcommand{\emptyset}{\varnothing}
\renewcommand{\ge}{\geqslant}
\newcommand{\CC}{\D C}
\newcommand{\Lip}{\D{Lip}}
\newcommand{\landa}{\lambda}
\newcommand{\ccyl}{[0,+\infty)\times\R}
\newcommand{\Ham}{\mathscr{H}(\alpha_0,\alpha_1,\gamma)}
\newcommand{\Hamall}{\mathscr{H}}
\newcommand{\Hamqc}{\mathscr{H}_{qc}(\alpha_0,\alpha_1,\gamma)}
\newcommand{\Hamsqc}{\mathscr{H}_{sqc}(\alpha_0,\alpha_1,\gamma,\eta)}
\newcommand{\D}[1]{\mbox{\rm #1}}
\begin{document}
	
\title[Stochastic homogenization of quasiconvex degenerate viscous HJ
equations in 1d]{Stochastic homogenization of quasiconvex\\ degenerate viscous HJ equations in 1d}

\author[A.\ Davini]{Andrea Davini}
\address{Andrea Davini\\ Dipartimento di Matematica\\ {Sapienza} Universit\`a di
  Roma\\ P.le Aldo Moro 2, 00185 Roma\\ Italy}
\email{davini@mat.uniroma1.it}
%\urladdr{https://www1.mat.uniroma1.it/people/davini/home.html}

\date{July 12, 2023}

\subjclass[2010]{35B27, 35F21, 60G10.} % 35B27 Homogenization; 35F21 Hamilton-Jacobi equations; 60G10 Stationary stochastic processes; 60K37 Processes in random environments.
\keywords{Viscous Hamilton-Jacobi equation, stochastic homogenization, stationary ergodic random environment, sublinear corrector, viscosity solution, scaled hill and valley condition}

\begin{abstract}
We prove homogenization for degenerate viscous Hamilton-Jacobi equations in dimension one in stationary ergodic environments
with a quasiconvex and superlinear Hamiltonian of fairly general type. We furthermore show that the effective Hamiltonian is quasiconvex. This latter result is new even in the periodic setting, despite homogenization has been known for quite some time.   
\end{abstract}

\maketitle

\section{Introduction}\label{sec:intro}

We are concerned with the asymptotic behavior,  as $\epsilon\to 0^+$, of solutions of a  viscous
Hamilton-Jacobi (HJ) equation of the form
\begin{equation}\label{eq:introHJ}
  \partial_t u^\epsilon=\epsilon a\left({x}/{\epsilon},\omega\right) \partial^2_{xx} u ^\epsilon+H\left({x}/{\eps},\partial_x u^\epsilon,\omega\right) \qquad \hbox{in $(0,+\infty)\times\R$,}
\end{equation}
when $H:\R\times\R\times\Omega\to\R$ is assumed superlinear and quasiconvex in the momentum, and $a:\R\times\Omega\to [0,1]$ is Lipschitz on $\R$ for every fixed $\omega$. The dependence of the equation on the random environment $(\Omega,\F,\P)$ enters through the Hamiltonian $H(x,p,\omega)$ and the diffusion coefficients $a(x,\omega)$, which are assumed to be stationary with respect to shifts in $x$, and bounded and Lipschitz continuous on $\R$, for every fixed $p\in\R$ and $\omega\in\Omega$.  The diffusion coefficient is assumed not identically zero,  but we require $\min_\R a(\cdot,\omega)=0$, at least almost surely.  Under these assumptions, we prove homogenization for equation \eqref{eq:introHJ} and we show that the effective Hamiltonian is also quasiconvex. 
This latter result is new in the literature even in the periodic setting, despite homogenization has been known for quite some time \cite{ Evans92,  LS_almostperiodic}. 
%This latter result is new in the literature even in the periodic case, despite homogenization of \eqref{eq:introHJ} in this setting has been  known for quite some time \cite{ Evans92,  LS_almostperiodic}. 
%
We refer to Section \ref{sec:result} for a complete description of the set of conditions assumed in the paper and for the precise statement of our homogenization results.\smallskip

Equations of the form \eqref{eq:introHJ} are a subclass of general
viscous stochastic HJ equations
\begin{equation}\label{vHJ}
  \partial_tu^\epsilon= \epsilon\text{tr}\left(A\left(\frac{x}{\epsilon},\omega\right) D^2 u^\epsilon\right)+H\left(D u^\epsilon,\frac{x}{\epsilon},\omega\right)\qquad \hbox{in $(0,+\infty)\times\R^d$,}
\end{equation}
where $A(\cdot,\omega)$ is a bounded, symmetric, and nonnegative
definite $d\times d$ matrix with a Lipschitz square root. The ingredients of the equation are assumed to be stationary with respect to shifts in $x$. 
This setting encompasses the periodic and the quasiperiodic cases, for which homogenization has been proved under fairly general assumptions by showing the existence of (exact or approximate) correctors, i.e., sublinear functions that solve an associated stationary HJ equations \cite{LPV, Evans92, I_almostperiodic, LS_almostperiodic}. 
In the stationary ergodic setting, such solutions do not exist in general, as it was shown in \cite{LS_correctors}, see also \cite{DS09,CaSo17} for inherent discussions and results. This is the main reason why the extension of the homogenization theory to random media is nontrivial 
and required the development of new arguments. 

The first homogenization results in this framework were obtained for convex Hamiltonians in the case of inviscid equations in \cite{Sou99, RT} and then for their viscous counterparts in \cite{LS2005,KRV}. 
By exploiting the metric character of first order HJ equations, homogenization has been extended to the case of quasiconvex Hamiltonians, first in dimension 1 \cite{DS09} and then in any space dimension \cite{AS}. 

The topic of homogenization in stationary ergodic media for HJ equations that are nonconvex in the gradient variable
remained an open problem for about fifteen years. Recently, it has been shown via counterexamples, first in the inviscid case \cite{Zil,FS}, then in the viscous one \cite{FFZ}, that homogenization can fail for Hamiltonians of the form $H(x,p,\omega):=G(p)+V(x,\omega)$ whenever $G$ has a strict saddle point. This has shut the door to the possibility of having a general qualitative homogenization theory  in the stationary ergodic setting in dimension $d\geqslant 2$, at least without imposing further mixing conditions on the stochastic environment. 

On the positive side, a quite general homogenization result, which includes as particular instances both the inviscid and viscous cases, has been established in \cite{AT} for Hamiltonians that are positively homogeneous of degree $\alpha\geqslant 1$ and under a finite range of dependence condition on the probability space $(\Omega,\F, \P)$. 

In the inviscid case, homogenization has been proved in \cite{ATY_1d,Gao16} in dimension $d=1$ for a rather general class of coercive and nonconvex Hamiltonians, and in any space dimension for Hamiltonians of the form $H(x,p,\omega)=\big(|p|^2-1\big)^2+V(x,\omega)$, see   \cite{ATY_nonconvex}.\smallskip 

Even though the addition of a diffusive term is not expected to prevent homogenization, the literature on viscous HJ equations 
in stationary ergodic media has remained rather limited until very recently. 
The viscous case is in fact known to present additional 
challenges which cannot be overcome by mere
modifications of the methods used for $a\equiv 0$.

Apart from already mentioned work \cite{AC18}, 
several progresses concerning the homogenization of the viscous HJ equation \eqref{vHJ} with
nonconvex Hamiltonians have been recently made in 
\cite{DK17, YZ19, KYZ20, Y21b, DKY23, D23a}. In the joint paper \cite{DK17}, we have
shown homogenization of \eqref{vHJ} with $H(x,p,\omega)$ which
are ``pinned'' at one or several points on the $p$-axis and convex in
each interval in between. For example, for every $\alpha>1$ the
Hamiltonian $H(x,p,\omega)=|p|^\alpha-c(x,\omega)|p|$ 
is pinned at $p=0$ (i.e.\
$H(0,x,\omega)\equiv
\mathrm{const}$) and convex in $p$ on each of the two intervals $(-\infty, 0)$ and
$(0,+\infty)$.
Clearly, adding a non-constant potential $V(x,\omega)$ breaks the pinning
property. In particular, homogenization of equation \eqref{vHJ} for $d=1$, $A\equiv \mathrm{const}>0$ and 
%\begin{equation}
 % \label{open}
 $H(x,p,\omega):=\frac12\,|p|^2-c(x,\omega)|p|+V(x,\omega)$ with $c(x,\omega)$ bounded and strictly positive 
 % 0<c(x,\omega)\le C,\quad \ \beta>0
%\end{equation}
remained an open problem even when $c(x,\omega)\equiv c>0$.  
Homogenization for this kind of equations with $A\equiv 1/2$ and $H$ as
above with $c(x,\omega)\equiv c>0$ was proved in \cite{KYZ20} 
under a novel hill and valley condition,\footnote{Such a hill and valley condition is fulfilled for a wide class of typical random environments without any restriction on their mixing properties, see  \cite[Example 1.3]{KYZ20} and \cite[Example 1.3]{YZ19}.  It is however not satisfied if the potential is ``rigid'', for example, in the periodic case.}
 that was introduced in \cite{YZ19} 
to study a sort of discrete version of this problem, where the Brownian motion in the  
stochastic control problem associated with equation \eqref{eq:introHJ} is replaced by controlled random walks on $\Z$. 
The approach of \cite{YZ19,KYZ20} relies on the Hopf-Cole
transformation, stochastic control representations of solutions and
the Feynman-Kac formula.  
It is applicable to \eqref{eq:introHJ}
with $H(x,p,\omega):=G(p)+V(x,\omega)$ and $G(p):=\frac12|p|^2-c|p|=\min\{\frac12p^2-cp,\frac12p^2+cp\}$ only. 
In the joint paper 
\cite{DK22}, we have proposed a different proof solely based on PDE methods. 
This new approach is flexible enough to be applied to the possible degenerate case 
$a(x,\omega)\ge 0$, and to any $G$ which is a minimum of a finite number of convex
superlinear functions $G_i$  having the same
minimum. The original hill and valley condition assumed in \cite{YZ19,KYZ20} is weakened in 
favor of a scaled hill and valley condition.\footnote{We stress that such scaled hill (respectively, valley) condition, which was assumed in \cite{DK22} and subsequently in \cite{D23a}, is empty when $a\equiv0$, namely it does not impose any restriction on the potential, or, equivalently, on the random environment. In particular, the inviscid periodic setting is covered by these works.} The arguments crucially rely on this condition and on the fact that all the functions $G_i$ have the same minimum.

%As in \cite{KYZ20}, the shape of the effective Hamiltonian associated with 
%$G$ is derived from the ones associated to each $G_i$.

The PDE approach introduced in \cite{DK22} was subsequently refined in \cite{Y21b} in order to prove 
homogenization for equation \eqref{eq:introHJ} when the function $G$ is superlinear and quasiconvex,  $a>0$ and the pair 
$(a,V)$ satisfies a scaled hill condition equivalent to the one adopted in \cite{DK22}. 
The core of the proof consists in showing existence and uniqueness of correctors  that possess stationary derivatives satisfying suitable bounds. These kind of results were obtained in \cite{DK22} by proving tailored-made  comparison principles and by exploiting a general result from \cite{CaSo17}  (and this was the only point where the piecewise convexity of $G$ was used). The novelty brought in by \cite{Y21b} 
relies on the nice observation that this can be directly proved via ODE arguments, which are viable since we are in one space dimension and $a>0$. 

By reinterpreting this approach in the viscosity sense and by making a more substantial use of viscosity techniques, the author has extended in \cite{D23a} to the possible degenerate case $a\geqslant 0$ the homogenization result established in \cite{Y21b}, providing in particular a unified proof which encompasses both the inviscid and the viscous case.  
%A key tool in this regard is played by the well known stability property of the notion of viscosity solution. 
%Our exposition also benefits of some observations and arguments developed by the author in the attempt to prove a more general homogenization result.  They have already displayed their usefulness for the joint research \cite{DKY23}, where they appeared for the first time. Their use here has permitted to simplify some proofs. %\vspace{0.5ex}

A substantial improvement of the results of \cite{DK22}  is provided in our joint work 
 \cite{DKY23}, where the  
main novelty consists in allowing a  general
superlinear $G$ without any restriction on its shape or the
number of its local extrema. Our analysis crucially relies on the assumption that the pair 
$(a,V)$ satisfies the scaled hill and valley condition and, differently from \cite{DK22}, 
that the diffusive coefficient $a$ is nondegenerate, i.e., $a>0$. 
%
%Analogously to \cite{KYZ20, DK22}, the
%function $G$ can be seen as a minimum of quasiconvex superlinear
%functions $G_i$, but when these latter have distinct minima there is
%a nontrivial interaction among them in the homogenization process, and
%the shape of the effective Hamiltonian associated with $G$ can no
%longer be guessed from those associated with each $G_i$. 
The proof consists in showing existence of suitable correctors whose derivatives 
are confined on different branches of $G$, as well as on a strong induction argument 
which uses as  base case the homogenization result established in \cite{Y21b}.\vspace{0.5ex}
%The fact that homogenization for quasiconvex $G$ was only available for $a>0$ is one of the  
%main reasons why we did not try to generalize our arguments to the possible 
%degenerate case $a\geqslant 0$.\vspace{0.5ex}

The novelty of the current work consists in dealing with quasiconvex Hamiltonians $H(x,p,\omega)$ where the dependence on $x$ and $p$ is not necessarily decoupled and 
under the only assumption that the diffusive coefficient $a$ is not identically zero and vanishes at some points or on some regions of $\R$, at least almost surely. 
This represents a significant step forward in the direction of obtaining a general homogenization result for viscous HJ equations in dimension 1 in the stationary ergodic setting. As a starting step for our analysis, we remark that we can restrict to consider stationary Hamiltonians that are strictly quasiconvex and superquadratic in the momentum variable. This class of Hamiltonians is in fact dense, in an appropriate sense, in the wider class of quasiconvex Hamiltonians we are considering, see Section \ref{sec:outline} and Appendix \ref{app:density} for more details, and, as it is known, homogenization is preserved in the limit. A first novelty with respect to previous works on the subject  
\cite{KYZ20,DK22,Y21b,DKY23,D23a} is that the expected minimum of the effective Hamiltonian cannot be deduced in a direct way from the bounds enjoyed by $H$. As we will show in Section \ref{sec:homogenization}, such a minimum coincides with a critical value $\lambda_0$ which is associated to the pair $(a, H)$ in a more intrinsic way, see Section \ref{sec:correctors}.  For the definition of $\landa_0$ and the analysis of its relevant properties, we use in a crucial way the assumption that $H$ is superquadratic in the momentum in order to exploit known H\"older regularity results for continuous supersolutions of the ``cell'' equation associated with \eqref{eq:introHJ}. This part is independent on the quasiconvexity condition of the Hamiltonian. 

Our approach to prove homogenization for equation  \eqref{eq:introHJ} is inspired by the one introduced in \cite{Y21b}, where the main idea was that of proving existence and uniqueness of suitable correctors of an associated cell problem. The fact that the diffusion coefficient $a(\cdot,\omega)$ vanishes on $\R$, almost surely, and that $H$ is strictly quasiconvex allows us to established suitable existence, uniqueness and regularity results for correctors in our setting. These kind of results rely on a fine qualitative analysis on the behavior of the derivatives of deterministic solutions to the cell equation when we approach the zero set  of the diffusion coefficient. The main difficulty relies on the fact that we do not make any  assumption on the structure of this set, besides those implied by the stationary ergodicity of $a(x,\omega)$.

A separate analysis is required to show homogenization at the bottom and the existence of a possible flat part there. This is achieved by proving suitable matching upper and lower bounds. This point is handled here via two novel ideas, corresponding to Theorems \ref{teo1 upper qc} and \ref{teo lower bound}, where we provide two different arguments to suitably bridge a pair of different viscosity solutions of the same cell equation. In Theorem \ref{teo1 upper qc} this is pursued by crucially exploiting the quasiconvexity  of $H$ and the fact that $a(\cdot,\omega)$ vanishes at some point, at least almost surely, in order to derive a quasiconvexity-type property of the expected effective Hamiltonian.  Theorem  \ref{teo lower bound} instead does not use directly the quasiconvexity of $H$. The core of the proof consists in 
exploiting nonexistence of supersolutions to the cell equation at levels strictly below the critical constant $\landa_0$ to construct a lift which allows to gently transition between two solutions of the same cell equation. \vspace{0.5ex}

The paper is organized as follows. In Section \ref{sec:result} we present the setting and the standing assumptions and we state our homogenization results, see Theorems \ref{thm:genhom} and \ref{thm:genhom2}. In Section \ref{sec:outline} we explain how Theorem \ref{thm:genhom} can be derived from Theorem  \ref{thm:genhom2}, then we 
outline the strategy we will follow to prove  Theorem  \ref{thm:genhom2}.  In Section \ref{sec:correctors} we give the definition of the critical value $\landa_0$, which is characterized as the minimal value of $\landa$ for which the correspondig corrector equation can be solved, and we correspondingly show existence of deterministic solutions of such equations. Section \ref{sec:deterministic  correctors} contains a fine analysis concerning uniqueness and regularity properties of suitable deterministic solutions of the corrector equations we are considering. Section \ref{sec:homogenization}  contains the proof of Theorem \ref{thm:genhom2}. The analysis for the flat part at the bottom mentioned above is contained in Section \ref{sec:flat part}.
The paper ends with two appendices. In Appendix \ref{app:density}, we prove the density property stated in Proposition \ref{prop density} which is at the base of the reduction argument described in Section \ref{sec:outline}. In Appendix \ref{app:PDE} we have collected all the PDE results we need for our analysis.\medskip

\indent{\textsc{Acknowledgements. $-$}} The author wishes to thank Luca Rossi for the help provided with the proof of Proposition \ref{prop pointwise solution}. 

\section{Preliminaries}\label{sec:preliminaries}
\subsection{Assumptions and statement of the main results}\label{sec:result}

We will denote by $\CC(\R)$ and $\CC(\R\times\R)$ the Polish spaces of continuous functions  on $\R$ and on $\R\times\R$, endowed with a metric inducing the topology of uniform convergence on compact subsets of $\R$ and of $\R\times\R$, respectively. 

The triple $(\Omega,\F, \P)$ denotes a probability space, where $\Omega$ is a Polish space, ${\cal F}$ is the $\sigma$-algebra of Borel subsets of $\Omega$, and $\P$ is a complete probability measure on $(\Omega,{\cal F})$.\footnote{The assumptions that $\Omega$ is a Polish space and $\P$ is a complete probability measure are only used in the proofs of Lemma \ref{lemma lambda zero} and Theorem \ref{teo1 random correctors} to show measurability of the random objects therein considered. The assumption that $\Omega$ is a Polish space is also used in the proof of Lemma \ref{appA lemma measurable selection} in order to apply  the  Kuratowski–Ryll-Nardzewski measurable selection theorem \cite{KuRy65}.} We will denote by ${\cal B}$ the Borel $\sigma$-algebra on $\R$ and equip the product space $\R\times \Omega$ with the product $\sigma$-algebra ${\mathcal B}\otimes {\cal F}$.

We will assume that $\P$ is invariant under the action of a one-parameter group $(\tau_x)_{x\in\R}$ of transformations $\tau_x:\Omega\to\Omega$. More precisely, we assume that the mapping
$(x,\omega)\mapsto \tau_x\omega$ from $\R\times \Omega$ to $\Omega$ is measurable, $\tau_0=id$, $\tau_{x+y}=\tau_x\circ\tau_y$ for every $x,y\in\R$, and $\P\big(\tau_x (E)\big)=\P(E)$ for every $E\in{\cal F}$ and $x\in\R$. We will furthermore assume that the action of $(\tau_x)_{x\in\R}$ is {\em ergodic}, i.e., any measurable function $\varphi:\Omega\to\R$ satisfying $\P(\varphi(\tau_x\omega) = \varphi(\omega)) = 1$ for every fixed $x\in{\R}$ is almost surely equal to a constant.
 If $\varphi\in L^1(\Omega)$, we write $\EE(\varphi)$ for the mean of $\varphi$ on $\Omega$, i.e. the quantity
$\int_\Omega \varphi(\omega)\, d \P(\omega)$.\smallskip

In this paper, we will consider an equation of the form 
\begin{equation}\label{eq:generalHJ}
\partial_t u=a(x,\omega) \partial^2_{xx} u +H(x,\partial_x u,\omega),\quad\hbox{in}\ (0,+\infty)\times\R,
\end{equation}
where the diffusion coefficient $a:\R\times\Omega\to [0,1]$ and the Hamiltonian $H:\R\times\R\times\Omega\to\R$ are assumed to be jointly measurable in all variables and {\em stationary}, meaning that 
\[
a(x+y,\omega)=a(x,\tau_y\omega)\quad\hbox{and}\quad H(x+y,p,\omega)=H(x,p,\tau_y \omega)
\qquad
\hbox{for all  $x,y,p\in\R$ and $\omega\in\Omega$.}
\]
The diffusion coefficient $a$ is furthermore assumed to satisfy the following conditions,  
for some constant $\kappa > 0$:
\begin{itemize}
\item[(A1)] \quad $a(\cdot,\omega)\not\equiv 0$\ \ and\ \  $\min_\R a(\cdot,\omega)=0$ \ on $\R$\quad for $\P$-a.e. $\omega\in\Omega$;\smallskip
\item[(A2)]  \quad $\sqrt{a(\,\cdot\,,\omega)}:\R\to [0,1]$\ is $\kappa$--Lipschitz continuous for all $\omega\in\Omega$.\footnote{
Note that (A2) implies that $a(\cdot,\omega)$ is $2\kappa$--Lipschitz in $\R$ for all $\omega\in\Omega$.  Indeed, 
for all $x,y\in\R$ we have 
\[
|a(x,\omega) - a(y,\omega)| = |\sqrt{a(x,\omega)} + \sqrt{a(y,\omega)}||\sqrt{a(x,\omega)} - \sqrt{a(y,\omega)}| \leqslant 2\kappa|x-y|.
\]
}
\end{itemize}
As for the Hamiltonian $H:\R\times\R\times\Omega\to\R$, we will assume $H(\cdot,\cdot,\omega)$ to belong to the family  $\Ham$ defined as follows, for some constants $\alpha_0,\alpha_1>0$ and $\gamma>1$ independent of $\omega\in\Omega$. 
\begin{definition}\label{def:Ham}
	A function $H:\R\times\R\times\Omega\to\R$ is said to be in the class $\Ham$ if it satisfies the following conditions, for some constants $\alpha_0,\alpha_1>0$ and $\gamma>1$:
	\begin{itemize}
		\item[(H1)] \quad $\alpha_0|p|^\gamma-1/\alpha_0\leqslant H(x,p,\omega)\leqslant\alpha_1(|p|^\gamma+1)$\quad for all ${(x,p,\omega)\in\R\times\R\times\Omega}$;\medskip
		\item[(H2)]\quad $|H(x,p,\omega)-H(x,q,\omega)|\leqslant\alpha_1\left(|p|+|q|+1\right)^{\gamma-1}|p-q|$\quad for all $(x,\omega)\in\R\times\Omega$ and $p,q\in\R$;\medskip
	\item[(H3)] \quad $|H(x,p,\omega)-H(y,p,\omega)|\leqslant\alpha_1(|p|^\gamma+1)|x-y|$\quad 
	for all $x,y,p\in\R$ and $(p,\omega)\in\R\times\Omega$.\medskip		
\end{itemize}
%We will denote by $\Hamall$ the union of the families $\Ham$, where $\alpha_0,\alpha_1$ vary in $(0,+\infty)$ and $\gamma$ in $(1,+\infty)$. 
\end{definition}

Solutions, subsolutions and supersolutions of \eqref{eq:generalHJ}
will be always understood in the viscosity sense, see \cite{CaCa95, users,barles_book,bardi}, and implicitly assumed continuous, without any further specification. Assumptions (A2) and (H1)-(H3)
guarantee well-posedness in $\D{UC}(\ccyl)$ of the Cauchy problem for
the parabolic equation \eqref{eq:generalHJ}, as well as Lipschitz
estimates for the solutions under appropriate assumptions on the
initial condition, see Appendix \ref{app:PDE} for more
details.

      The purpose of this paper is to prove a homogenization
      result for equation \eqref{eq:introHJ} when the nonlinearity $H$ is additionally assumed quasiconvex, meaning that 
\begin{itemize}
\item[(qC)] 
$H(x,(1-s) p_0+sp_1,\omega)
\leqslant\max\{H(x,p_0,\omega),H(x,p_1,\omega)\}
\quad
\hbox{for all $p_0\not=p_1\in\R$ and $s\in (0,1)$,}$
\end{itemize}
for every $(x,\omega)\in\R\times\Omega$. 
We remark that the quasiconvexity assumption has the following equivalent geometric meaning: 
\[
\{p\in\R\,:\,H(x,p,\omega)\leqslant \lambda\}\quad\hbox{is a (possibly empty) convex set for all $\lambda\in\R$ and $(x,\omega)\in\R\times\Omega$.} 
\]
We will denote by $\Hamqc$ the family of Hamiltonians $H\in\Ham$ satisfying condition (qC).
%
%Next, we introduce the general class of continuous and coercive functions $G$ that we consider in this paper.
%
%\begin{definition}\label{def:Ham}
%	A function $G:\R\to\R$ is said to be in the class $\Ham$ if it satisfies the following conditions for some constants $\alpha_0,\alpha_1>0$ and $\gamma>1$:
%	
%	\begin{itemize}
%		\item[(G1)] $\alpha_0|p|^\gamma-1/\alpha_0\leqslant G(p)\leqslant\alpha_1(|p|^\gamma+1)$\ for all ${p\in\R}$;\medskip
%		\item[(G2)] $|G(p)-G(q)|\leqslant\alpha_1\left(|p|+|q|+1\right)^{\gamma-1}|p-q|$\ for all $p,q\in\R$.
%	\end{itemize}
%      \end{definition}
%
Our main result reads as follows.

\begin{theorem}\label{thm:genhom}
Suppose $a$ satisfies (A1)-(A2) and $H(\cdot,\cdot,\omega)\in\Hamqc$  for some constants $\alpha_0,\alpha_1>0$ and $\gamma>1$, for every $\omega\in\Omega$. 
Then, %, for $\P$-a.e.\ $\omega$, as $\epsilon\to0$,  when subject to uniformly continuous initial data,
the viscous HJ equation \eqref{eq:introHJ} homogenizes, i.e., there exists a continuous and coercive function 
$\HV(H):\R\to\R$, called {\em effective Hamiltonian}, and a set $\hat\Omega$ of probability 1 such that, for every uniformly continuous function $g$ on $\R$ and every $\omega\in\hat\Omega$, the solutions $u^\epsilon(\cdot,\cdot,\omega)$ of \eqref{eq:introHJ} satisfying $u^\epsilon(0,\,\cdot\,,\omega) = g$ converge,
    locally uniformly on $[0, +\infty)\times \R$ as $\epsilon\to 0^+$, to the unique solution $\ol{u}$ of 
    \begin{eqnarray*}%\label{effeq}    
   \begin{cases}
    \partial_t \ol{u} = \HV(H)(\partial_x\ol{u}) & \hbox{in $(0,+\infty)\times\R$}\\
    \ol{u}(0,\,\cdot\,) = g & \hbox{in $\R$}.
    \end{cases}
    \end{eqnarray*}
Furthermore, $\HV(H)$ satisfies (H1), is locally Lipschitz and quasiconvex. 
\end{theorem}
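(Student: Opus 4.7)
The plan is to reduce to a dense subclass of Hamiltonians where a corrector-based homogenization argument becomes tractable, and then recover the general case by stability.

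First I would invoke the density property of Proposition \ref{prop density} to approximate a generic $H \in \mathscr{H}_{qc}(\alpha_0,\alpha_1,\gamma)$, locally uniformly, by a sequence $H_n$ of \emph{strictly} quasiconvex and \emph{superquadratic} Hamiltonians. Using the Lipschitz and comparison estimates collected in Appendix \ref{app:PDE}, standard stability of viscosity solutions transfers homogenization across this limit: if each $H_n$ homogenizes to some $\bar H_n$, then $H$ homogenizes to $\bar H := \lim_n \bar H_n$, and the qualitative properties of coercivity, quasiconvexity and local Lipschitz regularity pass to $\bar H$. This reduces Theorem \ref{thm:genhom} to Theorem \ref{thm:genhom2}.

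For such $H$, I would adopt a corrector strategy in the spirit of \cite{Y21b,D23a}. The candidate minimum of $\bar H$ is the critical value $\lambda_0$, defined as the smallest $\lambda$ for which the stationary cell equation $a(x,\omega) v'' + H(x, v', \omega) = \lambda$ admits a continuous supersolution on $\R$. Superquadratic growth in $p$ yields H\"older regularity of such supersolutions, which makes $\lambda_0$ well-defined and deterministic. For each $\lambda > \lambda_0$ and each target slope $\theta$, strict quasiconvexity forces the derivative $v'$ of any solution of the cell equation onto one of two well-defined branches $p_{\pm}(x,\lambda)$ on $\{a>0\}$; I would construct deterministic correctors with stationary derivatives by a fine qualitative analysis of the behavior of $v'$ as $x$ approaches the zero set of $a(\cdot,\omega)$, whose structure cannot be assumed beyond what stationarity and ergodicity impose. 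Uniqueness and regularity of these correctors would then feed the ergodic theorem to produce a well-defined map $\theta \mapsto \bar H(\theta) = \lambda$ on the regime $\{\bar H > \lambda_0\}$.

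The analysis at the bottom $\{\bar H = \lambda_0\}$, where a flat part of $\bar H$ may appear, requires matching upper and lower bounds. The upper bound, Theorem \ref{teo1 upper qc}, would exploit quasiconvexity of $H$ together with the a.s.\ vanishing of $a(\cdot,\omega)$ to derive a quasiconvexity-type property of $\bar H$ at its minimum. The lower bound, Theorem \ref{teo lower bound}, would avoid using quasiconvexity of $H$ directly and would instead use the nonexistence of supersolutions strictly below $\lambda_0$ to construct a lift that gently bridges two solutions of the same cell equation.

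The principal obstacle will be the corrector construction across the zero set of $a$: with no structural information on $\{a = 0\}$ available, the pure ODE arguments usable when $a > 0$ collapse, and one must combine viscosity techniques, strict quasiconvexity of $H(x,\cdot)$, and the H\"older estimates coming from superquadraticity in order to obtain both existence and the uniqueness/regularity needed to pass to ergodic averages. A secondary delicate point is the handling of the flat part at $\lambda_0$, where the standard corrector mechanism degenerates and the quasiconvexity of $\bar H$ at its minimum must be extracted indirectly through the two bridging arguments described above.
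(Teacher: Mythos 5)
Your proposal is correct and follows essentially the same route as the paper: reduction to the strictly quasiconvex, superquadratic class via the density result of Proposition \ref{prop density} combined with the stability of homogenization (Theorem \ref{appB teo stability}) and Proposition \ref{appB prop HF}, with quasiconvexity of $\HV(H)$ inherited in the limit from the $\HV(H_n)$. The corrector-based outline you give for the reduced case is likewise the strategy the paper uses to prove Theorem \ref{thm:genhom2} (critical value $\lambda_0$, correctors with stationary derivatives, and the two bridging arguments of Theorems \ref{teo1 upper qc} and \ref{teo lower bound} for the flat part).
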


The effective Hamiltonian $\HV(H)$ also depends on the diffusion coefficient $a$, but since the latter  
will remain fixed throughout the paper, we will not keep track of this in our notation.\smallskip 

In order to prove Theorem \ref{thm:genhom}, we find convenient to restrict to Hamiltonians $H\in\Ham$ with $\gamma>2$ (and $\alpha_0,\,\alpha_1>0$) and which are  {\em strictly quasiconvex}, meaning that they satisfy (qC) above with a strict inequality. Such a strictly quasiconvex Hamiltonian $H$ has a unique minimizer $\hat p(x,\omega)$ of $H(x,\cdot,\omega)$ in $\R$, for every fixed $(x,\omega)\in\R\times\Omega$. For every fixed $\eta>0$, we introduce the family $\Hamsqc$ of  strictly quasiconvex Hamiltonians $H\in\Ham$ satisfying the following condition, dependent on $\eta$:
\begin{itemize}
\item[(sqC)] \quad for all $(x,\omega)\in\R\times\Omega$, 
\begin{eqnarray*}
H(x,p_1,\omega)-H(x,p_2,\omega)\geqslant \eta|p_1-p_2|\qquad &&\hbox{for all $p_1<p_2\leqslant \hat p(x,\omega)$,}\\
H(x,p_2,\omega)-H(x,p_1,\omega)\geqslant \eta|p_1-p_2| \qquad && \hbox{for all $p_2>p_1\geqslant \hat p(x,\omega)$.}
\end{eqnarray*}
\end{itemize}
We will prove the following result. 

\begin{theorem}\label{thm:genhom2}
Suppose $a$ satisfies (A1)-(A2) and $H(\cdot,\cdot,\omega)\in\Hamsqc$  for some constants $\alpha_0,\alpha_1>0$, $\gamma>2$ and $\eta>0$, for every $\omega\in\Omega$. 
Then the viscous HJ equation \eqref{eq:introHJ} homogenizes, in the sense of Theorem \ref{thm:genhom}, and the effective Hamiltonian 
$\HV(H)$ satisfies (H1), is locally Lipschitz and quasiconvex. 
Furthermore, there exist $\lambda_0\in\R$ and $\theta^-(\lambda_0)\leqslant \theta^+(\lambda_0)$ in $\R$ such that 
\[
\HV(H)(\theta)=\lambda_0=\min_\R \HV(H)\qquad\hbox{for all $\theta\in [\theta^-(\landa_0),\theta^+(\landa_0)]$},
\] 
$\HV(H)$ is strictly decreasing in  $(-\infty,\theta^-(\landa_0))$  and is strictly increasing in $(\theta^+(\landa_0),+\infty)$.
\end{theorem}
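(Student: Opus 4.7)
The plan is to follow the outline of Section \ref{sec:outline} and reduce the proof to a detailed study of deterministic solutions of the cell equation
\begin{equation*}
a(x,\omega)\,w''(x) + H(x, w'(x), \omega) = \landa \qquad \text{in } \R.
\end{equation*}
First I would introduce, as in Section \ref{sec:correctors}, the critical value $\landa_0$ as the infimum of those $\landa\in\R$ for which the above equation admits a continuous supersolution on $\R$ for $\P$-a.e.\ $\omega$, and verify via stationarity and ergodicity that $\landa_0$ is deterministic. The hypothesis $\gamma>2$ enters crucially here: known H\"older regularity estimates for supersolutions of degenerate viscous HJ equations with superquadratic coercive Hamiltonian yield a uniform local modulus for any candidate supersolution, which combined with (A1) (vanishing of $a$ somewhere in $\R$ a.s.) forces $\landa_0$ to be finite.

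Next, for each $\landa\ge\landa_0$, I would construct two families of global deterministic correctors $w_\landa^\pm(\cdot,\omega)$, one for each of the two branches of the sublevel set $\{p:H(x,p,\omega)\le\landa\}$, which is a closed interval by strict quasiconvexity (sqC). The construction and uniqueness of $w_\landa^\pm$ proceeds by a viscosity version of the ODE approach of \cite{Y21b}, together with a fine qualitative analysis of the behavior of $(w_\landa^\pm)'$ on and near the zero set of $a(\cdot,\omega)$, where the equation formally reduces to $H(x,w',\omega)=\landa$. Since stationarity transfers to these derivatives, the ergodic theorem yields two deterministic monotone functions
\[
\theta^-(\landa) := \EE\bigl[(w_\landa^-)'(0,\cdot)\bigr] \le \theta^+(\landa) := \EE\bigl[(w_\landa^+)'(0,\cdot)\bigr],
\]
defined for $\landa\ge\landa_0$, with $\theta^-$ strictly decreasing and $\theta^+$ strictly increasing in $\landa$ on $(\landa_0,+\infty)$, and both divergent at infinity.

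With these correctors at hand, the homogenization step in Section \ref{sec:homogenization} can be carried out by the perturbed-test-function argument in the stationary ergodic setting: for each $\landa>\landa_0$, using $w_\landa^\pm$ as a corrector at the slope $\theta^\pm(\landa)$ shows precompactness of $\{u^\epsilon(\cdot,\cdot,\omega)\}$ and identifies any limit $\ol u$ as the unique solution of $\partial_t\ol u = \HV(H)(\partial_x\ol u)$, with $\HV(H)(\theta^\pm(\landa))=\landa$. This determines $\HV(H)$ on $\R\setminus[\theta^-(\landa_0),\theta^+(\landa_0)]$ as strictly monotone on each unbounded branch, yields the inequality $\HV(H)\ge\landa_0$ on the remaining interval, and delivers (H1) and local Lipschitz continuity from the bounds inherited from $H$.

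The delicate final point, handled in Section \ref{sec:flat part}, is to show that $\HV(H)=\landa_0$ on the closed interval $[\theta^-(\landa_0),\theta^+(\landa_0)]$, which simultaneously gives the flat part and the quasiconvexity of $\HV(H)$. For the upper bound I would invoke Theorem \ref{teo1 upper qc}: exploiting quasiconvexity of $H$ and the a.s.\ existence of a point where $a(\cdot,\omega)=0$, one pastes $w_{\landa_0}^-$ and $w_{\landa_0}^+$ into a single admissible subcorrector with any intermediate slope, thereby extracting a quasiconvexity-type property of $\HV(H)$. For the matching lower bound I would invoke Theorem \ref{teo lower bound}, which uses the nonexistence of supersolutions strictly below $\landa_0$ to build a ``lift'' bridging two solutions of the same cell equation at adjacent slopes. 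The hardest step in the whole program is the uniqueness and regularity analysis of $w_\landa^\pm$: without nondegeneracy of $a$ the pure ODE methods of \cite{Y21b} fail, and without additive structure one cannot fall back on the Hopf--Cole or piecewise-convex arguments of \cite{KYZ20, DKY23}, so one must carefully track how the derivative of a viscosity solution of the cell equation may switch between the two branches of $\{H=\landa\}$ across the zero set of $a$ and force it back onto the prescribed branch, using in an essential way (sqC) together with the H\"older regularity afforded by $\gamma>2$.
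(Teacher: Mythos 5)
Your proposal is correct and follows essentially the same route as the paper: the critical value $\lambda_0$ (finite thanks to the H\"older estimates available for $\gamma>2$), the construction, uniqueness and $C^1$ regularity of the branch correctors $u^\pm_\lambda$ via a fine analysis of the derivative near $\{a(\cdot,\omega)=0\}$ using (sqC), the strictly monotone coercive maps $\theta^\pm(\lambda)=\EE[(u^\pm_\lambda)'(0,\omega)]$, and the flat part handled exactly by the two bridging arguments of Theorems \ref{teo1 upper qc} and \ref{teo lower bound}. The only minor deviations are that the paper does not run a perturbed-test-function scheme but instead reduces homogenization to $\HV^L(H)=\HV^U(H)$ for linear initial data (\cite{DK17}) and concludes via the comparison arguments of Propositions \ref{prop consequence existence corrector} and \ref{prop corrector argument}, and that the correctors $u^\pm_\lambda$ exist only for $\lambda>\lambda_0$, so the pasting for the flat part is performed at levels $\lambda>\lambda_0$ and then one lets $\lambda\searrow\lambda_0$ rather than pasting critical correctors directly.
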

 
\subsection{Outline of the proof strategy}\label{sec:outline}

In this section, we outline the strategy that we will follow to prove the homogenization results stated in Theorems \ref{thm:genhom} and  \ref{thm:genhom2}. 
In particular, we will explain how we can reduce to prove Theorem \ref{thm:genhom2} only.\smallskip

Let us denote by $u_\theta(\,\cdot\,,\,\cdot\,,\omega)$ the unique Lipschitz solution to \eqref{eq:generalHJ} with initial condition $u_\theta(0,x,\omega)=\theta x$ on $\R$, and let us introduce the following deterministic quantities, defined almost surely on $\Omega$, see \cite[Proposition 3.1]{DK22}:
\begin{eqnarray}\label{eq:infsup}
	\HV^L(H) (\theta):=\liminf_{t\to +\infty}\ \frac{u_\theta(t,0,\omega)}{t}\quad\text{and}\quad 
	\HV^U(H) (\theta):=\limsup_{t\to +\infty}\ \frac{u_\theta(t,0,\omega)}{t}.
\end{eqnarray}
In view of \cite[Lemma 4.1]{DK17}, proving homogenization amounts to showing that $\HV^L(H)(\theta) = \HV^U(H)(\theta)$ for every $\theta\in\R$. If this occurs, their common value is denoted by $\HV(H)(\theta)$. The function $\HV(H):\R\to\R$ is called the effective Hamiltonian associated with $H$. It has already appeared in the statement of Theorems \ref{thm:genhom} and \ref{thm:genhom2}.\smallskip

The reduction argument mentioned above can be precisely stated as follows. 

\begin{prop}\label{prop reduction}
Suppose Theorem \ref{thm:genhom2} holds for any $\alpha_0,\,\alpha_1>0$, $\gamma>2$ and $\eta>0$. Then Theorem \ref{thm:genhom} holds. 
\end{prop}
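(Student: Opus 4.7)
The plan is to derive Theorem \ref{thm:genhom} for $H\in\Hamqc$ by approximating $H$ with a sequence in $\mathscr{H}_{sqc}(\alpha_0',\alpha_1',\gamma',\eta_n)$ with $\gamma'>2$, applying Theorem \ref{thm:genhom2} to each approximation, and passing to the limit by a stability argument.

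First, I would invoke Proposition \ref{prop density} to produce, for every $H(\cdot,\cdot,\omega)\in\Hamqc$, a sequence $(H_n)$ with $H_n(\cdot,\cdot,\omega)\in\mathscr{H}_{sqc}(\alpha_0',\alpha_1',\gamma',\eta_n)$ for constants $\alpha_0',\alpha_1'>0$ and $\gamma'>2$ independent of $n$ (while $\eta_n$ is allowed to degenerate to $0$), such that $H_n\to H$ locally uniformly in $(x,p)$ for every $\omega$, with the convergence uniform in $x\in\R$ on compact sets of $p$. By Theorem \ref{thm:genhom2}, each $H_n$ homogenizes to an effective Hamiltonian $\HV(H_n)$ satisfying (H1) with constants depending only on $(\alpha_0',\alpha_1',\gamma')$ and which is locally Lipschitz and quasiconvex.

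Next, I would extract a limit from $(\HV(H_n))$. From the uniform bound (H1) and uniform local Lipschitz estimates inherited from condition (H2), Arzel\`a-Ascoli provides a subsequence along which $\HV(H_n)$ converges locally uniformly on $\R$ to a continuous, coercive, quasiconvex function $\bar H$ satisfying (H1). To identify $\bar H$ as the effective Hamiltonian of $H$, fix $\theta\in\R$ and denote by $u_\theta^\epsilon$, resp.\ $u_{\theta,n}^\epsilon$, the solution of \eqref{eq:introHJ} with Hamiltonian $H$, resp.\ $H_n$, and linear initial datum $\theta x$. The a priori Lipschitz estimates of Appendix \ref{app:PDE}, whose constants depend only on $(\alpha_0',\alpha_1',\gamma')$ and $|\theta|$, yield a common Lipschitz bound $L$ for $\partial_x u_\theta^\epsilon$ and $\partial_x u_{\theta,n}^\epsilon$ uniform in $\epsilon>0$ and $n$. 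The standard comparison estimate for viscous HJ equations sharing a common diffusion term then gives, for every $T>0$,
\[
\|u_\theta^\epsilon-u_{\theta,n}^\epsilon\|_{L^\infty([0,T]\times\R)}\;\le\;T\!\!\sup_{x\in\R,\,|p|\le L,\,\omega\in\Omega}|H(x,p,\omega)-H_n(x,p,\omega)|\;\longrightarrow\;0\quad\text{as }n\to\infty,
\]
uniformly in $\epsilon$. Theorem \ref{thm:genhom2} ensures, for fixed $n$, that $u_{\theta,n}^\epsilon(t,0,\omega)/t\to\HV(H_n)(\theta)$ as $t\to\infty$ almost surely; combined with the above stability estimate via a diagonal procedure in $n$ and $t$, this produces $u_\theta^\epsilon(t,0,\omega)/t\to\bar H(\theta)$, i.e., $\HV^L(H)(\theta)=\HV^U(H)(\theta)=\bar H(\theta)$. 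As $\bar H(\theta)$ is thus independent of the extracted subsequence, the full sequence $(\HV(H_n))$ converges to $\HV(H):=\bar H$ locally uniformly on $\R$. Homogenization for a generic uniformly continuous initial datum then follows from \cite[Lemma 4.1]{DK17}, and all the properties of $\HV(H)$ asserted in Theorem \ref{thm:genhom} --- continuity, coercivity, local Lipschitz regularity, quasiconvexity, and (H1) --- pass to the limit along the convergence $\HV(H_n)\to\HV(H)$.

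\textbf{Main obstacle.} The key technical point is to guarantee uniformity in $n$ of the PDE estimates on $u_{\theta,n}^\epsilon$. This hinges on Proposition \ref{prop density} producing approximations whose (H1)-(H3) constants $(\alpha_0',\alpha_1',\gamma')$ do not depend on $n$ (with only $\eta_n$ allowed to tend to $0$), and on the Lipschitz estimates of Appendix \ref{app:PDE} depending solely on these constants and on $|\theta|$. Once this uniform control is in place, the stability of viscosity solutions under locally uniform convergence of the Hamiltonian, together with the routine diagonal extraction described above, closes the argument.
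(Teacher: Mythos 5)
Your proposal is correct and follows essentially the same route as the paper — approximate $H$ by the strictly quasiconvex Hamiltonians of Proposition \ref{prop density}, apply Theorem \ref{thm:genhom2} to each $H_n$, and pass to the limit — except that you re-derive the stability of homogenization by hand (uniform Lipschitz bounds plus a comparison estimate), whereas the paper simply invokes Theorem \ref{appB teo stability} together with Proposition \ref{appB prop HF}. One small correction: Proposition \ref{prop density} gives $H_n\to H$ only for each fixed $\omega$, so the error term in your comparison estimate should be $\sup_{x\in\R,\,|p|\leqslant L}|H(x,p,\omega)-H_n(x,p,\omega)|$ for the fixed $\omega$ under consideration rather than a supremum over $\omega\in\Omega$; since the identification of $\HV(H)(\theta)$ is carried out $\omega$-by-$\omega$ on a set of full probability, this does not affect the argument.
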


For this, we will need the following density result, whose proof is postponed to Appendix \ref{app:density}. 

\begin{prop}\label{prop density}
Let $H:\R\times\R\times\Omega\to\R$ be a stationary quasiconvex Hamiltonian such that $H(\cdot,\cdot,\omega)\in\Hamqc$ for some constants  $\alpha_0,\,\alpha_1>0$ and 
$\gamma>1$. There exist constants $\ol \alpha_0\geqslant \alpha_0$, $\ol\alpha_1\geqslant \alpha_1$, $\ol\gamma:=\max\{\gamma,4\}$, an infinitesimal sequence $(\eta_n)_n$ in $(0,1)$ and a sequence of stationary Hamiltonians $H_n:\R\times\R\times\Omega\to\R$ with $H_n(\cdot,\cdot,\omega)\in\Hamall_{sqc}(\ol\alpha_0,\ol\alpha_1,\ol\gamma,\eta_n)$ for every $\omega\in\Omega$ and $n\in\N$ such that 
\[
\lim_n\|H(\cdot,\cdot,\omega)-H_n(\cdot,\cdot,\omega)\|_{L^\infty\left(\R\times [-R,R]\right)}=0\qquad
\hbox{for every $R>0$ and $\omega\in\Omega$.}
\]
\end{prop}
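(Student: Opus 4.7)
My strategy is to modify $H$ in two stages: first, enforce strict quasiconvexity via a tilted max of monotone envelopes of $H$; second, if $\gamma<4$, boost the growth rate to $\ol\gamma=\max(\gamma,4)$ by adding a convex term supported at large $|p|$.

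I begin by decomposing $H$ into monotone pieces. Since $H(x,\cdot,\omega)$ is quasiconvex and coercive by (H1), the infima
\[
H^-(x,p,\omega):=\inf_{q\leqslant p}H(x,q,\omega),\qquad H^+(x,p,\omega):=\inf_{q\geqslant p}H(x,q,\omega)
\]
are attained, with $H^-$ non-increasing and $H^+$ non-decreasing in $p$, and $H=\max\{H^-,H^+\}$ by quasiconvexity. The uniform bound on the minimizers of $H(x,\cdot,\omega)$ following from (H1) lets one restrict these infima to a uniformly bounded $q$-interval, so that $H^{\pm}$ inherit joint Borel measurability, stationarity in $x$, and analogues of (H1)-(H3) from $H$ (with possibly enlarged constants).

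I then form the strictly quasiconvex version via linear tilting: for a sequence $\eta_n\downarrow 0$ in $(0,1)$ I set
\[
\tilde H_n(x,p,\omega):=\max\bigl\{H^-(x,p,\omega)-\eta_n p,\; H^+(x,p,\omega)+\eta_n p\bigr\}.
\]
The first branch is strictly decreasing with slope at most $-\eta_n$ and the second strictly increasing with slope at least $\eta_n$, so $\tilde H_n(x,\cdot,\omega)$ is strictly quasiconvex with a unique minimum at the crossing of the two branches, and condition (sqC) holds with parameter $\eta_n$. The pointwise estimate $|\tilde H_n-H|\leqslant \eta_n|p|$ yields $\|\tilde H_n(\cdot,\cdot,\omega)-H(\cdot,\cdot,\omega)\|_{L^\infty(\R\times[-R,R])}\leqslant \eta_n R\to 0$ for each $R>0$.

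For the growth adjustment when $\gamma<4$, I would pick a radius $R_*$ exceeding the uniform bound on the minimizers of $\tilde H_n$, together with a non-negative convex function $\phi:\R\to[0,\infty)$ vanishing on $[-R_*,R_*]$ and of order $|p|^4$ outside (e.g.\ $\phi(p):=(|p|-R_*)_+^4$). Setting $H_n:=\tilde H_n+\phi$, strict quasiconvexity and the (sqC) estimate with parameter $\eta_n$ are preserved because $\phi$ is flat on a neighborhood of the unique minimizer of $\tilde H_n$ and monotone on each side, so the slope gap from the minimum only increases. The growth at infinity is promoted to $|p|^{\ol\gamma}=|p|^4$, while $\phi\equiv 0$ on $[-R_*,R_*]$ keeps the convergence of Step 2 intact on each bounded $p$-interval.

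Stationarity and measurability pass through each step. Conditions (H2) and (H3) for $H_n$ follow from the corresponding estimates on $H$ and $\phi$, using the fact that the infima defining $H^{\pm}$ can be restricted to a uniformly bounded $q$-range. The hard part will be the delicate calibration of $\ol\alpha_0,\ol\alpha_1$ and $R_*$ so that the (H1) lower bound $\ol\alpha_0|p|^{\ol\gamma}-1/\ol\alpha_0\leqslant H_n$ holds uniformly in $n$ while convergence on each bounded $p$-interval is preserved; when $\gamma<\ol\gamma$, this requires balancing the $|p|^\gamma$ growth of $\tilde H_n$ (dominant on $[-R_*,R_*]$) against the $|p|^4$ growth of $\phi$ (dominant outside), so that the two regimes splice together to produce a uniform coercive lower bound of order $|p|^{\ol\gamma}$.
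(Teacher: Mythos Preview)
Your route to strict quasiconvexity via the monotone envelopes $H^\pm(x,p,\omega)=\inf_{\pm q\geqslant \pm p}H(x,q,\omega)$ and the tilted maximum $\tilde H_n=\max\{H^--\eta_n p,\,H^++\eta_n p\}$ is correct and genuinely different from the paper's construction. The paper instead writes $H=G+V$ with $V(x,\omega)=\min_p H(x,p,\omega)$, replaces $G$ by $\max\{2/n,G\}$, and then adds a quartic penalty $\eta_n\,|p-\hat p_n(x,\omega)|^4$ centered at a Lipschitz stationary approximate minimizer $\hat p_n$. Building $\hat p_n$ costs a measurable selection (Kuratowski--Ryll-Nardzewski) plus a mollification step to recover Lipschitz regularity in $x$; your approach bypasses both, and the estimate $|\tilde H_n-H|\leqslant\eta_n|p|$ gives the local uniform convergence directly. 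Another structural difference: the paper achieves (sqC) and the growth upgrade with a \emph{single} quartic term, whereas you separate the two tasks.

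There is, however, a real gap in your growth-adjustment step. Adding a \emph{fixed} $\phi(p)=(|p|-R_*)_+^4$ yields $H_n=\tilde H_n+\phi\to H+\phi$ locally uniformly, not $H$: for any $R>R_*$ you have $\|H_n-H\|_{L^\infty(\R\times[-R,R])}\geqslant \phi(R)>0$ for every $n$, so the required convergence fails. Your sentence ``$\phi\equiv 0$ on $[-R_*,R_*]$ keeps the convergence intact on each bounded $p$-interval'' is simply false once the interval exceeds $[-R_*,R_*]$. The paper avoids this by letting the coefficient of the quartic term tend to zero ($\eta_n\to 0$), so the added perturbation vanishes on every compact $p$-set; this is precisely the point where the single-term design pays off. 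You correctly flag that a vanishing coefficient makes the uniform (H1) lower bound with exponent $\ol\gamma$ delicate when $\gamma<4$; the paper relegates that verification to a ``tedious but standard computation''. If you want to keep your two-step architecture, the fix is to take $\phi_n=\delta_n\phi$ with $\delta_n\to 0$ (or $R_{*,n}\to\infty$), accepting the same calibration issue.
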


\begin{proof}[Proof of Proposition \ref{prop reduction}]
Let $H$ be a stationary quasiconvex Hamiltonian satisfying $H(\cdot,\cdot,\omega)\in\Hamqc$  for some constants $\alpha_0,\alpha_1>0$ and $\gamma>1$, for every $\omega\in\Omega$. Let $(H_n)_n$ be the sequence of strictly quasiconvex Hamiltonians given by  Proposition \ref{prop density}.
%
%According to Proposition \ref{prop density}, there exist constants $\ol \alpha_0\geqslant \alpha_0$, $\ol\alpha_1\geqslant \alpha_1$, $\ol\gamma:=\max\{\gamma,4\}$, an infinitesimal sequence $(\eta_n)_n$ in $(0,1)$ and a sequence of stationary Hamiltonians $H_n:\R\times\R\times\Omega\to\R$ with $H_n(\cdot,\cdot,\omega)\in\Hamall_{sqc}(\ol\alpha_0,\ol\alpha_1,\ol\gamma,\eta_n)$ for every $\omega\in\Omega$ and $n\in\N$ such that 
%\[
%\lim_n\|H(\cdot,\cdot,\omega)-H_n(\cdot,\cdot,\omega)\|_{L^\infty\left(\R\times [-R-R]\right)}=0\qquad
%\hbox{for every $R>0$ and $\omega\in\Omega$.}
%\]
By Theorem \ref{thm:genhom2}, the viscous HJ equation \eqref{eq:introHJ} homogenizes for each $H_n$, with effective Hamiltonian $\HV(H_n)$ which is quasiconvex in $\R$, and strictly quasiconvex outside the possible flat part at the bottom. 
By Theorem \ref{appB teo stability},  the viscous HJ equation \eqref{eq:introHJ} homogenizes for $H$ too, with effective Hamiltonian $\HV(H)$ which is the local uniform limit in $\R$ of the effective Hamiltonians $\big(\HV(H_n)\big)_n$. From Proposition \ref{appB prop HF} we derive that $\HV(H)$ satisfies (H1) and is locally Lipschitz in $\R$, while the   quasiconvexity of $\HV(H)$ is inherited from that of the effective Hamiltonians $\HV(H_n)$. 
\end{proof}

Let us briefly outline the strategy to prove Theorem  \ref{thm:genhom2}. In order to prove that $\HV^L(\theta)=\HV^U(\theta)$ for all $\theta\in\R$, 
we will adopt the approach that was taken in \cite{KYZ20, DK22} and substantially  developed in \cite{Y21b}.
It consists in showing the existence of a viscosity Lipschitz solution $u(x,\omega)$ with stationary derivative
for the following stationary equation associated with \eqref{eq:generalHJ}, namely 
\begin{equation}\label{eq:cellPDE}
	a(x,\omega)u''+H(x,u',\omega)=\lambda\qquad\hbox{in $\R$}
		\tag{HJ$_\lambda(\omega)$}
\end{equation}
for every $\lambda\in\R$ and for $\P$-a.e. $\omega\in\Omega$. With a slight abuse of terminology, we will call such solutions {\em correctors}  in the sequel for the role they play in homogenization.\footnote{The word {\em corrector} is usually used in literature to refer to the function $u(x,\omega)-\theta\,x$\ with\ $\theta:=\EE[u'(0,\omega)]$, see for instance \cite{CaSo17} for a more detailed discussion on the topic.} 
In fact, the following holds.

\begin{prop}\label{prop consequence existence corrector}
Let $\lambda\in\R$ such that equation \eqref{eq:cellPDE} admits a viscosity solution $u$ with stationary gradient. 
Let us set $\theta:=\EE[u'(0,\omega)]$.  Then \ $\HV^L(\theta)=\HV^U(\theta)=\lambda$. 
\end{prop}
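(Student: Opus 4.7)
The plan is to compare the evolutionary solution $u_\theta(\cdot,\cdot,\omega)$ with the function $v(t,x,\omega):=u(x,\omega)+\lambda t$, which is itself a viscosity solution of \eqref{eq:generalHJ}. If $v(t,0,\omega)/t\to\lambda$ and the initial data $\theta x$ and $u(x,\omega)$ are asymptotically ``parallel'' at infinity, then one expects $u_\theta(t,0,\omega)/t\to\lambda$ as well.

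First I would establish that $u(x,\omega)-\theta x=o(|x|)$ almost surely. Since $H$ is coercive by (H1) and $u$ is a viscosity solution of \eqref{eq:cellPDE}, the Lipschitz estimates in Appendix \ref{app:PDE} imply that $u(\cdot,\omega)$ is globally $L$--Lipschitz for some constant $L$ depending only on $\alpha_0,\alpha_1,\gamma,\lambda$; in particular the stationary process $y\mapsto u'(y,\omega)$ belongs to $L^\infty$ with mean $\theta$. Birkhoff's ergodic theorem then gives
\[
\lim_{R\to\pm\infty}\frac{u(R,\omega)-u(0,\omega)}{R}=\theta\qquad\hbox{$\P$--a.s.,}
\]
from which one extracts, for every $\varepsilon>0$, a random constant $M_\varepsilon(\omega)>0$ such that
\[
|u(x,\omega)-\theta x|\leqslant\varepsilon|x|+M_\varepsilon(\omega)\qquad\hbox{for all }x\in\R.
\]

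Next, set $\beta_\varepsilon:=\alpha_1(L+\varepsilon+1)^{\gamma-1}\varepsilon$ and define the lifts
\[
v_\varepsilon^\pm(t,x,\omega):=v(t,x,\omega)\pm\bigl(\varepsilon|x|+M_\varepsilon(\omega)+\beta_\varepsilon t\bigr).
\]
By the sublinear estimate, $v_\varepsilon^-(0,x,\omega)\leqslant\theta x=u_\theta(0,x,\omega)\leqslant v_\varepsilon^+(0,x,\omega)$ on $\R$. Away from the singular set $\{x=0\}$, the functions $\pm\varepsilon|x|$ are smooth, and a direct computation using $(v_\varepsilon^\pm)_x=u'\pm\varepsilon\,\mathrm{sgn}(x)$, $(v_\varepsilon^\pm)_{xx}=u''$, together with assumption (H2) (which yields $|H(x,u'\pm\varepsilon\,\mathrm{sgn}(x),\omega)-H(x,u',\omega)|\leqslant\beta_\varepsilon$), shows that $v_\varepsilon^+$ is a viscosity supersolution of \eqref{eq:generalHJ} and $v_\varepsilon^-$ a viscosity subsolution. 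The comparison principle of Appendix \ref{app:PDE} then yields $v_\varepsilon^-\leqslant u_\theta\leqslant v_\varepsilon^+$ on $\ccyl$; evaluating at $x=0$, dividing by $t>0$ and letting $t\to+\infty$ gives
\[
\lambda-\beta_\varepsilon\ \leqslant\ \HV^L(H)(\theta)\ \leqslant\ \HV^U(H)(\theta)\ \leqslant\ \lambda+\beta_\varepsilon,
\]
and the conclusion follows by sending $\varepsilon\to 0^+$.

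The main technical obstacle is the viscosity-theoretic verification of the sub/supersolution property at the singular point $x=0$, where $|x|$ is not differentiable. This is handled by exploiting the convexity of $\varepsilon|x|$ (respectively, the concavity of $-\varepsilon|x|$): either one regularizes $|x|$ by $\sqrt{x^2+\delta^2}$, checks the sub/supersolution inequality for each $\delta>0$, and passes to the limit $\delta\to 0^+$ absorbing any extra error into a slightly enlarged choice of $\beta_\varepsilon$, or one works directly with the viscosity sub/superdifferentials of $|x|$ at $0$. This issue aside, the argument is a quantitative adaptation of the standard viscosity stability under linear perturbations of the gradient.
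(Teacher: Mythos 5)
Your overall strategy --- compare $u_\theta$ with the lifted corrector $\lambda t+u(x,\omega)$ after compensating the sublinear discrepancy $u-\theta x$ by $\pm(\varepsilon|x|+M_\varepsilon(\omega)+\beta_\varepsilon t)$ and invoking the comparison principle --- is exactly the route the paper takes (its proof reduces to the sublinearity of $u-\theta x$ and then argues as in \cite[Lemma 5.6]{DK22}). The genuine gap is in your treatment of the kink at $x=0$: as written, that step fails, and both remedies you propose fail with it. The pairing of convexity/concavity with the super/subsolution tests is backwards. A convex kink at $x=0$ empties the superjet there (harmless for a \emph{sub}solution test) but fills the subjet with pairs $(p,X)$ whose $X$ is arbitrarily large and positive; hence, whenever $a(0,\omega)>0$, a $C^2$ function touching $v_\varepsilon^+$ from below at $(t_0,0)$ may have $\varphi_{xx}(0)$ as large as you wish, and $\partial_t\varphi-a(0,\omega)\varphi_{xx}-H(0,\varphi_x,\omega)$ becomes arbitrarily negative: $v_\varepsilon^+$ is simply not a supersolution at $x=0$, no matter how $\beta_\varepsilon$ is enlarged. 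Symmetrically, $v_\varepsilon^-$ fails the subsolution test at the concave kink. The regularization $\sqrt{x^2+\delta^2}$ does not rescue this: its second derivative at the origin is $1/\delta$, so the extra term $a(x,\omega)\,\varepsilon\,\delta^2(x^2+\delta^2)^{-3/2}$ is of order $\varepsilon/\delta$ near $x=0$ and blows up as $\delta\to0^+$, unless $a(0,\omega)=0$ --- which (A1) does not provide, since it only guarantees that $a(\cdot,\omega)$ vanishes \emph{somewhere}, not at the origin.

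The repair is small: do not send $\delta\to0^+$. Replace $\varepsilon|x|$ by the globally smooth $\psi_\varepsilon(x):=\varepsilon\sqrt{1+x^2}$. Then $\varepsilon|x|\leqslant\psi_\varepsilon(x)\leqslant\varepsilon|x|+\varepsilon$, so the ordering of the initial data persists after enlarging $M_\varepsilon$ by $\varepsilon$, while $|\psi_\varepsilon'|\leqslant\varepsilon$ and $0\leqslant\psi_\varepsilon''\leqslant\varepsilon$; since $0\leqslant a\leqslant1$ and sub/superjet gradients of the Lipschitz function $u$ are bounded by its Lipschitz constant, the curvature and gradient errors are of size $\varepsilon$ plus your $\beta_\varepsilon$ and can be absorbed into the time drift, with the viscosity verification now reduced to the standard ``smooth additive perturbation'' manipulation of test functions. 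With this modification, Proposition \ref{appB prop comparison} applies ($u_\theta$ is Lipschitz and the growth condition \eqref{hyp 2} is satisfied, the perturbation having the favourable sign on each side), and your argument becomes, in substance, the paper's. Alternatively, non-smooth gluings must be placed at points of $\{a(\cdot,\omega)=0\}$, where the second-order term is switched off --- this is precisely how the paper handles kinks in Theorems \ref{teo1 upper qc} and \ref{teo lower bound}.
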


\begin{proof}
Let us set $F_\theta(x,\omega):=u(x,\omega)-\theta x$ for all $(x,\omega)\in\R\times\Omega$. Then $F_\theta(\cdot,\omega)$ 
is sublinear for $\P$-a.e. $\omega\in\Omega$, see for instance \cite[Theorem 3.9]{DS09}. Furthermore, it is a viscosity solution of \eqref{eq:cellPDE} with $H(\cdot,\theta+\cdot,\cdot)$ in place of $H$. The assertion follows arguing as in the proof of \cite[Lemma 5.6]{DK22}.
\end{proof}

The first step consists in identifying the set of $\landa\in\R$ for which equation \eqref{eq:cellPDE} can be solved, for every fixed $\omega\in\Omega$. In Section \ref{sec:correctors} we will show that this set is equal to the half-line $[\landa_0(\omega),+\infty)$, where $\landa_0(\omega)$ is a critical constant suitably defined. Furthermore, we will show the existence of a Lipschitz viscosity solution $u_{\lambda_0}(\cdot,\omega)$ to \eqref{eq:cellPDE} with $\landa:=\landa_0(\omega)$. 
For these kind of results, we will use in a crucial way the fact that $H(\cdot,\cdot,\omega)$ belongs to $\Ham$ with $\gamma>2$. This will allow us to use known H\"older regularity results for continuous supersolutions of equation \eqref{eq:cellPDE}, see Proposition \ref{prop Holder estimate} for more details. The stationarity of $a$ and $H$ and the ergodicity assumption imply that $\landa_0(\omega)$ is almost surely equal to a constant  $\lambda_0$.  This part is independent on the quasiconvexity condition of the Hamiltonian, which is therefore dropped. 

The subsequent step consists in showing existence of solutions $u^\pm_\landa(\cdot,\omega)$ to equation \eqref{eq:cellPDE} for $\landa>\landa_0(\omega)$ satisfying 
$(u^-_{\landa})'(\cdot,\omega)\leqslant (u_{\landa_0})'(\cdot,\omega) \leqslant (u^+_{\landa})'(\cdot,\omega)$ a.e. in $\R$. We will show that such solutions $u^\pm_{\landa}(\cdot,\omega)$ are unique, up to additive constants, and of class $C^1$, at least almost surely, in particular, by uniqueness, they possess stationary derivatives. These uniqueness and regularity results crucially depend on the fact that the Hamiltonian $H(\cdot,\cdot,\omega)$ is assumed strictly quasiconvex, in the sense of (sqC), and that $\min_\R a(\cdot,\omega)=0$, almost surely. They are established through a fine (deterministic) analysis which is performed in Section \ref{sec:deterministic  correctors}. It is also worth pointing out that the functions 
$u^\pm_\lambda$ are independent on the particular choice of the critical solution $u_{\landa_0}$.  

The last step consists in showing homogenization at the bottom and existence of a possible flat part there, namely 
$\HV^L(H)(\theta)=\HV^U(H)(\theta)=\lambda_0=\min_\R \HV(H)$ for all $\theta\in [\theta^-(\landa_0),\theta^+(\landa_0)]$, for suitably defined $\theta^-(\landa_0)\leqslant\theta^+(\landa_0)$. 
This point is handled here via two novel ideas, corresponding to Theorems \ref{teo1 upper qc} and \ref{teo lower bound}, where we provide two different arguments to suitably bridge a pair of distinct solutions of the same viscous HJ equation \eqref{eq:cellPDE}.  
In Theorem \ref{teo1 upper qc} this is pursued by crucially exploiting the quasiconvexity  of $H$ and the fact that 
$a(\cdot,\omega)$ vanishes at some points, at least almost surely, in order to derive a quasiconvexity-type property of $\HV^U(H)$. To prove Theorem  \ref{teo lower bound}, instead, we exploit the nonexistence of supersolutions of the viscous HJ equation \eqref{eq:cellPDE} at levels strictly below the critical constant $\landa_0(\omega)$ to construct a lift which allows to gently descend from $(u^+_{\landa})'(\cdot,\omega)$ to $(u^-_{\landa})'(\cdot,\omega)$, almost surely and for every fixed $\landa>\landa_0(\omega)$. The proof of Theorem  \ref{teo lower bound} does not use directly the quasiconvexity of $H$, but it exploits the fact that the functions $u^\pm_{\landa}(\cdot,\omega)$ are of class $C^1$ in $\R$, hence everywhere differentiable, for $\landa>\landa_0(\omega)$, at least almost surely. This latter property is proved in Section \ref{sec:deterministic correctors} by crucially exploiting the strict quasiconvexity condition (sqC).\smallskip 

We end this section with a disclaimer. As already precised in the previous section, sub and supersolutions of equation \eqref{eq:cellPDE} are understood in the viscosity sense and will be implicitly assumed continuous, without any further specification. We want to remark that, due to the positive sign of the diffusion term $a$, a viscosity supersolution (repectively, subsolution) $u$ to \eqref{eq:cellPDE} that is twice differentiable at $x_0$ will satisfy the  inequality
\[
a(x_0,\omega)u''(x_0,\omega)+H(x_0,u'(x_0),\omega)\leqslant \lambda.\qquad\hbox{(resp.,\  \ $\geqslant \lambda$.)}
\vspace{0.5ex}
\]

\section{The critical value}\label{sec:correctors}

In this section we will characterize the set of admissible $\landa\in\R$ for which the corrector equation 
\begin{equation}\label{eq cellPDE}
	a(x,\omega)u''+H(x,u',\omega)=\lambda\qquad\hbox{in $\R$}
		\tag{HJ$_\lambda(\omega)$}
\end{equation}
admits solutions. 
In this section we assume that $H$ is a stationary Hamiltonian belonging to the class  $\Ham$ for some fixed constants $\alpha_0,\alpha_1>0$ and $\gamma>2$, for every $\omega\in\Omega$. Instead, we will not assume quasiconvexity of $H$ here.\smallskip 

We will start by regarding at \eqref{eq cellPDE} as a deterministic equation, where $\omega$ is treated as a fixed parameter. The first step consists in characterizing the set of real $\lambda$ for which equation \eqref{eq cellPDE} admits (deterministic) viscosity solutions. To this aim, for every fixed $\omega\in\Omega$, we define a critical value associated with \eqref{eq cellPDE} defined as follows: 
\begin{equation}\label{def lambda zero}
\lambda_0(\omega):=\inf \left \{ \lambda\in\R\,:\,\hbox{equation \eqref{eq cellPDE} admits a continuous viscosity supersolution}\right\}.
\end{equation}
It can be seen as the lowest possible level under which supersolutions of equation \eqref{eq cellPDE} do not exist at all. 
Such a kind of definition is natural and appears in literature with different names, according to the specific problem for which it is introduced: ergodic constant, Ma\~ne critical value, generalized principal eigenvalue. In the inviscid stationary ergodic setting, it appears in this exact form in \cite{DS11}, 
%where it is termed {\em free critical value}, to make reference to the fact that in the definition we do not impose any restriction on the growth of the supersolutions we consider. We also quote  
see also \cite{IsSic20, CDD23} for an analogous definition in the noncompact deterministic setting.\smallskip 

%
%\begin{oss}
%This notion of critical value is somewhat natural for viscous and inviscid HJ equations. In the inviscid case (i.e., when $a\equiv 0$) and when $H$ is convex in $p$, it is also known as Ma\˜ne critical constant, see for instance \cite{FatSic} or  \cite{IsSic20, CDD23} for an analogous definition in the viscosity sense.
%\end{oss}
%

It is easily seen that $\lambda_0(\omega)$ is uniformly bounded from above on $\Omega$. 
Indeed, the function $u\equiv 0$ on $\R$ is a classical supersolution of \eqref{eq cellPDE} with 
$\lambda:=\sup_{(x,\omega)} H(x,0,\omega)\leqslant \alpha_1$. Hence we derive the upper bound
\begin{equation}\label{eq upper bound lambda zero}
\lambda_0(\omega)\leqslant \alpha_1\qquad\hbox{for all $\omega\in\Omega$.}
\end{equation}

We proceed to show that $\lambda_0(\omega)$ is also uniformly bounded from below and that equation \eqref{eq cellPDE} admits a solution for every $\omega\in\Omega$ and $\lambda\geqslant\lambda_0(\omega)$. For this, we need a preliminary lemma first.  

\begin{lemma}\label{lemma Busemann function}
Let $\omega\in\Omega$ and $\lambda>\lambda_0(\omega)$ be fixed. For every fixed $y\in\R$, there exists a viscosity supersolution $w_y$ of \eqref{eq cellPDE} satisfying $w_y(y)=0$ and 
\begin{equation}\label{claim visco elsewhere}
a(x,\omega)u''+H(x,u',\omega)=\lambda\qquad\hbox{in $\R\setminus\{y\}$}
\end{equation}
in the viscosity sense. In particular 
\begin{equation}\label{eq2 Holder bounds}
|w_y(x)-w_y(z)| \leqslant K |x-z|^{\frac{\gamma-2}{\gamma-1}}\qquad\hbox{for all $x,z\in\R$,}
\end{equation}
where $K=K(\alpha_0,\gamma,\lambda)>0$ is given explicitly by \eqref{eq Holder bound}. 
\end{lemma}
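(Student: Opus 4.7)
The plan is to construct $w_y$ as a Perron-type pointwise infimum over continuous supersolutions that vanish at $y$, controlling all competitors uniformly via Proposition \ref{prop Holder estimate}.

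Since $\lambda>\lambda_0(\omega)$, the definition \eqref{def lambda zero} provides a continuous viscosity supersolution $\tilde v$ of \eqref{eq cellPDE}, and because $H$ does not depend on $u$ the translate $\tilde v-\tilde v(y)$ is again a supersolution vanishing at $y$. The family
\[
\F(y,\lambda):=\bigl\{v\in\CC(\R)\,:\, v\ \hbox{is a viscosity supersolution of \eqref{eq cellPDE} and}\ v(y)=0\bigr\}
\]
is therefore nonempty. Proposition \ref{prop Holder estimate}, applicable because $H\in\Ham$ with $\gamma>2$, furnishes the uniform H\"older bound $|v(x)-v(z)|\leqslant K|x-z|^{(\gamma-2)/(\gamma-1)}$ for every $v\in\F(y,\lambda)$, with $K=K(\alpha_0,\gamma,\lambda)$ as in \eqref{eq Holder bound}. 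Combined with $v(y)=0$ this gives $|v(x)|\leqslant K|x-y|^{(\gamma-2)/(\gamma-1)}$, so the pointwise infimum
\[
w_y(x):=\inf_{v\in\F(y,\lambda)} v(x)
\]
is everywhere finite, vanishes at $y$, and inherits the H\"older estimate \eqref{eq2 Holder bounds} by a two-sided comparison. A standard stability argument for viscosity supersolutions (the infimum of a locally equi-bounded, equi-continuous family of continuous viscosity supersolutions is a continuous viscosity supersolution) then shows that $w_y$ satisfies the supersolution inequality on all of $\R$.

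To upgrade $w_y$ to a solution on $\R\setminus\{y\}$ I would argue by contradiction via a standard Perron bump. If $w_y$ failed the subsolution condition at some $x_0\ne y$, there would exist a smooth test function $\phi$ touching $w_y$ from above at $x_0$ with
\[
a(x_0,\omega)\phi''(x_0)+H(x_0,\phi'(x_0),\omega)<\lambda.
\]
After replacing $\phi$ by $\phi+\varepsilon(\cdot-x_0)^2$ to make the contact strict, and choosing a small ball $B\Subset\R\setminus\{y\}$ about $x_0$ on which the strict inequality persists, one selects $\delta>0$ so that $\phi-\delta>w_y$ on $\partial B$ while $\phi(x_0)-\delta<w_y(x_0)$. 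The function $\hat w$, defined as $\min(w_y,\phi-\delta)$ on $B$ and as $w_y$ elsewhere, is then a continuous viscosity supersolution of \eqref{eq cellPDE}, still satisfies $\hat w(y)=0$, hence belongs to $\F(y,\lambda)$, yet $\hat w(x_0)<w_y(x_0)$. This contradicts the minimality of $w_y$, proving \eqref{claim visco elsewhere}.

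The main obstacle, and the reason the hypothesis $\gamma>2$ matters, is making sure the competitors in $\F(y,\lambda)$ stay locally equi-bounded and equi-continuous so that the pointwise infimum is a bona fide continuous supersolution with the claimed H\"older modulus; this is supplied precisely by Proposition \ref{prop Holder estimate}, after which the nonemptiness of $\F(y,\lambda)$ and the Perron bump are routine viscosity arguments that do not involve the quasiconvexity of $H$.
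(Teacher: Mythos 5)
Your proposal is correct and follows essentially the same route as the paper: the paper also defines $w_y$ as the infimum of continuous supersolutions of \eqref{eq cellPDE} vanishing at $y$, uses Proposition \ref{prop Holder estimate} for the uniform H\"older bound (hence finiteness, \eqref{eq2 Holder bounds}, and stability of the infimum), and proves \eqref{claim visco elsewhere} by the same Perron bump: a strict supertangent violating the subsolution inequality at $x_0\ne y$ yields $\min\{\varphi-\delta,w_y\}$, a competitor in the family that is strictly below $w_y$ at $x_0$, contradicting minimality. The minor cosmetic differences (adding $\varepsilon(\cdot-x_0)^2$ to force strict contact, phrasing of the patching across $\partial B$) do not change the argument.
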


\begin{proof}
Let us denote by $\superSol(\lambda)(\omega)$ the family of continuous viscosity supersolutions of equation 
\eqref{eq cellPDE}. This set is nonempty since $\lambda>\lambda_0(\omega)$. In view of Proposition \ref{prop Holder estimate}, we know that these functions satisfy \eqref{eq2 Holder bounds} for a common constant $K=K(\alpha_0,\gamma,\lambda)>0$ given explicitly by \eqref{eq Holder bound}. 
Let us set 
\[
w_y(x):=\inf\left\{ w\in\CC(\R)\,:\,w\in \superSol(\lambda)(\omega),\ w(y)=0\right\}
\qquad
\hbox{for all $x\in\R$.}
\]
The function $w_y$ is well defined, it satisfies \eqref{eq2 Holder bounds}  and $w_y(y)=0$.  As an infimum of viscosity supersolutions of \eqref{eq cellPDE}, it is itself a supersolution. Let us show that $w_y$ solves \eqref{claim visco elsewhere} in the viscosity sense. If this where not the case, there would exist a $C^2$ strict supertangent $\varphi$ to $w_y$ at a point $x_0\not=y$\,\footnote{i.e., $\varphi>w_y$ in $\R\setminus\{x_0\}$ and $\varphi(x_0)=w_y(x_0)$.} such that \ $a(x_0,\omega)\varphi''(x_0)+H(x_0,\varphi'(x_0),\omega)<\lambda$. By continuity, we can pick $r>0$ small enough so that $|y-x_0|>r$ and 
\begin{equation}\label{eq failed test}
a(x,\omega)\varphi''(x)+H(x,\varphi'(x),\omega)<\lambda
\qquad
\hbox{for all $x\in (x_0-r,x_0+r)$.}
\end{equation}
Choose $\delta>0$ small enough so that 
\begin{equation}\label{eq boundary data}
\varphi(x_0+r)-\delta>w_y(x_0+r)\qquad\hbox{and}\qquad \varphi(x_0-r)-\delta>w_y(x_0-r).
\end{equation}
Let us set \ $\tilde w(x):=\min\{\varphi(x)-\delta,w_y(x)\}$ \ for all $x\in\R$. 
The function $\tilde w$ is the minimum of two viscosity supersolution of \eqref{eq cellPDE} in $(x_0-r,x_0+r)$, in view of \eqref{eq failed test}, and it agrees with $w_y$ in $\R\setminus [x_0-\rho,x_0+\rho]$ for a suitable $0<\rho<r$, in view of \eqref{eq boundary data}. We infer that $\tilde w\in\superSol$ with $\tilde w(y)=0$. But this contradicts the minimality of $w_y$ since $\tilde w(y)=\varphi(y)-\delta<w_y(y)$. 
\end{proof}

With the aid of Lemma \ref{lemma Busemann function}, we can now prove the following existence result. 

\begin{theorem}\label{teo existence solutions}
Let $\omega\in\Omega$ and $\lambda\geqslant \lambda_0(\omega)$ be fixed. Then there exists a viscosity solution $u$ to 
\eqref{eq cellPDE}. Furthermore $u$ is $K$-Lipschitz, where the constant  $K=K(\alpha_0,\alpha_1,\gamma,\kappa,\lambda)>0$ is given explicitly by \eqref{eq Lipschitz bound}, and of class $C^2$ on every interval $I$ contained in $\{a(\cdot,\omega)>0\}$. Furthermore, $\lambda_0(\omega)\geqslant \min H\geqslant -{1}/{\alpha_0}$. 

\begin{proof}
Let us first assume $\lambda>\lambda_0(\omega)$. 
Let us pick a sequence of points $(y_n)_n$ in $\R$ with $\lim_n |y_n|=+\infty$ and, for each $n\in\N$, set $w_n(\cdot):=w_{y_n}(\cdot)-w_{y_n}(0)$ on $\R$, where $w_{y_n}$ is the function provided by Lemma \ref{lemma Busemann function}. Accordingly, we know that the functions $w_n$ are equi-H\"older continuous and hence locally equi--bounded on $\R$ since $w_n(0)=0$ for all $n\in\N$. By the Ascoli-Arzel\`a Theorem, up to extracting a subsequence, the functions $w_n$ converge in $\CC(\R)$ to a limit function $u$. For every fixed bounded interval $I$, the functions $w_n$ are viscosity solutions of \eqref{eq cellPDE} for all $n\in\N$ big enough since $|y_n|\to +\infty$ as $n\to+\infty$, and so is $u$ by stability of the notion of viscosity solution. We can now apply Proposition \ref{prop regularity solutions} to infer that $u$ is $K$--Lipschitz continuous in $\R$ and of class $C^2$  on every interval $I$ contained in $A(\omega):=\{a(\cdot,\omega)>0\}$. If $\{a(\cdot,\omega)=0\}$ has positive Lebesgue measure, the function $u$ possesses a differentiability point $x_0\in \{a(\cdot,\omega)=0\}$, hence by Proposition \ref{prop pointwise solution} we get
\[
\lambda
= 
H(x_0,u'(x_0),\omega) 
\geqslant 
\min H.
\]
If instead  $\{a(\cdot,\omega)=0\}$ has zero Lebesgue measure, we have 
\[
\lambda-\min H
= 
a(x,\omega)u''+H(x,u',\omega) -\min H
\geqslant 
a(x,\omega)u''
\qquad
\hbox{for all $x\in A(\omega)=\{a(\cdot,\omega)>0\}$.}
\]
Being $u'$ essentially bounded on $\R$ and $A(\omega)$ a set of full measure in $\R$, we must have  $\lambda\geqslant \min H$. This readily implies $\lambda_0(\omega)\geqslant \min H\geqslant -1/\alpha_0$.  

Let us now choose a decreasing sequence of real numbers $(\lambda_n)_n$ converging to $\lambda_0(\omega)$ and, for each $n\in\N$, let $u_n$ be a viscosity solution of \eqref{eq cellPDE} with $\landa:=\landa_n$ and $u_n(0)=0$. In view of Proposition \ref{prop regularity solutions}, the functions $(u_n)_n$ are equi-Lipschitz and thus locally equi--bounded on $\R$, hence they converge, along a subsequence, to a function $u$ in $\CC(\R)$. By stability, $u$ is a solution of \eqref{eq cellPDE} with $\lambda:=\lambda_0(\omega)$. The Lipschitz and regularity properties of $u$ are again a consequence of Proposition \ref{prop regularity solutions}.
\end{proof}
\end{theorem}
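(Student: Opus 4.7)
The plan is to treat the supercritical case $\lambda>\lambda_0(\omega)$ and the critical case $\lambda=\lambda_0(\omega)$ separately, bootstrapping the latter from the former, and then to derive the lower bound $\lambda_0(\omega)\geqslant\min H$ once a solution is in hand.

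For $\lambda>\lambda_0(\omega)$, I would apply Lemma \ref{lemma Busemann function} to a sequence of base points $(y_n)_n$ with $|y_n|\to+\infty$: the functions $w_{y_n}$ are supersolutions of \eqref{eq cellPDE} that solve the equation on $\R\setminus\{y_n\}$. After normalizing $w_n:=w_{y_n}-w_{y_n}(0)$, the uniform H\"older bound \eqref{eq2 Holder bounds} combined with $w_n(0)=0$ makes the family locally equi-bounded, so Ascoli--Arzel\`a yields a subsequential limit $u\in\CC(\R)$. On any bounded interval $I$, the singular point $y_n$ eventually escapes $I$, hence each $w_n$ is a genuine viscosity solution of \eqref{eq cellPDE} on $I$ for all $n$ large, and stability of the viscosity notion transfers this to $u$. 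The $K$-Lipschitz estimate and the $C^2$ regularity on each interval contained in $\{a(\cdot,\omega)>0\}$ then follow from Proposition \ref{prop regularity solutions}. For $\lambda=\lambda_0(\omega)$, I would pick $\lambda_n\searrow\lambda_0(\omega)$, invoke the supercritical existence just obtained to get solutions $u_n$ at level $\lambda_n$ with $u_n(0)=0$, apply Proposition \ref{prop regularity solutions} to make them equi-Lipschitz on $\R$, and pass to the limit once more via Ascoli--Arzel\`a and stability.

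Once a solution $u$ at the given level exists, the bound $\lambda_0(\omega)\geqslant\min H$ splits on the Lebesgue size of $Z:=\{a(\cdot,\omega)=0\}$. If $|Z|>0$, then the Lipschitz function $u$ admits a differentiability point $x_0\in Z$, and at $x_0$ Proposition \ref{prop pointwise solution} forces the pointwise identity $\lambda=H(x_0,u'(x_0),\omega)\geqslant\min H$. If instead $|Z|$ is null, then on the full-measure open set $A(\omega):=\{a(\cdot,\omega)>0\}$ the solution is classical and satisfies $a u''+H(x,u',\omega)=\lambda$, whence $a u''\leqslant\lambda-\min H$ pointwise in $A(\omega)$; coupling this with the essential boundedness of $u'$ on $\R$ (hence the impossibility of unbounded monotone behavior of $u'$) precludes $\lambda<\min H$. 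The elementary bound $\min H\geqslant-1/\alpha_0$ from (H1) closes the chain.

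The main obstacle is the critical case, where the infimum defining $\lambda_0(\omega)$ is not a priori attained: one needs uniform-in-$\lambda$ Lipschitz control on the supercritical solutions as $\lambda_n\searrow\lambda_0(\omega)$ in order to pass to the limit, which is precisely what Proposition \ref{prop regularity solutions} must deliver (and this is where the superquadratic exponent $\gamma>2$ enters, through the H\"older/Lipschitz machinery of Appendix \ref{app:PDE}). A secondary subtlety is the null-$Z$ alternative of the lower bound, where one must exploit the global essential bound on $u'$ to exclude $\lambda<\min H$ despite the potentially singular behavior of $u''$ as $a\to 0$; the supercritical construction and the lower bound are comparatively routine modulo these two ingredients.
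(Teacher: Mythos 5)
Your proposal follows the paper's proof essentially verbatim: the same Busemann-type construction from Lemma \ref{lemma Busemann function} with base points escaping to infinity, normalization plus the uniform H\"older bound, Ascoli--Arzel\`a and stability, Proposition \ref{prop regularity solutions} for the Lipschitz and $C^2$ properties, the decreasing approximation $\lambda_n\searrow\lambda_0(\omega)$ for the critical level, and the same dichotomy on the Lebesgue measure of $\{a(\cdot,\omega)=0\}$ (Proposition \ref{prop pointwise solution} at a differentiability point, versus $a\,u''\leqslant\lambda-\min H$ on the full-measure set $A(\omega)$ together with boundedness of $u'$) to obtain $\lambda_0(\omega)\geqslant\min H\geqslant-1/\alpha_0$. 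It is correct and coincides with the paper's argument.
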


The main output of the next result is that the function $\lambda_0(\cdot)$ is almost surely equal to a constant. We will denote this constant $\lambda_0$ in the sequel.

\begin{lemma}\label{lemma lambda zero}
The random variable $\lambda_0:\Omega\to \R$ is measurable and stationary. In particular, it is almost surely constant. 
\end{lemma}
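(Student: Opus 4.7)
The plan is to verify stationarity of $\lambda_0$ directly from the stationarity of the data, establish its measurability through a projection argument in the Polish space $\Omega\times \CC(\R)$, and then conclude via ergodicity.

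For stationarity, I would fix $y\in\R$, $\omega\in\Omega$ and $\lambda\in\R$, and check that $u\mapsto u(\cdot+y)$ is a bijection between continuous viscosity supersolutions of \eqref{eq cellPDE} at $\omega$ and those at $\tau_y\omega$. Indeed, if $u-\varphi$ has a local minimum at a point $x_0+y$, then the shifted function $\tilde u(x):=u(x+y)$ has $\tilde u-\tilde\varphi$ attaining a local minimum at $x_0$, with $\tilde\varphi(x):=\varphi(x+y)$. The supersolution inequality
\[
a(x_0+y,\omega)\varphi''(x_0+y)+H(x_0+y,\varphi'(x_0+y),\omega)\leqslant \lambda
\]
is then transformed, via $a(x_0+y,\omega)=a(x_0,\tau_y\omega)$ and $H(x_0+y,\cdot,\omega)=H(x_0,\cdot,\tau_y\omega)$, into
\[
a(x_0,\tau_y\omega)\tilde\varphi''(x_0)+H(x_0,\tilde\varphi'(x_0),\tau_y\omega)\leqslant\lambda,
\]
which is the supersolution condition for $\tilde u$ at $\tau_y\omega$. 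Hence the infima defining $\lambda_0(\omega)$ and $\lambda_0(\tau_y\omega)$ coincide.

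For measurability, in view of the uniform bounds $-1/\alpha_0\leqslant\lambda_0(\omega)\leqslant\alpha_1$ from Theorem \ref{teo existence solutions} and \eqref{eq upper bound lambda zero}, it suffices to show that for every $q\in\Q$ the set
\[
B_q:=\{\omega\in\Omega:\text{\eqref{eq cellPDE} with }\lambda=q\text{ admits a continuous viscosity supersolution}\}
\]
is $\P$-measurable, since $\{\lambda_0<c\}=\bigcup_{q\in\Q,\,q<c}B_q$ for every $c\in\R$. The plan is to realize $B_q$ as the $\Omega$-projection of
\[
E_q:=\{(\omega,u)\in\Omega\times \CC(\R):u\text{ is a viscosity supersolution of \eqref{eq cellPDE} with }\lambda=q\},
\]
where $\CC(\R)$ is equipped with its usual Polish topology of local uniform convergence. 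Using the standard countable characterization of the viscosity supersolution property---test functions restricted to polynomials with rational coefficients, local minima detected through strict inequalities on finite rational intervals---together with the joint measurability of $a$ and $H$, the set $E_q$ is Borel in $\Omega\times \CC(\R)$. Since $\Omega$ is Polish, $B_q=\pi_\Omega(E_q)$ is then an analytic subset of $\Omega$, hence $\P$-measurable by completeness of $\P$. Combining measurability and stationarity of $\lambda_0$ with the ergodicity of $(\tau_x)_{x\in\R}$ gives that $\lambda_0$ is almost surely equal to a constant.

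The main technical point is checking the Borel measurability of the supersolution property jointly in $(\omega,u)$: this is standard but requires a careful countable encoding, detecting strict local minima of $u-\varphi$ through inequalities of the form $u(x_0)-\varphi(x_0)<u(x)-\varphi(x)$ for $x$ ranging over rational points in a rational punctured neighborhood of $x_0$, and pairing this with the Borel condition on $(x_0,\omega)$ provided by joint measurability of $a$ and $H$.
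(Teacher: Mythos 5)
Your stationarity argument is the same as the paper's (shift invariance of the class of supersolutions under $u\mapsto u(\cdot+y)$, using stationarity of $a$ and $H$), and your use of ergodicity at the end is identical. Where you genuinely diverge is the measurability step. The paper does not encode the supersolution property at all: it uses completeness of $\P$, inner regularity, and Lusin's theorem to find a closed set $F$ with $\P(\Omega\setminus F)<\eps$ on which $\omega\mapsto a(\cdot,\omega)$ and $\omega\mapsto H(\cdot,\cdot,\omega)$ are continuous, and then proves that $\lambda_0$ is lower semicontinuous on $F$ by a compactness--stability argument (equi-Lipschitz solutions at level $\lambda_0(\omega_n)$ from Theorem \ref{teo existence solutions} and Proposition \ref{prop regularity solutions}, passage to the limit in the viscosity sense). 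Your route replaces this with descriptive set theory: write $\{\lambda_0<c\}=\bigcup_{q\in\Q,\,q<c}B_q$ (note this uses, tacitly, that a supersolution at level $\lambda$ is a supersolution at every level $q\geqslant\lambda$ — true here, but worth saying), realize $B_q$ as $\pi_\Omega(E_q)$, and invoke analyticity of projections of Borel sets plus completeness of $\P$. This avoids the existence theorem and the Lipschitz estimates almost entirely, at the price of having to prove that $E_q$ is Borel in $\Omega\times\CC(\R)$.

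That last point is where your sketch needs repair. As written in your final paragraph, the encoding quantifies over \emph{all} real points $x_0$ that are (rationally detected) strict local minima of $u-\varphi$; a universal quantifier over an uncountable parameter of a Borel condition only yields a co-analytic set, and projections of co-analytic sets are not universally measurable in ZFC, so the step ``$B_q=\pi_\Omega(E_q)$ is analytic'' would collapse. The fix is to formulate the countable encoding without a free real variable: for each polynomial $\varphi$ with rational coefficients and each rational interval $I$, impose the condition that whenever $\inf_I(u-\varphi)$ is strictly below the boundary values, one has
\[
\lim_{k\to\infty}\ \sup\Big\{\,a(x,\omega)\varphi''(x)+H\big(x,\varphi'(x),\omega\big)\ :\ x\in I\cap\Q,\ (u-\varphi)(x)<\textstyle\inf_I(u-\varphi)+\tfrac1k\,\Big\}\ \leqslant\ q,
\]
which controls the supremum over the compact set of interior minimizers because $a(\cdot,\omega)$ and $H(\cdot,p,\omega)$ are continuous in $x$ and the sublevel sets are open; each bracketed quantity is a countable supremum of ${\mathcal B}(\CC(\R))\otimes\F$-measurable functions of $(u,\omega)$, so $E_q$ defined by these countably many conditions is Borel, and one then checks (by the standard perturbation/approximation argument, using the strictness of the violated inequality) that this countable family of conditions is equivalent to the supersolution property. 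With that reformulation your projection argument goes through and yields a correct, genuinely different proof of measurability; without it, the argument as stated has a gap precisely at the claim that $E_q$ is Borel.
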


\begin{proof}
If $u\in\CC(\R)$ is a viscosity supersolution of \eqref{eq cellPDE} for some $\omega\in\Omega$ and $\lambda\geqslant \lambda_0(\omega)$, then the functions $u(\cdot+z)$ is a viscosity supersolution of \eqref{eq cellPDE} with $\tau_z\omega$ in place of $\omega$ by the stationarity of $H$ and $a$. By its very definition, we infer that $\lambda_0$ is a stationary function. 

Let us show that  $\lambda_0:\Omega\to \R$ is measurable. Since the probability measure $\P$ is complete on $(\Omega,\F)$, it is enough to show that, for every fixed $\eps>0$, there exists a set $F\in \F$ with $\P(\Omega\setminus F)<\eps$ such that the restriction of 
$\lambda_0$ to $F$ is measurable.  To this aim, we notice that the measure $\P$ is inner regular on $(\Omega,\F)$, see \cite[Theorem 1.3]{Bill99}, hence it is a Radon measure. By applying Lusin's Theorem \cite{LusinThm} to the random variables $a:\Omega\to\CC(\R)$ and $H:\Omega\to\CC(\R\times\R)$, we infer that there exists a closed set $F\subseteq \Omega$ with $\P(\Omega\setminus F)<\eps$ such that $a_{| F}:F\to\CC(\R)$ and 
$H_{| F}:F\to\CC(\R\times\R)$ are continuous. 
We claim that $F\ni\omega\mapsto \lambda_0(\omega)\in \R$ is lower semicontinuous. 
Indeed, let $(\omega_n)_{n\in\N}$ be a sequence converging to some $\omega_0$ in $F$. For each $n\in\N$, let $u_n$ be a solution of \eqref{eq cellPDE} with $\omega_n$ and $\lambda(\omega_n)$ in place of $\omega$ and $\lambda$. Let us furthermore assume that $u_n(0)=0$ for all $n\in\N$. From Proposition \ref{prop regularity solutions} and the fact that $\landa_0(\omega)$ is uniformly bounded with respect to $\omega$, we derive that the functions $u_n$ are equi-Lipschitz and locally equi-bounded in $\R$. Let us extract a subsequence such that $\liminf_n \lambda (\omega_n)=\lim_k \lambda(\omega_{n_k})=:\tilde\lambda$ and 
$\big(u_{n_k}\big)_k$ converges to a function $u$ in $C(\R)$. Since $a(\cdot,\omega_n)\to a(\cdot,\omega_0)$ in 
$\CC(\R)$ and $H(\cdot,\cdot,\omega_n)\to H(\cdot,\cdot,\omega_0)$ in $\CC(\R\times\R)$, we derive by stability that $u$ solves \eqref{eq:cellPDE} with $\omega:=\omega_0$ and $\lambda:=\tilde\lambda$ in the viscosity sense. By definition of $\lambda_0(\omega_0)$, we conclude that $\tilde\lambda\geqslant \lambda(\omega_0)$, i.e., $\liminf_n \lambda(\omega_n)\geqslant \lambda_0(\omega_0)$ as it was claimed. 
\end{proof}

We conclude this section with the following remark.  

\begin{lemma}\label{lemma A(omega)}
There exists a set $\hat\Omega$ of probability 1 such that, for every $\omega\in\hat\Omega$, the following holds:
\begin{itemize}
\item[(A1$'$)] $a(\cdot,\omega)\not\equiv 0$ and every connected component of $A(\omega):=\{a(\cdot,\omega)>0\}$ is a bounded interval.
\end{itemize}
\end{lemma}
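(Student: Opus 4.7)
The plan is to reduce the statement to the claim that, almost surely, the zero set
\[
Z(\omega):=\{x\in\R : a(x,\omega)=0\}
\]
is unbounded both above and below. Since $a(\cdot,\omega)$ is continuous for every $\omega$ by (A2), the set $A(\omega)$ is open, so each of its connected components is an open interval; and if some component were of the form $(x_0,+\infty)$ the inclusion $Z(\omega)\subseteq(-\infty,x_0]$ would force $Z(\omega)$ to be bounded above, while a component extending to $-\infty$ would force $Z(\omega)$ to be bounded below. Combining this with (A1), which already supplies $a(\cdot,\omega)\not\equiv0$ almost surely, then yields the desired conclusion on $\hat\Omega$.

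The core step is a standard zero--one law argument. By the stationarity of $a$ one has $Z(\tau_x\omega)=Z(\omega)-x$ for all $x\in\R$ and $\omega\in\Omega$, so the extended random variable
\[
y_+(\omega):=\sup Z(\omega)\in[-\infty,+\infty]
\]
satisfies the covariance relation $y_+(\tau_x\omega)=y_+(\omega)-x$. Measurability of $y_+$ follows from the joint measurability of $a$ together with the continuity of $a(\cdot,\omega)$: the identity
\[
\{\omega : Z(\omega)\cap[m,M]\neq\emptyset\}=\Big\{\omega : \inf_{x\in[m,M]\cap\Q}a(x,\omega)=0\Big\}
\]
shows that the left-hand side lies in $\F$ for every $m<M$, whence $\{y_+\leqslant M\}\in\F$ by countable operations. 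The event $B_+:=\{y_+<+\infty\}$ is manifestly $\tau$-invariant, so by ergodicity $\P(B_+)\in\{0,1\}$. Assume for contradiction that $\P(B_+)=1$: since (A1) yields $Z(\omega)\neq\emptyset$ almost surely, $y_+$ is then $\R$-valued almost surely, and the covariance together with the $\tau$-invariance of $\P$ forces its distribution $\mu$ on $\R$ to be translation-invariant, i.e.\ $\mu(E+x)=\mu(E)$ for every Borel $E\subseteq\R$ and every $x\in\R$. Taking $E=[0,1)$ and summing over integer translates gives $1=\mu(\R)=\sum_{n\in\Z}\mu([0,1))$, which is absurd. Hence $\P(B_+)=0$, i.e.\ $y_+=+\infty$ almost surely. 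The same argument applied to $y_-(\omega):=\inf Z(\omega)$ shows $y_-=-\infty$ almost surely.

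Setting $\hat\Omega$ equal to the intersection of the three events of probability one, namely $\{a(\cdot,\omega)\not\equiv 0\}$, $\{y_+=+\infty\}$ and $\{y_-=-\infty\}$, completes the proof. I do not anticipate any serious obstacle; the only technical point is the measurability of $y_\pm$, and this is handled directly by the continuity of $a(\cdot,\omega)$ and the density of $\Q$ in $\R$ as indicated above.
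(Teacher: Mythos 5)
Your proof is correct. It follows the same basic reduction as the paper --- everything hinges on showing that the zero set $\{a(\cdot,\omega)=0\}$ is almost surely unbounded both above and below, which together with (A1) and the openness of $A(\omega)$ gives (A1$'$) --- but where the paper disposes of this in two lines by citing \cite{DS09} (Proposition 3.2 there to see that $C(\omega):=\{a(\cdot,\omega)=0\}$ is a stationary closed random set, and Proposition 3.5 to conclude that an almost surely nonempty stationary closed random set meets $(-\infty,-n)$ and $(n,+\infty)$ for every $n$, almost surely), you reprove that fact from scratch. Your route --- the exact equivariance $y_+(\tau_x\omega)=y_+(\omega)-x$ of $y_+:=\sup Z$, measurability via rational points and continuity of $a(\cdot,\omega)$, the zero--one law for the invariant event $\{y_+<+\infty\}$, and the nonexistence of a translation-invariant Borel probability measure on $\R$ to exclude probability one --- is a standard and complete argument; in effect you have reproved the relevant special case of \cite[Proposition 3.5]{DS09}. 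What the paper's version buys is brevity and reuse of machinery already invoked elsewhere (the same \cite{DS09} propositions appear again, e.g.\ in the proof of Lemma \ref{appA lemma measurable selection}); what yours buys is a self-contained, elementary proof that also makes the measurability of $\sup Z$ and $\inf Z$ explicit rather than implicit in the notion of random closed set.
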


\begin{proof}
The set $C(\omega):=\{a(\cdot,\omega)=0\}$  is a stationary closed random set by \cite[Proposition 3.2]{DS09}. In view of \cite[Proposition 3.5]{DS09}, 
we infer that there exists a set $\hat\Omega$ of probability 1 such that, for every $\omega\in\hat\Omega$, we have 
$(-\infty,-n)\cap C(\omega)\not=\emptyset$ and $(n,+\infty)\cap C(\omega)\not=\emptyset$ for every $n\in\N$. This and condition (A1) readily imply the assertion. 
\end{proof}

\section{Deterministic solutions}\label{sec:deterministic  correctors}

\subsection{Assumptions and preliminary results}\label{sec:deterministic preliminaries}
In this section, we shall assume that $a:\R\to [0,1]$ is a deterministic diffusion coefficient satisfying condition (A2) and  
\begin{itemize}
\item[(A1$'$)]\quad $a\not\equiv 0$ and every connected component of the set $A:=\{a>0\}$ is a bounded interval.
\end{itemize}
We observe that, since $A$ is open and $\R$ is locally connected, every connected component of $A$ is always an open interval. As for the Hamiltonian $H:\R\times\R\to\R$, we will assume that it belongs to the class $\Hamsqc$ for some constants $\alpha_0,\alpha_1>0, \gamma>2$ and $\eta>0$.  In view of hypotheses (H2)-(H3), it is easily see that the Hamiltonian $H$ satisfies the following property:
\begin{itemize}
\item[(C)] for any $K>0$, there exists a constant $C(K)$ such that  $H$ is $C(K)$-Lipschitz in $\R\times [-K,K]$.
\end{itemize}
Note that $C(K)$ also depends on the constants $\alpha_1>0$ and $\gamma>2$. \smallskip

Throughout this section, we will adopt the following notation: for every $x\in\R$, we will denote by $\hat p(x)$ the unique element in $\R$ such that 
\begin{equation}\label{def p hat}
H(x,\hat p(x))=\min_{p\in\R} H(x,p)=:\hat\lambda(x);
\tag{$\hat p$}
\end{equation}
for every fixed $\lambda\geqslant \hat\lambda(x)$, we will denote by $p_\lambda^-(x),\, p_\lambda^+(x)$ the unique elements in $\R$ such that 
\begin{equation}\label{def p lambda}
\{p\in\R\,:\, H(x,p)\leqslant \lambda\,\}
=
[p_\lambda^-(x),p_\lambda^+(x)].
\tag{$p^\pm_\lambda$}
\end{equation}
We will denote by $\lambda_0$ the critical constant defined by \eqref{def lambda zero}. We recall that, by Theorem \ref{teo existence solutions}, the following viscous HJ equation 
\begin{equation}\label{appC eq cellPDE}
	a(x)u''+H(x,u')=\lambda\qquad\hbox{in $\R$}\tag{HJ$_\lambda$}
\end{equation}
admits Lipschitz viscosity solutions on $\R$ for every $\landa\geqslant\landa_0$.  \smallskip

We start by remarking the following fact. 

\begin{lemma}\label{appC lemma 1}
The functions $\hat \lambda,\hat p:\R\to\R$ are bounded and $\hat\kappa$-Lipschitz, for some constant $\hat\kappa=\hat\kappa(\alpha_0,\alpha_1,\gamma,\eta)>0$ only depending on $\alpha_0,\alpha_1>0,\gamma>2$ and $\eta>0$. 
\end{lemma}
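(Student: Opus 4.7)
The plan is to obtain the boundedness and Lipschitz estimates directly from the growth condition (H1), the $x$-Lipschitz condition (H3), and the strict quasiconvexity (sqC), in that order.

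First, I would bound both functions. For $\hat\lambda$, the upper bound follows from taking $p=0$ in the definition and applying (H1), giving $\hat\lambda(x)\leq H(x,0)\leq\alpha_1$; the lower bound $\hat\lambda(x)\geq -1/\alpha_0$ is immediate from (H1). For $\hat p$, I would use $H(x,\hat p(x))=\hat\lambda(x)\leq\alpha_1$ together with the coercive lower bound $H(x,\hat p(x))\geq\alpha_0|\hat p(x)|^\gamma-1/\alpha_0$ from (H1) to conclude
\[
|\hat p(x)|\leq M_0:=\Big(\tfrac{\alpha_1+1/\alpha_0}{\alpha_0}\Big)^{1/\gamma}\qquad\hbox{for all $x\in\R$.}
\]

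Next I would establish that $\hat\lambda$ is Lipschitz. For $x,y\in\R$, the optimality of $\hat p(y)$ and (H3) yield
\[
\hat\lambda(x)\leq H(x,\hat p(y))\leq H(y,\hat p(y))+\alpha_1(|\hat p(y)|^\gamma+1)|x-y|\leq \hat\lambda(y)+L|x-y|,
\]
where $L:=\alpha_1(M_0^\gamma+1)$. Exchanging the roles of $x$ and $y$ gives $|\hat\lambda(x)-\hat\lambda(y)|\leq L|x-y|$.

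The final and main step is the Lipschitz estimate on $\hat p$, which is where condition (sqC) plays its role (and where the main obstacle lies, since one needs to exploit the strict quasiconvexity in a quantitative way). I would first observe that (sqC), applied with $p_1=\hat p(x)$ (resp.\ $p_2=\hat p(x)$), gives the uniform coercive bound around the minimizer
\[
H(x,p)\geq\hat\lambda(x)+\eta\,|p-\hat p(x)|\qquad\hbox{for all $x,p\in\R$.}
\]
Applying this with $p=\hat p(y)$, and then using (H3) together with the boundedness $|\hat p(y)|\leq M_0$ and the Lipschitz bound on $\hat\lambda$ established above, I obtain
\[
\eta\,|\hat p(y)-\hat p(x)|\leq H(x,\hat p(y))-\hat\lambda(x)\leq \hat\lambda(y)-\hat\lambda(x)+L|x-y|\leq 2L|x-y|,
\]
so $\hat p$ is Lipschitz with constant $2L/\eta=2\alpha_1(M_0^\gamma+1)/\eta$. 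Taking $\hat\kappa$ to be the maximum of the two Lipschitz constants produced yields the claim, with $\hat\kappa$ depending only on $\alpha_0,\alpha_1,\gamma,\eta$ as required.
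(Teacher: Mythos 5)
Your proof is correct and follows essentially the same route as the paper: the boundedness of $\hat p$ and the Lipschitz estimate for $\hat\lambda$ are obtained exactly as in Lemma \ref{appA lemma Lipschitz min H} (which the paper simply invokes), and your Lipschitz bound for $\hat p$ rests on the same quantitative minimality inequality $H(x,p)\geqslant\hat\lambda(x)+\eta|p-\hat p(x)|$ coming from (sqC), combined with the $x$-Lipschitz bound on the strip $|p|\leqslant M_0$. The only cosmetic difference is that you compare $H(x,\hat p(y))$ with $\hat\lambda(x)$ while the paper compares $H(y,\hat p(x))$ with $\hat\lambda(y)$, i.e.\ the roles of $x$ and $y$ are swapped; the resulting constant $\hat\kappa$ depends on $\alpha_0,\alpha_1,\gamma,\eta$ exactly as required.
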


\begin{proof}
According to  Lemma \ref{appA lemma Lipschitz min H}, we have $\sup_{x\in\R}|\hat p(x)|\leqslant \hat R$ 
and $\hat\landa(\cdot)$ is $\hat\kappa_1$-Lipschitz for some constants  $\hat R=\hat R(\alpha_0,\alpha_1,\gamma)$ and 
$\hat\kappa_1=\hat\kappa_1(\alpha_0,\alpha_1,\gamma)>0$ only depending on $\alpha_0,\alpha_1>0$ and $\gamma>2$. 
Let us denote by $\hat C=\hat C(\alpha_1,\gamma,\hat R)$ a Lipschitz constant of $H$ in $\R\times [-\hat R,\hat R]$. 
For every  $x,y\in\R$ we have
\[
H(x,\hat p(x))-H(y,\hat p(y))
\geqslant
-\hat C|x-y|+H(y,\hat p(x))-H(y,\hat p(y))
\geqslant
-\hat C|x-y|+\eta|\hat p(x)-\hat p(y)|,
\]
i.e.
\[
\eta|\hat p(x)-\hat p(y)|
\leqslant
\hat C|x-y|
+
\hat\landa(x)-\hat\landa(y)
\leqslant 
(\hat C+\hat\kappa_1) |x-y|.
\]
The assertion follows by setting $\hat\kappa:=\max\{\hat\kappa_1,(\hat C+\hat\kappa_1)/\eta\}$. 
\end{proof}

We state for further use two trivial consequences of the previous lemma. 

\begin{lemma}\label{appC lemma 2}
Let $x_0\in\{a=0\}$ and assume that $\lambda\geqslant \delta+\hat\lambda(x_0)$ for some $\delta>0$. For every $C>0$, there exists  $r=r(\hat\kappa,C,\delta)>0$ such that 
\[
\hat \lambda(x)<\lambda-3rC\qquad\hbox{for all $|x-x_0|<r$.}
\]
\end{lemma}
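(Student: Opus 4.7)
The statement is a direct quantitative consequence of the Lipschitz regularity of $\hat\lambda$ proved in Lemma \ref{appC lemma 1}. My plan is to combine the Lipschitz bound with the gap hypothesis $\lambda - \hat\lambda(x_0) \geqslant \delta$ and then choose $r$ so that both the oscillation of $\hat\lambda$ near $x_0$ and the buffer $3rC$ fit strictly below $\delta$.

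Concretely, by Lemma \ref{appC lemma 1} the function $\hat\lambda$ is $\hat\kappa$-Lipschitz with $\hat\kappa = \hat\kappa(\alpha_0,\alpha_1,\gamma,\eta)$, hence for every $x\in\R$ with $|x-x_0|<r$ we have
\[
\hat\lambda(x) \leqslant \hat\lambda(x_0) + \hat\kappa|x-x_0| \leqslant \hat\lambda(x_0) + \hat\kappa r \leqslant \lambda - \delta + \hat\kappa r.
\]
To conclude $\hat\lambda(x)<\lambda-3rC$, it suffices to ensure $-\delta+\hat\kappa r < -3rC$, i.e., $r(\hat\kappa + 3C)<\delta$. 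Choosing
\[
r := \frac{\delta}{2(\hat\kappa + 3C)}
\]
depends only on $\hat\kappa, C, \delta$, as required, and makes the strict inequality uniform on $|x-x_0|<r$.

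There is no real obstacle here: the assumption $x_0\in\{a=0\}$ plays no role in the argument itself (it only fixes the context in which this lemma will be applied later), and the whole statement is a one-line consequence of Lemma \ref{appC lemma 1}. The only thing to be careful about is that the chosen $r$ truly depends only on $\hat\kappa,C,\delta$ (and not, for instance, on $x_0$ or on $\lambda$ beyond the gap $\delta$), which is transparent from the explicit formula above.
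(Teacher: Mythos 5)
Your proof is correct and is exactly the argument the paper intends: the lemma is stated there as an immediate consequence of the $\hat\kappa$-Lipschitz continuity of $\hat\lambda$ from Lemma \ref{appC lemma 1}, with no further ideas needed, and your explicit choice $r=\delta/\bigl(2(\hat\kappa+3C)\bigr)$ implements it with the right dependence $r=r(\hat\kappa,C,\delta)$.
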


\begin{lemma}\label{appC lemma 3}
Let $x_0\in\{a=0\}$ and assume that $\lambda=\hat\lambda(x_0)$. For every $\eps>0$, there exists  $r=r(\hat\kappa,\eps)>0$ such that 
\[
\hat \lambda(x)<\lambda+\eps\qquad\hbox{for all $|x-x_0|<r$.}
\]
\end{lemma}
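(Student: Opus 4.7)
The plan is to simply invoke the Lipschitz continuity of $\hat\lambda$ established in Lemma \ref{appC lemma 1} and choose $r$ small enough that the oscillation of $\hat\lambda$ around $x_0$ is strictly less than $\eps$. Since $\hat\lambda$ is $\hat\kappa$-Lipschitz on $\R$, we have, for every $x\in\R$,
\[
\hat\lambda(x)-\hat\lambda(x_0)\leqslant \hat\kappa|x-x_0|.
\]
So it suffices to pick any $r=r(\hat\kappa,\eps)>0$ with $\hat\kappa\, r<\eps$, for instance $r:=\eps/(2\hat\kappa)$. For every $|x-x_0|<r$ we then obtain
\[
\hat\lambda(x)\leqslant \hat\lambda(x_0)+\hat\kappa|x-x_0|<\hat\lambda(x_0)+\hat\kappa r<\hat\lambda(x_0)+\eps=\lambda+\eps,
\]
which is the desired inequality.

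Note that the hypothesis $x_0\in\{a=0\}$ is not used in the estimate itself; it is only relevant in the way this lemma will later be applied (as is the case in Lemma \ref{appC lemma 2}). The only genuine ingredient is the Lipschitz constant $\hat\kappa=\hat\kappa(\alpha_0,\alpha_1,\gamma,\eta)$ furnished by Lemma \ref{appC lemma 1}, which guarantees that the radius $r$ depends solely on $\hat\kappa$ and $\eps$ as asserted. No obstacle is anticipated: the statement is indeed, as the author puts it, a trivial consequence of the Lipschitz continuity of $\hat\lambda$.
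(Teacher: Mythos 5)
Your argument is exactly the intended one: the paper states this lemma (together with Lemma \ref{appC lemma 2}) without proof as a trivial consequence of the $\hat\kappa$-Lipschitz continuity of $\hat\lambda$ from Lemma \ref{appC lemma 1}, and your choice $r=\eps/(2\hat\kappa)$ makes that precise correctly. Nothing is missing, and your remark that $x_0\in\{a=0\}$ plays no role in the estimate itself is also accurate.
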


We proceed by showing the following preliminary existence result. 

\begin{prop}\label{appC prop pre-existence}
Let $\lambda> \lambda_0$ and let $u_{\landa_0}$ be a Lipchitz viscosity solution of \eqref{appC eq cellPDE} with $	\lambda=\landa_0$. Then there exists functions $f^-_\lambda, f^+_\lambda\in\CC^1(A)$ satisfying \ $f^-_\lambda< u'_{\lambda_0}< f^+_\lambda$\  in $A$\  and 
\begin{equation}\label{appC eq cellODE}
a(x)f'+H(x,f)=\lambda\qquad\hbox{in $A$.}\tag{ODE$_\lambda$}
\end{equation}
Furthermore, there exists a constant $K_\lambda=K(\alpha_0,\alpha_1,\gamma,\lambda)$, only depending on $\alpha_0,\alpha_1>0$, $\gamma>2$ and $\lambda$, such that 
\ $|f^\pm_\lambda(x)|\leqslant K_\lambda$\  for all $x\in A$.
\end{prop}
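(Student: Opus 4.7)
The plan is to work component by component on $A$ using first-order ODE theory. By (A1$'$), $A=\bigsqcup_n I_n$ is a disjoint union of bounded open intervals; fix one, $I=(\alpha,\beta)$. On $I$, $a>0$ in the interior and $u_{\lambda_0}$ is $C^2$ by Proposition \ref{prop regularity solutions}, so $g:=u'_{\lambda_0}|_I\in C^1(I)$ classically solves $ag'+H(x,g)=\lambda_0$. Rewriting (ODE$_\lambda$) as $f'=(\lambda-H(x,f))/a(x)$, the right-hand side is continuous and, by (H2), locally Lipschitz in $f$ on compact subsets of $I$; Picard--Lindel\"of thus gives local existence and uniqueness of $C^1$ solutions. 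The core monotonicity principle is: at any $x_1\in I$ where a $C^1$ solution $f$ of (ODE$_\lambda$) meets $g$,
\[
(f-g)'(x_1)=\frac{\lambda-\lambda_0}{a(x_1)}>0,
\]
so $f-g$ is strictly increasing at each of its zeros, and its sign is preserved on any common existence interval.

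Next I prove a $\kappa$-free a priori bound. Let $P_\mu:=\bigl((\mu+1/\alpha_0)/\alpha_0\bigr)^{1/\gamma}$, so that by (H1), $|p|>P_\mu$ forces $H(x,p)>\mu$ uniformly in $x$. I claim that any bounded $C^1$ solution $h$ of $ah'+H(x,h)=\mu$ on $I$ satisfies $|h|\leq P_\mu$. For each $\delta>0$, on any connected component $(p,q)$ of $\{h>P_\mu+\delta\}\subseteq I$, the ODE gives $h'\leq -c_\delta/a$ with $c_\delta:=\alpha_0\bigl((P_\mu+\delta)^\gamma-P_\mu^\gamma\bigr)>0$. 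If $(p,q)$ is strictly interior to $I$, integration yields $0=h(q)-h(p)\leq -c_\delta\int_p^q dx/a<0$, a contradiction. If instead $p=\alpha$ or $q=\beta$, condition (A2) with $\sqrt{a(\alpha)}=\sqrt{a(\beta)}=0$ gives $a(x)\leq\kappa^2\min\{(x-\alpha)^2,(\beta-x)^2\}$, hence $\int dx/a=+\infty$ near $\partial I$; the integrated barrier then drives $h\to -\infty$, contradicting $h>P_\mu+\delta>0$ on the component. Sending $\delta\to 0^+$ and arguing symmetrically below $-P_\mu$ yields $|h|\leq P_\mu$. In particular, $|g|\leq P_{\lambda_0}<P_\lambda$ on $I$.

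For the construction of $f^+_\lambda$, fix $x_0\in I$ and $\epsilon\in(0,P_\lambda-P_{\lambda_0})$; let $f^+_\lambda$ be the maximal $C^1$ solution of (ODE$_\lambda$) with $f^+_\lambda(x_0)=g(x_0)+\epsilon$, defined on $(\alpha^*,\beta^*)\subseteq I$. By the monotonicity principle, $f^+_\lambda>g\geq -P_{\lambda_0}$ throughout. Fix $\delta\in(0,P_\lambda-P_{\lambda_0}-\epsilon)$ so that $f^+_\lambda(x_0)<P_\lambda+\delta$. A nonempty connected component $(p,q)$ of $\{f^+_\lambda>P_\lambda+\delta\}$ in $(\alpha^*,\beta^*)$ would fall into three cases, each leading to a contradiction: if $(p,q)$ is strictly interior to $(\alpha^*,\beta^*)$, the interior integration above applies; if $p=\alpha^*=\alpha$ or $q=\beta^*=\beta$, the $\partial I$ case applies; and if $p=\alpha^*>\alpha$ or $q=\beta^*<\beta$, the integrated barrier on the compact subinterval (where $\int dx/a<+\infty$) forces $f^+_\lambda$ to stay bounded near that endpoint, enabling Picard extension past it and contradicting maximality of $(\alpha^*,\beta^*)$. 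Hence $f^+_\lambda\leq P_\lambda+\delta$, and the same extension argument applied to the now globally bounded $f^+_\lambda$ forces $(\alpha^*,\beta^*)=I$. Letting $\delta\to 0^+$ yields $|f^+_\lambda|\leq K_\lambda:=P_\lambda$, depending only on $\alpha_0,\gamma,\lambda$. The construction of $f^-_\lambda$ is symmetric, with initial value $g(x_0)-\epsilon$.

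The main obstacle I anticipate is the $\kappa$-free a priori bound itself: ruling out connected components of $\{h>P_\mu+\delta\}$ that touch $\partial I$ crucially exploits the Lipschitz regularity of $\sqrt a$ vanishing at $\partial I$, which yields the divergence $\int dx/a=+\infty$ near $\partial I$. It is this integrability failure, together with the coercivity of $H$ from (H1), that renders $K_\lambda$ independent of the Lipschitz constant $\kappa$ of $\sqrt a$.
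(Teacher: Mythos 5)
Your construction has a genuine gap at its very first step: the claim that ``by the monotonicity principle, $f^+_\lambda>g$ throughout'' its maximal interval. The contact-point computation only shows that $f-g$ increases strictly through each of its zeros, hence the set $\{f>g\}$ is invariant \emph{forward} in $x$ (and $\{f<g\}$ is invariant backward); it does not preserve the sign in both directions. Since you prescribe the data at a single interior point $x_0$, nothing prevents the solution through $(x_0,g(x_0)+\epsilon)$ from lying \emph{below} $g$ on a left portion of $I$, with one upward crossing at some $x_1<x_0$; so the required ordering $f^+_\lambda>u'_{\lambda_0}$ on all of the component is not established (symmetrically, your $f^-_\lambda$ is only controlled to the left of $x_0$). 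This also removes the lower bound $f^+_\lambda\geqslant -P_{\lambda_0}$ that you invoke on $(\alpha^*,x_0)$. The endpoint/maximality analysis on the left is likewise unsound: on a component $(p,q)$ of $\{f^+_\lambda>P_\lambda+\delta\}$ with $p=\alpha^*$, integrating $f'\leqslant -c_\delta/a$ from $x$ to $q$ gives the \emph{lower} bound $f(x)\geqslant f(q)+c_\delta\int_x^q a^{-1}$, not an upper bound, so it neither shows boundedness near $\alpha^*$ (your case $\alpha^*>\alpha$) nor yields a contradiction when $\alpha^*=\alpha$ unless $f^+_\lambda$ is already known to be bounded, which is what is being proved. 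In fact the danger is real: by (H1), once the solution enters the region where $f$ is large while moving leftward one has $\frac{d}{dx}\big(f^{1-\gamma}\big)\geqslant c/a(x)\geqslant c>0$, which forces backward blow-up to $+\infty$ at a finite point; so the single-shot Picard solution may genuinely fail to reach $\alpha$ or to dominate $u'_{\lambda_0}$ there.

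The repair is to avoid backward control altogether, which is what the paper does: $m:=u'_{\lambda_0}$ is a strict lower fence and the constant $M:=q_\lambda$, chosen with $H(\cdot,q_\lambda)>\lambda$ (your $P_\lambda$ plays the same role), is a strict upper fence, and the funnel $\{m<f<M\}$ is forward invariant; a standard ODE trapping lemma (the paper cites \cite[Lemma A.8]{DKY23}) then yields a solution confined between $m$ and $M$ on the whole component. If you prefer to keep your Picard-based route, you must take initial points $x_n\downarrow\alpha$ with data inside the funnel, use forward invariance to get solutions on $[x_n,\beta)$ trapped in $[m,M]$, extract a limit by Arzel\`a--Ascoli on compact subsets of $I$ (where $a$ is bounded below, so the funnel gives equi-Lipschitz bounds), and finally upgrade the limit inequalities to strict ones by the same contact-point derivative computation; the symmetric procedure from $x_n\uparrow\beta$ produces $f^-_\lambda$. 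Your $\kappa$-free a priori bound $|h|\leqslant P_\mu$ for \emph{bounded} $C^1$ solutions is correct (modulo the swapped endpoint in the sentence about which contradiction occurs where) and is a nice way to get a constant depending only on $\alpha_0,\gamma,\lambda$, but it is applied before the boundedness and the ordering of your candidate solution have been secured, which is where the actual difficulty of the proposition lies.
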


\begin{proof}
Let us prove the existence of such a function $f^+_\lambda$. In view of Proposition \ref{prop regularity solutions}, there exists a constant $K_{\lambda_0}=K(\alpha_0,\alpha_1,\gamma,\lambda_0)$, only depending on $\alpha_0,\alpha_1>0$, $\gamma>2$ and $\lambda_0$, such that 
$|u'_{\lambda_0}|\leqslant K_{\lambda_0}$ in $A$. Since $H\in\Ham$, we can find $q_\lambda>K_{\lambda_0}$ such that $H(x,q_\lambda)>\lambda$ for every $x\in\R$. Note that the choice of $q_\lambda$ only depends on  $\alpha_0,\alpha_1>0$, $\gamma>2$ and $\lambda$. 
Then the functions $m(\cdot):=u'_{\lambda_0}(\cdot)$ and $M(\cdot):=q_\lambda$ satisfy 
\begin{equation}\label{eq appC pre-existence}
a(x)m'(x)+H(x,m(x))<\lambda<a(x)M'(x)+H(x,M(x))\qquad\hbox{in $J$,}
\end{equation}
for every connected component $J$ of $A$. The existence of a $C^1$ solution $f^+_\lambda\in C^1(J)$ to \eqref{appC eq cellODE} in $J$ satisfying 
\[
u'_{\lambda_0}(x)=m(x)<f^+_\lambda(x)<q_\lambda=M(x)\qquad\hbox{for all $x\in J$}
\]
follows from standard ODE results, see for instance \cite[Lemma A.8]{DKY23} for a proof. The assertion follows by the arbitrariness of the choice of $J$. The proof for $f^-_{\lambda}$ is analogous and is omitted. 
\end{proof}

\subsection{Behavior of solutions near the boundary of $A$.}\label{sec:technical lemmas} A viscosity solution $u_\landa$ of \eqref{appC eq cellPDE} is of class $C^2$ in $A$ by Proposition \ref{prop regularity solutions}, in particular its derivative $f:=u'_\landa$ is a $C^1$ solution of \eqref{appC eq cellODE}. We need to understand the behavior of the derivative of  $u_\landa$ when we approach the boundary of $A$.  
In this subsection we study this issue when $f$ is a generic bounded $C^1$ solution of \eqref{appC eq cellODE} in $A$. The analysis is based on a series of technical propositions. We start with the following. 

\begin{prop}\label{appC prop tool1}
Let $x_0\in\{a=0\}$ and $\lambda\geqslant \delta+\hat\lambda(x_0)$ for some $\delta>0$. Let $f\in\CC^1(A)$ be a bounded solution of  \eqref{appC eq cellODE}. Let $C=C(K)$ the constant chosen according to (C) above with $K\geqslant \|f\|_{L^\infty(A)}$,  and $r(\hat\kappa,C,\delta)>0$ be the radius given by Lemma \ref{appC lemma 2}. Let  $r\in\big(0,r(\hat\kappa,C,\delta)\big)$ and 
$I=(\tilde\ell_1,\tilde\ell_2)$ be an interval contained in $(x_0-r,x_0+r)$ such that $a>0$ in $\overline I$. Assume the following hypotheses: 
\begin{itemize}
\item[\em(i)] \quad $f$ is monotone on $I$;\smallskip
\item[\em(ii)] \quad $f(\tilde\ell_1)>\hat p(\tilde\ell_1)$;\smallskip
\item[\em(iii)] \quad $\lambda-hC \leqslant H(\tilde\ell_1,f(\tilde\ell_1)) \leqslant \lambda+hC$\qquad where $h:=|I|\leqslant r$. 
 \smallskip
\end{itemize}
Then the following holds:
\begin{eqnarray*}
f(y)>\hat p(x)\qquad \hbox{for all $x,y\in I$;}\\
\lambda-2hC \leqslant H(x,f(x))\leqslant \lambda+2hC \qquad \hbox{for all $x\in I$}.
\end{eqnarray*}
\end{prop}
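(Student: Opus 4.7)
My plan is to establish both conclusions simultaneously by a direct estimate combining the ODE \eqref{appC eq cellODE}, the strict quasiconvexity (sqC), the $C$-Lipschitz continuity of $H$, and the margin supplied by Lemma~\ref{appC lemma 2}. Hypothesis (i) splits the argument into two symmetric cases according to whether $f$ is non-decreasing or non-increasing; I describe the former only.

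For the $H$ bound: since $f'\geq 0$ and $a\geq 0$ on $\overline I\subset A$, the ODE immediately gives $H(x,f(x))=\lambda-a(x)f'(x)\leq \lambda\leq \lambda+2hC$, which is half of the claim. For the matching lower bound, (ii) and monotonicity force $f(x)\geq f(\tilde\ell_1)\geq \hat p(\tilde\ell_1)$ throughout $I$; applying (sqC) at the point $\tilde\ell_1$ with $p_1=f(\tilde\ell_1)$ and $p_2=f(x)\geq p_1$ yields $H(\tilde\ell_1,f(x))\geq H(\tilde\ell_1,f(\tilde\ell_1))+\eta(f(x)-f(\tilde\ell_1))\geq H(\tilde\ell_1,f(\tilde\ell_1))$, and invoking (iii) together with the $C$-Lipschitz continuity of $H$ in the spatial variable gives
\[
H(x,f(x))\geq H(\tilde\ell_1,f(x))-Ch\geq (\lambda-hC)-Ch=\lambda-2hC,
\]
as required.

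With the $H$ bound in hand, the strict ordering follows from a second comparison, this time against $\hat\lambda$. For each $x\in\overline I\subset [x_0-r,x_0+r]$, Lemma~\ref{appC lemma 2} yields $\hat\lambda(x)<\lambda-3rC$, whence $H(x,f(x))-\hat\lambda(x)>3rC-2hC\geq rC$ since $h\leq r$. Using $\hat\lambda(x)=H(x,\hat p(x))$ and the $C$-Lipschitz continuity of $H$ in the momentum variable, this rewrites as $|f(x)-\hat p(x)|>r$ on $\overline I$. Hypothesis (ii) and continuity pin the sign to $+$ (any change of sign would force $f-\hat p$ through a zero, contradicting the strict gap just proved), so $f(x)>\hat p(x)+r$ on $\overline I$. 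The $\hat\kappa$-Lipschitz continuity of $\hat p$ yields $\hat p(x)\leq \hat p(y)+\hat\kappa h$ for $x,y\in I$, so
\[
f(y)>\hat p(y)+r\geq \hat p(x)+r-\hat\kappa h,
\]
and the radius $r(\hat\kappa,C,\delta)$ in Lemma~\ref{appC lemma 2} is chosen precisely so that $r>\hat\kappa h$ (via the dependence on $\hat\kappa$), giving $f(y)>\hat p(x)$.

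The point I expect to be most delicate is the arithmetic coordination of constants in the last step: the factor ``$3$'' in Lemma~\ref{appC lemma 2} (as opposed to ``$2$'') is exactly what produces the positive margin ``$rC$'' in the comparison $H-\hat\lambda$, which, after dividing by $C$, yields a separation $|f-\hat p|>r$ large enough to dominate the Lipschitz spreading $\hat\kappa h$ of $\hat p$ across $I$. This is the mechanism that synchronizes the ``$2hC$'' in the $H$ bound with the ``$3rC$'' in Lemma~\ref{appC lemma 2}, and it is what makes the proposition robust against the freedom of choosing $h$ anywhere in $(0,r]$.
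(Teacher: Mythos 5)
Your treatment of the nondecreasing case is fine up to and including the two-sided bound $\lambda-2hC\leqslant H(x,f(x))\leqslant\lambda$, and the quantified separation $|f(x)-\hat p(x)|>r$ (granting the momentum-Lipschitz step) together with the sign-pinning by continuity also goes through. The genuine gap is the very last step: you claim that the radius of Lemma \ref{appC lemma 2} ``is chosen precisely so that $r>\hat\kappa h$''. It is not. Lemma \ref{appC lemma 2} only guarantees $\hat\lambda(x)<\lambda-3rC$ on $(x_0-r,x_0+r)$; it encodes no relation between $r$, $\hat\kappa$ and $h$. Since $h$ is allowed to be as large as $r$, and $\hat\kappa=\max\{\hat\kappa_1,(\hat C+\hat\kappa_1)/\eta\}$ is typically much larger than $1$ (it blows up as $\eta\to0$), the inequality $r>\hat\kappa h$ fails in general, and your chain $f(y)>\hat p(x)+r-\hat\kappa h$ does not yield $f(y)>\hat p(x)$. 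The quantitative strategy of comparing the gap $f-\hat p$ (of size $\sim 3r-2h$) with the oscillation $\hat\kappa h$ of $\hat p$ over $I$ simply cannot be closed with these constants. The paper's proof avoids this entirely: having shown $f>\hat p$ pointwise on $I$, it estimates the \emph{two-variable} quantity $H(y,f(x))\geqslant H(x,f(x))-C|x-y|\geqslant\lambda-3hC>\hat\lambda(y)$ for all $x,y\in I$, so that $f(x)\neq\hat p(y)$; since $f(x)>\hat p(y)$ at $x=y$, continuity of $f$ in $x$ gives $f(x)>\hat p(y)$ for all $x,y\in I$, with no need to dominate the Lipschitz spreading of $\hat p$.

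Two secondary points. First, the cases $f'\geqslant0$ and $f'\leqslant0$ are not symmetric under reflection, because hypothesis (iii) is anchored at the left endpoint $\tilde\ell_1$ in both: in the nonincreasing case the ODE gives the lower bound $H(x,f(x))\geqslant\lambda$ for free, but the upper bound requires first establishing $\hat p(\tilde\ell_1)<f(x)\leqslant f(\tilde\ell_1)$ and then the quasiconvexity of $H(\tilde\ell_1,\cdot)$, i.e.\ the order of steps changes; dismissing this case as ``symmetric'' skips real content. Second, your derivation of $|f(x)-\hat p(x)|>r$ uses $C$-Lipschitz continuity of $H(x,\cdot)$ between $f(x)$ and $\hat p(x)$, but $C=C(K)$ is a Lipschitz constant only on $\R\times[-K,K]$ with $K\geqslant\|f\|_{L^\infty(A)}$, and nothing forces $|\hat p(x)|\leqslant K$; the paper only ever invokes Lipschitz continuity in the space variable at momenta $f(x)$, $f(\tilde\ell_1)$, which do lie in $[-K,K]$, so it never faces this issue.
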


\begin{proof}
{\bf Case $f'\geqslant 0$ in $I$.} Then 
\begin{equation}\label{appC eq1.1 tool1}
H(x,f(x))\leqslant a(x)f'(x)+H(x,f(x))=\lambda\qquad\hbox{for all $x\in I$.}
\end{equation}
Since $f$ is nondecreasing on $I$, we have $f(x)\geqslant f(\tilde\ell_1)>\hat p(\tilde\ell_1)$ for all $x\in I$. Hence by (sqC) we infer
\begin{equation}\label{appC eq1.2 tool1}
H(x,f(x))\geqslant H(\tilde\ell_1,f(x))-C|\tilde\ell_1-x| \geqslant H(\tilde\ell_1,f(\tilde\ell_1))-hC\geqslant \lambda -2hC
\qquad 
\hbox{for all $x\in I$.} 
\end{equation}
From \eqref{appC eq1.1 tool1} and \eqref{appC eq1.2 tool1} we get 
\begin{equation*}
\lambda-2hC \leqslant H(x,f(x))\leqslant \lambda\qquad\hbox{for all $x\in I$,}
\end{equation*}
yielding in particular the second part of the assertion. 
Since $f(\tilde\ell_1)>\hat p(\tilde\ell_1)$, from \eqref{appC eq1.2 tool1} and the fact that 
$\lambda-3hC>\hat\lambda(x)$ for all $x\in I$ in view of the choice of $r$, we infer that \ $f(x)>\hat p(x)$ for all $x\in I$. Furthermore 
\begin{equation*}
H(y,f(x)) \geqslant H(x,f(x))-C|x-y|\geqslant \lambda-3hC>\hat\lambda(y)\qquad\hbox{for all $x,y\in I$.}
\end{equation*}
From the fact that $f(x)>\hat p(y)$ at $x=y$, we get from the continuity of $f$ that $f(x)>\hat p(y)$ for all $x,y\in I$, as it was to be shown.\smallskip\\
{\bf Case $f'\leqslant 0$ in $I$.} Then 
\begin{equation}\label{appC eq2.1 tool1}
H(x,f(x))\geqslant a(x)f'(x)+H(x,f(x))=\lambda>\hat\lambda(x)\qquad\hbox{for all $x\in I$.}
\end{equation}
From the fact that $f(\ell_1)>\hat p(\ell_1)$ we derive, by continuity of $f$ and $\hat p$, that $f(x)>\hat p(x)$ for all $x\in I$. 
For every $x<y$ in $I$ we have 
\begin{equation*}
H(x,f(x))
\geqslant
H(y,f(x)) - C|x-y|
\geqslant
H(y,f(y))-hC
\geqslant
\lambda -hC,
\end{equation*}
where for the second inequality we have used the fact that $f(x)\geqslant f(y)>\hat p(y)$ and the quasiconvexity of $H(y,\cdot)$.
For every $x,y\in I$ we infer 
\begin{equation*}
H(y,f(x))
\geqslant 
H(x,f(x))-C|x-y|
\geqslant
\lambda -2hC>\hat\lambda(y).
\end{equation*}
In view of the fact that $f(x)>\hat p(y)$ at the point $x=y$, we infer from the continuity of $f$ that $f(x)>\hat p(y)$ for all $x,y\in I$, thus proving the first part of the assertion. Last, for every $x\in I$ we get
\[
H(x,f(x)) 
\leqslant  
H(\tilde\ell_1,f(x))+C|x-\tilde\ell_1|
\leqslant
H(\tilde\ell_1,f(\tilde\ell_1))+hC
\leqslant 
\lambda +2hC, 
\] 
where for the second inequality we have used the fact that $\hat p(\tilde\ell_1)<f(x)\leqslant f(\tilde\ell_1)$ and the quasiconvexity of $H(\tilde\ell_1,\cdot)$. This, together  with \eqref{appC eq2.1 tool1}, proves the second part of the assertion.
\end{proof}

We proceed to show a variant of the previous result. 

\begin{prop}\label{appC prop tool2}
Let $x_0\in\{a=0\}$ and $\lambda\geqslant \delta+\hat\lambda(x_0)$ for some $\delta>0$. Let $f\in\CC^1(A)$ be a bounded solution of  \eqref{appC eq cellODE}. Let $C=C(K)$ the constant chosen according to (C) above with $K\geqslant \|f\|_{L^\infty(A)}$,  
and $r(\hat\kappa,C,\delta)>0$ be the radius given by Lemma \ref{appC lemma 2}. Let  $r\in\big(0,r(\hat\kappa,C,\delta)\big)$ and 
$I=(\tilde\ell_1,\tilde\ell_2)$ be an interval contained in $(x_0-r,x_0+r)$ such that $a>0$ on $\overline I$. Assume the following hypotheses:
\begin{itemize}
\item[\em(i)] \quad $f$ is monotone on $I$;\smallskip
\item[\em(ii)] \quad $\lambda-hC < H(\tilde\ell_2,f(\tilde\ell_2))<\lambda+hC$\qquad where $h:=|I|\leqslant r$. 
 \smallskip
\end{itemize}
Then:
\begin{itemize}
\item[(a)] if $f'\leqslant 0$ in $I$ and either $f(\tilde\ell_1)<\hat p(\tilde\ell_1)$ or  $f(\tilde\ell_2)<\hat p(\tilde\ell_2)$, then
\begin{eqnarray*}
f(y)<\hat p(x)\qquad &&\hbox{for all $x,y\in I$;}\\
\lambda\leqslant  H(x,f(x))<\lambda+2hC \qquad &&\hbox{for all $x\in I$};
\end{eqnarray*}
\item[(b)] if $f'\geqslant 0$ in $I$, then 
\begin{equation*}
H(x,f(x))\leqslant \lambda\qquad\hbox{for all $x\in I$.}
\end{equation*} 
Furthermore, if $f(\tilde\ell_2)<\hat p(\ell_2)$, then 
\begin{eqnarray*}
f(y)<\hat p(x)\qquad &&\hbox{for all $x,y\in I$;}\\
\lambda-2hC\leqslant H(x,f(x))\leqslant \lambda \qquad &&\hbox{for all $x\in I$}.
\end{eqnarray*}
\end{itemize}
\end{prop}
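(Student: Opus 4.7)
The proof runs parallel to that of Proposition \ref{appC prop tool1}, with the key difference that now $f$ sits on the \emph{decreasing} branch of $H(x,\cdot)$, i.e.\ below $\hat p$, which forces us to reverse several of the monotonicity arguments. The plan is to first use the ODE and the sign of $f'$ to produce a one-sided bound on $H(x,f(x))$, then match it with the opposite bound via hypothesis (ii), the monotonicity of $f$, and the strict decrease of $H(\tilde\ell_2,\cdot)$ on $(-\infty,\hat p(\tilde\ell_2)]$ granted by (sqC). The pointwise separation $f(y)<\hat p(x)$ on $I\times I$ is obtained by a continuity/connectedness argument on the square, using the choice of $r$ through Lemma \ref{appC lemma 2}.

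Case (b) is the simpler one. Since $a\geqslant 0$ and $f'\geqslant 0$, the ODE gives $H(x,f(x))=\lambda-a(x)f'(x)\leqslant\lambda$, which is the first assertion. Under the extra hypothesis $f(\tilde\ell_2)<\hat p(\tilde\ell_2)$, the monotonicity of $f$ yields $f(x)\leqslant f(\tilde\ell_2)<\hat p(\tilde\ell_2)$ for every $x\in I$, so the strict decrease of $H(\tilde\ell_2,\cdot)$ on $(-\infty,\hat p(\tilde\ell_2)]$ implies $H(\tilde\ell_2,f(x))\geqslant H(\tilde\ell_2,f(\tilde\ell_2))>\lambda-hC$, and then
\[
H(x,f(x))\;\geqslant\; H(\tilde\ell_2,f(x))-C|x-\tilde\ell_2|\;>\;\lambda-2hC.
\]
For the joint separation, any $x,y\in I$ satisfy $H(x,f(y))\geqslant H(y,f(y))-C|x-y|>\lambda-3hC>\hat\lambda(x)$, where the last inequality uses $h\leqslant r<r(\hat\kappa,C,\delta)$ combined with Lemma \ref{appC lemma 2}. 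Hence $f(y)\neq\hat p(x)$ on $I\times I$, and since the continuous map $(x,y)\mapsto f(y)-\hat p(x)$ is negative at $(\tilde\ell_2,\tilde\ell_2)$, it is negative on the whole connected set $I\times I$.

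In case (a), the sign $f'\leqslant 0$ and the ODE immediately give $H(x,f(x))\geqslant\lambda$. The main novelty is that the hypothesis is an ``either/or'' on the two endpoints, so the preliminary task is to reduce both alternatives to the single condition $f(\tilde\ell_2)<\hat p(\tilde\ell_2)$. To this end, set $\varphi:=f-\hat p$ on $\overline I$: if $\varphi$ vanished at some $x^*\in\overline I$, then $H(x^*,f(x^*))=\hat\lambda(x^*)$ and the ODE at $x^*$ would yield $a(x^*)f'(x^*)=\lambda-\hat\lambda(x^*)>0$, incompatible with $a(x^*)>0$ and $f'(x^*)\leqslant 0$ (both of which hold on $\overline I$ by continuity, since $\overline I\subset A$). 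Hence $\varphi$ has constant sign on $\overline I$, and either endpoint hypothesis forces $\varphi<0$ throughout, in particular $f(\tilde\ell_2)<\hat p(\tilde\ell_2)$.

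With the reduction in place, the connectedness argument is cleaner: from $H(y,f(y))\geqslant\lambda$ we get $H(x,f(y))\geqslant\lambda-hC>\hat\lambda(x)$, so $f(y)\neq\hat p(x)$ on $I\times I$, and evaluating at $(\tilde\ell_2,\tilde\ell_2)$ pins the sign to $f(y)<\hat p(x)$ everywhere. In particular $f(y)\in[f(\tilde\ell_2),\hat p(\tilde\ell_2))$ for every $y\in I$ by monotonicity, and the strict decrease of $H(\tilde\ell_2,\cdot)$ on $(-\infty,\hat p(\tilde\ell_2)]$ produces $H(\tilde\ell_2,f(y))\leqslant H(\tilde\ell_2,f(\tilde\ell_2))<\lambda+hC$, hence $H(y,f(y))\leqslant H(\tilde\ell_2,f(y))+C|y-\tilde\ell_2|<\lambda+2hC$. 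The main obstacle, as in Proposition \ref{appC prop tool1}, is to recognise that the pointwise information $f<\hat p$ on $\overline I$ is not sufficient: the quasiconvexity comparison at $\tilde\ell_2$ requires the joint separation $f(y)<\hat p(x)$ on $I\times I$, so the square connectedness argument has to be run \emph{before} the quantitative $H$-bound can be closed.
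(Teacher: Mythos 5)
Your proof is correct and follows essentially the same route as the paper's: the ODE together with the sign of $f'$ yields one one-sided bound on $H(x,f(x))$, hypothesis (ii) at $\tilde\ell_2$ combined with the Lipschitz property (C) and the monotone-branch structure of $H(\tilde\ell_2,\cdot)$ yields the matching bound, and the choice of $r$ through Lemma \ref{appC lemma 2} rules out $f(y)=\hat p(x)$, so continuity pins the sign of $f-\hat p$. Your two small deviations --- excluding a zero of $f-\hat p$ via the ODE sign at $x^*$ instead of the paper's direct observation $H(x,f(x))\geqslant\lambda>\hat\lambda(x)$ in case (a), and running the connectedness argument on the square $I\times I$ with the sign pinned near the corner $(\tilde\ell_2,\tilde\ell_2)$ (which lies on the boundary of $I\times I$, so strictly speaking evaluate at nearby interior points) --- are harmless variants of the same argument.
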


\begin{proof}
\noindent (a) Let us assume $f'\leqslant 0$ in $I$. Then 
\begin{equation}\label{appC eq1.1 tool2}
H(x,f(x))\geqslant a(x)f'(x)+H(x,f(x))=\lambda>\hat\lambda(x)\qquad\hbox{for all $x\in I$.}
\end{equation}
From the fact that either $f(\ell_1)<\hat p(\ell_1)$ or $f(\ell_2)<\hat p(\ell_2)$ we derive, by continuity of $f$ and $\hat p$, that $f(x)<\hat p(x)$ for all $x\in I$. 
We also have, for every $x,y\in I$, 
\begin{equation*}
H(y,f(x))
\geqslant
H(x,f(x)) - C|x-y|
\geqslant
\lambda -hC>\hat\lambda(y),
\end{equation*}
where the last inequality holds by the choice of $r$. Since $f(x)<\hat p(y)$ at $x=y$, by continuity of $f$ we get \  $f(x)<\hat p(y)$ for every $x,y\in I$. This prove the first part of assertion (i).
For $x<y$ in $I$ we have 
\[
H(x,f(x)) 
\leqslant 
H(y,f(x))+C|x-y|
\leqslant 
H(y,f(y))+hC, 
\]
where for the last inequality we have used the fact that $f(y)\leqslant f(x) <\hat p(y)$ and the quasiconvexity of $H(y,\cdot)$. By sending $y\to\tilde\ell_2^-$ we get, in view of the hypothesis, 
\[
H(x,f(x))\leqslant \lambda+2hC\qquad\hbox{for all $x\in I$.}
\]
This, together with \eqref{appC eq1.1 tool2}, proves the second part of assertion (a). \smallskip\\
(b) Let us assume $f'\geqslant 0$ in $I$. Then 
\begin{equation}\label{appC eq2.1 tool2}
H(x,f(x))\leqslant a(x)f'(x)+H(x,f(x))=\lambda\qquad\hbox{for all $x\in I$,}
\end{equation}
thus proving the first inequality in assertion (b). If we assume $f(\tilde \ell_2)<\hat p(\tilde\ell_2)$, by monotonicity of $f$ we derive 
$f(x)\leqslant f(\tilde \ell_2)<\hat p(\tilde\ell_2)$ for all $x\in I$. We infer 
\begin{equation}\label{appC eq2.2 tool2}
H(x,f(x))
\geqslant
H(\tilde\ell_2,f(x))-C|x-\tilde\ell_2|
\geqslant
H(\tilde\ell_2,f(\tilde\ell_2))-hC
\geqslant 
\lambda-2hC
>
\hat\lambda(y)
\quad
\hbox{for all $x,y\in I$},
\end{equation}
where for the second inequality we have used the fact that $f(x)\leqslant f(\tilde\ell_2)<\hat p(\ell_2)$ and the quasiconvexity of $H(\tilde\ell_2,\cdot)$. For every fixed $x\in I$ we have $f(x)<\hat p(y)$ at $y=\tilde \ell_2$, 
hence, by continuity of $\hat p$, we derive from \eqref{appC eq2.2 tool2} that $f(x)<\hat p(y)$ for every $y\in I$. This, together with \eqref{appC eq2.1 tool2} and \eqref{appC eq2.2 tool2}, proves the second part of assertion (b).   
\end{proof}

The next proposition addresses the remaining case $\lambda=\hat\lambda(x_0)$.

\begin{prop}\label{appC prop tool3}
Let $x_0\in\{a=0\}$ and $\lambda=\hat\lambda(x_0)$. Let $f\in\CC^1(A)$ be a bounded solution of  \eqref{appC eq cellODE}. Let $C=C(K)$ the constant chosen according to (C) above with $K\geqslant \|f\|_{L^\infty(A)}$. Let $\eps>0$ and denote by $r(\hat\kappa,\eps)>0$ be the radius given by Lemma \ref{appC lemma 3}. Let  $r\in\big(0,r(\hat\kappa,\eps)\big)$ and 
$I=(\tilde\ell_1,\tilde\ell_2)$ be an interval contained in $(x_0-r,x_0+r)$ such that $a>0$ on $\overline I$. Assume the following hypotheses:
\begin{itemize}
\item[\em(i)] \quad $f$ is monotone on $I$;\smallskip
\item[\em(ii)] \quad $H(\tilde\ell_i,f(\tilde\ell_i))<\lambda+\eps$\qquad for each $i\in\{1,2\}$.  
 \smallskip
\end{itemize}
Then\ \  
%\begin{eqnarray*}
$H(x,f(x))<\lambda+\eps+2hC$ \  {for all $x\in I$, where $h:=|I|\leqslant r$.}
%\end{eqnarray*}
\end{prop}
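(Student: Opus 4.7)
The plan is to argue via a case analysis on the sign of $f'$ on $I$, in the same spirit as the proofs of Propositions \ref{appC prop tool1} and \ref{appC prop tool2}, but with a simpler structure since we only need an upper bound on $H(x,f(x))$ and not fine information on the position of $f$ relative to $\hat p$.

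First I would dispose of the easy case: if $f'\geqslant 0$ on $I$, then by \eqref{appC eq cellODE} and $a\geqslant 0$ one has
\[
H(x,f(x)) = \lambda - a(x) f'(x) \leqslant \lambda < \lambda+\eps+2hC
\]
for every $x\in I$, and the conclusion is immediate.

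The substantive case is $f'\leqslant 0$ on $I$, where the ODE gives $H(x,f(x))\geqslant \lambda$ and an upper bound is needed. Here the key idea is, for each fixed $x\in I$, to compare $f(x)$ with $\hat p(x)$ and pick the endpoint of $I$ that lies on the same side of $\hat p(x)$ as $f(x)$. Concretely, if $f(x)\geqslant \hat p(x)$, then monotonicity gives $\hat p(x)\leqslant f(x)\leqslant f(\tilde\ell_1)$, and quasiconvexity of $H(x,\cdot)$ (which is nondecreasing on $[\hat p(x),+\infty)$) yields $H(x,f(x))\leqslant H(x,f(\tilde\ell_1))$. Applying the Lipschitz bound (C) in the $x$-variable, and then (ii), gives
\[
H(x,f(x)) \leqslant H(x,f(\tilde\ell_1)) \leqslant H(\tilde\ell_1,f(\tilde\ell_1)) + Ch < \lambda + \eps + Ch.
\]
The symmetric subcase $f(x)<\hat p(x)$ uses that $f(\tilde\ell_2)\leqslant f(x)<\hat p(x)$ together with quasiconvexity (so that $H(x,\cdot)$ is nonincreasing on $(-\infty,\hat p(x)]$), yielding $H(x,f(x))\leqslant H(x,f(\tilde\ell_2)) \leqslant H(\tilde\ell_2,f(\tilde\ell_2)) + Ch < \lambda + \eps + Ch$. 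In either subcase we get $H(x,f(x))<\lambda+\eps+Ch\leqslant \lambda+\eps+2hC$.

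I do not expect a serious obstacle here. Strict quasiconvexity is not actually needed at each fixed $x$: plain quasiconvexity with unique minimizer $\hat p(x)$ already forces $H(x,\cdot)$ to be monotone on each side of $\hat p(x)$, which is all the argument uses. It is worth noting that the radius $r(\hat\kappa,\eps)$ coming from Lemma \ref{appC lemma 3} does not enter the core estimate: the inequality $\hat\lambda(x)<\lambda+\eps$ on $I$ is never invoked in the case split, and the proof yields the slightly tighter bound $\lambda+\eps+Ch$, the factor $2$ presumably being left in for uniformity with Propositions \ref{appC prop tool1} and \ref{appC prop tool2}.
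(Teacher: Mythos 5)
Your argument is correct: monotone plus $C^1$ means $f'$ has a constant sign on $I$, the nondecreasing case follows directly from the ODE since $a\geqslant 0$, and in the nonincreasing case quasiconvexity with unique minimizer $\hat p(x)$ does make $H(x,\cdot)$ nonincreasing on $(-\infty,\hat p(x)]$ and nondecreasing on $[\hat p(x),+\infty)$, so the comparison with the appropriate endpoint plus one Lipschitz transfer in $x$ is legitimate (all momentum arguments stay in $[-K,K]$, so the constant $C(K)$ applies). The paper's proof is organized differently and is a bit slicker: it makes no case split on the sign of $f'$, never invokes the ODE or $\hat p$, and instead freezes the base point at $\tilde\ell_1$ — since $f(x)$ lies between $f(\tilde\ell_1)$ and $f(\tilde\ell_2)$ by monotonicity, quasiconvexity of $H(\tilde\ell_1,\cdot)$ gives $H(\tilde\ell_1,f(x))\leqslant\max_i H(\tilde\ell_1,f(\tilde\ell_i))$, and two Lipschitz transfers (one to bound $H(\tilde\ell_1,f(\tilde\ell_2))$ by $H(\tilde\ell_2,f(\tilde\ell_2))+hC$, one to pass from $\tilde\ell_1$ back to $x$) produce exactly the stated $2hC$; your route trades this for a case analysis but only needs one transfer, hence the sharper $hC$. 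Your side remark is also accurate: the radius from Lemma \ref{appC lemma 3} is not used in the paper's proof of this proposition either — it is carried in the statement for consistency with how the result is invoked later (e.g.\ in Proposition \ref{appC prop tool3+} and Theorem \ref{appC teo boundary regularity}).
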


\begin{proof}
We have 
\[
H(\tilde\ell_1,f(\tilde\ell_2))
\leqslant
H(\tilde\ell_2,f(\tilde\ell_2))+C|\tilde\ell_2-\tilde\ell_1|
<
\lambda+\eps+hC.
\]
Since $f$ is monotone on $I$,  for any fixed $x\in I$ the point $f(x)$ is contained in the interval having $f(\tilde\ell_1)$ and $f(\tilde\ell_2)$ as extreme points. For every $x\in I$ we get 
\[
H(x,f(x)) 
\leqslant
H(\tilde\ell_1,f(x))+C|x-\tilde\ell_1|
\leqslant
\max_{i\in\{1,2\}} \big\{ H(\tilde\ell_1, f(\tilde\ell_1)), H(\tilde\ell_1, f(\tilde\ell_2))\big\}
+hC
<
\lambda+\eps+2hC,
\]
where for the second inequality we have exploited the quasiconvexity of $H(\tilde\ell_1,\cdot)$. 
\end{proof}

We are now in position to establish the first boundary regularity result of this subsection.

\begin{theorem}\label{appC teo boundary regularity}
Let $J$ be an open and bounded interval such that $a>0$ on $J$ and let $x_0\in\partial J$ such that $a(x_0)=0$.  
Let $f\in\CC^1(J)$ be a bounded solution of  \eqref{appC eq cellODE} for some $\lambda$ in $\R$.  
Then the following limit exists and it satisfies
\[
\lim_{x\to x_0,\, x\in J}f(x)\in \{p_\lambda^-(x_0), p_\lambda^+(x_0)\}.
\]
In particular, $\lambda\geqslant \hat\lambda(x_0)$. 
\end{theorem}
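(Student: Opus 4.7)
My plan is first to establish the lower bound $\lambda\geqslant\hat\lambda(x_0)$ and then to prove the limit exists and lies in $\{p^-_\lambda(x_0),p^+_\lambda(x_0)\}$ by a case analysis on whether the turning points of $f$ accumulate at $x_0$. For definiteness I assume $J=(x_0,x_1)$ with $x_1>x_0$; the opposite orientation is symmetric.

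\textbf{Lower bound on $\lambda$.} Suppose $\lambda<\hat\lambda(x_0)$. By Lemma \ref{appC lemma 1} there exist $\delta,r>0$ with $\hat\lambda(x)\geqslant\lambda+\delta$ on $(x_0,x_0+r)$. Since $H(x,f(x))\geqslant\hat\lambda(x)$ and $a(x)f'(x)=\lambda-H(x,f(x))$, we get $a(x)f'(x)\leqslant-\delta$; combined with the bound $a(x)\leqslant\kappa^2(x-x_0)^2$ coming from (A2) (and $\sqrt{a(x_0)}=0$), this yields $f'(x)\leqslant-\delta/(\kappa^2(x-x_0)^2)$, and integrating over $(x_0+\eps,x_0+r)$ sends $f(x_0+\eps)\to+\infty$ as $\eps\to 0^+$, contradicting boundedness. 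The same integration argument will also show that whenever $f(x)\to L$ along a one-sided monotone piece ending at $x_0$, having $H(x_0,L)\neq\lambda$ forces $f$ to blow up near $x_0$; hence $H(x_0,L)=\lambda$.

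\textbf{Structural observation on turning points.} At every turning point $\bar x\in J$ of $f$, $f'(\bar x)=0$ gives $H(\bar x,f(\bar x))=\lambda$, hence $f(\bar x)\in\{p^-_\lambda(\bar x),p^+_\lambda(\bar x)\}$. Assume $\lambda>\hat\lambda(x_0)$. The transition $f(\bar x_1)=p^+_\lambda(\bar x_1)\to f(\bar x_2)=p^-_\lambda(\bar x_2)$ between consecutive turning points $\bar x_1<\bar x_2$ sufficiently close to $x_0$ is impossible: for small $|\bar x_2-\bar x_1|$ it forces $f$ to be monotone decreasing on $(\bar x_1,\bar x_2)$, yet the intermediate value theorem applied to $f-\hat p$ produces $\hat y\in(\bar x_1,\bar x_2)$ with $f(\hat y)=\hat p(\hat y)$, at which $f'(\hat y)=(\lambda-\hat\lambda(\hat y))/a(\hat y)>0$, contradicting monotone decrease. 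As a consequence, any sequence of turning points $\bar x_n\downarrow x_0$ must eventually lie entirely on one branch, either $p^-_\lambda$ or $p^+_\lambda$.

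\textbf{Convergence.} Assume first $\lambda>\hat\lambda(x_0)$. If no turning point of $f$ accumulates at $x_0$, then $f$ is strictly monotone on $(x_0,x_0+r')$ for some $r'>0$ and $L:=\lim_{x\to x_0^+}f(x)$ exists; the blowup argument of the first step forces $H(x_0,L)=\lambda$, so $L\in\{p^-_\lambda(x_0),p^+_\lambda(x_0)\}$. Otherwise, by the structural observation the turning points $\bar x_n\downarrow x_0$ eventually all lie on one branch, say $p^-_\lambda$. On each $I_n=(\bar x_{n+1},\bar x_n)$, $f$ is monotone with $f(\bar x_i)=p^-_\lambda(\bar x_i)$ at both endpoints, and by the lower bound $\hat p(x)-p^-_\lambda(x)\geqslant(\lambda-\hat\lambda(x))/C$ inherited from (H2), for $|I_n|$ small enough we have $f<\hat p$ on $I_n$. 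Proposition \ref{appC prop tool2} (whichever subcase matches the sign of $f'$) then yields $|H(x,f(x))-\lambda|\leqslant 2|I_n|C$ on $I_n$; since $|I_n|\to 0$, we get $H(x,f(x))\to\lambda$ and $f<\hat p$ near $x_0$, so every subsequential limit $L^*$ of $f(x)$ satisfies $H(x_0,L^*)=\lambda$ and $L^*\leqslant\hat p(x_0)$, forcing $L^*=p^-_\lambda(x_0)$. The $p^+_\lambda$-branch case is analogous via Proposition \ref{appC prop tool1}, and the degenerate case $\lambda=\hat\lambda(x_0)$ follows from Proposition \ref{appC prop tool3} together with $|f(\bar x_n)-\hat p(\bar x_n)|\to 0$ (inferred from $H(\bar x_n,f(\bar x_n))=\lambda=\hat\lambda(x_0)$, $\hat\lambda(\bar x_n)\to\hat\lambda(x_0)$, and (sqC)). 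The main obstacle is the structural observation: it is precisely the strict quasiconvexity of $H$ at the crossing $\hat y$ that rules out $p^+\to p^-$ transitions and thereby prevents persistent oscillation of $f$ between the two branches as $x\to x_0$.
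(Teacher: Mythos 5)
Your argument is correct in substance and uses the same toolbox as the paper (the non-integrability of $1/a$ near a zero of $a$ from Lemma \ref{lemma 1/a} resp.\ the quadratic bound $a\leqslant\kappa^2(x-x_0)^2$, the dichotomy between eventual monotonicity and accumulating zeros of $f'$, and Propositions \ref{appC prop tool1}--\ref{appC prop tool3}), but you organize the oscillatory case differently. The paper fixes a turning point $\ell$ near $x_0$, splits according to whether $f(\ell)>\hat p(\ell)$ or $f<\hat p$ at all such points, and then runs a continuation (sup) argument with the sets $E$, $F$ to propagate the branch and the two-sided bound on $H(x,f(x))$ up to $x_0$; you instead first prove an exclusion lemma (no transition from the $p^+_\lambda$-branch to the $p^-_\lambda$-branch among turning points close to $x_0$) via the observation that at any crossing $f(\hat y)=\hat p(\hat y)$ one has $f'(\hat y)=(\lambda-\hat\lambda(\hat y))/a(\hat y)>0$, deduce that all turning points eventually sit on a single branch, and only then apply Propositions \ref{appC prop tool1}/\ref{appC prop tool2}/\ref{appC prop tool3} on the monotone pieces. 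This is a legitimate and rather transparent alternative to the paper's continuation argument, and your upfront proof of $\lambda\geqslant\hat\lambda(x_0)$ (needed anyway for $p^\pm_\lambda(x_0)$ to make sense) is fine.

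One imprecision should be repaired: you treat the zero set of $f'$ as if it were discrete. ``Consecutive turning points'' need not exist, and if $(\bar x_n)_n$ is merely a sequence of turning points decreasing to $x_0$, the intervals $I_n=(\bar x_{n+1},\bar x_n)$ may contain further zeros of $f'$, so the claim ``$f$ is monotone on $I_n$'' can fail as stated (indeed $f'$ may even vanish on sets with nonempty interior, where $H(x,f(x))\equiv\lambda$). The fix is routine and consistent with your scheme: work with the connected components of $\{f'\neq 0\}$ near $x_0$ (whose endpoints, when interior to $J$, are zeros of $f'$, hence satisfy $H=\lambda$ and lie on one of the two branches), note that on the closure of $\{f'=0\}$ one has $H(x,f(x))=\lambda$ and $f\in\{p^-_\lambda,p^+_\lambda\}$ exactly, and apply the tool propositions component by component; likewise, in the structural observation the existence of a ``consecutive'' pair with $p^+$ on the left and $p^-$ on the right requires a short compactness argument (or can be bypassed altogether, since your crossing-point argument does not really need monotonicity: taking $\hat y:=\sup\{x\in[y,z]:f(x)\geqslant\hat p(x)\}$ and using that $f'(\hat y)$ is large compared with the Lipschitz constant of $\hat p$ because $a(\hat y)$ is small already yields the contradiction). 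With these adjustments the proof goes through.
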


\begin{proof}
Choose $K\geqslant  \|f\|_{L^\infty(J)}$ and denote by $C=C(K)$ the constant chosen according to (C) above. For every $y<x$ in $J$ we have 
\[
f(x)-f(y)=\int_y^x \dfrac{\lambda -H(z,f(z))}{a(z)}\, dz,
\]
i.e.
\begin{equation}\label{appC eq not exploding}
\left| \int_y^x \dfrac{\lambda -H(z,f(z))}{a(z)}\, dz\right|
\leqslant 2K
\qquad
\hbox{for all $y<x$ in $J$.}
\end{equation}
We first consider the case when $x_0$ is the right end point of the interval $J$. 
Let us assume there exists a $\rho>0$ such that $f$ is monotone in $J\cap (x_0-\rho,x_0)$. By monotonicity we infer that $f(x)$ has a limit $p$ as $x\to x_0^-$ in $J$. If $H(x_0,p)\not = \lambda$, there would exist $\rho>0$ small enough such that $H(x,f(x))-\lambda$ has constant sign in $J\cap (x_0-\rho,x_0)$ and $|H(x,f(x))-\lambda|>\eps$ for all $x\in J\cap (x_0-\rho,x_0)$ for some $\eps>0$. This contradicts \eqref{appC eq not exploding} since, for every fixed $y\in J$,  $\int_{y}^x \frac{1}{a(z)}\, dz\to +\infty$ as 
$x\to x_0^-$ by Lemma \ref{lemma 1/a}.
%\[
%\lim_{x\to x_0^-} \int_y^x \dfrac{1}{a(z)}\, dz =+\infty\qquad\hbox{for every fixed $y\in J$.}
%\]
This implies the assertion. 

Let us assume on the contrary that $f$ is not monotone in $J\cap (x_0-\rho,x_0)$ for every $\rho>0$. 
In particular, there exists an increasing sequence of points $(x_n)_n$ converging to $x_0$ such that $f'(x_n)=0$ for all $n\in\N$, hence 
\[
\lambda=a(x_n)f'(x_n)+H(x_n,f(x_n))=H(x_n,f(x_n))\qquad\hbox{for all $n\in\N$.}
\]
We derive that any accumulation point $p$ of $\big(f(x_n)\big)_n$ satisfies $H(x_0,p)=\lambda$, in particular $\lambda\geqslant \hat\lambda(x_0)$. We split the proof in two subcases.\smallskip\\ 
\noindent{\bf Case $\lambda>\hat\lambda(x_0)$.} Let us denote by $r(\hat\kappa,C,\lambda-\hat\lambda(x_0))$ the radius given by Lemma \ref{appC lemma 2} and choose $r\in\big(0,r(\hat\kappa,C,\lambda-\hat\lambda(x_0))\big)$. By the non monotonicity assumption on $f$ in a any left neighborhood of $x_0$, we know from what remarked above that there exist  infinitely many $x\in (x_0-r,x_0)$ such that $f'(x)=0$. For such an $x$, by arguing as above, we infer that $H(x,f(x))=\lambda>\hat\lambda(x)$, where the last inequality follows from the choice of $r$ in view of Lemma \ref{appC lemma 2}. 

Let us assume that there exists $\ell\in (x_0-r,x_0)$ such that $H(\ell,f(\ell))=\lambda$ and $f(\ell)>\hat p(\ell)$. Denote by $E$ the (nonempty) set of points $x\in [\ell,x_0)$ that satisfy the following inequalities:
\begin{itemize}
\item[(a)] \quad $f(x)>\hat p(x)$;\smallskip
\item[(b)] \quad $\lambda-2Cr \leqslant H(x,f(x)) \leqslant \lambda +2Cr$. 
\end{itemize}
Let us set $\overline y:=\sup\{y\in [\ell,x_0)\,:\, [\ell,y)\subseteq E\,\}$. By continuity of $f$, $\hat p$ and $H$, we certainly have $\overline y>\ell$. We claim that $\overline y=x_0$. Let us assume by contradiction that $\overline y\in (\ell,x_0)$. By continuity of $f$, $\hat p$ and $H$, we get that $\overline y$ satisfies condition (b) above and $f(\overline y)\geqslant \hat p(\overline y)$. Since 
$\lambda-2Cr>\hat\lambda(\overline y)$ by the choice of $r>0$, we infer that $\overline y$ satisfies (a) as well, i.e. $\overline y\in E$.  
If $f'(\overline y)=0$, then $H(\overline y, f(\overline y))=\lambda$ and by continuity of $f$, $\hat p$ and $H$ we can find $\rho>0$ small enough such that $(\overline y-\rho,\overline y+\rho)\subset E$, thus contradicting the maximality of $\overline y$. 
Let us assume, on the other hand, that $f'(\overline y)\not=0$, and take the connected component $I$ of the set 
$\{x\in (\ell,x_0)\,:\,f'(x)\not=0\,\}$ containing $\overline y$. Then $I$ is a bounded open interval of the form $I=(\tilde \ell_1,\tilde \ell_2)$ with  $\ell\leqslant \tilde\ell_1<\overline y<\tilde\ell_2< x_0$, where the strict inequality $\tilde \ell_2<x_0$ comes from the fact that we are assuming that $f$ is not monotone in any left neighbourhood of $x_0$.  In any case, no matter if $\tilde\ell_1>\ell_1$ or $\tilde\ell_1=\ell_1$, we have that $f'(\tilde\ell_1)=0$, hence $H(\tilde\ell_1,f(\tilde\ell_1))=\lambda$ and $f(\tilde\ell_1)>\hat p(\tilde\ell_1)$ by definition of $\overline y$. 
By applying Proposition \ref{appC prop tool1} we infer that $I\subset E$, and hence 
$[\ell,\tilde\ell_2]=[\ell,\overline y]\cup I \subseteq E$, thus contradicting the maximality of $\overline y$. Hence $\overline y=x_0$. This means that any accumulation point $p$ of the (bounded) sequence $\big(f(z_n)\big)_n$,  where $(z_n)_n$ is any fixed sequence of points converging to $x_0$ from the left, satisfies $p>\hat p(x_0)$ and $\lambda-2Cr \leqslant H(x,f(x)) \leqslant \lambda +2Cr$ for every $r>0$ small enough. This readily implies that $\lim_{x\to x_0^-} f(x)=p_\lambda^+(x_0)$.  

We are left with the case when $f(x)<\hat p(x)$ for every $x\in (x_0-r,x_0)$ such that $H(x,f(x))=\lambda$. Pick $\ell\in (x_0-r,x_0)$ such that $H(\ell,f(\ell))=\lambda$ and $f(\ell)<\hat p(\ell)$ and denote by $F$ the set of points $x\in [\ell,x_0)$ that satisfy the following inequalities:
\begin{itemize}
\item[(a$'$)] \quad $f(x)<\hat p(x)$;\smallskip
\item[(b$'$)] \quad $\lambda-2Cr \leqslant H(x,f(x)) \leqslant \lambda +2Cr$. 
\end{itemize}
Let us set $\overline y:=\sup\{y\in [\ell,x_0)\,:\, [\ell,y)\subseteq F\,\}$. We claim that $\overline y=x_0$. Let us assume by contradiction that $\overline y\in (\ell,x_0)$. 
The argument is similar to the one above. First, we get that $\overline y>\ell$ and 
$\overline y\in F$. If $f'(\overline y)=0$, by arguing as above we again reach a contradiction with the maximality of $\overline y$. 
Let us then assume that $f'(\overline y)\not=0$, and take the connected component $I$ of the set 
$\{x\in (\ell,x_0)\,:\,f'(x)\not=0\,\}$ containing $\overline y$. Then $I$ is a bounded open interval of the form $I=(\tilde \ell_1,\tilde \ell_2)$ with $\ell\leqslant \tilde\ell_1<\overline y<\tilde\ell_2< x_0$. 
%where the strict inequality $\tilde \ell_2<x_0$ comes from the fact that we are assuming that $f$ is not monotone in any left neighbourhood of $x_0$.  
Then $f'(\tilde\ell_2)=0$, hence $H(\tilde\ell_2,f(\tilde\ell_2))=\lambda$, which implies that $f(\tilde\ell_2)<\hat p(\tilde\ell_2)$ since this is the case we are considering.  
By applying Proposition \ref{appC prop tool2} we infer that $I\subset F$, and hence 
$[\ell,\tilde\ell_2]=[\ell,\overline y]\cup I \subseteq F$, thus contradicting the maximality of $\overline y$. By arguing as above, we conclude that $\lim_{x\to x_0^-} f(x)=p_\lambda^-(x_0)$.  
\smallskip\\
\noindent{\bf Case $\lambda=\hat\lambda(x_0)$.} Let us fix $\eps>0$ and denote by $r(\hat\kappa,\eps)>0$ 
the radius given by Lemma \ref{appC lemma 3}. Choose $r\in \big(0,r(\hat\kappa,\eps)\big)$ and pick $\ell\in (x_0-r,x_0)$ such that $f'(\ell)=0$ and hence $H(\ell,f(\ell))=\lambda$. Take $y\in (\ell,x_0)$. If $f'(y)=0$, then $H(y,f(y))=\lambda$. If $f'(y)\not=0$, take the connected component $I$ of the set $\{x\in (\ell,x_0)\,:\,f'(x)\not=0\,\}$ containing $y$. Then $I$ is a bounded open interval of the form $I=(\tilde \ell_1,\tilde \ell_2)$ with $\ell\leqslant \tilde\ell_1<\overline y<\tilde\ell_2< x_0$, where the strict inequality $\tilde \ell_2<x_0$ comes from the fact that we are assuming that $f$ is not monotone in any left neighbourhood of $x_0$. For each $i\in\{1,2\}$ we have 
$f'(\tilde\ell_i)=0$ and hence $H(\tilde\ell_i,f(\tilde\ell_i))=\lambda$. By applying Proposition \ref{appC prop tool3} we infer that 
$H(x,f(x))<\lambda+\eps+2Cr$ for every $x\in I$. By arbitrariness of the choice of $\eps>0$ and $r\in \big(0,r(\hat\kappa,\eps)\big)$, we conclude that $\lim_{x\to x_0^-} H(x,f(x))=\landa=\hat\lambda(x_0)$, namely 
$\lim_{x\to x_0^-} f(x)=\hat p(x_0)=p_\lambda^-(x_0)=p_\lambda^+(x_0)$.\smallskip

When $x_0$ is the left end point of $J=(x_0,\tilde\ell)$, we can reduce to the case considered above after remarking that the function $\check f(x):=-f(-x)$ is a solution to \eqref{appC eq cellODE} in $\check J:=(-\tilde\ell,-x_0)$ with $\check a(x):=a(-x)$ and $\check H(x,p):=H(-x,-p)$ in place of $a$ and $H$, respectively. 
\end{proof}

Next, we prove a refined version of Proposition \ref{appC prop tool2}, where we drop the monotonicity condition previously assumed on $f$.

\begin{prop}\label{appC prop tool2+}
Let $x_0\in\{a=0\}$ and $\lambda\geqslant \delta+\hat\lambda(x_0)$ for some $\delta>0$. Let $f\in\CC^1(A)$ be a bounded solution of  \eqref{appC eq cellODE}. Let $C=C(K)$ the constant chosen according to (C) above with $K\geqslant \|f\|_{L^\infty(A)}$,  
and $r(\hat\kappa,C,\delta)>0$ be the radius given by Lemma \ref{appC lemma 2}. Let  $r\in\big(0,r(\hat\kappa,C,\delta)\big)$ and 
$J=( \ell_1, \ell_2)$ be a connected component of $A$ contained in $(x_0-r,x_0+r)$. Then 
\begin{equation}\label{appC claim prop tool2+}
H(x,f(x))\leqslant \lambda +2hC\quad\hbox{for all $x\in J$, where $h:=|I|\leqslant r$.}
\end{equation}
Furthermore:
\begin{itemize}
\item[(a)] if $f(x)>\hat p(x)$ in a right neighborhood of $ \ell_1$, then 
\begin{eqnarray*}
f(y)>\hat p(x)\qquad &&\hbox{for all $x,y\in J$;}\\
\lambda-2hC \leqslant H(x,f(x)) \qquad &&\hbox{for all $x\in J$};\smallskip
\end{eqnarray*}
\item[(b)] if $f(x)<\hat p(x)$ in a left neighborhood of $ \ell_2$, then 
\begin{eqnarray*}
f(y)<\hat p(x)\qquad &&\hbox{for all $x,y\in J$;}\\
\lambda-2hC \leqslant H(x,f(x)) \qquad &&\hbox{for all $x\in J$}.
\end{eqnarray*}
\end{itemize}
\end{prop}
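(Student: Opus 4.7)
\emph{Strategy.} To remove the monotonicity hypothesis of Propositions \ref{appC prop tool1}--\ref{appC prop tool2}, I decompose $J$ along the closed set $\mathcal Z := \{x \in J : f'(x) = 0\}$. On $\mathcal Z$ the ODE gives $H(x, f(x)) = \lambda$ directly, while each connected component $I = (\tilde\ell_1, \tilde\ell_2)$ of $J \setminus \mathcal Z$ is an interval on which $f'$ has constant sign, hence $f$ is strictly monotone. If $\tilde\ell_i \in J$ then $f'(\tilde\ell_i) = 0$ and $H(\tilde\ell_i, f(\tilde\ell_i)) = \lambda$; since Lemma \ref{appC lemma 2} yields $\hat\lambda(\tilde\ell_i) < \lambda - 3rC$, it follows that $f(\tilde\ell_i) \in \{p_\lambda^-(\tilde\ell_i), p_\lambda^+(\tilde\ell_i)\}$. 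If instead $\tilde\ell_i \in \{\ell_1, \ell_2\}$, Theorem \ref{appC teo boundary regularity} lets me shrink $I$ by a small $\sigma > 0$ so that $H$ at the new endpoint lies within $\sigma C$ of $\lambda$. In either case hypothesis (iii) of Propositions \ref{appC prop tool1}--\ref{appC prop tool2} is met, and after a further subdivision if needed to keep each piece of length at most $r$ those propositions become applicable.

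\emph{Global bound.} On each (possibly shrunk and subdivided) monotone piece $I'$, I apply Proposition \ref{appC prop tool2}(b) when $f' > 0$ on $I'$, Proposition \ref{appC prop tool1} when $f > \hat p$ at the left endpoint of $I'$, or Proposition \ref{appC prop tool2}(a) when $f < \hat p$ at an endpoint and $f' < 0$ on $I'$. In every case one obtains $H(x, f(x)) \leq \lambda + 2|I'|C \leq \lambda + 2hC$ on $I'$. Combined with $H \equiv \lambda$ on $\mathcal Z$ and a passage to the limit $\sigma \to 0$ at $\partial J$, this yields the stated global bound.

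\emph{Part (a).} Set
\[
\bar s := \sup\{y \in J : f(x) > \hat p(x) \text{ for all } x \in (\ell_1, y]\}.
\]
I will show $\bar s = \ell_2$. Restricting the decomposition above to $(\ell_1, \bar s)$, on which $f > \hat p$ throughout, every monotone piece now meets the hypotheses of Proposition \ref{appC prop tool1}, yielding the two-sided bound $\lambda - 2|I'|C \leq H \leq \lambda + 2|I'|C$ on each $I'$; together with $H = \lambda$ on $\mathcal Z$, this gives $H(x, f(x)) \geq \lambda - 2hC$ on $(\ell_1, \bar s)$. If $\bar s$ were in $J$, continuity would force $f(\bar s) = \hat p(\bar s)$, hence $H(\bar s, f(\bar s)) = \hat\lambda(\bar s) < \lambda - 3rC \leq \lambda - 2hC$ by Lemma \ref{appC lemma 2}, contradicting the lower bound just obtained. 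Hence $\bar s = \ell_2$, so $f > \hat p$ on all of $J$ and $H \geq \lambda - 2hC$ throughout. To strengthen this to $f(y) > \hat p(x)$ for all $x, y \in J$, observe that
\[
H(x, f(y)) \geq H(y, f(y)) - C|x - y| \geq \lambda - 2hC - hC > \hat\lambda(x),
\]
so $f(y) \neq \hat p(x)$; continuity in $x$ and the base case $x = y$ then force $f(y) > \hat p(x)$ throughout $J$.

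\emph{Part (b) and main obstacle.} Part (b) follows by the symmetric argument: define $\underline{s} := \inf\{y \in J : f(x) < \hat p(x) \text{ for all } x \in [y, \ell_2)\}$ and repeat the argument above, using Proposition \ref{appC prop tool2}(a) on decreasing monotone pieces and the refinement in the second half of Proposition \ref{appC prop tool2}(b) on increasing ones to secure the two-sided bound on $(\underline{s}, \ell_2)$; alternatively, one can reduce (b) to (a) through the reflection $x \mapsto -x$ used at the end of the proof of Theorem \ref{appC teo boundary regularity}. The principal difficulty is the possibly Cantor-like structure of $\mathcal Z$ and the unavailability of the tools at $\partial J$; both are sidestepped by combining the supremum/infimum formulation with the boundary asymptotics provided by Theorem \ref{appC teo boundary regularity}.
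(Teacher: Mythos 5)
Your argument is correct and follows essentially the same route as the paper: decompose $J$ into maximal intervals where $f'$ has constant sign, pin $H(\cdot,f(\cdot))$ exactly at $\lambda$ at interior endpoints (where $f'=0$) and approximately at $\partial J$ via Theorem \ref{appC teo boundary regularity}, apply Propositions \ref{appC prop tool1}--\ref{appC prop tool2} on each piece, and run a sup/continuation argument for (a) with reflection for (b). The differences are only organizational (the paper trims a $\delta$-collar at $\partial J$ instead of your $\sigma\to 0$ shrinking, and its continuation set carries both conditions rather than just $f>\hat p$); note also that your ``further subdivision to length $\leqslant r$'' is unnecessary, since every piece already has length at most $h=|J|\leqslant r$, and the endpoint tolerance you actually need after shrinking is within $|I'|C$ of $\lambda$ (which the boundary asymptotics give), not within $\sigma C$.
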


\begin{proof}
From the fact that $J=(\ell_1,\ell_2)$ is contained in $(x_0-r,x_0+r)$, we derive that $a(\ell_1)=a(\ell_2)=0$. We can therefore apply 
Theorem \ref{appC teo boundary regularity} to infer that there exists $0<\delta<|J|/2$ such that 
\begin{equation}\label{appC eq1 prop tool2+}
\hat\lambda(y)<\lambda-hC<H(x,f(x))<\lambda+hC\qquad\hbox{for every $x,y\in (\ell_1,\ell_1+\delta]\cup [\ell_2-\delta,\ell_2)$,}
\end{equation}
where the first inequality holds due to the choice of $r$. Hence we need to show \eqref{appC claim prop tool2+} only for points $y\in J_\delta:= (\ell_1+\delta,\ell_2-\delta)$. 
Let $y\in J_\delta$. If $f'(y)\geqslant 0$,  then 
\[
H(y,f(y))\leqslant a(y)f'(y)+H(y,f(y))=\lambda.
\]
Let us assume $f'(y)<0$ and denote by $I=(\tilde\ell_1,\tilde\ell_2)$ the connected component of the set 
$\{x\in J_\delta\,:\,f'(x)<0\,\}$ containing the point $y$. Note that  $\ell_1+\delta\leqslant \tilde\ell_1<y<\tilde\ell_2\leqslant \ell_2-\delta$. We remark that \eqref{appC eq1 prop tool2+} holds with $x:=\tilde\ell_i$ for each $i\in\{1,2\}$. This is trivially true due to \eqref{appC eq1 prop tool2+} if $|\tilde\ell_i-\ell_i|=\delta$. 
If on the other hand $|\tilde\ell_i-\ell_i|>\delta$, then $f'(\tilde\ell_i)=0$ and so $H(\tilde\ell_i,f(\tilde\ell_i))=\lambda$. If $f(\tilde\ell_1)>\hat p(\tilde\ell_1)$, we can apply Proposition \ref{appC prop tool1} and get
\[
\lambda-2hC \leqslant H(x,f(x))\leqslant \lambda+2hC \qquad \hbox{for all $x\in I$.}
\]
If $f(\tilde\ell_1)<\hat p(\tilde\ell_1)$, we apply Proposition \ref{appC prop tool2}-(a) to get 
\[
\lambda\leqslant  H(x,f(x))<\lambda+2hC \qquad \hbox{for all $x\in I$}.
\]
This proves the first assertion. 

We now proceed to prove assertion (a). If $f(x)>\hat p(x)$ in a right neighborhood of $\ell_1$, by the continuity of $f$ and $\hat p$ we get that the first two inequalities in \eqref{appC eq1 prop tool2+} imply that $f(x)>\hat p(x)$ for every $x\in (\ell_1,\ell_1+\delta]$. 
%If $f$ is monotone in $J_\delta$, the assertion follows as a direct application of Proposition \ref{appC prop tool1} with $I:=J_\delta$. Let us assume that $f$ is not monotone in $J_\delta$.  
%
Denote by $E$ the (nonempty) set of points $x\in [\ell_1+\delta,\ell_2)$ that satisfy the following inequalities:
\begin{itemize}
\item[(i)] \quad $f(x)>\hat p(x)$;\smallskip
\item[(ii)] \quad $\lambda-2hC \leqslant H(x,f(x)) \leqslant \lambda +2hC$. 
\end{itemize}
Let us set $\overline y:=\sup\{y\in [\ell_1+\delta,\ell_2)\,:\, [\ell_1+\delta,y)\subseteq E\,\}$. By continuity of $f$, $\hat p$ and $H$, we certainly have $\overline y>\ell_1+\delta$. We claim that $\overline y=\ell_2$. Let us assume by contradiction that $\overline y\in (\ell_1+\delta,\ell_2)$. By continuity of $f$, $\hat p$ and $H$, we get that $\overline y$ satisfies condition (ii) above and $f(\overline y)\geqslant \hat p(\overline y)$. Since 
$\lambda-2hC>\hat\lambda(\overline y)$ by the choice of $r>0$, we infer that $\overline y$ satisfies (i) as well, i.e. $\overline y\in E$.  
If $f'(\overline y)=0$, then $H(\overline y, f(\overline y))=\lambda$ and by continuity of $f$, $\hat p$ and $H$ we can find $\rho>0$ small enough such that $(\overline y-\rho,\overline y+\rho)\subset E$, thus contradicting the maximality of $\overline y$. 
Let us assume, on the other hand, that $f'(\overline y)\not=0$, and take the connected component $I$ of the set 
$\{x\in (\ell_1+\delta,\ell_2)\,:\,f'(x)\not=0\,\}$ containing $\overline y$. Then $I$ is a bounded open interval of the form $I=(\tilde \ell_1,\tilde \ell_2)$ with  $\ell_1+\delta\leqslant \tilde\ell_1<\overline y<\tilde\ell_2\leqslant \ell_2$.  By definition of $\overline y$, we have that the point $\tilde\ell_1$ satisfies condition (i) above. Furthermore, if $\tilde\ell_1=\ell_1+\delta$, then the point $x:=\tilde\ell_1$ satisfies \eqref{appC eq1 prop tool2+}. If $\tilde\ell_1>\ell_1+\delta$, then $f'(\tilde\ell_1)=0$ and so $H(\tilde\ell_1,f(\tilde\ell_1))=\lambda$. In either case, we can apply Proposition \ref{appC prop tool1} to derive that $I\subset E$, and hence 
$[\ell_1+\delta,\tilde\ell_2]=[\ell_1+\delta,\overline y]\cup I \subseteq E$, thus contradicting the maximality of $\overline y$. Hence $\overline y=\ell_2$. We have thus proved that the inequalities (i) and (ii)  
%\eqref{appC eq1 prop tool2+} and $f(x)>\hat p(x)$ 
hold for every 
$x\in J$. Since for every fixed $y\in J$ the inequality $f(x)>\hat p(y)$ is satisfied at $x=y$, from (ii), the choice of $r>0$ and 
%\eqref{appC eq1 prop tool2+} and  
the fact that $f$ is continuous we derive that $f(x)>\hat p(y)$ for every $x,y\in J$. 

Assertion (b) follows directly from item (a) after observing that the function $\check f(x):=-f(-x)$ is a solution to \eqref{appC eq cellODE} in $\check J:=(-\ell_2,-\ell_1)$ with $\check a(x):=a(-x)$ and $\check H(x,p):=H(-x,-p)$ in place of $a$ and $H$, respectively. 
\end{proof}

We will also need the following refined version of Proposition \ref{appC prop tool3}, where we drop the monotonicity condition previously assumed on $f$.

\begin{prop}\label{appC prop tool3+}
Let $x_0\in\{a=0\}$ and $\lambda=\hat\lambda(x_0)$. Let $f\in\CC^1(A)$ be a bounded solution of   \eqref{appC eq cellODE}. Let $C=C(K)$ the constant chosen according to (C) above with $K\geqslant \|f\|_{L^\infty(A)}$. Let $\eps>0$ and denote by $r(\hat\kappa,\eps)>0$ be the radius given by Lemma \ref{appC lemma 3}. Let  $r\in\big(0,r(\hat\kappa,\eps)\big)$ and 
$J=( \ell_1, \ell_2)$ be a connected component of the set $A$ contained in $(x_0-r,x_0+r)$. Then\ \  
\begin{equation}\label{appC claim prop tool3+}
H(x,f(x))<\lambda+\eps+2hC\qquad\hbox{for all $x\in J$, where $h:=|J|\leqslant r$.}
\end{equation}
\end{prop}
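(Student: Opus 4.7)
The plan is to mirror how Proposition \ref{appC prop tool2+} strengthens Proposition \ref{appC prop tool2}: decompose $J$ into maximal subintervals on which $f$ is monotone, handle each piece, and use Theorem \ref{appC teo boundary regularity} to supply boundary values when a piece reaches $\partial J$, where $f$ itself is not defined. The easy case is immediate: whenever $f'(x)\geqslant 0$, the ODE \eqref{appC eq cellODE} gives $H(x,f(x))=\landa-a(x)f'(x)\leqslant\landa$, which lies well below the required bound.

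For the remaining case $f'(x)<0$, I would let $I=(\tilde\ell_1,\tilde\ell_2)$ be the maximal connected component of $\{y\in J:f'(y)<0\}$ containing $x$, so that $f$ is strictly decreasing on $I$. At each endpoint $\tilde\ell_i$, one of two situations occurs. If $\tilde\ell_i\in J$, then maximality of $I$ together with continuity of $f'$ forces $f'(\tilde\ell_i)=0$, hence $H(\tilde\ell_i,f(\tilde\ell_i))=\landa$. If instead $\tilde\ell_i\in\{\ell_1,\ell_2\}$, then $a(\tilde\ell_i)=0$, and applying Theorem \ref{appC teo boundary regularity} to the restriction of $f$ to $I$ the one-sided limit $p_{\tilde\ell_i}:=\lim_{y\to\tilde\ell_i,\,y\in I}f(y)$ exists and belongs to $\{p_\landa^-(\tilde\ell_i),p_\landa^+(\tilde\ell_i)\}$, so again $H(\tilde\ell_i,p_{\tilde\ell_i})=\landa$. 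Writing $p_+$ and $p_-$ for these (possibly limiting) values at $\tilde\ell_1$ and $\tilde\ell_2$ respectively, monotonicity of $f$ on $I$ places $f(x)\in[p_-,p_+]$ for every $x\in I$.

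The final estimate then proceeds just as in the proof of Proposition \ref{appC prop tool3}, with $p_+,p_-$ playing the role of $f(\tilde\ell_1),f(\tilde\ell_2)$. Quasiconvexity of $H(\tilde\ell_1,\cdot)$ and condition (C) give
\[
H(\tilde\ell_1,f(x))\leqslant\max\{H(\tilde\ell_1,p_+),\,H(\tilde\ell_1,p_-)\}\leqslant\max\{\landa,\,H(\tilde\ell_2,p_-)+C|I|\}=\landa+C|I|,
\]
after using (C) to transport the estimate at $\tilde\ell_2$ back to $\tilde\ell_1$ in the second argument. A further Lipschitz step in $x$ then yields
\[
H(x,f(x))\leqslant H(\tilde\ell_1,f(x))+C|x-\tilde\ell_1|\leqslant\landa+2C|I|\leqslant\landa+2hC<\landa+\eps+2hC,
\]
as required.

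The main obstacle, and the only genuinely new point compared to Proposition \ref{appC prop tool3}, is the boundary case $\tilde\ell_i\in\partial J$: there $f$ is undefined and $a$ vanishes, so Proposition \ref{appC prop tool3} cannot be quoted verbatim on $I$. The resolution is exactly Theorem \ref{appC teo boundary regularity}, which provides honest one-sided limits of $f$ with prescribed $H$-value $\landa$; once these play the role of $f(\tilde\ell_i)$, the quasiconvexity argument extends by continuity to $\overline{I}$ and the estimate above goes through with the same constants. It is worth noting that the $\eps$ slack kept in the stated bound is in fact superfluous in this non-monotone refinement; it is retained only for symmetry with Proposition \ref{appC prop tool3}.
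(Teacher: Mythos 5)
Your proof is correct and follows essentially the same route as the paper: the case $f'\geqslant 0$ is handled directly from the ODE, and on each maximal interval where $f'<0$ one runs the quasiconvexity/Lipschitz-in-$x$ estimate of Proposition \ref{appC prop tool3}, with Theorem \ref{appC teo boundary regularity} supplying the value $\lambda$ of $H$ at any endpoint lying on $\partial J$ (where $a$ vanishes and $f$ is undefined). The only divergence is that the paper first extracts a $\delta$-collar near $\partial J$ on which $H(x,f(x))<\lambda+\eps$, so that Proposition \ref{appC prop tool3} can be quoted verbatim with honest endpoint values inside $J_\delta$, whereas you feed the one-sided limits $p_\lambda^\pm(\ell_i)$ directly into the same computation; this is legitimate and, as you note, shows that the $\eps$ in \eqref{appC claim prop tool3+} is not actually needed in this non-monotone refinement, giving the slightly sharper bound $H(x,f(x))\leqslant\lambda+2hC$ on $J$.
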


\begin{proof}
From the fact that $J=(\ell_1,\ell_2)$ is contained in $(x_0-r,x_0+r)$, we derive that $a(\ell_1)=a(\ell_2)=0$. We can therefore apply  Theorem \ref{appC teo boundary regularity}, there exists $0<\delta<|J|/2$ such that 
\begin{equation}\label{appC eq1 prop tool3+}
H(x,f(x))<\lambda+\eps\qquad\hbox{for every $x,y\in (\ell_1,\ell_1+\delta]\cup [\ell_2-\delta,\ell_2)$.}
\end{equation}
Hence we need to show \eqref{appC claim prop tool3+} only for points $y\in J_\delta:= (\ell_1+\delta,\ell_2-\delta)$. 
Let $y\in J_\delta$.  If $f'(y)\geqslant 0$,  then 
\[
H(y,f(y))\leqslant a(y)f'(y)+H(y,f(y))=\lambda.
\]
Let us assume $f'(y)<0$ and denote by $I=(\tilde\ell_1,\tilde\ell_2)$ the connected component of the set 
$\{x\in J_\delta\,:\,f'(x)<0\,\}$ containing the point $y$. Note that  $\ell_1+\delta\leqslant \tilde\ell_1<y<\tilde\ell_2\leqslant \ell_2-\delta$. We remark that \eqref{appC eq1 prop tool3+} holds with $x:=\tilde\ell_i$ for each $i\in\{1,2\}$. This is trivially true due to \eqref{appC eq1 prop tool3+} if $|\tilde\ell_i-\ell_i|=\delta$. 
If on the other hand $|\tilde\ell_i-\ell_i|>\delta$, then $f'(\tilde\ell_i)=0$ and so $H(\tilde\ell_i,f(\tilde\ell_i))=\lambda$. We can therefore apply Proposition \ref{appC prop tool3} and conclude that \eqref{appC claim prop tool3+} holds for every $x\in I$ and in particular for $x=y$. 
\end{proof}

\subsection{Main results}\label{sec:deterministic correctors main} We take advantage of the information gathered in the previous subsection and proceed to study the regularity and uniqueness properties of the viscosity solutions to \eqref{appC eq cellPDE} we are interested in. We start with the following crucial result. 

\begin{theorem}\label{appC teo boundary regularity2}
Let $x_0\in \{a=0\}$ be an accumulation point for $A$. Let 
$\lambda>\lambda_0$ and  $f\in\CC^1(A)$ be a bounded solution of   \eqref{appC eq cellODE}. 
Let $u_{\landa_0}$ be a Lipchitz viscosity solution of \eqref{appC eq cellPDE} with $	\lambda=\landa_0$. 
\begin{itemize}
\item[(a)] If $f\geqslant u'_{\lambda_0}$ on $A$, then \ \ 
$\displaystyle \lim_{x\to x_0,\, x\in A}f(x)=p_\lambda^+(x_0)>p_{\lambda_0}^+(x_0)\geqslant \hat p(x_0).$
\item[(b)] If $f\leqslant  u'_{\lambda_0}$ on $A$,  then \ \ 
$\displaystyle \lim_{x\to x_0,\, x\in A}f(x)=p_\lambda^-(x_0)<p_{\lambda_0}^-(x_0)\leqslant \hat p(x_0).$
\end{itemize}
In particular, $\hat\lambda(x_0)\leqslant \landa_0$.
\end{theorem}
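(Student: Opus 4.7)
Assertion (b) follows from (a) by applying the latter to $\check f(x) := -f(-x)$, $\check u_{\lambda_0}(x) := -u_{\lambda_0}(-x)$, $\check a(x) := a(-x)$ and $\check H(x,p) := H(-x,-p)$ in place of $f, u_{\lambda_0}, a$ and $H$; this exchanges $p_\lambda^-(x_0)$ with $-p_\lambda^+(-x_0)$ and the assumption $f \le u'_{\lambda_0}$ with $\check f \ge \check u'_{\lambda_0}$. So I focus on (a). Since $u_{\lambda_0}$ is $C^2$ on $A$ by Proposition \ref{prop regularity solutions}, the function $g := u'_{\lambda_0}$ is a bounded $C^1$ solution of \eqref{appC eq cellODE} with $\lambda_0$ in place of $\lambda$ on each component of $A$. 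Because $x_0$ accumulates $A$, either some component $J$ has $x_0$ as an endpoint, or countably many components $J_n$ accumulate at $x_0$ with endpoints tending to $x_0$; in both instances Theorem \ref{appC teo boundary regularity} applied to $g$, combined with continuity of $\hat\lambda$ (Lemma \ref{appC lemma 1}) in the second instance, yields $\hat\lambda(x_0) \le \lambda_0$. Together with $\lambda > \lambda_0$, the condition (sqC) then delivers the strict orderings $p_\lambda^+(x_0) > p_{\lambda_0}^+(x_0) \ge \hat p(x_0)$ and $p_\lambda^-(x_0) < p_{\lambda_0}^-(x_0) \le \hat p(x_0)$; the latter is crucial for what follows.

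\textbf{The limit argument.} Fix a sequence $(x_n) \subset A$ with $x_n \to x_0$, and let $J_n$ be the component of $A$ containing $x_n$. Up to subsequences we distinguish two exhaustive cases. Case (A): all $x_n$ lie in a single component $J$ with $x_0 \in \partial J$. By Theorem \ref{appC teo boundary regularity} applied to both $f$ at level $\lambda$ and $g$ at level $\lambda_0$, the limits $\ell_f := \lim_{J \ni x \to x_0} f(x)$ and $\ell_g := \lim_{J \ni x \to x_0} g(x)$ exist and lie in $\{p_\lambda^-(x_0), p_\lambda^+(x_0)\}$ and $\{p_{\lambda_0}^-(x_0), p_{\lambda_0}^+(x_0)\}$ respectively. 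The hypothesis $f \ge g$ on $A$ gives $\ell_f \ge \ell_g \ge p_{\lambda_0}^-(x_0) > p_\lambda^-(x_0)$ by the first paragraph, which forces $\ell_f = p_\lambda^+(x_0)$.

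\textbf{Case (B).} The $J_n$'s are eventually pairwise distinct, so $h_n := |J_n| \to 0$ and both endpoints $\ell_n^\pm$ of $J_n$ converge to $x_0$. Set $\delta := (\lambda - \lambda_0)/2 > 0$ and $K := \sup_A |f|$, let $C = C(K)$ be the Lipschitz constant from (C), and fix $r < r(\hat\kappa, C, \delta)$ from Lemma \ref{appC lemma 2}. For $n$ large, $J_n \subset (x_0 - r, x_0 + r)$ and $\hat\lambda(y) \le \lambda - \delta$ for all $y$ in $\overline{J_n}$. Applying Case (A) at the left endpoint $\ell_n^-$ of $J_n$ (playing the role of $x_0$) yields $\lim_{J_n \ni x \to (\ell_n^-)^+} f(x) = p_\lambda^+(\ell_n^-) > \hat p(\ell_n^-)$, so by continuity $f > \hat p$ in a right neighbourhood of $\ell_n^-$ inside $J_n$. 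Proposition \ref{appC prop tool2+}(a) then gives $f(x) > \hat p(x)$ for every $x \in J_n$ together with $|H(x, f(x)) - \lambda| \le 2 h_n C$ throughout $J_n$. Since (sqC) makes $p \mapsto H(x, p)$ strictly increasing on $[\hat p(x), +\infty)$, this sandwiches $f(x_n)$ between $p_{\lambda - 2h_n C}^+(x_n)$ and $p_{\lambda + 2h_n C}^+(x_n)$; continuity of the map $(\mu, x) \mapsto p_\mu^+(x)$ on $\{\mu \ge \hat\lambda(x)\}$ (a standard consequence of strict monotonicity and the continuity of $H$) then yields $f(x_n) \to p_\lambda^+(x_0)$.

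\textbf{Main obstacle.} The delicate step is Case (B), where $J_n$ shrinks and the internal geometry of $f$ on $J_n$ is a priori uncontrolled: $f$ may oscillate and is not assumed monotone. The combination of (i) a boundary value analysis at $\ell_n^-$ via the Case (A) reduction, which uses the strict separation $p_{\lambda_0}^-(x_0) > p_\lambda^-(x_0)$ coming from strict quasiconvexity, and (ii) the propagation estimate of Proposition \ref{appC prop tool2+}(a) across the entire component, is precisely what allows one to bridge an infinitesimal component and conclude uniformly.
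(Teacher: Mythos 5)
Your argument is correct and follows essentially the same route as the paper's proof: boundary limits from Theorem \ref{appC teo boundary regularity} (applied to both $f$ and $u'_{\lambda_0}$ at endpoints of components, which also yields $\hat\lambda(x_0)\leqslant\lambda_0$ and the strict separation of the $p^\pm$ values), followed by Proposition \ref{appC prop tool2+}(a) to propagate $f>\hat p$ and the two-sided bound $|H(x,f(x))-\lambda|\leqslant 2h_nC$ across the shrinking components. The only slip is in your reduction of (b) to (a): the reflected critical solution must be $\check u_{\lambda_0}(x):=u_{\lambda_0}(-x)$, whose derivative is $-u'_{\lambda_0}(-x)$, rather than $-u_{\lambda_0}(-x)$ (as written, $\check u_{\lambda_0}$ does not solve the reflected equation and $\check f\geqslant \check u'_{\lambda_0}$ is not equivalent to $f\leqslant u'_{\lambda_0}$); with this sign fixed, or by running the symmetric argument through Proposition \ref{appC prop tool2+}(b) as the paper does, the proof is complete.
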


\begin{proof}
Let us set $f_{\lambda_0}:=u'_{\lambda_0}$. In view of Proposition \ref{prop regularity solutions}, the function $f_{\lambda_0}$ is a $C^1$ solution of \eqref{appC eq cellODE} with $\lambda:=\lambda_0$.

We only consider the case when $x\to x_0^-$. The other case can be treated analogously. 
If there exists $\rho>0$ such that $(x_0-\rho,x_0)\subset A$, then, according to Theorem \ref{appC teo boundary regularity}, there exists $p_\lambda\in\{p^-_\lambda(x_0), p^+_\lambda(x_0)\}$ and 
$p_{\lambda_0}\in\{p^-_{\lambda_0}(x_0), p^+_{\lambda_0}(x_0)\}$ such that $f_\lambda(x)\to p_\lambda$ and $f_{\lambda_0}(x)\to p_{\lambda_0}$ as $x\to x_0^-$. In case (a), we have 
$p_\lambda\geqslant p_{\lambda_0} \geqslant p^-_{\lambda_0}(x_0)$, hence 
$p_\lambda=p^+_\lambda(x_0)>p_{\lambda_0}^+(x_0)\geqslant \hat p(x_0)$ by  quasiconvexity of $H(x_0,\cdot)$. 
 In case (b), we have 
$p_\lambda\leqslant p_{\lambda_0} \leqslant p^+_{\lambda_0}(x_0)$, hence 
$p_\lambda=p^-_\lambda(x_0)<p_{\lambda_0}^-(x_0)\leqslant \hat p(x_0)$ by quasiconvexity of $H(x_0,\cdot)$. 

If $(x_0-\rho,x_0)\not\subset A$ for every $\rho>0$, then there exist a sequence of connected component 
$I_n:=(\underline \ell_n, \overline \ell_n)$ of $A$ such that $\underline\ell_n\to x_0^-$ as $n\to +\infty$. 
From the first part of the proof applied to each interval $I_n$, we infer that $\hat\landa(\ul\ell_n)\leqslant \landa_0<\landa$. 
By continuity of $\hat\landa(\cdot)$, we get  $\hat\lambda(x_0)\leqslant \landa_0<\lambda$.

Let $C=C(K)$ the constant chosen according to (C) above with $K\geqslant \|f\|_{L^\infty(A)}$,  
and $r(\hat\kappa,C,\delta)>0$ be the radius given by Lemma \ref{appC lemma 2} with 
$\delta:=\lambda-\lambda_0$. Fix $r\in\big(0,r(\hat\kappa,C,\delta)\big)$ and let $I=(\ell_1,\ell_2)$ be a connected component of $A$ contained in $(x_0-r,x_0+r)$.  If we are in case (a), then, according to the first part of the proof, we have that $f(x)>\hat p(x)$  
in a right neighborhood of $\ell_1$. In view of Proposition \ref{appC prop tool2+}-(a) we get 
\[
f(x)>\hat p(x)\quad\hbox{and}\quad \lambda-2rC \leqslant H(x,f(x))\leqslant \lambda+2rC \qquad \hbox{for all $x\in I$.}
\]
This readily implies assertion (a). If we are in case (b), then, according to the first part of the proof, we have that $f(x)<\hat p(x)$ in a left neighborhood of $\ell_2$. We can therefore apply Proposition \ref{appC prop tool2+}-(b) and conclude via an analogous argument. 
\end{proof}

As a consequence of the above, we derive the following important information. 

\begin{prop}\label{appC prop lambda 0}
We have $\lambda_0\geqslant \sup\limits_{x\in\{a=0\}} \hat\lambda(x)$.
\end{prop}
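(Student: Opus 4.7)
The plan is to fix an arbitrary $x_0\in\{a=0\}$ and show that $\hat\landa(x_0)\leqslant\landa_0$, then take the supremum. I would split the argument into two cases, depending on whether $x_0$ is an accumulation point of $A=\{a>0\}$ or not, since $A$ is open and so the complementary case forces a full interval of vanishing diffusion around $x_0$.

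In the first case, $x_0$ is an accumulation point of $A$. Here the conclusion is essentially already contained in Theorem \ref{appC teo boundary regularity2}. Concretely, pick any $\landa>\landa_0$, use Theorem \ref{teo existence solutions} to fix a Lipschitz viscosity solution $u_{\landa_0}$ of \eqref{appC eq cellPDE} with $\landa=\landa_0$, and then apply Proposition \ref{appC prop pre-existence} to produce a bounded $f=f^+_\landa\in\CC^1(A)$ satisfying $f>u'_{\landa_0}$ on $A$ and solving \eqref{appC eq cellODE}. The last line of Theorem \ref{appC teo boundary regularity2} gives $\hat\landa(x_0)\leqslant\landa_0$.

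In the second case, $x_0$ is not an accumulation point of $A$. Since $A$ is open, there exists $r>0$ such that $(x_0-r,x_0+r)\subseteq\{a=0\}$. Fix $\landa>\landa_0$ and let $u$ be any Lipschitz viscosity solution of \eqref{appC eq cellPDE} provided by Theorem \ref{teo existence solutions}. By Rademacher's theorem, $u$ is differentiable almost everywhere on $(x_0-r,x_0+r)$, and at each such differentiability point $x$ we have $a(x)=0$, hence Proposition \ref{prop pointwise solution} (used exactly as in the proof of Theorem \ref{teo existence solutions}) yields $H(x,u'(x))=\landa$, which implies $\landa\geqslant\hat\landa(x)$. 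Thus $\landa\geqslant\hat\landa(x)$ for a.e.\ $x\in(x_0-r,x_0+r)$, and the continuity of $\hat\landa$ established in Lemma \ref{appC lemma 1} gives $\landa\geqslant\hat\landa(x_0)$. Sending $\landa\downarrow\landa_0$ yields $\landa_0\geqslant\hat\landa(x_0)$.

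Taking the supremum over $x_0\in\{a=0\}$ concludes the proof. The main conceptual step is recognizing that the two cases require different tools: the accumulation-point case is governed by the fine boundary analysis of Subsection \ref{sec:technical lemmas} encoded in Theorem \ref{appC teo boundary regularity2}, while the flat case where $a$ vanishes identically on a neighborhood reduces to a pointwise first-order argument combined with the continuity of $\hat\landa$. I expect no serious obstacle beyond correctly identifying this dichotomy and invoking the appropriate result in each branch.
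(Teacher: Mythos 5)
Your proof is correct and follows essentially the same route as the paper: the same dichotomy between accumulation points of $A$ (handled via Proposition \ref{appC prop pre-existence} and the final assertion of Theorem \ref{appC teo boundary regularity2}) and points with a whole neighborhood in $\{a=0\}$ (handled via Proposition \ref{prop pointwise solution} and the Lipschitz continuity of $\hat\lambda$). The only cosmetic difference is that in the second case the paper applies Proposition \ref{prop pointwise solution} directly to the critical solution $u_{\lambda_0}$, obtaining $\hat\lambda\leqslant\lambda_0$ a.e.\ at once, whereas you work at a level $\lambda>\lambda_0$ and then let $\lambda\downarrow\lambda_0$; both are fine.
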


\begin{proof}
According to Proposition \ref{appC prop pre-existence},  for every $\lambda>\lambda_0$ there exists a bounded function $f\in\CC^1(A)$ which solves  \eqref{appC eq cellODE} and satisfies $f\geqslant u'_{\lambda_0}$ on $A$.  
If $x_0\in \{a=0\}$ is an accumulation point for $A$, then, according to Theorem \ref{appC teo boundary regularity2}, $\hat \lambda(x_0)\leqslant \landa_0$. If $x_0\in \{a=0\}$ is not an accumulation point for $A$, then there exists $\rho>0$ such that $(x_0-\rho,x_0+\rho)\subset \{a=0\}$. From the fact that $H(x,u_{\lambda_0}'(x))= \lambda_0$ for a.e. $x\in\{a=0\}$ in view of Proposition \ref{prop pointwise solution}, we obtain in particular that 
$\hat\lambda(x)\leqslant \lambda_0$ for a.e. $x\in (x_0-\rho,x_0+\rho)$, and hence for every $x\in (x_0-\rho,x_0+\rho)$ by continuity of $\hat\lambda(\cdot)$. 
\end{proof}

We proceed by showing the following fact, see \eqref{def p lambda} for the definition of $p_\lambda^-,\, p_\lambda^+$. 

\begin{prop}\label{appC prop p lambda}
Let $\lambda\geqslant \sup_{x\in\{a=0\}} \hat\lambda(x)$. Then the functions 
$p_\lambda^-,\,p_\lambda^+:\{a=0\}\to\R$ are well defined, bounded and Lipschitz continuous. More precisely, there exists  a constant $\tilde K_\lambda=\tilde K(\alpha_0,\gamma,\lambda)$, only depending on $\alpha_0>0$, $\gamma>2$ and $\lambda$, and a constant $\tilde \kappa_\lambda=\tilde\kappa(\tilde K_\landa,\alpha_1,\gamma,\eta)$, also depending on $\alpha_1>0$ and $\eta>0$, such that 
\[
|p^\pm_\lambda(x)|\leqslant\tilde K_\lambda
\qquad
\hbox{and}
\qquad
|p^\pm_\lambda(x)-p^\pm_\lambda(y)\leqslant \tilde \kappa_\lambda|x-y|
\qquad\hbox{for all $x,y\in \{a=0\}$.}
\]
\end{prop}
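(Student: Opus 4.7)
The plan is to derive the three assertions (well-definedness, uniform boundedness, Lipschitz continuity) from the structural properties of $H$, namely coercivity (H1), the Lipschitz dependence on $x$ expressed in (H3), and the one-sided reverse Lipschitz estimate provided by strict quasiconvexity (sqC). The Lipschitz continuity of $\hat p$ established in Lemma \ref{appC lemma 1} will play an auxiliary role.

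First, for well-definedness, I would note that the hypothesis $\lambda\geqslant \sup_{x\in\{a=0\}}\hat\lambda(x)$ guarantees that for every $x\in\{a=0\}$ the set $E_\lambda(x):=\{p\in\R\,:\,H(x,p)\leqslant \lambda\}$ contains $\hat p(x)$ and is therefore nonempty; by (H1) it is compact, and by (qC) it is convex, hence an interval $[p^-_\lambda(x),p^+_\lambda(x)]$. The uniform bound follows immediately from (H1): $\alpha_0|p|^\gamma -1/\alpha_0\leqslant H(x,p)\leqslant\lambda$ forces $|p|\leqslant \tilde K_\lambda$ with $\tilde K_\lambda :=\big((\alpha_0\lambda+1)/\alpha_0^2\big)^{1/\gamma}$.

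For the Lipschitz estimate on $p^+_\lambda$, I would fix $x,y\in\{a=0\}$ and argue as follows. By Lemma \ref{appC lemma 1}, $\hat p$ is $\hat\kappa$-Lipschitz, so WLOG $\hat p(x)\leqslant \hat p(y)$, which gives $p^+_\lambda(y)\geqslant \hat p(y)\geqslant \hat p(x)$, placing both $p^+_\lambda(x)$ and $p^+_\lambda(y)$ on the increasing branch of $H(x,\cdot)$. Condition (sqC) at the point $x$ then yields
\[
\eta\,|p^+_\lambda(y)-p^+_\lambda(x)|
\leqslant
|H(x,p^+_\lambda(y))-H(x,p^+_\lambda(x))|.
\]
Since $H(x,p^+_\lambda(x))=\lambda$ (by continuity of $H(x,\cdot)$ and the definition of $p^+_\lambda(x)$, with the degenerate case $\lambda=\hat\lambda(x)$ handled directly as $p^+_\lambda(x)=\hat p(x)$), and since by (H3) combined with $H(y,p^+_\lambda(y))=\lambda$ one has
\[
|H(x,p^+_\lambda(y))-\lambda|
=
|H(x,p^+_\lambda(y))-H(y,p^+_\lambda(y))|
\leqslant
\alpha_1(\tilde K_\lambda^\gamma+1)|x-y|,
\]
combining the two inequalities yields the Lipschitz bound with $\tilde\kappa_\lambda:=\alpha_1(\tilde K_\lambda^\gamma+1)/\eta$. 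The argument for $p^-_\lambda$ is entirely symmetric: WLOG $\hat p(x)\geqslant \hat p(y)$, so $p^-_\lambda(y)\leqslant \hat p(y)\leqslant \hat p(x)$ and both minima sit on the decreasing branch of $H(x,\cdot)$, on which (sqC) provides the same type of reverse Lipschitz bound.

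There is no substantial obstacle here; the only subtlety is ensuring the WLOG reduction genuinely places both arguments on the same branch of $H(x,\cdot)$, which is exactly what $\hat p(x)\leqslant \hat p(y)\Rightarrow p^+_\lambda(y)\geqslant \hat p(x)$ delivers. Degenerate cases where $\lambda=\hat\lambda(x)$ or $\lambda=\hat\lambda(y)$ (so that $p^+_\lambda=p^-_\lambda=\hat p$ at that point) require no separate treatment because the identity $H(\cdot,p^\pm_\lambda(\cdot))=\lambda$ continues to hold and the same chain of inequalities applies verbatim.
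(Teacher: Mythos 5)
Your proof is correct and follows essentially the same route as the paper: it combines the identity $H(\cdot,p^{\pm}_\lambda(\cdot))=\lambda$, the reverse Lipschitz bound from (sqC), and the Lipschitz dependence of $H$ on $x$, with the uniform bound coming directly from (H1). The only difference is cosmetic: your WLOG ordering of $\hat p(x),\hat p(y)$ makes explicit that both momenta lie on the same monotone branch before invoking (sqC), a point the paper's proof uses implicitly by applying the estimate at one of the two base points.
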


\begin{proof}
Let us denote by $p_\lambda(\cdot)$ either the function $p^-_\lambda(\cdot)$ or the function $p^+_\lambda(\cdot)$. 
The fact that $|p_\lambda(x)|\leqslant\tilde K_\lambda$ for all $x\in \{a=0\}$ is apparent from the fact that $H$ satisfies condition (H1). Let us denote by 
$C=C(\tilde K_\lambda)$ the constant chosen according to (C) above. For every $x,y\in \{a=0\}$ we have 
\[
0
=
|H(x,p_\lambda(x))-H(y,p_\lambda(y))|
\geqslant
|H(y,p_\lambda(x))-H(y,p_\lambda(y))|-C|x-y|
\geqslant 
\eta |p_\lambda(x)-p_\lambda(y)|-C|x-y|,
\]
and the assertion follows by setting $\tilde \kappa_\lambda:=C/\eta$. 
\end{proof}

Next, we establish the uniqueness result mentioned above. 

\begin{theorem}\label{appC teo uniqueness}
Let $\lambda>\lambda_0$, $u_1,u_2$ be two viscosity solutions of
\begin{equation}\label{appC eq2 cellPDE}
	a(x)u''+H(x,u')=\lambda\qquad\hbox{in $\R$,}
	\tag{HJ$_\lambda$}
\end{equation}
and $v_1,v_2$ two viscosity solutions of \eqref{appC eq2 cellPDE} with $\landa:=\landa_0$.
\begin{itemize}
\item[(a)] If $u_i'\geqslant v'_{i}$ a.e. on $\R$ for each $i\in\{1,2\}$, then $u_1-u_2$ is constant on $\R$;\smallskip
\item[(b)] If $u_i'\leqslant v'_{i}$ a.e. on $\R$ for each $i\in\{1,2\}$, then $u_1-u_2$ is constant on $\R$.
\end{itemize}
\end{theorem}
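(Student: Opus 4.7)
\emph{Proof plan.} The plan is to establish case (a); case (b) follows by the reflection $\check u_i(x):=u_i(-x)$, $\check v_i(x):=v_i(-x)$ (with $\check a(x):=a(-x)$, $\check H(x,p):=H(-x,-p)$) already used elsewhere in the section, which turns case (b) into case (a) while preserving all hypotheses. For case (a), the goal is to show $u_1'=u_2'$ a.e.\ on $\R$, from which $u_1-u_2\equiv\mathrm{const}$ follows since both $u_i$ are Lipschitz by Theorem \ref{teo existence solutions}.

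On $A$, both $u_i$ and $v_i$ are $C^2$ by Proposition \ref{prop regularity solutions}, so the a.e.\ inequality $u_i'\geqslant v_i'$ extends by continuity to all of $A$. Fix a connected component $J=(\ell_1,\ell_2)$ of $A$, bounded by (A1'), and set $f:=u_1'$, $g:=u_2'$. Both are $C^1$ solutions of \eqref{appC eq cellODE} on $J$, and Theorem \ref{appC teo boundary regularity2}(a) applied to the pairs $(u_1,v_1)$ and $(u_2,v_2)$ yields
$\lim_{x\to\ell_i,\,x\in J}f(x)=\lim_{x\to\ell_i,\,x\in J}g(x)=p_\lambda^+(\ell_i)>\hat p(\ell_i)$ for $i=1,2$; in particular $f,g>\hat p$ in one-sided neighborhoods of $\ell_1$ and $\ell_2$ within $J$.

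Let $h:=f-g$; subtracting the two ODEs gives $a\,h'=-(H(\cdot,f)-H(\cdot,g))$ on $J$. Since $(\lambda-H(x,\cdot))/a(x)$ is locally Lipschitz on compact subintervals of $J$, Cauchy uniqueness for the resulting scalar ODE forces $h$ either to vanish identically or to never vanish on $J$. Supposing the latter, WLOG $h>0$ on $J$, pick a right neighborhood $(\ell_1,\ell_1+\delta)$ where $f,g>\hat p$; strict quasiconvexity (sqC) then gives $H(x,f)-H(x,g)\geqslant\eta(f-g)=\eta\,h>0$, hence $h'<0$ on this neighborhood. Thus $h$ is strictly decreasing there, and combined with $\lim_{x\to\ell_1^+}h(x)=0$ this forces $h\leqslant 0$ on $(\ell_1,\ell_1+\delta)$, a contradiction. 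Consequently $u_1'=u_2'$ on every connected component of $A$. On $\{a=0\}$, Proposition \ref{prop pointwise solution} gives $H(x,u_i'(x))=\lambda$ at a.e.\ such $x$, so strict quasiconvexity forces $u_i'(x)\in\{p_\lambda^-(x),p_\lambda^+(x)\}$; likewise $v_i'(x)\in\{p_{\lambda_0}^-(x),p_{\lambda_0}^+(x)\}$ a.e.\ on $\{a=0\}$, with $p_{\lambda_0}^\pm$ well defined by Proposition \ref{appC prop lambda 0}. The strict inequality $p_\lambda^-(x)<p_{\lambda_0}^-(x)\leqslant v_i'(x)$ (from $\lambda>\lambda_0$ and (sqC)), combined with $u_i'\geqslant v_i'$ a.e., forces $u_i'(x)=p_\lambda^+(x)$ a.e.\ on $\{a=0\}$; thus $u_1'=u_2'$ a.e.\ on $\R$.

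I expect the main obstacle to be the uniqueness step on each component $J$ of $A$: the matched boundary limits from Theorem \ref{appC teo boundary regularity2} (which give $h\to 0$ at the endpoints) must be paired with a sign-definite linearization of the ODE. The crucial ingredient is that, on the upper branch $p>\hat p(x)$, the linearization coefficient $M:=[H(\cdot,f)-H(\cdot,g)]/[f-g]$ is bounded below by the constant $\eta>0$ from (sqC); this is what turns the first-order equation for $h$ into a decay/growth dichotomy and makes the monotonicity-with-vanishing-boundary argument yield the contradiction. Without (sqC), $M$ could vanish or change sign and the scheme would collapse.
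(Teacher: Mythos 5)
Your proof is correct and follows essentially the same route as the paper: identify $u_i'=p_\lambda^+(\cdot)$ a.e.\ on $\{a=0\}$ via Proposition \ref{prop pointwise solution} and quasiconvexity, and on each connected component of $A$ use the boundary limits supplied by Theorem \ref{appC teo boundary regularity2} together with (sqC) to force the two derivatives to coincide, concluding by Cauchy--Lipschitz. The only local difference is the endgame on a component: where the paper (Proposition \ref{appC prop pre-uniqueness}) runs a Gronwall estimate combined with the divergence of $\int 1/a$ near the degenerate endpoint (Lemma \ref{lemma 1/a}), you use that $h:=u_1'-u_2'$ tends to $0$ at the endpoint while being strictly decreasing wherever positive, which is an equally valid and slightly more elementary contradiction.
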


The proof is based on an argument that we state and prove separately.

\begin{prop}\label{appC prop pre-uniqueness}
Let $J$ be a connected component of $A$ and $\lambda>\lambda_0$. Let $f_1,f_2\in\CC^1(J)$ be bounded solutions of the following ODE
\begin{equation}\label{appC eq2 cellODE}
a(x)f'+H(x,f)=\lambda\qquad\hbox{in $J$,}
\end{equation}
and $v_1,v_2$ two viscosity solutions of \eqref{appC eq2 cellPDE} with $\landa:=\landa_0$. 
Let us assume that either $f_i\geqslant v'_{i}$ on $J$ for each $i\in\{1,2\}$,  or $f_i\leqslant v'_{i}$ on $J$ for each $i\in\{1,2\}$. Then $f_1\equiv f_2$. 
\end{prop}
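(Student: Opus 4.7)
The plan is to set $g := f_1-f_2$ and show that $g$ satisfies a linear first-order ODE on $J=(\ell_1,\ell_2)$ whose boundary behaviour, combined with strict quasiconvexity, forces $g \equiv 0$.

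First I would establish the boundary limits at $\ell_1$ and $\ell_2$. Since each $v_i$ is a viscosity solution of \eqref{appC eq2 cellPDE} at $\lambda_0$, by Proposition \ref{prop regularity solutions} $v_i\in C^2(J)$ and $v'_i$ is a bounded $C^1$ solution of \eqref{appC eq2 cellODE} at $\lambda_0$ on $J$. Applying Theorem \ref{appC teo boundary regularity} to $f_j$ and to $v'_j$ shows that at each endpoint $\ell_i$ of $J$, the limits $\lim_{x\to\ell_i,x\in J} f_j(x)$ and $\lim_{x\to\ell_i,x\in J} v'_j(x)$ exist and belong to $\{p^-_\lambda(\ell_i),p^+_\lambda(\ell_i)\}$ and to $\{p^-_{\lambda_0}(\ell_i),p^+_{\lambda_0}(\ell_i)\}$ respectively. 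In case~(a), the inequality $f_j\geqslant v'_j$ on $J$ passes to the limit; together with the strict inclusion $p^-_\lambda(\ell_i)<p^-_{\lambda_0}(\ell_i)\leqslant p^+_{\lambda_0}(\ell_i)<p^+_\lambda(\ell_i)$ (which holds because $\lambda>\lambda_0$ and $H(\ell_i,\cdot)$ is strictly quasiconvex), this forces $\lim f_j(x)=p^+_\lambda(\ell_i)$ for $j\in\{1,2\}$, exactly as in the proof of Theorem \ref{appC teo boundary regularity2}-(a). Symmetrically, in case~(b) both $f_1$ and $f_2$ converge to $p^-_\lambda(\ell_i)$. Either way, $g$ extends continuously to $\overline J$ with $g(\ell_1)=g(\ell_2)=0$.

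Next, subtracting the two ODEs yields $a(x)g'(x)=H(x,f_2(x))-H(x,f_1(x))$. Setting
\[
b(x):=\int_0^1 \partial_p H\bigl(x,f_2(x)+sg(x)\bigr)\,ds\qquad\hbox{for $x\in J$,}
\]
which is well-defined a.e.\ and bounded thanks to (H2) and the boundedness of $f_1,f_2$, we obtain the linear ODE $a(x)g'(x)+b(x)g(x)=0$ on $J$. Since $a>0$ on $J$, the equation $g'=-(b/a)g$ has locally Lipschitz right-hand side, so the standard uniqueness theorem for linear ODEs implies that $g\equiv 0$ on $J$ as soon as $g$ vanishes at a single point of $J$. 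Consequently, if $g\not\equiv 0$, then $g$ has constant sign on $J$, and we may assume $g>0$ without loss of generality.

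Finally, I would derive a contradiction near an endpoint. In case~(a), $f_1(x),f_2(x)\to p^+_\lambda(\ell_1)>\hat p(\ell_1)$ as $x\to \ell_1^+$, so by continuity of $f_1,f_2$ and $\hat p$ there exists $\delta>0$ with $f_2(x)<f_1(x)$ and both $f_j(x)>\hat p(x)$ on $(\ell_1,\ell_1+\delta)$. Strict quasiconvexity (sqC) gives $H(x,f_1(x))-H(x,f_2(x))\geqslant \eta\,g(x)$, hence $b(x)\geqslant \eta>0$ there. Thus $g'(x)=-(b(x)/a(x))g(x)<0$ on $(\ell_1,\ell_1+\delta)$, so $g$ is strictly decreasing there; combined with $\lim_{x\to\ell_1^+}g(x)=0$, this gives $g(x_0)\leqslant \lim_{y\to \ell_1^+} g(y)=0$ for every $x_0\in(\ell_1,\ell_1+\delta)$, contradicting $g>0$. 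In case~(b), the same analysis performed near $\ell_2$ (where $f_j<\hat p$, hence $b\leqslant -\eta$ and $g$ is strictly increasing near $\ell_2$) contradicts $g(\ell_2^-)=0$. In both cases $g\equiv 0$, i.e., $f_1\equiv f_2$. The only delicate point is the identification of the boundary limits in the first paragraph, which is where the strict quasiconvexity of $H$ and the fine boundary analysis of the preceding subsection enter crucially; once those limits are in hand, the linear-ODE contradiction is essentially routine.
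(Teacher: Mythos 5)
Your argument is correct in substance, and it reaches the conclusion by a somewhat different closing mechanism than the paper's proof. Both arguments rest on the same two pillars: the boundary identification (your first paragraph re-derives, from Theorem \ref{appC teo boundary regularity} together with the comparison $f_j\geqslant v_j'$ (resp.\ $\leqslant$) and the strict inclusion of sublevel sets, exactly the content of Theorem \ref{appC teo boundary regularity2}: in case (a) both $f_1,f_2\to p_\lambda^+(\ell_1)>\hat p(\ell_1)$ at $\ell_1$, in case (b) both tend to $p_\lambda^-(\ell_2)<\hat p(\ell_2)$ at $\ell_2$), and the strict monotonicity condition (sqC) on the relevant side of $\hat p$. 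The difference is in the endgame. The paper uses only that $f_1,f_2>\hat p$ near $\ell_1$, derives $a w'+\eta w\leqslant 0$ for $w:=f_2-f_1\geqslant 0$, and forces $w\equiv 0$ by Gronwall combined with the divergence $\int 1/a=+\infty$ at the degenerate endpoint (Lemma \ref{lemma 1/a}); you instead exploit in addition that $g:=f_1-f_2$ tends to $0$ at the endpoint (because both $f_j$ have the \emph{same} boundary limit) and obtain a sign contradiction from the strict monotonicity of $g$ there. Your route avoids Lemma \ref{lemma 1/a}; the paper's route does not need the coincidence of the two boundary limits (although it is available). Both are legitimate.

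One technical slip should be repaired. Since $H$ is only Lipschitz in $p$ by (H2), not $C^1$, your $b(x)=\int_0^1\partial_pH\bigl(x,f_2(x)+sg(x)\bigr)\,ds$ is merely bounded and measurable in $x$, so the right-hand side of $g'=-(b/a)g$ is \emph{not} locally Lipschitz and you cannot invoke the standard uniqueness theorem for that equation as stated. The constant-sign step should instead be justified as in the paper: the original ODE $f'=\bigl(\lambda-H(x,f)\bigr)/a(x)$ has right-hand side continuous in $x$ and locally Lipschitz in $f$ on $J$ (by (H2) and $\inf_K a>0$ on compact $K\subset J$), so by Cauchy--Lipschitz two solutions touching at a point of $J$ coincide on $J$; equivalently, a Gronwall bound $a|g|'\leqslant C|g|$ on compact subintervals does the job. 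Note also that the factorization through $b$ is unnecessary for the final contradiction: (sqC) gives directly $a(x)g'(x)=H(x,f_2(x))-H(x,f_1(x))\leqslant-\eta\,g(x)<0$ near $\ell_1$ in case (a), and $a(x)g'(x)\geqslant\eta\,g(x)>0$ near $\ell_2$ in case (b), which is all your sign argument uses. With these small adjustments the proof is complete.
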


\begin{proof}
By assumption (A1$'$) we know that $J$ is a bounded open interval of the form $J=(\ell_1,\ell_2)$. Let $K>0$ be a constant such that $K\geqslant\|f_i\|_{L^\infty(J)}$ for each $i\in\{1,2\}$. Since the graphs of $f_1$ and $f_2$ cannot cross in $J$, we can assume without any loss of generality that $f_2\geqslant f_1$ in $J$. Let us first consider the case $f_i\geqslant v'_{i}$ on $J$ for each $i\in\{1,2\}$. By Theorem \ref{appC teo boundary regularity2} we know that 
\[
\lim_{x\to\ell_1^+} f_i(x)=p_\lambda^+(\ell_1)>\hat p(\ell_1)\qquad\hbox{for each $i\in \{1,2\}$,}
\]
hence by continuity of the map $\hat p(\cdot)$ we can find $\rho>0$ small enough such that 
\[
f_i(\cdot)>\hat p(\cdot)\quad\hbox{in $I:=(\ell_1,\ell_1+\rho)\subset J$\qquad for each $i\in\{1,2\}$.} 
\]
By subtracting the ODE \eqref{appC eq2 cellODE} for $f_1$ to the one for $f_2$ we get
\[
0=a(x)(f_2-f_1)'(x)+H(x,f_2(x))-H(x,f_1(x)) 
\geqslant
a(x)(f_2-f_1)'(x)+\eta(f_2-f_1)(x)\quad\hbox{in $I$},
\]
where for the last inequality we have used the fact that $f_2(\cdot)\geqslant f_1(\cdot)>\hat p(\cdot)$ in $I$. 
Let us set $w:=f_2-f_1$ and let $y<x$ be a pair of points arbitrarily chosen in $I$. By Gronwall's Lemma we infer 
\[
0\leqslant w(x)
\leqslant 
w(y) e^{-\int_{y}^x \frac{\eta}{a(z)}\, dz}
\leqslant 
2K  e^{-\int_{y}^x \frac{\eta}{a(z)}\, dz}.
\]
By sending $y\to \ell_1^+$ we conclude that $w(x)=0$ for all $x\in I$ since $\int_{y}^x \frac{1}{a(z)}\, dz\to +\infty$ as 
$y\to \ell_1^+$ by Lemma \ref{lemma 1/a}. By the Cauchy-Lipschitz Theorem we conclude that $f_1=f_2$ in $J$. 

The case $f_i\leqslant v'_{i}$ on $J$ for each $i\in\{1,2\}$ can be treated similarly. By Theorem \ref{appC teo boundary regularity2} we know that 
\[
\lim_{x\to\ell_2^-} f_i(x)=p_\lambda^-(\ell_2)<\hat p(\ell_2)\qquad\hbox{for each $i\in \{1,2\}$,}
\]
hence by continuity of the map $\hat p(\cdot)$ we can find $\rho>0$ small enough such that 
\[
f_i(\cdot)<\hat p(\cdot)\quad\hbox{in $J:=(\ell_2-\rho,\ell_2)\subset I$\qquad for each $i\in\{1,2\}$.} 
\]
Now the functions $\check f_i(x):=-f_i(-x)$ solve the ODE  \eqref{appC eq2 cellODE} in $\check J:=(-\ell_2,-\ell_2+\rho)$ with $\check a(x):=a(-x)$ and $\check H(x,p):=H(-x,-p)$ in place of $a$ and $H$, respectively. By the first part of the proof we conclude that $f_1=f_2$ in $J$. 
\end{proof}

\begin{proof}[Proof of Theorem \ref{appC teo uniqueness}.]
Let us prove (a). According to Proposition \ref{prop regularity solutions}, the functions $u_1,u_2$ are Lipschitz on $\R$ and satisfy $\|u_i'\|_\infty\leqslant K$ for each $i\in\{1,2\}$, where $K=K(\lambda,\alpha_0,\alpha_1,\gamma)$ is the constant given explicitly by  \eqref{eq Lipschitz bound}. By Proposition \ref{prop pointwise solution} we have 
\[
H(x,v'_i(x))=\lambda_0<\lambda=H(x,u_i'(x))\qquad\hbox{for a.e. $x\in\{a=0\}$.}
\]
Since $u'_i\geqslant v'_{i}$ a.e. in $\R$, 
by quasiconvexity of $H$ in $p$ we infer that 
\begin{equation}\label{appC eq1 uniqueness}
u_1'(x)=u_2'(x)=p_\lambda^+(x)\qquad\hbox{for a.e. $x\in \{a=0\}$.}
\end{equation}
Let $J$ be a connected component of the set $A=\{a>0\}$.  Each function $u_i$ is of class $C^2$ in $J$ in view of Proposition  \ref{prop regularity solutions}, in particular its derivative $f_i:=u'_i$ is a classical $C^1$ solution of the ODE 
\eqref{appC eq2 cellODE} satisfying $f_i\geqslant v'_i$ in $J$. In view of Proposition \ref{appC prop pre-uniqueness}, we infer that $u_1'=u_2'$ in $J$. By arbitrariness of $J$ and by \eqref{appC eq1 uniqueness}, we conclude that  $u_1'=u_2'$ a.e. in $\R$,   as it was to be shown.

The proof of (b) is similar and it is just sketched. An argument analogous to the one used above shows that 
\begin{equation*} %\label{appC eq2 uniqueness}
u_1'(x)=u_2'(x)=p_\lambda^-(x)\qquad\hbox{for a.e. $x\in \{a=0\}$.}
\end{equation*}
Let $J$ be a connected component of the set $A=\{a>0\}$. Each function $f_i:=u'_i$ is a classical $C^1$ solution of the ODE \eqref{appC eq2 cellODE} satisfying $f_i\leqslant v'_i$ in $J$. In view of Proposition \ref{appC prop pre-uniqueness}, we infer that $u_1'=u_2'$ in $J$. This eventually shows that $u_1'=u_2'$ a.e. in $\R$, as it was asserted.  
\end{proof}

We collect all the information gathered so far in the following key result. 

\begin{theorem}\label{appC teo existence corrector}
Let $\lambda>\lambda_0$ and let $u_{\landa_0}$ be a Lipchitz viscosity solution of \eqref{appC eq cellPDE} with $	\lambda=\landa_0$. 
\begin{itemize}
\item[(a)] There exists a unique viscosity solution $u^+_\lambda$ to \eqref{appC eq2 cellPDE} satisfying 
$(u^+_\lambda)' > u'_{\lambda_0}$ a.e. on $\R$ and $u^+_\lambda(0)=0$. The function $u^+_\lambda$ is of class $C^1$ on $\R$, of class $C^2$ on $A:=\{a>0\}$ and satisfies $(u^+_\lambda)'=p^+_\landa(\cdot)$ on $\{a=0\}$, in particular it is a classical solution of \eqref{appC eq2 cellPDE}. Furthermore, $u^+_\lambda$ does not depend on the choice of $u_{\landa_0}$.
\smallskip
\item[(b)] There exists a unique viscosity solution $u^-_\lambda$ to \eqref{appC eq2 cellPDE} satisfying 
$(u^-_\lambda)'< u'_{\lambda_0}$ a.e. on $\R$ and $u^-_\lambda(0)=0$. The function $u^-_\lambda$ is of class $C^1$ on $\R$, of class $C^2$ on $A:=\{a>0\}$ and satisfies $(u^-_\lambda)'=p^-_\landa(\cdot)$ on $\{a=0\}$, in particular it is a classical solution of \eqref{appC eq2 cellPDE}. Furthermore, $u^-_\lambda$ does not depend on the choice of $u_{\landa_0}$.
\end{itemize}
\end{theorem}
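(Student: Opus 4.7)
The plan is to construct $u^+_\lambda$ explicitly by prescribing its derivative: on $A$ it will be the bounded $C^1$ ODE solution furnished by Proposition \ref{appC prop pre-existence}, and on $\{a=0\}$ it will be the algebraic upper branch $p^+_\lambda$. The nontrivial input is that these two pieces match continuously across $\partial A$, which is already captured by Theorem \ref{appC teo boundary regularity2}. Once $u^+_\lambda$ is in hand, uniqueness and independence follow from Theorem \ref{appC teo uniqueness}, while the strict inequality on $\{a=0\}$ is an immediate consequence of the strict quasiconvexity (sqC).

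\textbf{Construction.} I would first apply Proposition \ref{appC prop pre-existence} to obtain a bounded $f^+_\lambda\in C^1(A)$ solving \eqref{appC eq cellODE} with $f^+_\lambda>u'_{\lambda_0}$ on $A$. By Proposition \ref{appC prop lambda 0} we have $\lambda>\lambda_0\geqslant\sup_{\{a=0\}}\hat\lambda$, so Proposition \ref{appC prop p lambda} gives that $p^+_\lambda$ is well defined and Lipschitz on $\{a=0\}$. Set $\tilde f:=f^+_\lambda$ on $A$ and $\tilde f:=p^+_\lambda$ on $\{a=0\}$. The only nontrivial continuity check is at points $x_0\in\partial A\subset\{a=0\}$, which are automatically accumulation points of $A$: by Theorem \ref{appC teo boundary regularity2}(a) (applied with $f=f^+_\lambda\geqslant u'_{\lambda_0}$) one has $\lim_{x\to x_0,x\in A}f^+_\lambda(x)=p^+_\lambda(x_0)=\tilde f(x_0)$. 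Hence $\tilde f\in C(\R)$, and $u^+_\lambda(x):=\int_0^x\tilde f(t)\,dt$ is $C^1$ on $\R$ with $(u^+_\lambda)'=\tilde f$ and $u^+_\lambda(0)=0$.

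\textbf{PDE and inequality.} On each connected component of $A$ the function $u^+_\lambda$ is $C^2$ and classically solves \eqref{appC eq2 cellPDE} by construction of $f^+_\lambda$. On $\{a=0\}$ the equation degenerates to $H(x,(u^+_\lambda)'(x))=H(x,p^+_\lambda(x))=\lambda$, which holds pointwise. For the viscosity property at a point $x_0$ with $a(x_0)=0$, any $C^2$ test function $\varphi$ touching $u^+_\lambda$ from above or below must match its slope, $\varphi'(x_0)=(u^+_\lambda)'(x_0)$, and then $a(x_0)\varphi''(x_0)+H(x_0,\varphi'(x_0))=\lambda$ regardless of $\varphi''$, so sub- and supersolution inequalities both hold. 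The strict inequality $(u^+_\lambda)'>u'_{\lambda_0}$ is built in on $A$; a.e.\ on $\{a=0\}$ Proposition \ref{prop pointwise solution} gives $u'_{\lambda_0}(x)\in[p^-_{\lambda_0}(x),p^+_{\lambda_0}(x)]$, and strict monotonicity of $H(x,\cdot)$ on $[\hat p(x),+\infty)$ from (sqC) together with $\lambda_0<\lambda$ yields $p^+_{\lambda_0}(x)<p^+_\lambda(x)$, so $u'_{\lambda_0}(x)<(u^+_\lambda)'(x)$ a.e.\ on $\{a=0\}$.

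\textbf{Uniqueness and independence.} If $u$ is any viscosity solution of \eqref{appC eq2 cellPDE} with $u'>u'_{\lambda_0}$ a.e.\ and $u(0)=0$, then Theorem \ref{appC teo uniqueness}(a) applied with $v_1=v_2=u_{\lambda_0}$ gives that $u-u^+_\lambda$ is constant, hence zero. If $\tilde u_{\lambda_0}$ is another critical Lipschitz solution and $\tilde u^+_\lambda$ is the construction carried out starting from it, Theorem \ref{appC teo uniqueness}(a) applied with $v_1:=u_{\lambda_0}$, $v_2:=\tilde u_{\lambda_0}$, $u_1:=u^+_\lambda$, $u_2:=\tilde u^+_\lambda$ yields $u^+_\lambda-\tilde u^+_\lambda\equiv 0$, since both vanish at the origin. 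Part (b) is proved by the verbatim symmetric argument, using $f^-_\lambda$ and $p^-_\lambda$, the corresponding item of Theorem \ref{appC teo boundary regularity2}(b), and Theorem \ref{appC teo uniqueness}(b).

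The hard work has already been absorbed into Theorem \ref{appC teo boundary regularity2} and the case analysis of Subsection \ref{sec:technical lemmas}; the remaining obstacle here is the continuity of the glued derivative $\tilde f$ at $\partial A$ (immediate from that theorem) and the subtle but routine check that the degenerate equation on $\{a=0\}$ behaves well under viscosity testing. With those in place, everything else is a direct assembly of Propositions \ref{appC prop pre-existence}, \ref{appC prop lambda 0}, \ref{appC prop p lambda} and Theorem \ref{appC teo uniqueness}.
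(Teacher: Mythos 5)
Your proposal is correct and follows essentially the same route as the paper: glue the ODE solution from Proposition \ref{appC prop pre-existence} on $A$ with $p^\pm_\lambda$ on $\{a=0\}$, get continuity of the glued derivative from Theorem \ref{appC teo boundary regularity2}, integrate, and invoke Theorem \ref{appC teo uniqueness} for uniqueness and independence of the choice of $u_{\lambda_0}$. In fact you spell out two points the paper leaves implicit (the viscosity check at points of $\{a=0\}$ via matching slopes, and the a.e.\ strict inequality on $\{a=0\}$ via (sqC)), both correctly.
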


\begin{proof}
We only need to prove the existence part, since uniqueness and independence from $u_{\landa_0}$ is guaranteed by Theorem \ref{appC teo uniqueness}. 
Let us prove item (a). According to Proposition \ref{appC prop pre-existence}, there exists a bounded function $f^+_\lambda\in C^1(A)$ such that $f^+_\lambda > u'_{\lambda_0}$ in $A$ and $f^+_\lambda$ solves 
\begin{equation*}
a(x)f'+H(x,f)=\lambda\qquad\hbox{in $A=\{a>0\}$.}
\end{equation*}
We extend $f^+_\lambda$  to the whole $\R$ by setting $f^+_\lambda(x):=p^+_\lambda(x)$ for all $x\in\{a=0\}$. This is well defined in view of Propositions \ref{appC prop lambda 0} and \ref{appC prop p lambda}. We claim that the function $f^+_\lambda$ is continuous on $\R$. Indeed, let $x_0\in \R$ and $(x_n)_n$ be a sequence converging to $x_0$. Since $f^+_\lambda$ is $C^1$ in $A$ and $A$ is open, we can assume that $x_0\in\{a=0\}$. In view of Propositions \eqref{appC prop lambda 0} and  \ref{appC prop p lambda} and up to extracting a subsequence, we can furthermore assume that $x_n\in A$ for every $n\in\N$. 
This means that $x_0$ is an accumulation point for $A$. From Theorem \ref{appC teo boundary regularity2} we infer that $\lim_n f^+_\lambda(x_n)= p^+_\lambda(x_0)=f^+_\lambda(x_0)$. This shows the asserted continuity of $f^+_\lambda$ in $\R$. The sought solution is defined by setting $u^+_\lambda(x):=\int_0^x f^+_\lambda(z)\, dz$ for all $x\in\R$. 
The proof of item (b) is similar as is omitted. 
\end{proof}

We proceed by showing the following properties of the functions $\landa\mapsto (u_\landa)'$.

\begin{prop}\label{appC prop monotonicity derivatives}
The following properties hold.
\begin{itemize}
\item[\em(i)] Let $\lambda>\mu>\lambda_0$. Then \ 
$(u^+_\lambda)'>(u^+_\mu)'>(u^-_\mu)'>(u^-_\lambda)'$\  \ in $\R$.\smallskip
\item[\em(ii)] Let $\lambda>\lambda_0$. Then 
\[
\lim_{\mu\to\lambda} (u^+_\mu)'(x)=(u^+_\lambda)'(x),
\qquad
\lim_{\mu\to\lambda} (u^-_\mu)'(x)=(u^-_\lambda)'(x)
\qquad
\hbox{for every $x\in\R$.}
\]
\item[\em(iii)] For every $R>\|u'_{\landa_0}\|_{L^\infty(A)}$ there exists $\landa_R>\landa_0$ such that for every $\landa>\landa_R$ we have 
\[
(u^-_{\lambda})'(x)<-R<R<(u^+_{\lambda})'(x)
\qquad
\hbox{for all $x\in\R$.}
\]
\end{itemize}
\end{prop}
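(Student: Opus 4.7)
My plan is to handle the three claims separately, leveraging the existence/uniqueness package of Theorem \ref{appC teo existence corrector}, the boundary identification of Theorem \ref{appC teo boundary regularity2}, and the ODE comparison at the heart of Proposition \ref{appC prop pre-uniqueness}. For (i), on $\{a=0\}$ the identities $(u^\pm_\nu)'(x) = p^\pm_\nu(x)$ together with $\lambda > \mu > \lambda_0 \geqslant \hat\lambda(x)$ (Proposition \ref{appC prop lambda 0}) and strict quasiconvexity of $H(x,\cdot)$ immediately yield $p^+_\lambda > p^+_\mu > \hat p > p^-_\mu > p^-_\lambda$ pointwise. On a connected component $J = (\ell_1,\ell_2)$ of $A$, Theorem \ref{appC teo boundary regularity2} identifies the $\ell_1^+$-limits of $f_\lambda := (u^+_\lambda)'$ and $f_\mu := (u^+_\mu)'$ as $p^+_\lambda(\ell_1) > p^+_\mu(\ell_1)$, so $w := f_\lambda - f_\mu > 0$ near $\ell_1$; a first interior zero $x^* \in J$ would give, on subtracting the two ODEs, $a(x^*) w'(x^*) = \lambda - \mu > 0$, hence $w'(x^*) > 0$, contradicting $w \geqslant 0$ on $(\ell_1,x^*]$ with $w(x^*) = 0$. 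The inequality $(u^-_\mu)' > (u^-_\lambda)'$ on $A$ is symmetric, and the middle link $(u^+_\mu)' > (u^-_\mu)'$ follows from $(u^+_\mu)' > u'_{\lambda_0} > (u^-_\mu)'$ on $A$ (Theorem \ref{appC teo existence corrector}) combined with $p^+_\mu > p^-_\mu$ on $\{a=0\}$.

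For (ii), take a monotone sequence $\mu_n \to \lambda$ and normalize $u^+_{\mu_n}(0) = 0$. By Proposition \ref{prop regularity solutions} these functions are equi-Lipschitz (the bound depends only on the level, locally uniformly in $\mu_n$), so along a subsequence $u^+_{\mu_n} \to u$ locally uniformly, with $u$ a viscosity solution of the cell equation at level $\lambda$ by stability, $u(0) = 0$, and (by (i)) the derivatives converge monotonically to some $g$ with $g \geqslant u'_{\lambda_0}$ a.e. Bounded convergence in $u^+_{\mu_n}(x) = \int_0^x (u^+_{\mu_n})'(t)\,dt$ yields $u(x) = \int_0^x g(t)\,dt$, so $u' = g$ a.e.; Theorem \ref{appC teo uniqueness}(a) applied to $u$ and $u^+_\lambda$ then forces $u = u^+_\lambda$ and $g = (u^+_\lambda)'$ a.e. Pointwise equality is upgraded on $\{a=0\}$ by continuity of $\nu \mapsto p^+_\nu(x)$, and on $A$ from the observation that the ODE $(u^+_{\mu_n})'' = (\mu_n - H(x,(u^+_{\mu_n})'))/a$ gives equi-$C^1$ bounds for $(u^+_{\mu_n})'$ on compact subsets of $A$; Arzel\`a--Ascoli combined with Proposition \ref{appC prop pre-uniqueness} identifies any subsequential limit with $(u^+_\lambda)'$, and the monotonicity of (i) then forces convergence of the full sequence. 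The case $(u^-_\mu)' \to (u^-_\lambda)'$ is entirely analogous via Theorem \ref{appC teo uniqueness}(b).

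Part (iii) is the main obstacle: one must rule out the possibility that $(u^+_\lambda)'$, despite diverging with $\lambda$ on $\{a=0\}$, dips down on components of $A$ to values on the ``wrong branch'' $p^-_\lambda \leqslant \hat p$, which stays bounded in $\lambda$ by Lemma \ref{appC lemma 1}. Pick $\lambda > \max\{\lambda_0,\, \hat\Lambda + \hat\kappa\}$, where $\hat\Lambda := \sup_\R \hat\lambda$ and $\hat\kappa$ is the Lipschitz constant of $\hat p$ (both controlled by Lemma \ref{appC lemma 1}). From (H1), $p^+_\lambda(x) \geqslant ((\lambda/\alpha_1) - 1)^{1/\gamma}$ uniformly in $x$, and this diverges with $\lambda$. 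On $\{a=0\}$ the bound is immediate. On a component $J = (\ell_1,\ell_2)$ of $A$, set $m := (u^+_\lambda)' \in C(\bar J)$ with $m(\ell_i) = p^+_\lambda(\ell_i)$; if $\min_{\bar J} m$ is attained at an interior $x^*$, then $m'(x^*) = 0$, the ODE gives $H(x^*,m(x^*)) = \lambda$, hence $m(x^*) \in \{p^-_\lambda(x^*),\, p^+_\lambda(x^*)\}$. To exclude $m(x^*) = p^-_\lambda(x^*) < \hat p(x^*)$, note that $m(\ell_1^+) = p^+_\lambda(\ell_1) > \hat p(\ell_1)$, so by continuity there is a first $y \in (\ell_1,x^*)$ with $m(y) = \hat p(y)$ and $m > \hat p$ on $(\ell_1,y)$; then $m(y) - m(y - \eps) < \hat p(y) - \hat p(y - \eps) \leqslant \hat\kappa \eps$ gives $m'(y) \leqslant \hat\kappa$ as $\eps \to 0^+$, whereas the ODE gives $m'(y) = (\lambda - \hat\lambda(y))/a(y) \geqslant \lambda - \hat\Lambda > \hat\kappa$, contradiction. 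So $m(x^*) = p^+_\lambda(x^*) \geqslant ((\lambda/\alpha_1) - 1)^{1/\gamma}$, and the bound propagates to all of $\R$. Choosing $\lambda_R$ with $((\lambda_R/\alpha_1) - 1)^{1/\gamma} > R$ settles the $+$-case, and the $-$-case is symmetric.
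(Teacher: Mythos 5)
Your parts (i) and (ii) are correct and follow essentially the same route as the paper: the pointwise ordering of $p^\pm_\nu$ on $\{a=0\}$, the first-crossing/ODE-subtraction argument on each component of $A$ (at a touching point $(f_\lambda-f_\mu)'=(\lambda-\mu)/a>0$), and, for (ii), monotone limits plus equi-Lipschitz bounds, stability, and identification via Theorem~\ref{appC teo uniqueness}; your explicit upgrade from a.e.\ to pointwise convergence (exact values $p^+_\nu$ on $\{a=0\}$, local $C^1$ compactness on $A$) is what the paper leaves as ``easily seen''.

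Part (iii) is where you genuinely diverge, and your argument is correct. The paper does not estimate $(u^+_\lambda)'$ directly: it re-runs the construction of Proposition~\ref{appC prop pre-existence} with the constant sub/supersolution pair $m\equiv R$, $M\equiv q_\lambda$ (where $\lambda_R:=\sup_x H(x,R)$), extends by $p^+_\lambda$ on $\{a=0\}$, and then identifies the resulting solution with $u^+_\lambda$ by the uniqueness in Theorem~\ref{appC teo existence corrector}. You instead prove the lower bound on $u^+_\lambda$ itself, by showing that the derivative can never cross $\hat p$ downward: at a touching point the ODE forces slope $(\lambda-\hat\lambda(y))/a(y)\geqslant\lambda-\hat\Lambda>\hat\kappa$ (using $a\leqslant 1$), while the geometry of a downward crossing caps the slope by $\hat\kappa$, so any interior minimum sits on the branch $p^+_\lambda\geqslant((\lambda/\alpha_1)-1)^{1/\gamma}$. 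The paper's route is softer (no extra threshold beyond $\sup_x H(x,R)$, no use of the Lipschitz constant of $\hat p$, hence no dependence on $\eta$), but leans on the uniqueness theorem; yours is a direct quantitative barrier argument that only needs Theorem~\ref{appC teo boundary regularity2} and Lemma~\ref{appC lemma 1}, at the price of the larger threshold $\lambda>\max\{\lambda_0,\hat\Lambda+\hat\kappa,\alpha_1(R^\gamma+1)\}$ (immaterial for the statement). One small point to make explicit: for the $-$ case the ``symmetric'' argument is not the mirror image at the left endpoint (there the sign of the ODE slope produces no contradiction); it is the reflection $x\mapsto-x$, $p\mapsto-p$ used repeatedly in the paper, equivalently a crossing analysis to the right of the putative interior maximum, where the same ``can only cross $\hat p$ upward'' principle applies. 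Also, minor: your bound $p^+_\lambda(x)\geqslant((\lambda/\alpha_1)-1)^{1/\gamma}$ uses $p^+_\lambda(x)\geqslant 0$, which holds since $\lambda>\alpha_1\geqslant H(x,0)$ for the $\lambda$ you consider.
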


\begin{proof} (i) Let us set $f_\lambda:=(u^+_\lambda)'$ for every $\lambda>\lambda_0$. By Theorem \ref{appC teo existence corrector} we know that $f_\lambda\in\CC(\R)$ and $f_\lambda=p^+_\lambda$ on $\{a=0\}=\R\setminus A$. Hence $f_\lambda>f_\mu$ on $\R\setminus A$ if $\lambda>\mu>\lambda_0$. Let $J=(\ell_1,\ell_2)$ be a connected component of the set $A$.  Since $f_\lambda$ is a $C^1$ solution of \eqref{appC eq2 cellODE}, at any point $x_0\in J$ such that $f_\lambda(x_0)=f_\mu(x_0)$ we would have $(f_\lambda-f_\mu)'(x_0)=(\lambda-\mu)/a(x_0)>0$. Since $(f_\lambda-f_\mu)(\ell_1)=p^+_\lambda(\ell_1)-p^+_\mu(\ell_1)>0$, this implies that $f_\lambda>f_\mu$ in $J$, as it was asserted. The statement for $(u^-_\mu)'$ and $(u^-_\lambda)'$ can be proved analogously. 

(ii) Fix $\lambda>\lambda_0$ and let $(\lambda_n)_n$ be an increasing sequence in $(\lambda_0,+\infty)$ converging to $\lambda$ from the left. By item (i), we know that for every fixed $y\in\R$, the sequence $\big(u^+_{\lambda_n}(x)-u^+_{\lambda_n}(y)\big)_n$ is decreasing if $x<y$ and increasing if $x>y$, in particular 
$\lim_n u^+_{\lambda_n}(x)-u^+_{\lambda_n}(y)=:u(x)$ exists for every $x\in\R$. Since the function $(u^+_{\lambda_n})_n$ are equi-Lipschitz in $\R$ in view of Proposition \ref{prop regularity solutions}, this convergence is also locally uniform in $\R$. By the stability of the notion of viscosity solution, we infer that $u$ is a solution of \eqref{appC eq2 cellPDE}. Since clearly $u'\geqslant u'_{\lambda_0}$ a.e. in $\R$, by Theorem \ref{appC teo uniqueness} we conclude that 
$u(x)=u^+_\lambda(x)-u^+_\lambda(y)$ for every $x\in\R$. Let us set $g(x):=\sup_n (u^+_{\lambda_n})'(x)=\lim_n (u^+_{\lambda_n})'(x)$ for every $x\in\R$. By item (i), we know that $g\leqslant (u^+_\lambda)'$ in $\R$. On the other hand, for every $x,y\in\R$ we have
\begin{eqnarray*}
\int_y^ x (u^+_\lambda)'(z)\, dz
&=&
u^+_\lambda(x)-u^+_\lambda(y)
=
\lim_{n\to +\infty} u^+_{\lambda_n}(x)-u^+_{\lambda_n}(y)\\
&=&
\lim_{n\to +\infty} \int_y^x (u^+_{\lambda_n})'(z)\, dz
=
\int_y^x \lim_{n\to +\infty} (u^+_{\lambda_n})'(z)\, dz
=
\int_y^x g(z)\, dz,
\end{eqnarray*}
where for the second to last equality we have used the Dominated Convergence Theorem. This yields $(u^+_\lambda)'=g=\lim_n (u^+_{\lambda_n})'$ in $\R$, as it is easily seen, and finally shows that $\lim_{\mu\to\lambda^-} (u^+_\mu)'=(u^+_\lambda)'$ in $\R$. The proof of the remaining cases is analogous and is omitted.

(iii) The argument is analogous to the one used in the proof of Proposition \ref{appC prop pre-existence}. Let us set $\landa_R:=\sup_{x\in\R} H(x,R)$. Pick $\landa>\landa_R$ and choose $q_\landa>R$ such that $\inf_{x\in\R} H(x,q_\landa)>\landa$. Then the constant functions $m(\cdot):=R$ and $M(\cdot):=q_\landa$ satisfy \eqref{eq appC pre-existence} and $m(\cdot)<M(\cdot)$ in $J$, for any connected component $J$ of $A$.  By arguing as in the proof of Proposition \ref{appC prop pre-existence}, we find a solution $f_\landa\in\CC^1(A)$ of \eqref{appC eq2 cellODE} in $A$ satisfying $m(\cdot)<f_\landa<M(\cdot)$ in $A$. We extend 
$f_\lambda$  to the whole $\R$ by setting $f_\lambda(x):=p^+_\lambda(x)$ for all $x\in\{a=0\}$. By arguing as in the proof of Theorem \ref{appC teo existence corrector}, we derive that $f_\landa$ is continuous in $\R$, hence $u_\landa(x):=\int_0^x f_\landa(z)\,dz$, $x\in\R$, is a $C^1$ solution of \eqref{appC eq2 cellPDE} satisfying $(u_\landa)'>R>u'_{\landa_0}$ almost everywhere in $\R$. By uniqueness, see Theorem \ref{appC teo existence corrector}, we get $u^+_\landa\equiv u_\landa$ and we conclude that 
$(u^+_{\lambda})'(x)=f_\landa(x)>R$ for all $x\in\R$, as it was asserted. The proof for $(u^-_{\lambda})'$ is similar and is omitted.
\end{proof}

We conclude this subsection by proving a fact that will play a crucial role in the proof of Theorem \ref{teo lower bound}. 

\begin{theorem}\label{appC teo supersolution}
Let $\lambda>\sup_{x\in\{a=0\}} \hat\lambda(x)$ and assume that there exists a bounded function $f\in C^1(A)$ satisfying 
\begin{equation}\label{appC eq3 cellODE}
a(x)f'+H(x,f)=\lambda\qquad\hbox{in $A=\{a>0\}$.}
\end{equation}
Let us extend $f$ to the whole $\R$ by setting $f(x)=p_\lambda^+(x)$ for all $x\in \{a=0\}$. Then the function 
$u(x):=\int_0^x f(z)\,dz$, $x\in\R$, is a Lipschitz viscosity supersolution of \eqref{appC eq2 cellPDE}.
\end{theorem}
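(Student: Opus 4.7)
My plan is in three stages. First, $u$ is Lipschitz: $f$ is bounded on $A$ by hypothesis and on $\{a=0\}$ the extension $p_\lambda^+$ is bounded by Proposition \ref{appC prop p lambda} (applicable because $\lambda>\sup_{\{a=0\}}\hat\lambda$). Second, on $A$ the function $u$ is $C^2$ with $u'=f\in C^1(A)$ solving the ODE, so $a(x)u''(x)+H(x,u'(x))=\lambda$ classically and $u$ is a viscosity solution on $A$. The substantive part is the supersolution check at each $x_0\in\{a=0\}$: since $a(x_0)=0$, the condition reduces to $H(x_0,p)\le\lambda$ for every $p\in D^-u(x_0)$, and quasiconvexity identifies $\{p:H(x_0,p)\le\lambda\}=[p_\lambda^-(x_0),p_\lambda^+(x_0)]$, so it suffices to show $D^-u(x_0)\subseteq [p_\lambda^-(x_0),p_\lambda^+(x_0)]$.

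Because $u$ is Lipschitz with $u(y)-u(x_0)=\int_{x_0}^y f(z)\,dz$, the subdifferential $D^-u(x_0)$ is a (possibly empty) closed interval bounded by the one-sided Dini derivatives of $u$ at $x_0$, each expressible as a $\liminf$ or $\limsup$ of averages of $f$ over one-sided intervals shrinking to $x_0$. Hence matters reduce to the pointwise bound: for every $\epsilon>0$ there exists $h_0>0$ such that $f(z)\in [p_\lambda^-(x_0)-\epsilon,\,p_\lambda^+(x_0)+\epsilon]$ for every $z\in (x_0-h_0,x_0+h_0)$. I will establish this by analysing the behaviour of $f$ on three disjoint regions of such a neighbourhood. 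On $\{a=0\}$, $f=p_\lambda^+$ is Lipschitz by Proposition \ref{appC prop p lambda}, so the bound is automatic for $h_0$ small. Fixing a radius $r\in (0,r(\hat\kappa,C,\lambda-\hat\lambda(x_0)))$ from Lemma \ref{appC lemma 2}, on each connected component $J$ of $A$ contained in $(x_0-r,x_0+r)$ Proposition \ref{appC prop tool2+} provides the uniform bound $H(z,f(z))\le \lambda+2|J|C\le \lambda+4rC$ for all $z\in J$; combined with strict quasiconvexity (sqC)---which forces $\mu\mapsto p_\mu^\pm(z)$ to be $1/\eta$-Lipschitz---and the $z$-Lipschitz continuity from Proposition \ref{appC prop p lambda}, choosing $r$ small enough yields $f(z)\in [p_\lambda^-(x_0)-\epsilon,\,p_\lambda^+(x_0)+\epsilon]$ on every such $J$. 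Finally, among the components of $A$ meeting $(x_0-r,x_0+r)$ but not contained therein there are at most two (one on each side of $x_0$, since $x_0\notin A$ separates them); the ones whose endpoint nearest to $x_0$ lies at strictly positive distance from $x_0$ are excluded by shrinking $h_0$ below that distance, while the at most two components having $x_0$ itself as an endpoint are handled by Theorem \ref{appC teo boundary regularity}, which gives $\lim_{z\to x_0,\,z\in J} f(z)\in\{p_\lambda^-(x_0),p_\lambda^+(x_0)\}$, so $H(z,f(z))<\lambda+\epsilon$ on a one-sided neighbourhood of $x_0$ inside the component, yielding the same interval bound.

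Taking $h_0$ smaller than all the parameters above, every average of $f$ over $(x_0-h,x_0)$ or $(x_0,x_0+h)$ with $h<h_0$ lies in an $O(\epsilon)$-neighbourhood of $[p_\lambda^-(x_0),p_\lambda^+(x_0)]$, whence $D^-u(x_0)\subseteq [p_\lambda^-(x_0)-O(\epsilon),\,p_\lambda^+(x_0)+O(\epsilon)]$; sending $\epsilon\to 0$ concludes the argument. The main obstacle I anticipate is the a priori uncontrolled topology of $\{a=0\}$ near $x_0$---components of $A$ can accumulate at $x_0$ with no lower bound on their size, in a Cantor-like fashion---but this is tamed precisely by Proposition \ref{appC prop tool2+}, which supplies a bound \emph{uniform} in $J$ for all small fully contained components, leaving only the at most two boundary-crossing components to be handled individually via Theorem \ref{appC teo boundary regularity}.
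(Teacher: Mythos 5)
Your proof is correct, and it reaches the conclusion by a genuinely different reduction than the paper, while relying on the same three structural ingredients (Proposition \ref{appC prop p lambda} on $\{a=0\}$, Proposition \ref{appC prop tool2+} for components of $A$ contained in a small window, and Theorem \ref{appC teo boundary regularity} for the components having $x_0$ as an endpoint). The paper does not prove a uniform neighbourhood bound on $f$: it passes through nonsmooth analysis, namely $D^-u(x_0)\subseteq \partial^c u(x_0)=\mathrm{co}\,D^*_\Sigma u(x_0)$ with $\Sigma$ the complement of the Lebesgue points of $f$, so that quasiconvexity reduces the supersolution test to reachable gradients $p_0=\lim_n f(x_n)$, and then runs the trichotomy ($x_n\in\{a=0\}$; $x_n$ eventually in one component; $x_n$ in pairwise disjoint components) along that single sequence. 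You instead prove the stronger statement that for every $\eps>0$ one has $f(z)\in[p_\lambda^-(x_0)-\eps,\,p_\lambda^+(x_0)+\eps]$ on a full neighbourhood of $x_0$, and then control $D^-u(x_0)$ by one-sided averages of $f$; this trades the Clarke-gradient machinery for the elementary observation that $D^-u(x_0)=[D_-,D_+]$ with $D_\pm$ given by one-sided difference quotients, at the price of having to treat uniformly all components of $A$ in the window --- which you do correctly, noting that at most two components cross the window boundary and that those with $x_0$ as an endpoint are covered one-sidedly by Theorem \ref{appC teo boundary regularity}. Two small points to tidy up: (i) Proposition \ref{appC prop p lambda} is stated only on $\{a=0\}$, so rather than invoking the $z$-Lipschitz continuity of $p_\lambda^\pm$ at points $z\in A$, it is cleaner to transfer the level bound to $x_0$ first, writing $H(x_0,f(z))\leqslant H(z,f(z))+C|z-x_0|\leqslant \lambda+O(r)$ and then using (sqC) at the single point $x_0$ to get $f(z)\in[p^-_\lambda(x_0)-O(r)/\eta,\,p^+_\lambda(x_0)+O(r)/\eta]$; (ii) in the final step note explicitly that $D^-u(x_0)$ does not depend on $\eps$, so the inclusions for all $\eps>0$ give $D^-u(x_0)\subseteq[p_\lambda^-(x_0),p_\lambda^+(x_0)]$, on which $H(x_0,\cdot)\leqslant\lambda$ by the definition \eqref{def p lambda}, and that at $x_0$ the diffusion term drops since $a(x_0)=0$, so any $C^2$ subtangent satisfies the supersolution inequality.
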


\begin{proof}
By assumption and Proposition \ref{appC prop p lambda}, there exists a constant 
$K$ such that $|f(x)|\leqslant K$ for all $x\in\R$. Hence the function $u$ is globally Lipschitz on $\R$. The function $u$ is of class $C^2$ and a classical solution to \eqref{appC eq2 cellPDE} in $A$. 
It is left to show that, for any $x\in\{a=0\}$, we have 
\begin{equation}\label{appC eq1 supersolution}
H(x,p)\leqslant \lambda\qquad\hbox{for all $p\in D^-u(x)$.}
\end{equation}
Since $D^- u(x)$ is contained in the Clarke's generalized gradient $\partial^c u(x)$, see \cite[Proposition 2.3.2]{Clarke},  showing \eqref{appC eq1 supersolution} for every $p\in \partial^c u(x)$ implies the assertion. We recall that the Clarke's generalized gradient 
$\partial^c u(x)$ can be defined, for every $x\in\R$,  as the convex hull of the closed set $D_\Sigma^*u(x)$ of the following set of reachable gradients
\[
D_\Sigma^*u(x):=\left\{p\in\R\,:\, p=\lim_n u'(x_n)\ \hbox{for some sequence $(x_n)_n$ in $\R\setminus\Sigma$ converging to $x$}\, \right\},
\]
where $\Sigma$ is any fixed negligible subset of $\R$ containing the nondifferentiability points of $u$. It is known that the convex hull of $D^*_\Sigma(x)$ is independent of the choice of $\Sigma$, see \cite[Theorem 2.5.1]{Clarke}. Since the function $H(x,\cdot)$ is quasiconvex, it will be therefore enough to prove \eqref{appC eq1 supersolution} for any $p\in D_\Sigma^*u(x)$ for a 
proper choice of $\Sigma$. 
Let us choose $\Sigma$ in such a way that any $x\in\R\setminus\Sigma$ is a Lebesgue point for $f$. Clearly, for every $x\in\R\setminus\Sigma$,  
\[
\lim_{y\to x}\left|\dfrac{u(y)-u(x)}{y-x}-f(x)\right|
=
\lim_{y\to x}\dfrac{1}{|y-x|}\left|\int_x^y (f(z)-f(x))\,dz\right|=0,
\]
namely $u$ is differentiable at $x$ and $u'(x)=f(x)$. Let us fix $x_0\in\{a=0\}$. Pick $p_0\in D^*_\Sigma u(x_0)$ and 
let $(x_n)_n$ be a sequence in $\R\setminus\Sigma$ converging to $x_0$ and such that $\lim_n u'(x_n)=p_0$. 
Up to extracting a subsequence, we can assume that  $(x_n)_n$ is either contained in the set $\{a=0\}$  or in its complement $A$. In the first case, we have $u'(x_n)=p^+_\lambda(x_n)$ for all $n\in\N$, so $p_0=p^+_\lambda(x_0)$ in view of Proposition \ref{appC prop p lambda}, and the point $(x_0,p_0)$ satisfies the equality in \eqref{appC eq1 supersolution}. Let us then assume that $(x_n)_n$ is contained in $A$. If there exists $n_0\in\N$ such that, for every $n\geqslant n_0$, the points $x_n$ are contained in the same connected component $J$ of $A$, then $x_0$ is an end point of the interval $J$. By Theorem \ref{appC teo boundary regularity} we conclude that $H(x_0,p_0)=\lim_n H(x_n,f(x_n))=\lambda$. Otherwise, up to extracting a subsequence if necessary, we can assume that each $x_n$ is contained in an interval $J_n$, where $(J_n)_n$ is a sequence of pairwise disjoint connected components of $A$. 
Let $C=C(K)$ be the constant chosen according to (C) above and set $\delta:=\lambda-\hat\lambda(x_0)>0$. Denote by $r(\hat\kappa,C,\delta)>0$ the radius given by Lemma \ref{appC lemma 2} and pick  $r\in\big(0,r(\hat\kappa,C,\delta)\big)$. According to Proposition \ref{appC prop tool2+}, we have $H(x_n,f(x_n))\leqslant \lambda+2rC$ for every $x_n\in (x_0-r,x_0+r)$. By sending $n\to +\infty$ and by arbitrariness of $r>0$, we conclude that $H(x_0,p_0)\leqslant \lambda$. 
This concludes the proof.
\end{proof}

\section{The homogenization result}\label{sec:homogenization}

In this section we will prove the homogenization result stated in Theorem \ref{thm:genhom2}. We will therefore assume that, for every $\omega\in\Omega$, the Hamiltonian $H(\cdot,\cdot,\omega)$ belong to $\Hamsqc$ for some fixed constants $\alpha_0,\alpha_1>0$, $\gamma>2$ and $\eta>0$. 

\subsection{Random correctors} In this subsection we will construct suitable random correctors of the following viscous HJ equation, for every $\lambda>\lambda_0$:
\begin{equation}\label{eq2 cellPDE}
	a(x,\omega)u''+H(x,u',\omega)=\lambda\qquad\hbox{in $\R$.}
	\tag{HJ$_\lambda(\omega)$}
\end{equation}
In view of Lemma \ref{lemma lambda zero} and Lemma \ref{lemma A(omega)}, there exists a set $\hat\Omega$ of probability 1 such that, for every $\omega\in\hat\Omega$, the function $a(\cdot,\omega)$ satisfies assumption (A1$'$) and $\landa_0(\omega)=\landa_0$.  For every fixed $\omega\in\hat\Omega$, we denote by $u_{\lambda_0}(\cdot,\omega)$ a Lipschitz solution of equation \eqref{eq2 cellPDE} with $\landa=\landa_0$ and satisfying $u_{\landa_0}(0,\omega)=0$. We extend the definition of $u_{\landa_0}$ to $\Omega\times\R$ by setting $u_{\landa_0}(\cdot,\landa)\equiv 0$ for $\omega\in\Omega\setminus\hat\Omega$. We  point out that, in the current definition of $u_{\landa_0}(\cdot,\omega)$, we are treating the variable $\omega$ as a fixed parameter, in particular the function $u_{\landa_0}:\R\times\Omega\to\R$ is not jointly-measurable, in general. 

\smallskip

For $\lambda>\lambda_0$ and $\omega\in\Omega$ fixed, we denote by
\begin{eqnarray*}
\Sol^-_\lambda(\omega) &:=& \{u\in\Lip(\R)\,:\, \text{$u$ solves \eqref{eq2 cellPDE} and 
$u'\leqslant u'_{\landa_0}$ a.e. in $\R$}\},\\
\Sol^+_\lambda(\omega) &:=& \{u\in\Lip(\R)\,:\, \text{$u$ solves \eqref{eq2 cellPDE} and 
$u'\geqslant u'_{\landa_0}$ a.e. in $\R$}\},\bigskip
\end{eqnarray*}
where we agree that $\Sol^\pm_\lambda(\omega)$ is the set of constant functions on $\R$ when $\omega\in\Omega\setminus\hat\Omega$. The following holds.

\begin{theorem}\label{teo1 random correctors}
Let $\lambda>\lambda_0$  be fixed. The following holds:
\begin{itemize}
\item[(a)] for every $\omega\in\Omega$, there exists a unique $u^+_\lambda\in\Sol^+_\lambda(\omega)$ with $u^+_\lambda(0,\omega)=0$. Furthermore, $u^+_\lambda(\cdot,\omega)\in\CC^1(\R)$;\smallskip
\item[(b)] for every $\omega\in\Omega$, there exists a unique $u^-_\lambda\in\Sol^-_\lambda(\omega)$ with $u^-_\lambda(0,\omega)=0$. Furthermore, $u^-_\lambda(\cdot,\omega)\in\CC^1(\R)$.\medskip
\end{itemize}
Moreover, the functions $u^\lambda_+,u^\lambda_-:\R\times\Omega\to\R$ are jointly measurable and have stationary derivatives. 
\end{theorem}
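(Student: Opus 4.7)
The theorem has three components: pathwise existence/uniqueness with $C^1$ regularity, stationarity of the derivatives, and joint measurability. The first is a direct application of Theorem \ref{appC teo existence corrector}(a): for each $\omega \in \hat\Omega$ (the full-measure set produced by Lemma \ref{lemma A(omega)}), the data $(a(\cdot,\omega), H(\cdot,\cdot,\omega))$ satisfies the hypotheses of Section \ref{sec:deterministic preliminaries}, and the theorem produces the unique $u^+_\lambda(\cdot,\omega) \in \Sol^+_\lambda(\omega) \cap \CC^1(\R)$ with $u^+_\lambda(0,\omega) = 0$; on $\Omega \setminus \hat\Omega$ we set $u^+_\lambda(\cdot,\omega) \equiv 0$. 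The two remaining parts both hinge crucially on the uniqueness statement of Theorem \ref{appC teo existence corrector}(a), and more specifically on the fact that $u^+_\lambda(\cdot,\omega)$ does not depend on the particular critical solution $u_{\landa_0}(\cdot,\omega)$ used to define the class $\Sol^+_\lambda(\omega)$. I shall describe the argument for $u^+_\lambda$; the case of $u^-_\lambda$ is entirely symmetric.

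\textbf{Stationarity of the derivative.} Fix $y \in \R$. The $\tau_y$-invariance of $\P$ yields a full-measure set $\Omega_y \subseteq \hat\Omega$ such that $\tau_y\omega \in \hat\Omega$ for every $\omega \in \Omega_y$. For such $\omega$, set
\[
\tilde u(x) := u^+_\lambda(x+y,\omega) - u^+_\lambda(y,\omega), \qquad \tilde v(x) := u_{\landa_0}(x+y,\omega) - u_{\landa_0}(y,\omega).
\]
By the stationarity of $a$ and $H$, $\tilde u$ is a Lipschitz viscosity solution of \eqref{eq2 cellPDE} with $\tau_y\omega$ in place of $\omega$, and $\tilde v$ is a Lipschitz viscosity solution of the same equation with $\landa_0$ in place of $\landa$ (and $\tau_y\omega$ in place of $\omega$). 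Both vanish at $0$, and $\tilde u' \geqslant \tilde v'$ almost everywhere. Hence $\tilde v$ is an admissible critical solution for $\tau_y\omega$ and $\tilde u \in \Sol^+_\lambda(\tau_y\omega)$ relative to it. The uniqueness and the independence-from-critical-solution statement of Theorem \ref{appC teo existence corrector}(a) forces $\tilde u \equiv u^+_\lambda(\cdot,\tau_y\omega)$. Differentiating gives $(u^+_\lambda)'(x+y,\omega) = (u^+_\lambda)'(x,\tau_y\omega)$ for every $x \in \R$ and every $\omega \in \Omega_y$, which is the required stationarity.

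\textbf{Joint measurability---the main obstacle.} Since $u^+_\lambda(\cdot,\omega)$ is continuous in $x$, the Carath\'eodory criterion reduces joint measurability to showing that $\omega \mapsto u^+_\lambda(x,\omega)$ is measurable for each fixed $x$. I shall follow the template of Lemma \ref{lemma lambda zero}: for every $\eps > 0$, Lusin's theorem produces a closed set $F \subseteq \Omega$ with $\P(\Omega \setminus F) < \eps$ on which both $a \colon \Omega \to \CC(\R)$ and $H \colon \Omega \to \CC(\R \times \R)$ are continuous. The key claim is that
\[
F \cap \hat\Omega \ni \omega \longmapsto u^+_\lambda(\cdot,\omega) \in \CC(\R) \ \text{is continuous}.
\]
Given this, completeness of $\P$ together with the arbitrariness of $\eps$ yield the required measurability. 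To establish the claim, let $\omega_n \to \omega_0$ in $F \cap \hat\Omega$ and set $u_n := u^+_\lambda(\cdot,\omega_n)$ and $v_n := u_{\landa_0}(\cdot,\omega_n)$. Proposition \ref{prop regularity solutions} guarantees that both families are equi-Lipschitz, and they vanish at $0$; Ascoli-Arzel\`a then produces a common subsequence with $u_n \to u^*$ and $v_n \to v^*$ locally uniformly. The uniform convergence of $a$ and $H$ on $F$ combined with the stability of viscosity solutions ensures that $u^*$ is a Lipschitz viscosity solution of \eqref{eq2 cellPDE} with $\omega_0$ in place of $\omega$, and that $v^*$ is a Lipschitz viscosity solution of the same equation with $\landa_0$ in place of $\landa$ (and $\omega_0$ in place of $\omega$). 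The pivotal observation---which is precisely what bypasses the non-canonical, possibly non-measurable dependence of the critical solutions on $\omega$---is that each function $u_n - v_n$ has nonnegative derivative almost everywhere, hence is nondecreasing on $\R$; the pointwise limit $u^* - v^*$ inherits this monotonicity, so $(u^*)' \geqslant (v^*)'$ almost everywhere and $u^* \in \Sol^+_\lambda(\omega_0)$ using $v^*$ as the critical solution. The uniqueness in Theorem \ref{appC teo existence corrector}(a), together with its independence from the chosen critical solution, then forces $u^* = u^+_\lambda(\cdot,\omega_0)$. Since every subsequential limit equals $u^+_\lambda(\cdot,\omega_0)$, the full sequence converges locally uniformly to it, establishing continuity. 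The crux of the entire argument is this monotonicity trick: without it, one would have to track the critical solutions $u_{\landa_0}(\cdot,\omega_n)$ in a coherent, measurable way as $\omega_n$ varies, which we do not see how to do by direct means.
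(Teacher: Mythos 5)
Your proposal is correct and follows essentially the same route as the paper's own proof: pathwise existence, uniqueness and $C^1$ regularity from Theorem \ref{appC teo existence corrector}, stationarity of the derivative via translation plus the uniqueness/independence-from-$u_{\landa_0}$ statement, and joint measurability via Lusin's theorem, equi-Lipschitz compactness, viscosity stability, and exactly the same monotonicity observation that $u^+_\lambda-u_{\landa_0}$ being nondecreasing passes to the limit. The only cosmetic differences (restricting to $F\cap\hat\Omega$, using the Carath\'eodory criterion instead of citing the $\CC(\R)$-valued random variable equivalence) do not change the argument.
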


\begin{proof}
Assertion (a) and (b) are obvious if $\omega\in\Omega\setminus\hat\Omega$. If $\omega\in\hat\Omega$, they follow directly from Theorem \ref{appC teo existence corrector}.  
Let us prove that $u^+_\lambda:\R\times\Omega\to\R$ is measurable with respect to the product $\sigma$-algebra ${\mathcal B}\otimes\F$. This is equivalent to showing that $\Omega\ni \omega\mapsto u^+_\lambda(\,\cdot\,,\omega)\in\CC(\R)$ is a random variable from $(\Omega,\F)$ to the Polish space $\CC(\R)$ endowed with its Borel $\sigma$-algebra, see for instance  \cite[Proposition 2.1]{DS09}. 
Since the probability measure $\P$ is complete on $(\Omega,\F)$, it is enough to show that, for every fixed $\eps>0$, there exists a set $F\in \F$ with $\P(\Omega\setminus F)<\eps$ such that the restriction 
$u^+_\lambda$ to $F$ is a random variable from $F$ to $\CC(\R)$.  To this aim, we notice that the measure $\P$ is inner regular on $(\Omega,\F)$, see \cite[Theorem 1.3]{Bill99}, hence it is a Radon measure. By applying Lusin's Theorem \cite{LusinThm} to the random variable $a:\Omega\to\CC(\R)$ and $H:\Omega\to\CC(\R\times\R)$, we infer that there exists a closed set $F\subseteq \Omega$ with $\P(\Omega\setminus F)<\eps$ such that $a_{| F}:F\to\CC(\R)$ and $H_{| F}:F\to\CC(\R\times\R)$ are continuous. 
We claim that $F\ni\omega\mapsto u^+_\lambda(\,\cdot\,,\omega)\in\CC(\R)$ is continuous. 
Indeed, let $(\omega_n)_{n\in\N}$ be a sequence converging to some $\omega_0$ in $F$. The functions $u^+_\lambda(\cdot,\omega_n)$ are equi-Lipschitz and locally equi-bounded in $\R$, hence they converge, up to subsequences, to a Lipschitz function $u$ satisfying $u(0)=0$. Since $a(\cdot,\omega_n)\to a(\cdot,\omega_0)$ in 
$\CC(\R)$ and 
$H(\cdot,\cdot,\omega_n)\to H(\cdot,\cdot,\omega_0)$ in $\CC(\R\times\R)$, we derive by stability that $u$ solves \eqref{eq2 cellPDE} with $\omega:=\omega_0$ in the viscosity sense. Via the same argument, we see that the functions $u_{\lambda_0}(\cdot,\omega_n)$ converge, up to subsequences, to a solution $v$ of \eqref{eq2 cellPDE} with $\omega:=\omega_0$ and $\landa=\landa_0$. The inequality $(u^+_\landa)'(\cdot,\omega_n)\geqslant  u'_{\landa_0}(\cdot,\omega_n)$ a.e. in $\R$  amounts to requiring that 
$\left(u^+_\landa-u_{\landa_0}\right)(\cdot,\omega_n)$ is nondecreasing in $\R$. Since this property is stable under pointwise convergence, we derive that $u'\geqslant v'$ a.e. in $\R$. By uniqueness, see Theorems \ref{appC teo uniqueness} and \ref{appC teo existence corrector},  we infer that $u\equiv u^+_\lambda(\cdot,\omega_0)$, yielding in particular that the whole sequence $\big(u^+_\lambda(\cdot,\omega_n)\big)_n$ is converging to $u^+_\lambda(\cdot,\omega_0)$. This proves the asserted continuity property of the map $\omega\mapsto u^+_\lambda(\cdot,\omega)$ on $F$. 

Last, for every fixed $z\in\R$ and $\omega\in\Omega$, the map $u(\cdot):=u^+_\lambda(\cdot+z,\omega)-u^+_\lambda(z,\omega)$ is a solution of equation \eqref{eq2 cellPDE} with $\tau_z\omega$ in place of $\omega$, by the stationary character of $a$ and $H$. Furthermore, it satisfies $u(0)=0$ and $u'\geqslant u'_{\landa_0}(\cdot+z,\omega)$ a.e. in $\R$. By the stationary character of $a$ and $H$, the function $v(\cdot):=u_{\landa_0}(\cdot+z,\omega)$ is a solution  of equation \eqref{eq2 cellPDE} with $\tau_z\omega$ in place of $\omega$ and $\landa_0$ in place of $\landa$. By uniqueness, we get $u=u^+_\lambda(\cdot,\tau_z\omega)$, i.e., 
$u^+_\lambda(\cdot+z,\omega)-u^+_\lambda(z,\omega)=u^+_\lambda(\cdot,\tau_z\omega)$. This clearly implies that $u^+_\lambda$ has stationary derivative. 
\end{proof}

From the information gathered in Section \ref{sec:deterministic correctors main}, we easily derive the following facts. 

\begin{prop}\label{prop theta functions}\ 
\begin{itemize}
\item[\em(i)] The quantity $\theta^+(\lambda):=\EE \big[ (u^+_\lambda)'(0,\omega) \big]$  defines a continuous, coercive and strictly increasing function
\[
\theta^+ :[\lambda_0,+\infty)\to \big(\theta^+(\lambda_0),+\infty\big),
\]
where $\theta^+(\lambda_0):=\inf_{\landa>\landa_0} \theta^+(\lambda)$. 
\item[\em(ii)] The quantity $\theta^-(\lambda):=\EE \big[ (u^-_\lambda)'(0,\omega) \big]$  defines a continuous, coercive and strictly decreasing function
\[
\theta^- :[\lambda_0,+\infty)\to \big(-\infty,\theta^-(\lambda_0)\big),
\]
where $\theta^-(\lambda_0):=\sup_{\landa>\landa_0} \theta^-(\lambda)$. 
\end{itemize}
\end{prop}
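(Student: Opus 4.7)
The plan is to obtain strict monotonicity, continuity, and coercivity of $\theta^\pm$ as integrated versions of the pointwise assertions (i)--(iii) of Proposition \ref{appC prop monotonicity derivatives}, applied $\omega$ by $\omega$ on the full-measure set $\hat\Omega$, together with the dominated convergence theorem. First I would verify that $(u^+_\lambda)'(0,\omega)$ is a genuine random variable with finite expectation: joint measurability of $u^+_\lambda$ from Theorem \ref{teo1 random correctors} combined with its $C^1$ regularity in $x$ lets us write the derivative as a pointwise limit of difference quotients, and the Lipschitz bound of Proposition \ref{prop regularity solutions} provides a uniform-in-$\omega$ bound by a constant $K=K(\alpha_0,\alpha_1,\gamma,\kappa,\lambda)$. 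Hence $\theta^+(\lambda)\in\R$ for every $\lambda>\lambda_0$, and once monotonicity is established the definition $\theta^+(\lambda_0):=\inf_{\lambda>\lambda_0}\theta^+(\lambda)$ is finite as well.

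Strict monotonicity is then immediate: for $\lambda>\mu>\lambda_0$, Proposition \ref{appC prop monotonicity derivatives}(i) gives $(u^+_\lambda)'(0,\omega)>(u^+_\mu)'(0,\omega)$ for every $\omega\in\hat\Omega$, and integrating against $\P$ yields $\theta^+(\lambda)>\theta^+(\mu)$. For continuity at a given $\lambda>\lambda_0$ I would pick any sequence $\mu_n\to\lambda$ lying in a compact subinterval of $(\lambda_0,+\infty)$; Proposition \ref{appC prop monotonicity derivatives}(ii) provides pointwise convergence $(u^+_{\mu_n})'(0,\omega)\to (u^+_\lambda)'(0,\omega)$ on $\hat\Omega$, while the deterministic Lipschitz estimate of Proposition \ref{prop regularity solutions} applied at the upper end of the chosen subinterval supplies an $\omega$-independent dominating constant valid for all $n$. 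Dominated convergence then yields $\theta^+(\mu_n)\to\theta^+(\lambda)$, whence continuity on $(\lambda_0,+\infty)$. Right-continuity at the left endpoint is encoded in the very definition of $\theta^+(\lambda_0)$: the strict monotonicity just proved forces this infimum to agree with $\lim_{\lambda\to\lambda_0^+}\theta^+(\lambda)$.

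Coercivity reduces to Proposition \ref{appC prop monotonicity derivatives}(iii): for each $R>\|u'_{\lambda_0}\|_{L^\infty}$ there exists $\lambda_R$ such that $(u^+_\lambda)'(0,\omega)>R$ on $\hat\Omega$ whenever $\lambda>\lambda_R$, and integration gives $\theta^+(\lambda)>R$ for such $\lambda$, forcing $\theta^+(\lambda)\to+\infty$. The proof of assertion (ii) is entirely symmetric, obtained by reversing all inequalities and replacing $u^+_\lambda$ by $u^-_\lambda$ throughout, the coercivity now giving $\theta^-(\lambda)\to-\infty$. I do not anticipate any substantial obstacle: every analytic ingredient has been prepared in the deterministic theory of Section \ref{sec:deterministic correctors main}, and the only mild care needed is to keep track of the fact that the dominating constants in the continuity step can be chosen independent of $\omega$, which is transparent from the explicit form of the Lipschitz bound in Proposition \ref{prop regularity solutions}.
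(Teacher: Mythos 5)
Your proposal is correct and follows essentially the same route as the paper, which proves the proposition precisely by combining the pointwise monotonicity, convergence and coercivity statements of Proposition \ref{appC prop monotonicity derivatives} with the Dominated Convergence Theorem, using the uniform Lipschitz bound of Proposition \ref{prop regularity solutions} as the dominating function. Your write-up simply makes explicit the measurability and $\omega$-uniformity details that the paper leaves implicit.
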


\begin{proof}
The assertion follows as a direct application of   Proposition \ref{appC prop monotonicity derivatives} and of the Dominated Convergence Theorem in view of Proposition \ref{prop regularity solutions}. 
\end{proof}

\subsection{The flat part}\label{sec:flat part}
This subsection is addressed at proving homogenization at the bottom with the existence of a possible flat part. More precisely, we want to prove that 
\[
\HV^L(H)(\theta)=\HV^U(H)(\theta)=\lambda_0
\qquad 
\hbox{for all\quad $\theta \in \big[\theta^-(\lambda_0),\theta^+(\lambda_0)\big]$.}
\]
We distill in the next statement an argument that already appeared in \cite{DK22} and which will be needed 
for the proofs of Theorems \ref{teo1 upper qc} and \ref{teo lower bound}.

\begin{prop}\label{prop corrector argument}
Let $u_1,u_2$ be random viscosity solutions of \eqref{eq2 cellPDE} for the same $\lambda$ with stationary gradient and satisfying $u_1'(\cdot,\omega)\leqslant u'_2(\cdot,\omega)$ a.e. in $\R$,  almost surely. 
Let us set $\theta_i:=\EE[u'_i(0,\omega)]$ for $i\in\{1,2\}$. 
\begin{itemize}
\item[\em(i)] Let us assume that there exist $\mu>\lambda$ and a set $\Omega_\mu$ of positive probability such that, for every $\omega\in\Omega_\mu$, there exists a locally Lipschitz function $w$ satisfying  
\[
a(x,\omega)u''+H(x,u',\omega)\leqslant \mu\qquad\hbox{in $\R$}
\]
and such that $w'=u'_1(\cdot,\omega)$ a.e. in $(-\infty, -L)$ and $w'=u'_2(\cdot,\omega)$ a.e. in $(L,+\infty)$ for some $L>0$. Then \ $\HV^U(H)(\theta)\leqslant \mu$\quad for every $\theta\in (\theta_1,\theta_2)$.\smallskip
\item[\em(ii)] Let us assume that there exist $\mu<\lambda$ and a set $\Omega_\mu$  of positive probability such that, for every $\omega\in\Omega_\mu$, there exists a locally Lipschitz function $v$ satisfying  
\[
a(x,\omega)u''+H(x,u',\omega)\geqslant \mu\qquad\hbox{in $\R$}
\]
and such that $v'=u'_2(\cdot,\omega)$ a.e. in $(-\infty, -L)$ and $v'=u'_1(\cdot,\omega)$ a.e. in $(L,+\infty)$ for some $L>0$. Then \ $\HV^L(H)(\theta)\geqslant \mu$\quad for every $\theta\in (\theta_1,\theta_2)$.
\end{itemize}
\end{prop}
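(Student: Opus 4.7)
The plan is to use each one-$\omega$ bridge directly as an explicit sub/supersolution of the parabolic equation \eqref{eq:generalHJ}, then close by comparison. No construction of a stationary corrector is needed; in particular the proof does not use any regularity of $H$ in $p$ beyond what is already granted.

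For (i), first apply Birkhoff's ergodic theorem to the stationary derivatives $u_1'(\cdot,\omega)$ and $u_2'(\cdot,\omega)$ to obtain a full-probability set $\Omega_0$ on which $u_i(x,\omega)/x\to\theta_i$ as $|x|\to+\infty$ for $i\in\{1,2\}$. Intersecting with $\Omega_\mu$ and with the full-measure event on which $\limsup_{t\to+\infty} u_\theta(t,0,\omega)/t = \HV^U(H)(\theta)$ (the latter deterministic by \cite[Proposition 3.1]{DK22}), we get a set of probability $\P(\Omega_\mu)>0$; fix $\omega$ in it. Let $w$ and $L>0$ be the corresponding bridge data. Because $w$ is Lipschitz and its a.e.\ derivative coincides with $u_1'(\cdot,\omega)$ on $(-\infty,-L)$ and with $u_2'(\cdot,\omega)$ on $(L,+\infty)$, there are constants $c_1,c_2\in\R$ with
\[
w(x)=u_1(x,\omega)+c_1\ \text{for } x<-L,\qquad w(x)=u_2(x,\omega)+c_2\ \text{for } x>L.
\]

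Next, for any $\theta\in(\theta_1,\theta_2)$ the strict inequalities $\theta_1<\theta<\theta_2$ together with $u_i(x,\omega)/x\to\theta_i$ force $w(x)-\theta x\to+\infty$ at \emph{both} $\pm\infty$: as $x\to-\infty$, $w(x)-\theta x\sim(\theta_1-\theta)x\to+\infty$ (negative times negative); as $x\to+\infty$, $w(x)-\theta x\sim(\theta_2-\theta)x\to+\infty$. Continuity of $w$ then provides $M=M(\omega)\in\R$ with $w(x)+M\geq\theta x$ on $\R$. The function $U(t,x):=\mu t+w(x)+M$ is a Lipschitz viscosity supersolution of \eqref{eq:generalHJ}, since $\partial_t U=\mu$ dominates $a(\cdot,\omega)w''+H(\cdot,w',\omega)$ by hypothesis on $w$, and it satisfies $U(0,x)\geq\theta x=u_\theta(0,x,\omega)$. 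The comparison principle recalled in Appendix \ref{app:PDE} yields $u_\theta(t,0,\omega)\leq\mu t+w(0)+M$, and dividing by $t$ and taking the limsup gives $\HV^U(H)(\theta)\leq\mu$ by our choice of $\omega$.

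Part (ii) is completely symmetric: the bridge $v$ now satisfies $v=u_2(\cdot,\omega)+c_2$ on $(-\infty,-L)$ and $v=u_1(\cdot,\omega)+c_1$ on $(L,+\infty)$, so the same sign bookkeeping yields $v(x)-\theta x\to-\infty$ at both $\pm\infty$ and hence $v(x)-M'\leq\theta x$ on $\R$ for some $M'\in\R$. Then $V(t,x):=\mu t+v(x)-M'$ is a Lipschitz viscosity subsolution of \eqref{eq:generalHJ} with $V(0,\cdot)\leq\theta\,(\cdot)$, and comparison gives $u_\theta(t,0,\omega)\geq\mu t+v(0)-M'$, whence $\HV^L(H)(\theta)\geq\mu$. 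The only step that deserves genuine attention is the one-sided sign bookkeeping in the asymptotic estimate for $w-\theta x$ (respectively $v-\theta x$), which is exactly where the hypothesis $\theta\in(\theta_1,\theta_2)$ enters; everything else reduces to the parabolic comparison principle and the a.s.\ constancy of $\HV^U$ and $\HV^L$.
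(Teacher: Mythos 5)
Your proposal is correct and follows essentially the same route as the paper: Birkhoff's ergodic theorem to control the linear growth of $u_1,u_2$ at $\mp\infty$, the explicit function $\mu t+w(x)+M$ (resp.\ $\mu t+v(x)-M'$) as a global parabolic super- (resp.\ sub-) solution dominating (resp.\ dominated by) the initial datum $\theta x$, and the comparison principle of Appendix \ref{app:PDE} to bound $u_\theta(t,0,\omega)/t$ on the positive-probability set $\hat\Omega\cap\Omega_\mu$, where $\HV^U$ and $\HV^L$ are a.s.\ constant. The only cosmetic difference is that you phrase the tail behavior via $w=u_i(\cdot,\omega)+c_i$ on the two half-lines, while the paper writes the same estimate directly through the ergodic averages of $u_i'$.
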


\begin{proof}
The proof is based on an argument already used in \cite[Lemma 5.6]{DK22}. We shall only prove (i), being the proof of (ii) analogous. Let us fix $\theta\in (\theta_1,\theta_2)$ and choose a set $\hat\Omega$ of probability 1 such that, for every $\omega\in\hat\Omega$, both the limits in \eqref{eq:infsup}  and the following ones (by Birkhoff Ergodic Theorem) hold:
\begin{align}
\lim_{x\to+\infty}\frac{1}{x}\int_{L}^x{u_2'}(s,\omega)\,ds&=\EE[u_2'(0,\omega)] =\theta_2> \theta\label{eq1 Birkhoff}\\
\lim_{x\to-\infty}\frac{1}{|x|}\int_{x}^{-L}{u_1'}(s,\omega)\,ds&=\EE[u_1'(0,\omega)]=\theta_1 < \theta.\label{eq2 Birkhoff}
\end{align}
Let us fix  $\omega\in\hat\Omega\cap\Omega_\mu$ and set 
$
\  \tilde w(t,x)=\mu t+w(x)+M\ \  \hbox{for all $(t,x)\in \ccyl$},
$
where the constant $M$ is chosen large enough to ensure that $\tilde w(0,x)\geqslant \theta x$ for all $x\in\R$. This is possible due to \eqref{eq1 Birkhoff} and \eqref{eq2 Birkhoff} and since, by assumption,  $w'=u'_1(\cdot,\omega)$ a.e. in $(-\infty, -L)$ and a.e. in $w'=u'_2(\cdot,\omega)$ in $(L,+\infty)$ for some $L>0$.
The function $\tilde w$ is a supersolution of \eqref{eq:generalHJ}. Indeed, 
	\begin{align*}
		\partial_t\tilde w
=
		\mu \ge	
a(x,\omega)\partial_{xx}^2\tilde w + H(x,\partial_x \tilde w,\omega)
=
a(x,\omega)w'' + H(x, w',\omega)
\quad
\hbox{in $\ccyl$}.
\end{align*}
By the comparison principle, $\tilde w(t,x)\geqslant u_\theta(t,x,\omega)$ on $\ccyl$ and, hence,
	\[\HV^U(H)(\theta)=\limsup_{t\to+\infty}\frac{u_\theta(t,0,\omega)}{t}\leqslant\limsup_{t\to+\infty}\frac{\tilde w(t,0)}{t}=\mu.\qedhere
\] 
\end{proof}

We proceed by showing the first of the two main results of this subsection. It expresses the fact that the function $\HV^U(H)$ enjoys a quasiconvexity-type property. 

\begin{theorem}\label{teo1 upper qc}
Let $\lambda>\lambda_0$. Then\quad $\HV^U(H)(\theta)\leqslant \lambda\quad\hbox{for all $\theta\in [\theta^-(\lambda),\theta^+(\lambda)]$.}$
\end{theorem}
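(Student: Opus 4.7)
The plan is to verify the hypothesis of Proposition~\ref{prop corrector argument}(i) with $u_1:=u^-_\lambda$ and $u_2:=u^+_\lambda$. By Theorem~\ref{teo1 random correctors} these are random viscosity solutions of \eqref{eq2 cellPDE} with stationary derivatives, and from the inequalities $(u^-_\lambda)'\leqslant u'_{\lambda_0}\leqslant (u^+_\lambda)'$ a.e.\ (Theorem~\ref{appC teo existence corrector}) we have $u_1'\leqslant u_2'$ a.e.\ almost surely, with averages $\theta_1=\theta^-(\lambda)$ and $\theta_2=\theta^+(\lambda)$. It therefore suffices to exhibit, for every fixed $\mu>\lambda$ and on a set of positive probability, a locally Lipschitz $\mu$-subsolution $w(\cdot,\omega)$ of \eqref{eq2 cellPDE} whose derivative coincides with $(u^-_\lambda)'(\cdot,\omega)$ far to the left and with $(u^+_\lambda)'(\cdot,\omega)$ far to the right; letting $\mu\downarrow\lambda$ will then yield $\HV^U(H)(\theta)\leqslant\lambda$ on the open interval $(\theta^-(\lambda),\theta^+(\lambda))$.

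Fix $\mu>\lambda$ and let $\hat\Omega$ be the full-probability event of Lemma~\ref{lemma A(omega)} intersected with $\{\lambda_0(\omega)=\lambda_0\}$. For $\omega\in\hat\Omega$, condition (A1$'$) provides a point $x_0=x_0(\omega)$ with $a(x_0,\omega)=0$; set
\[
w(x,\omega):=\begin{cases} u^-_\lambda(x,\omega)+u^+_\lambda(x_0,\omega)-u^-_\lambda(x_0,\omega), & x\leqslant x_0, \\ u^+_\lambda(x,\omega), & x\geqslant x_0. \end{cases}
\]
Then $w(\cdot,\omega)$ is continuous and locally Lipschitz, with $w'=(u^-_\lambda)'$ on $(-\infty,x_0)$ and $w'=(u^+_\lambda)'$ on $(x_0,+\infty)$; the matching condition in Proposition~\ref{prop corrector argument}(i) thus holds with $L(\omega):=|x_0(\omega)|+1$.

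Off $x_0$, $w(\cdot,\omega)$ is a translate of a $\lambda$-solution, hence a $\mu$-subsolution. At the point $x_0$ itself, Theorem~\ref{appC teo existence corrector} gives the one-sided slopes
\[
w'(x_0^-,\omega)=p_\lambda^-(x_0,\omega),\qquad w'(x_0^+,\omega)=p_\lambda^+(x_0,\omega).
\]
Since $\lambda>\lambda_0\geqslant\hat\lambda(x_0,\omega)$ by Proposition~\ref{appC prop lambda 0} and $H(x_0,\cdot,\omega)$ is strictly quasiconvex, these two slopes are distinct: $p_\lambda^-(x_0,\omega)<p_\lambda^+(x_0,\omega)$. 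Thus $w$ exhibits a genuine convex corner at $x_0$, the second-order superjet of $w$ there is empty, and the subsolution condition at $x_0$ is vacuous. Equivalently, any $C^2$ upper test $\varphi$ touching $w$ from above at $x_0$ would have to satisfy $\varphi'(x_0)\in[p_\lambda^-(x_0,\omega),p_\lambda^+(x_0,\omega)]=\{p:H(x_0,p,\omega)\leqslant\lambda\}$, and since $a(x_0,\omega)=0$ the second-order term drops, leaving $H(x_0,\varphi'(x_0),\omega)\leqslant\lambda<\mu$. Either way, $w(\cdot,\omega)$ is a viscosity $\mu$-subsolution on all of $\R$.

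Proposition~\ref{prop corrector argument}(i) now gives $\HV^U(H)(\theta)\leqslant\mu$ for every $\theta\in(\theta^-(\lambda),\theta^+(\lambda))$; sending $\mu\downarrow\lambda$ yields the bound on this open interval. The two endpoints are handled by repeating the same construction at an arbitrary $\lambda'>\lambda$: the strict monotonicity of $\theta^\pm$ in Proposition~\ref{prop theta functions} gives $[\theta^-(\lambda),\theta^+(\lambda)]\subset(\theta^-(\lambda'),\theta^+(\lambda'))$, so $\HV^U(H)(\theta^\pm(\lambda))\leqslant\lambda'$ for every $\lambda'>\lambda$ and hence $\leqslant\lambda$. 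The principal obstacle is the subsolution check at the corner $x_0$, which relies simultaneously on the degeneracy $a(x_0,\omega)=0$ (available by (A1)), the quasiconvex sub-level-set structure of $H(x_0,\cdot,\omega)$, and the strict quasiconvexity (sqC) combined with $\lambda>\hat\lambda(x_0,\omega)$ to guarantee that the corner is genuine.
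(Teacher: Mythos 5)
Your construction is the paper's: glue $u^-_\lambda(\cdot,\omega)$ to the left and $u^+_\lambda(\cdot,\omega)$ to the right of a zero $x_0$ of $a(\cdot,\omega)$ and feed the glued function into Proposition \ref{prop corrector argument}-(i); your endpoint treatment via $\lambda'>\lambda$ is fine (the paper gets the endpoints from Proposition \ref{prop consequence existence corrector} and continuity). The one place where your argument goes astray is the viscosity check at the corner. The inequality required by Proposition \ref{prop corrector argument}-(i), namely $a(x,\omega)w''+H(x,w',\omega)\leqslant\mu$ in the viscosity sense (so that $\mu t+w(x)+M$ becomes a parabolic supersolution), is in this paper's sign convention the \emph{supersolution} inequality for \eqref{eq2 cellPDE}: it must be tested against functions touching $w$ \emph{from below}, i.e.\ against slopes in $D^-w(x_0)$ (see the disclaimer at the end of Section \ref{sec:outline} and the proof of Theorem \ref{appC teo supersolution}). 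At your convex corner $D^-w(x_0)=[p^-_\lambda(x_0,\omega),p^+_\lambda(x_0,\omega)]$ is nonempty, so the condition at $x_0$ is \emph{not} vacuous; the emptiness of the superjet is irrelevant for this inequality (it would be the relevant object for the opposite inequality, as in Case 1 of Theorem \ref{teo lower bound}). Your ``equivalently'' sentence is also internally inconsistent: having just asserted that no upper test exists, you then argue about upper tests. Fortunately that sentence contains the correct verification once ``touching from above'' is replaced by ``touching from below'': any subtangent $\varphi$ at $x_0$ has $\varphi'(x_0)\in D^-w(x_0)=\{p:\,H(x_0,p,\omega)\leqslant\lambda\}$, the term $a(x_0,\omega)\varphi''(x_0)$ vanishes, and quasiconvexity gives $H(x_0,\varphi'(x_0),\omega)\leqslant\lambda<\mu$, which is verbatim the paper's proof. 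Two minor remarks: strict quasiconvexity and the ``genuineness'' of the corner are not needed (if $p^-_\lambda(x_0,\omega)=p^+_\lambda(x_0,\omega)$ the glued function is $C^1$ at $x_0$ and there is nothing extra to check), and the one-sided slopes you use are justified either by Theorem \ref{appC teo existence corrector}, as you do, or by Proposition \ref{prop pointwise solution}, as in the paper.
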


\begin{proof}
Let $\hat\Omega$ be a set of probability 1 such that 
$\lambda_0(\omega)=\lambda_0$, the diffusion coefficient 
$a(\cdot,\omega)$ satisfies condition (A1$'$), and the equalities in \eqref{eq:infsup} holds, 
for every $\omega\in\hat\Omega$. 

Let us fix $\omega\in\hat\Omega$ and pick $x_0\in\{a(\cdot,\omega)=0\}$. 
Let us set $f^-(\cdot):=(u^-_\lambda)'(\cdot,\omega)$ and $f^+(\cdot):=(u^+_\lambda)'(\cdot,\omega)$ in $\R$. 
In view of Proposition \ref{prop pointwise solution} we know that  
\[
H\big(x_0,f^\pm(x_0),\omega\big)
=
H\big(x_0,(u^\pm_\lambda)'(x_0,\omega),\omega\big)=\lambda.
\]
Define a Lipschitz function $w$ on $\R$ by setting 
\[
w(x):=\int_{x_0}^x \Big( f^-(z,\omega)\1_{(-\infty,x_0]}(z)+f^+(z,\omega)\1_{(x_0,+\infty)}(z) \Big)\,dz,
\qquad
x\in\R.
\footnote{We denote by $\1_E$ the characteristic function of the set $E$, i.e., the function which is identically equal to 1 on $E$ and to 0 in its complement.}
\]
Then $w=u^-_\lambda(\cdot,\omega)-u^-_\lambda(x_0,\omega)$ in $(-\infty,x_0)$ and 
$w=u^+_\lambda(\cdot,\omega)-u^+_\lambda(x_0,\omega)$  in $(x_0,+\infty)$, hence $w$ solves \eqref{eq2 cellPDE} in $\R\setminus\{x_0\}$. On the other hand, the subdifferential of $w$ the point $x_0$ is  
$D^-w(x_0)=[f^-(x_0),f^+(x_0) ]$. Since any $C^2$--subtangent $\varphi$ to $w$ at $x_0$ satisfies $\varphi'(x_0)\in D^-w(x_0)$, by the quasiconvex character of $H(x_0,\cdot,\omega)$ we infer  
\begin{equation*}
a(x_0,\omega)\varphi''(x_0)+H\big(x_0,\varphi'(x_0),\omega\big)
\leqslant 
\max\left\{H\big(x_0,f^-(x_0),\omega\big), H\big(x_0,f^+(x_0),\omega\big)\right\}=
\lambda.
\end{equation*}
We conclude that $w$ is a viscosity supersolution of \eqref{eq2 cellPDE} satisfying $w'=(u^-_\lambda)'(\cdot,\omega)$ in $(-\infty,x_0)$ and $w'=(u^+_\lambda)'(\cdot,\omega)$ in $(x_0,+\infty)$. The assertion follows in view of Proposition \ref{prop corrector argument}-(i).
\end{proof}

We now prove the second main result of this subsection. 
For this, we exploit the definition of  $\landa_0$ to provide the expected lower bound  the function $\HV^L(H)$. The core of the proof consists in exploiting the absence of (deterministic) supersolutions of \eqref{eq2 cellPDE} for values of the constant at the right-hand side lower than $\lambda_0$ in order to construct a lift which allows to gently descend from $(u^+_\lambda)'(\cdot,\omega)$ to $(u^-_\lambda)'(\cdot,\omega)$, almost surely and for every fixed $\landa>\landa_0$.\smallskip

\begin{theorem}\label{teo lower bound}
We have \ $\HV^L(H)(\theta)\geqslant \lambda_0$ \ for all $\theta\in [\theta^-(\lambda_0),\theta^+(\lambda_0)]$.
\end{theorem}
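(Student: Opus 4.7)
The plan is to apply Proposition \ref{prop corrector argument}-(ii) with $u_1 := u^-_\lambda$ and $u_2 := u^+_\lambda$ for some fixed $\lambda > \lambda_0$, for which the associated averages are $\theta_1 = \theta^-(\lambda)$ and $\theta_2 = \theta^+(\lambda)$. By the strict monotonicity in Proposition \ref{prop theta functions}, the interval $[\theta^-(\lambda_0), \theta^+(\lambda_0)]$ is strictly contained in $(\theta^-(\lambda), \theta^+(\lambda))$ for every $\lambda > \lambda_0$. Thus it suffices to construct, almost surely and for every $\mu < \lambda_0$, a locally Lipschitz viscosity subsolution $v$ of the cell equation at level $\mu$ satisfying $v' = (u^+_\lambda)'(\cdot,\omega)$ on $(-\infty, -L)$ and $v' = (u^-_\lambda)'(\cdot,\omega)$ on $(L, +\infty)$ for some $L > 0$; letting $\mu \to \lambda_0^-$ then yields the theorem.

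The natural candidate is $v := \min\bigl(u^+_\lambda(\cdot,\omega),\, u^-_\lambda(\cdot,\omega) + C\bigr)$ for a suitable constant $C$. Because $u^+_\lambda - u^-_\lambda$ is strictly increasing on $\R$ (its derivative $f^+ - f^-$ is strictly positive a.e., where $f^\pm := (u^\pm_\lambda)'$), the equation $u^+_\lambda(x_0,\omega) = u^-_\lambda(x_0,\omega) + C$ has a unique solution $x_0$ which can be placed at any prescribed location by choice of $C$. With this choice $v = u^+_\lambda$ on $(-\infty, x_0)$ and $v = u^-_\lambda + C$ on $(x_0, +\infty)$, so the required derivative matching holds for any $L > |x_0|$. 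Away from $x_0$, $v$ locally agrees with a viscosity solution of the cell equation at level $\lambda \geq \mu$ and is therefore a viscosity subsolution at level $\mu$. At the concave kink $x_0$ one has $D^+ v(x_0) = [f^-(x_0), f^+(x_0)]$; since for any $\phi'(x_0)$ in the interior of this superdifferential a $C^2$ supertangent $\phi$ to $v$ at $x_0$ can be chosen with $\phi''(x_0)$ arbitrarily negative, the viscosity subsolution inequality $a(x_0,\omega)\phi''(x_0) + H(x_0, \phi'(x_0), \omega) \geq \mu$ can hold only if $a(x_0, \omega) = 0$ and $H(x_0, p, \omega) \geq \mu$ for every $p \in [p^-_\lambda(x_0), p^+_\lambda(x_0)]$; by quasiconvexity of $H(x_0, \cdot, \omega)$, this amounts to $\hat\lambda(x_0, \omega) \geq \mu$.

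The problem therefore reduces to exhibiting, almost surely and for every $\mu < \lambda_0$, a zero $x_0$ of $a(\cdot,\omega)$ with $\hat\lambda(x_0,\omega) \geq \mu$, equivalently to showing that $\sup\{\hat\lambda(x,\omega) : x \in \{a(\cdot,\omega) = 0\}\} \geq \lambda_0$ almost surely. This is where the defining characterization of $\lambda_0$ as the infimum of levels admitting viscosity supersolutions is exploited. Were $\sup_{\{a=0\}}\hat\lambda < \mu < \lambda_0$ to hold on a set of positive probability, then on each connected component $J$ of $A = \{a(\cdot,\omega) > 0\}$ the value $\mu$ would strictly exceed $\hat\lambda$ on $\overline{J}$, the endpoint values $p^+_\mu$ would be well defined and uniformly bounded by Proposition \ref{appC prop p lambda}, and a bounded $C^1$ solution of the ODE $a f' + H(x, f, \omega) = \mu$ on $A$ could be assembled component-wise by standard ODE methods combined with the boundary regularity analysis of Section \ref{sec:technical lemmas}. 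Extending such $f$ by $p^+_\mu$ on $\{a = 0\}$ and invoking Theorem \ref{appC teo supersolution} would then produce a Lipschitz viscosity supersolution of the cell equation at level $\mu$, contradicting $\mu < \lambda_0$. The main obstacle is precisely this deterministic ODE construction: producing and uniformly bounding a $C^1$ solution across the (possibly infinite and accumulating) family of components of $A$ at the subcritical level $\mu$, which requires extending the monotonicity, uniqueness and regularity results of Section \ref{sec:deterministic correctors main} to this regime. Once this is established, combining with Proposition \ref{appC prop lambda 0} gives $\sup_{\{a=0\}}\hat\lambda = \lambda_0$ almost surely, supplies the required kink point $x_0$, and closes the argument.
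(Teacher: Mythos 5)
Your reduction of the kink inequality is carried out correctly, and in the regime where it applies it coincides with the paper's ``Case 1'': when there are zeros $x_0$ of $a(\cdot,\omega)$ with $\hat\lambda(x_0,\omega)$ arbitrarily close to $\lambda_0$, gluing $(u^+_\lambda)'$ to the left of $x_0$ with $(u^-_\lambda)'$ to the right and invoking Proposition \ref{prop corrector argument}-(ii) is exactly what the paper does. The gap is in the step you yourself flag as ``the main obstacle'': you need $\sup\{\hat\lambda(x,\omega):x\in\{a(\cdot,\omega)=0\}\}\geqslant\lambda_0$ almost surely, and you propose to get it by contradiction, assembling a bounded $C^1$ solution of $af'+H(x,f,\omega)=\mu$ on all of $A$ for $\sup_{\{a=0\}}\hat\lambda<\mu<\lambda_0$ and then invoking Theorem \ref{appC teo supersolution}. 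This cannot be completed: if such a bounded solution existed on every component of $A$, Theorem \ref{appC teo supersolution} would indeed produce a Lipschitz supersolution of \eqref{eq2 cellPDE} at level $\mu<\lambda_0$, contradicting the definition of $\lambda_0$ --- which shows precisely that this ODE construction must \emph{fail} whenever $\mu<\lambda_0$, no matter how the results of Section \ref{sec:deterministic correctors main} are extended. The identity $\lambda_0=\sup_{\{a=0\}}\hat\lambda$ is not true in general under the standing assumptions (only the inequality $\geqslant$ of Proposition \ref{appC prop lambda 0} holds): a ``hill'' of $H$ localized inside a component of $\{a>0\}$ can force the ODE solution at a subcritical level to blow down to $-\infty$ before reaching the right endpoint and push $\lambda_0$ strictly above $\sup_{\{a=0\}}\hat\lambda$. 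The paper's Case 2 is exactly this scenario: there, by (sqC), $\sup_{\{a=0\}}\hat\lambda\leqslant\lambda_0-\eta\eps_0/2$, so your kink construction at a zero of $a$ only yields a subsolution at level $\hat\lambda(x_0)\leqslant\lambda_0-\eta\eps_0/2$, which misses $\lambda_0$ by a fixed amount and cannot be repaired by letting $\mu\to\lambda_0^-$.

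What the paper does instead in that case is to read the failure of the ODE construction in the opposite logical direction: for $\mu\in(\sup_{\{a=0\}}\hat\lambda,\lambda_0)$, either a bounded solution at level $\mu$ exists on all of $A$ (impossible, by the argument above), or there is a single component $J$ of $A$ containing a decreasing ``bridge'' solution $g$ of the ODE at level $\mu$ joining $(u^+_\lambda)'$ at one point of $J$ to $(u^-_\lambda)'$ at another. This bridge, glued to $f_2=(u^+_\lambda)'$ on the left and $f_1=(u^-_\lambda)'$ on the right, mollified, and patched with a $C^1$ cutoff, gives a Lipschitz function whose derivative transitions from $(u^+_\lambda)'$ to $(u^-_\lambda)'$ \emph{across an interval where $a>0$}, and which is a viscosity subsolution at level $\mu-2\eps$; Proposition \ref{prop corrector argument}-(ii) then applies and one sends $\eps\to0^+$, $\mu\to\lambda_0^-$. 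So the transition point cannot in general be taken in $\{a=0\}$, and the missing step in your proposal is not a technical extension but the actual crux of Theorem \ref{teo lower bound}.
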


\begin{proof}
By stationarity of the $a$ and $H$ and the ergodicity assumption, we have two possible different scenarios.\smallskip

\noindent{{\bf Case 1:}} {\em there exists a set $\hat \Omega$ of probability 1 such that for every $\omega\in\hat\Omega$ the following property holds:
\begin{itemize}
\item\ for every $n\in\N$, there exists a point $x_n\in\{a(\cdot,\omega)=0\}$ such that 
\begin{equation*}
|\{p\in\R\,:\, H(x_n,p,\omega)\leqslant \lambda_0\}|<\frac1n.
\end{equation*}
\end{itemize}
}
\noindent
Up to choosing a smaller $\hat\Omega$ if necessary, we can assume that 
$\lambda_0(\omega)=\lambda_0$ and $a(\cdot,\omega)$ satisfies (A1$'$) for every $\omega\in\hat\Omega$. 
Let us fix $\omega\in\hat\Omega$ and $n\in\N$. Choose $\lambda>\lambda_0$ close enough to $\lambda_0$ such that 
\begin{equation}\label{eq1.1 lower bound}
|\{p\in\R\,:\, H(x_n,p,\omega)\leqslant \lambda\}|<\frac1n.
\end{equation}
Then the functions $u^\pm_\lambda(\cdot,\omega)$ are  viscosity solutions of \eqref{eq2 cellPDE} of class $C^1$ on $\R$.
In particular 
\[
%H(x_n,f^\pm(x_n),\omega)=
H(x_n,(u^\pm_\lambda)'(x_n,\omega),\omega)=\lambda.
\]
%where we have set $f^-:=(u^-_\lambda)'(\cdot,\omega)$ and $f^+:=(u^+_\lambda)'(\cdot,\omega)$. 
Define a Lipschitz function $v$ on $\R$ by setting 
\[
v(x):=\int_{x_n}^x \Big( (u^+_\lambda)'(z,\omega)\1_{(-\infty,x_n]}(z)+(u^-_\lambda)'(z,\omega)\1_{(x_n,+\infty)}(z) \Big)\,dz,
\qquad
x\in\R.
\]
Then $w=u^+_\lambda(\cdot,\omega)-u^+_\lambda(x_n,\omega)$ in $(-\infty,x_n)$ and 
$w=u^-_\lambda(\cdot,\omega)-u^-_\lambda(x_n,\omega)$  in $(x_n,+\infty)$, hence $v$ solves \eqref{eq2 cellPDE} in $\R\setminus\{x_n\}$. On the other hand, the superdifferential of $v$ the point $x_n$ is  
\[
D^+w(x_n)=[(u^-_\lambda)'(x_n,\omega),(u^+_\lambda)'(x_n,\omega)].
%=
%\{p\in\R\,:\, H(x_n,p,\omega)\leqslant \lambda\}.
\]
Let us denote by $\tilde C$ a Lipschitz constant of $H(x_n,\cdot,\omega)$ in an open neighborhood of 
$\{p\in\R\,:\, H(x_n,p,\omega)\leqslant \lambda\}$. Since any $C^2$--supertangent $\varphi$ to $v$ at $x_n$ satisfies $\varphi'(x_n)\in D^+w(x_n)$, we derive from \eqref{eq1.1 lower bound}
\begin{equation*}
a(x_n,\omega)\varphi''(x_n)+H\big(x_n,\varphi'(x_n),\omega\big)
=
H\big(x_n,\varphi'(x_n),\omega\big)
\geqslant
H\big(x_n,(u^+_\lambda)'(x_n,\omega),\omega\big)-\frac{\tilde C}{n}
=
\lambda -\frac{\tilde C}{n}.
\end{equation*}
Hence $v$ is a subsolution of \eqref{eq2 cellPDE} with $\lambda-\tilde C/n$ in place of $\lambda$ satisfying 
$w'=(u^+_\lambda)'(\cdot,\omega)$ in $(-\infty,x_n)$ and $w'=(u^-_\lambda)'(\cdot,\omega)$ in $(x_n,+\infty)$. 
Pick $\theta\in [\theta^-(\lambda_0),\theta^+(\lambda_0)]\subsetneq [\theta^-(\lambda),\theta^+(\lambda)]$.
By Proposition \ref{prop corrector argument}-(ii) we get \ $\HV(H)(\theta)\geqslant \lambda-\tilde C/n$. By letting 
$\lambda\searrow \lambda_0$ and then $n\to +\infty$ we get the assertion.\medskip

\noindent{\bf Case 2:}{\em \ there exists a set $\hat \Omega$ of probability 1 and $\eps_0>0$ such that, 
for every $\omega\in\hat\Omega$, we have}
\[
|\{p\in\R\,:\, H(x,p,\omega)\leqslant \lambda_0\}|\geqslant \eps_0\qquad\hbox{for all $x\in\{a(\cdot,\omega)=0\}$.}
\] 
\noindent
Up to choosing a smaller $\hat\Omega$ if necessary, we can assume that 
$\lambda_0(\omega)=\lambda_0$ and $a(\cdot,\omega)$ satisfies (A1$'$) for every $\omega\in\hat\Omega$. 

Let us fix $\omega\in\hat\Omega$.  Take $x\in\{a(\cdot,\omega)=0\}$. Then either $|p^-_{\lambda_0}(x,\omega)-\hat p(x,\omega)|\geqslant \eps_0/2$ or   $|p^+_{\lambda_0}(x,\omega)-\hat p(x,\omega)|\geqslant \eps_0/2$. Let us assume for definiteness that the first alternative holds. Then 
\[
\lambda_0-\hat\lambda(x,\omega)
=
H(x,p^-_{\lambda_0}(x,\omega),\omega)-H(x,\hat p(x,\omega),\omega)
\geqslant 
\eta |p^-_\lambda(x,\omega)-\hat p(x,\omega)|
\geqslant 
\frac{\eta\eps_0}{2}.
\]
This implies that 
\begin{equation}\label{eq2 lower bound}
\lambda_0-\frac{\eta\eps_0}{2}\geqslant \sup_{x\in\{a(\cdot,\omega)=0\}}\hat\lambda(x). 
\end{equation}
Let us fix $\mu\in (\lambda_0-{\eta\eps_0}/{2},\lambda_0)$, $\lambda>\lambda_0$ and  $\eps>0$.  
We want to show that there exists a Lipschitz function $v$ satisfying the following inequality in the viscosity sense 
\begin{equation}\label{claim lower bound}
a(x,\omega)v''+H(x,v',\omega)> \mu-2\eps\qquad\hbox{in $\R$}
\end{equation}
and such that  $v'=(u^+_\lambda)'(\cdot,\omega)$ in $(-\infty, -L)$ and $v'=(u^-_\lambda)'(\cdot,\omega)$ in $(L,+\infty)$ for some $L>0$. This is enough to conclude. In fact,  in view of Proposition \ref{prop corrector argument}-(ii), this would imply that 
\[
\HV^L(H)(\theta)\geqslant  \mu-2\eps\qquad\hbox{for every $\theta \in [\theta^-(\lambda_0),\theta^+(\lambda_0)]
\subsetneq [\theta^-(\lambda),\theta^+(\lambda)]$}.
\]
The assertion follows from this by sending $\eps\to 0^+$ and then $\mu\to \lambda_0^-$. 

In order to define such a subsolution $v$, we first observe that the functions $u_\lambda^{\pm}(\cdot,\omega)$ are in $\CC^1(\R)\cap \CC^2(A)$ and (classical) pointwise solutions of equation \eqref{eq2 cellPDE} in $\R$. Let us set $f_1:=(u^-_\lambda)'(\cdot,\omega)$ and 
$f_2:=(u^+_\lambda)'(\cdot,\omega)$ on $\R$. In view of Proposition \ref{appC prop monotonicity derivatives}, $f_2>f_1$ in $\R$ and each $f_i$ is a classical solution of the  ODE 
\begin{equation}\label{ODE lower bound}
a(x,\omega)f'+H(x,f,\omega)= \lambda\qquad\hbox{in $A$.}
\end{equation}
Pick $R>1+\max\{\|f_1\|_{L^\infty(\R)} ,\|f_2\|_{L^\infty(\R)}\}$ and denote by $C_R$ the Lipschitz constant of $H(\cdot,\cdot,\omega)$ on $\R\times[-R,R]$. We proceed to prove the following claim:\smallskip

\noindent{\bf Claim:} {\em there exists a connected component $J$ of $A$, a pair of points 
$\hat x<\hat y$ in $J$ and a function $g:[\hat x,\hat y]\to\R$ which solves the ODE
\begin{equation}\label{eq ODE lower bound}
a(x,\omega)f'+H(x,f,\omega)=\mu\qquad\hbox{in $(\hat x,\hat y)$ }
\end{equation}
and satisfies 
\begin{equation}\label{eq bounds}
f_1(\cdot)<g<f_2(\cdot)\quad\hbox{in $(\hat x,\hat y)$},
\quad g(\hat x)=f_2(\hat x),
\quad g(\hat y)=f_1(\hat y).
\end{equation}
}

Let us assume the Claim false and let us argue by contradiction. Let $J=(\ell_1,\ell_2)$ be a connected component of the set $A$. 
For  fixed $n\in\N$, let us denote by $  g_n:[\ell_1+1/n,b)\to\R$ the unique solution of the ODE \eqref{eq ODE lower bound} in $[\ell_1+1/n,b)$
satisfying $  g_n(\ell_1+1/n)=f_2(\ell_1+1/n)$, for some $b\in (\ell_1+1/n,\ell_2)$. The existence of such a $b$ is guaranteed by the  classical Cauchy-Lipschitz Theorem. Let us denote by $I$ the maximal interval of the form $(\ell_1+1/n,b)\subset J$ where such a $  g_n$ is defined. 
It is easily seen that $  g_n<f_2(\cdot)$ on $I$: since $\mu<\lambda$ and $a(\cdot,\omega)>0$ on $J$, we have $g_n'(x_0)<f_2'(x_0)$ at any possible point $x_0\in I$ where $g_n(x_0)=f_2(x_0)$, so in fact this equality occurs at $x_0=\ell_1+1/n$ only. Let us denote by $y_n:=\sup\{y\in I\,:\,  g_n>f_1(\cdot)\ \hbox{in $(\ell_1+1/n,y)$}\}$. 
Since we are assuming the Claim false, we must have $y_n=\ell_2$, in particular $f_1(\cdot)<  g_n<f_2(\cdot)$ in $(\ell_1+1/n,\ell_2)$. Being $a(\cdot,\omega)>0$ in $J$, we infer from 
\eqref{eq ODE lower bound} that the functions $  g_n$ are locally equi-Lipschitz and equi-bounded on their domain of definition. Up to extracting a subsequence, we derive that the functions $  g_n$ locally uniformly converge  to a function $  g$ in $J=(\ell_1,\ell_2)$, and also locally in the $C^1$ norm being each $  g_n$ a solution of \eqref{eq ODE lower bound} in $(\ell_1+1/n,\ell_2)$. Hence $  g$ is a solution of \eqref{eq ODE lower bound} in $J$ satisfying 
\begin{equation*}
f_1(\cdot)<   g< f_2(\cdot)\qquad\hbox{in $J$,}
\end{equation*}
where the strict inequality comes from that fact that $  g$ and $f_i$ are solutions of different ODEs. The argument is analogous to the one employed above.  Since this need to happen for every connected component $J$ of $A$, we have found a bounded function $g\in\CC^1(A)$ which solves the ODE \eqref{eq ODE lower bound} in $A$. Let us extend $g$ to the whole $\R$ by setting $g(x)=p^+_\mu(x,\omega)$ for every $x\in \{a(\cdot,\omega)=0\}$, which is possible since $\mu>\sup_{x\in\{a(\cdot,\omega)=0\}}\hat\lambda(x)$ in view of \eqref{eq2 lower bound}. Then the function $w(x):=\int_0^x g(z)\,dz$, $x\in\R$, is a Lipschitz continuous viscosity supersolution of \eqref{eq2 cellPDE} with $\mu$ in place of $\lambda$ by Theorem \ref{appC teo supersolution}. This contradicts the definition of $\lambda_0=\lambda_0(\omega)$ since $\mu<\lambda_0$. The Claim is thus proved.\smallskip 

We derive that there exists a connected component $J=(\ell_1,\ell_2)$ of $A$, a pair of points 
$\hat x<\hat y$ in $J$ and a function $g:[\hat x,\hat y]\to\R$ which solves equation \eqref{eq ODE lower bound} in 
$(\hat x,\hat y)$  and satisfies \eqref{eq bounds}.
%\begin{equation}
%f_1(\cdot)<g<f_2(\cdot)\quad\hbox{in $(\hat x,\hat y)$},
%\quad g(\hat x)=f_2(\hat x),
%\quad g(\hat y)=f_1(\hat y).
%\end{equation}
Let us extend the function $g$ to the whole $\R$ by setting $g=f_2$ on $(-\infty,\hat x)$ and $g=f_1$ on $(\hat y,+\infty)$. 
Take a standard sequence of even convolution kernels $\rho_n$ supported in $(-1/n,1/n)$ and set $g_n:=\rho_n*g$. Let us pick $r>0$ such that the the set $(\hat x-2r,\hat y+2r)$ is compactly contained in $J$
and choose $n\in\N$ big enough so that $1/n<r$ and $\|g-g_n\|_{_{L^\infty(\hat J)}}<1$, where we have denoted by $\hat J$ the interval $(\hat x-r,\hat y+r)$ to ease notation.
We claim that we can choose $n$ big enough such that 
\begin{equation} \label{claim2 lower bound}
a(x,\omega)g_n'+H(x,g_n,\omega)>\mu-\eps\qquad\hbox{in $(\hat x-r,\hat y+r)$.}
\end{equation}
To this aim, first observe that the map $x\mapsto H(x,g(x),\omega)$ is $K$-Lipschitz continuous in 
$(\hat x-2r,\hat y+2r)$, for some constant $K$, due to the fact that $g$ is $C^1$ on $[\hat x-2r,\hat y+2r]\subset J$. 
For every $x\in \hat J=(\hat x-r,\hat y+r)$ and $|y|\leqslant 1/n$ we have 
\begin{eqnarray*}
H(x,(\rho_n*g)(x),\omega) 
&\geqslant&
H(x,g(x),\omega)-C_R\|g-g_n\|_{_{L^\infty(\hat J)}}\\
&\geqslant& 
H(x-y,g(x-y),\omega)-C_R\|g-g_n\|_{_{L^\infty(\hat J)}}-\dfrac{K}{n},
\end{eqnarray*}
hence
\begin{equation*}\label{eq inequality 1}
H(x,g_n(x),\omega)
\geqslant
\int_{-1/n}^{1/n} \rho_n(y)H(x-y,g(x-y),\omega)\, dy -C_R\|g-g_n\|_{_{L^\infty(\hat J)}}-\dfrac{K}{n}.
%\qquad\hbox{for all $x\in (\hat x-r,\hat y+r)$}.
\end{equation*}
For all $x\in (\hat x-r,\hat y+r)$ we have
\begin{eqnarray*}
a(x,\omega)g_n'(x)+H(x,g_n(x),\omega)
&\geqslant& 
-C_R\|g-g_n\|_{_{L^\infty(\hat J)}}-\dfrac{K+2\kappa \esssup\limits_{[\hat x-2r,\hat y+2r]}|g'|}{n}\\
&+& \int_{-1/n}^{1/n} \Big( a(x-y,\omega)g'(x-y)+H(x-y,g(x-y),\omega)  \Big)\,\rho_n(y)\, dy \\
&>& 
\mu -\eps
\end{eqnarray*}
for $n\in\N$ big enough, where for the last inequality we have used the fact that $g$ satisfies \eqref{eq ODE lower bound} in $(\hat x,\hat y)$,  it satisfies \eqref{eq ODE lower bound} in $J\setminus [\hat x,\hat y]$ with $\lambda$ in place of $\mu$, and $\|g-g_n\|_{_{L^\infty(\hat J)}}\to 0$ as $n\to +\infty$. We recall that $2\kappa$ is a Lipschitz constant of the diffusion coefficient $a$. 
Let us now take $\xi\in C^1(\R)$ such that 
\[
0\leqslant \xi \leqslant 1\quad\hbox{in $\R$,}
\quad
\xi\equiv 0\quad\hbox{in $(-\infty,\hat x-r]\cup [\hat y+r,+\infty)$},
\quad
\xi\equiv 1\quad\hbox{in $[\hat x-r/2,\hat y+r/2]$},
\]
and we set $g_\eps(x):=\xi(x) g_n(x)+(1-\xi(x))g(x)$ for all $x\in\R$. The function $g_\eps$ satisfies $g_\eps=f_2$ in $(-\infty,\hat x-r]$, $g_\eps=f_1$ in $[\hat y+r,+\infty)$ and it is of class $C^1$ in $J$. We will show that we can choose $n$ in 
$\N\cap (1/r,+\infty)$ big enough so that  $g_\eps$ satisfies
\begin{equation}\label{claim3 lower bound}
a(x,\omega)g_\eps'+H(x,g_\eps,\omega)> \mu-2\eps\qquad\hbox{in $\hat J=(\hat x-r,\hat y+r)$.}
\end{equation}
For notational simplicity, we momentarily suppress $(x,\omega)$ from some of the notation below and observe that
\begin{align*}\label{eq:concur}
ag_\eps' + H(x,g_\eps,\omega) &= \xi\big(a g_n' + H(x,g_n,\omega)\big) + (1-\xi)\big(a g ' + H(x,g,\omega )\big)
\nonumber\\
&\quad + \xi\big( H(x,\xi g_n + (1-\xi) g,\omega) -H(x,g_n,\omega)\big) \\
&\quad +
(1-\xi)\big( H(x,\xi g_n + (1-\xi) g,\omega) - H(x, g,\omega )\big) +a\xi'( g_n  -  g) \nonumber\\
&\quad 
>\mu-\eps-
\big(2 C_R+\|\xi'\|_\infty\big) \|g-g_n\|_{_{L^\infty(\hat J)}},
\end{align*}
so inequality \eqref{claim3 lower bound} follows for $n\in\N$ big enough since $\|g-g_n\|_{_{L^\infty(\hat J)}}\to 0$ as $n\to +\infty$. 
The sought subsolution $v$ is defined by setting $v(x):=\int_0^x g_\eps(z)\, dz$ for all $x\in\R$. Indeed, in view of the properties enjoyed by $g_\eps$, it is easily seen that 
there exists constants $c^+,\,c^-$ such that 
\begin{equation}\label{eq3 lower bound}
v(x)=u^+_\lambda(x,\omega)+c^+\quad \hbox{in $(-\infty,\hat x-r]$,}
\qquad
v(x)=u^-_\lambda(x,\omega)+c^-\quad  \hbox{in $[\hat y+r,+\infty)$,}
\end{equation}
and $v$ satisfies \eqref{claim lower bound} in view of \eqref{claim2 lower bound}, \eqref{eq3 lower bound}  and the fact that $\lambda>\mu$. 
\end{proof}

\subsection{Proof of Theorem \ref{thm:genhom2}}
For every $\lambda>\lambda_0$ we have, in view of Theorem \ref{teo1 random correctors} and of Proposition \ref{prop consequence existence corrector}, 
\[
\HV(\theta):=\HV^L(H)(\theta)=\HV^U(H)(\theta)=\lambda
\qquad 
\hbox{if\quad $\theta \in \{\theta^-(\landa),\theta^+(\landa)\}$.}
\]
Furthermore, in view of Theorems \ref{teo1 upper qc} and \ref{teo lower bound} and of the fact that $\big[\theta^-(\lambda_0),\theta^+(\lambda_0)\big]\subsetneq \big[\theta^-(\lambda),\theta^+(\lambda)\big]$ for every $\landa>\landa_0$, 
\[
\HV(\theta):=\HV^L(H)(\theta)=\HV^U(H)(\theta)=\lambda_0
\qquad 
\hbox{for all\quad $\theta \in \big[\theta^-(\lambda_0),\theta^+(\lambda_0)\big]$.}
\]
In view of Propositions \ref{prop theta functions}  and \ref{appB prop HF}, this defines a function $\HV(H):\R\to [\lambda_0,+\infty)$ which is superlinear and locally Lipschitz. Homogenization of equation \eqref{eq:introHJ} follows in view of \cite[Lemma 4.1]{DK17}.
Note that $\HV(G)$ is strictly decreasing in $(-\infty,\theta^-(\lambda_0)]$ and strictly increasing  in 
$[\theta^+(\lambda_0), +\infty)$ in view of Proposition \ref{prop theta functions}, and constant on $\big[\theta^-(\lambda_0),\theta^+(\lambda_0)\big]$. In particular, it is strictly quasiconvex, possibly except at the bottom, where it present a flat part whenever $\theta^-(\lambda_0)<\theta^+(\lambda_0)$. \qed

\appendix

\section{Proof of Proposition \ref{prop density}}\label{app:density}

Let $H:\R\times\R\times\Omega\to\R$ be a stationary Hamiltonian satisfying $H(\cdot,\cdot,\omega)\in\Ham$ for every $\omega$, for some fixed constants $\alpha_0,\alpha_1>0$ and $\gamma>1$. We start by remarking what follows. 

\begin{lemma}\label{appA lemma Lipschitz min H}
The function $V(x,\omega):=\min_{p\in\R} H(x,p,\omega)$ is a bounded and Lipschitz stationary function. More precisely:
\begin{itemize}
\item[\em (i)] \quad $-1/\alpha_0\leqslant V(x,\omega)\leqslant \alpha_1$\quad for every $(x,\omega)\in\R\times\Omega$;\smallskip
\item[\em (ii)] there exists a constant $\hat\kappa=\hat\kappa(\alpha_0,\alpha_1,\gamma)$, only depending on $\alpha_0,\alpha_1>0$ and $\gamma>1$, such that 
\[
V(\cdot,\omega)\ \ \hbox{is $\hat\kappa$-Lipschitz continuous in $\R$,\  \ for every $\omega\in\Omega$.} 
\]
\end{itemize}
\end{lemma}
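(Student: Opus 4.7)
The plan is to verify the two bounds directly from (H1) and then show that the minimum of $H(x,\cdot,\omega)$ is always attained on a bounded set, so that (H3) forces joint Lipschitz behavior in $x$ with a constant uniform in $\omega$.

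First, for (i), the lower bound is immediate from (H1): for every $p \in \R$ we have $H(x,p,\omega) \geqslant \alpha_0 |p|^\gamma - 1/\alpha_0 \geqslant -1/\alpha_0$, so $V(x,\omega) \geqslant -1/\alpha_0$. For the upper bound, evaluate at $p=0$ using (H1) again: $V(x,\omega) \leqslant H(x,0,\omega) \leqslant \alpha_1(0+1) = \alpha_1$. Stationarity of $V$ follows at once from the stationarity of $H$, since
\[
V(x+y,\omega) = \min_{p\in\R} H(x+y,p,\omega) = \min_{p\in\R} H(x,p,\tau_y\omega) = V(x,\tau_y\omega).
\]

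Next, for (ii), I first localize the minimizers. From the lower bound in (H1), whenever $\alpha_0|p|^\gamma - 1/\alpha_0 > \alpha_1 \geqslant H(x,0,\omega)$ we have $H(x,p,\omega) > H(x,0,\omega)$, so the minimum of $H(x,\cdot,\omega)$ cannot be attained at such $p$. Consequently, setting
\[
\hat R := \left(\frac{\alpha_1 + 1/\alpha_0}{\alpha_0}\right)^{1/\gamma},
\]
the infimum defining $V(x,\omega)$ is in fact a minimum attained at some $\hat p(x,\omega)$ with $|\hat p(x,\omega)| \leqslant \hat R$, and $\hat R$ depends only on $\alpha_0,\alpha_1,\gamma$.

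Finally, I combine the localization with (H3). For any $x,y\in\R$ and $\omega\in\Omega$, pick minimizers $\hat p_x:=\hat p(x,\omega)$ and $\hat p_y:=\hat p(y,\omega)$, both of modulus at most $\hat R$. Using $V(x,\omega)=H(x,\hat p_x,\omega)\leqslant H(x,\hat p_y,\omega)$ and (H3),
\[
V(x,\omega) - V(y,\omega) \leqslant H(x,\hat p_y,\omega) - H(y,\hat p_y,\omega) \leqslant \alpha_1(|\hat p_y|^\gamma + 1)|x-y| \leqslant \alpha_1(\hat R^\gamma + 1)|x-y|.
\]
Exchanging the roles of $x$ and $y$ yields the same estimate with opposite sign, so $|V(x,\omega) - V(y,\omega)| \leqslant \hat\kappa |x-y|$ with $\hat\kappa := \alpha_1(\hat R^\gamma + 1)$, a constant depending only on $\alpha_0,\alpha_1,\gamma$. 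No serious obstacle is expected: the only substantive point is the a priori bound on the minimizers, and once that is in place both assertions follow from (H1) and (H3) by a standard envelope argument.
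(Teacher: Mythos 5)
Your proof is correct and follows essentially the same route as the paper: bound $V$ via (H1), localize the minimizers in a ball of radius $\hat R$ depending only on $\alpha_0,\alpha_1,\gamma$, then compare $V(x,\omega)-V(y,\omega)$ through a minimizer at $y$ using the $x$-Lipschitz bound (the paper invokes a generic Lipschitz constant of $H$ on $\R\times[-\hat R,\hat R]$, you use (H3) directly, which is the same thing with an explicit constant).
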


\begin{proof}
It is easily seen that $V$ is stationary. 
Let us denote by $\hat\M(x,\omega)$ the set of minimizers of the function $p\mapsto H(x,p,\omega)$ in $\R$, for every fixed $(x,\omega)$.  From (H1) we get 
\[
\alpha_0|\hat p|^\gamma-\frac{1}{\alpha_0}
=
H(x,\hat p,\omega)
\leqslant 
H(x,0,\omega)
\leqslant 
\alpha_1
\qquad
\hbox{for every $\hat p\in\hat\M(x,\omega)$}.
\]
This implies $-1/{\alpha_0}\leqslant V(x,\omega)\leqslant \alpha_1$ and $\hat \M(x,\omega)\subseteq [-\hat R,\hat R]$ for every $(x,\omega)\in\R\times\Omega$, where 
\[
\hat R=\hat R(\alpha_0,\alpha_1,\gamma):=\left(\frac{1+\alpha_1\alpha_0}{\alpha_0^2}\right)^{\frac{1}{\gamma}}.
\]
Let us denote by $C=C(\alpha_1,\gamma,\hat R)$ a Lipschitz constant of $H(\cdot,\cdot,\omega)$ in $\R\times [-\hat R,\hat R]$. For every fixed $\omega\in\Omega$ and every $x,y\in\R$ we have 
\[
V(x,\omega)-V(y,\omega) \leqslant H(x,\hat p_y)-H(y,\hat p_y) \leqslant C|x-y|
\qquad
\hbox{with\  $\hat p_y\in\hat\M(y,\omega)$.}
\]
The proof is complete.
\end{proof}

Let us set \ 
$
G(x,p,\omega):=H(x,p,\omega)-V(x,\omega)\quad\hbox{with\quad $V(x,\omega):=\min_{p\in\R} H(x,p,\omega)$} 
$
for every $(x,p,\omega)\in\R\times\R\times\Omega$. Then $H(x,p,\omega)=G(x,p,\omega)+V(x,\omega)$, where $G(\cdot,\cdot,\omega)\in\Hamall(\tilde\alpha_0,\tilde\alpha_1,\gamma)$ for every $\omega\in\Omega$, for possible different constants 
$\tilde\alpha_0\geqslant \alpha_0$ and $\tilde\alpha_1\geqslant \alpha_1$. Note that 
\begin{equation}\label{appA eq min G}
\min_{p\in\R} G(x,p,\omega)=0\qquad\hbox{for every $(x,\omega)\in\R\times\Omega$.}
\end{equation}
Fix $n\in\N$ and for every $(x,p,\omega)\in\R\times\R\times\Omega$ set 
\[
\tilde G_n(x,p,\omega):=\max\{2/n,G(x,p,\omega)\}\quad\hbox{and}\quad \tilde H_n(x,p,\omega):=\tilde G_n(x,p,\omega)+V(x,\omega).
\]
Up to choosing larger constants $\tilde\alpha_0,\,\tilde\alpha_1$, if necessary, we have that $\tilde G_n(\cdot,\cdot,\omega)\in\Hamall (\tilde\alpha_0,\tilde\alpha_1,\gamma)$ for every $\omega\in\Omega$ and $n\in\N$. 
It is easily seen that 
\[
|H(x,p,\omega)-\tilde H_n(x,p,\omega)|=|\tilde G_n(x,p,\omega)-G(x,p,\omega)|\leqslant\frac2n\qquad\hbox{for every $(x,p,\omega)\in\R\times\R\times\Omega$.}
\]
Hence it suffices to prove Proposition \ref{prop density} with $\tilde G_n$ in place of $H$. 

\begin{lemma}\label{appA lemma measurable selection}
Let $n\in\N$ and set \ $Z_{2n}(\omega):=\{p\in\R\,:\, G(0,p,\omega)\leqslant 1/n\,\}$ \ for every $\omega\in\Omega$. Then there exists a measurable selection for the set-valued map $Z_{2n}$, i.e., a measurable function $\tilde p_n:\Omega\to\R$ such that $\tilde p_n(\omega)\in Z_{2n}(\omega)$ for every $\omega\in\Omega$.  
\end{lemma}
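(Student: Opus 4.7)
The plan is to verify the hypotheses of the Kuratowski--Ryll-Nardzewski measurable selection theorem for the closed-valued multifunction $\omega\mapsto Z_{2n}(\omega)\subseteq\R$. Since $\R$ is Polish and $\F$ is a $\sigma$-algebra, this reduces to checking that $Z_{2n}(\omega)$ is nonempty and closed for every $\omega$, and that the multifunction is weakly measurable in the sense that $\{\omega\in\Omega:Z_{2n}(\omega)\cap U\not=\emptyset\}\in\F$ for every open $U\subseteq\R$.

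The first two conditions are essentially immediate. Nonemptiness follows from \eqref{appA eq min G}: any minimizer $\hat p$ of $G(0,\cdot,\omega)$ satisfies $G(0,\hat p,\omega)=0<1/n$, so $\hat p\in Z_{2n}(\omega)$. Closedness follows from the continuity of $G(0,\cdot,\omega)$, which is a consequence of (H2) applied to $H$ (and hence to $G=H-V$). Moreover, the superlinear lower bound in (H1) together with the uniform upper bound on $V$ from Lemma \ref{appA lemma Lipschitz min H} shows that $G(0,\cdot,\omega)$ is coercive uniformly in $\omega$, so $Z_{2n}(\omega)$ is in fact compact and uniformly bounded in $\omega$.

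For weak measurability, I would reduce to the countable basis of $\R$ consisting of open intervals with rational endpoints. For such an interval $(a,b)$ with $a,b\in\Q$, the key observation is that
\[
Z_{2n}(\omega)\cap (a,b)\not=\emptyset \iff \exists\, k\in\N\ \text{such that}\ Z_{2n}(\omega)\cap [a+1/k,b-1/k]\not=\emptyset,
\]
which turns the problem into one on compact subintervals. On a compact interval $[a',b']$, coercivity and continuity of $G(0,\cdot,\omega)$ give that $Z_{2n}(\omega)\cap [a',b']\not=\emptyset$ iff $\min_{p\in [a',b']} G(0,p,\omega)\leqslant 1/n$, and by continuity of $G(0,\cdot,\omega)$ this minimum agrees with $\inf_{p\in [a',b']\cap\Q} G(0,p,\omega)$. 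The joint measurability of $H$ and Lemma \ref{appA lemma Lipschitz min H} imply that $G$ is jointly measurable and thus $\omega\mapsto G(0,p,\omega)$ is measurable for each fixed $p$; hence the above infimum is measurable as a countable infimum of measurable functions, and the resulting sublevel set lies in $\F$. Taking the countable union over $k\in\N$ and over rational $(a,b)$ yields weak measurability.

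The main obstacle lurks in the reduction step: a naive approach of testing $\inf_{p\in (a,b)\cap\Q}G(0,p,\omega)\leqslant 1/n$ directly fails when the infimum on $(a,b)$ is attained only on the boundary $\partial(a,b)$, since then $Z_{2n}(\omega)\cap (a,b)$ can be empty even though the infimum equals $1/n$. This pathology is precisely what is avoided by exhausting $(a,b)$ by the closed subintervals $[a+1/k,b-1/k]$, on each of which the infimum is attained by compactness. Once weak measurability is in place, the Kuratowski--Ryll-Nardzewski theorem produces the desired measurable selection $\tilde p_n:\Omega\to\R$.
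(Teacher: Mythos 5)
Your proof is correct and follows essentially the same route as the paper: both check that $Z_{2n}(\omega)$ is a nonempty compact (convex) subset of $\R$, establish measurability of the set-valued map, and then apply the Kuratowski--Ryll-Nardzewski selection theorem. The only difference is that the paper gets the measurability of $\{\omega\,:\,Z_{2n}(\omega)\cap K\not=\emptyset\}$ for compact $K$ by citing \cite[Proposition 3.2]{DS09}, whereas you prove weak measurability directly via rational intervals, exhaustion by compact subintervals and countable infima --- a sound, self-contained substitute for that citation.
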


\begin{proof}
For each $\omega$, the set $Z_{2n}(\omega)$ is a compact and convex subset of $\R$. Furthermore, 
by \cite[Proposition 3.2]{DS09}, we know that $\{\omega\,:\,Z_{2n}(\omega)\cap K\not=\emptyset\}\in\F$ for every compact set $K$ in $\R$. The assertion follows by applying the  Kuratowski–Ryll-Nardzewski measurable selection theorem \cite{KuRy65}.
\end{proof}

We define a jointly measurable function $p_n:\R\times\Omega\to\R$ by setting $p_n(x,\omega):=\tilde p_n(\tau_x\omega)$ for every $(x,\omega)\in\R\times\Omega$. Since $p_n$ is neither Lipschitz nor continuous with respect to $x$, we need to regularize it as explained in the proof of the next lemma.

\begin{lemma}\label{appA lemma regularization}
For every fixed $n\in\N$, there exists a jointly measurable and stationary function $\hat p_n:\R\times\Omega\to\R$ and a constant $\hat\kappa_n$ such that 
\begin{itemize}
\item[\em (i)] \quad $\hat p_n(x,\omega)\in \{p\in\R\,:\,G(x,p,\omega)\leqslant 2/n\,\}$\quad for every $(x,\omega)\in\R\times\Omega$;\smallskip
\item[\em (ii)] \quad $\hat p_n(\cdot,\omega)$\ \  is $\hat\kappa_n$-Lipschitz continuous in $\R$,\  \ for every $\omega\in\Omega$. 
\end{itemize}
\end{lemma}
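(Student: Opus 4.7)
The plan is to define $\hat p_n$ as a standard $x$-convolution of $p_n$ against a smooth even probability kernel, with mollification scale $\delta_n$ chosen small enough to turn the crude bound $G(x, p_n(x,\omega), \omega) \leqslant 1/n$ into the slightly weaker but uniform bound $G(x, \hat p_n(x,\omega), \omega) \leqslant 2/n$. Concretely, fix $\rho \in C^\infty_c(\R)$ even, nonnegative, with support in $[-1,1]$ and $\int\rho=1$, and set $\rho_\delta(y) := \rho(y/\delta)/\delta$. The regularized selection will be
\[
\hat p_n(x,\omega) := \int_\R p_n(y,\omega)\,\rho_{\delta_n}(x-y)\,dy,
\]
with $\delta_n>0$ to be fixed below.

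The first observation is that $p_n$ is uniformly bounded: from (H1) and the upper bound $V\leqslant \alpha_1$ provided by Lemma \ref{appA lemma Lipschitz min H}, the inequality $G(x,p,\omega)\leqslant 1/n$ forces $|p|\leqslant R_n := \bigl(\alpha_0^{-1}(1+1/\alpha_0+\alpha_1)\bigr)^{1/\gamma}$. Joint measurability of $\hat p_n$ then follows from Fubini applied to the bounded jointly measurable integrand. Stationarity is immediate from the change of variable $y\mapsto y+z$ combined with the stationarity of $p_n$:
\[
\hat p_n(x+z,\omega) = \int_\R p_n(y,\omega)\,\rho_{\delta_n}(x+z-y)\,dy = \int_\R p_n(y,\tau_z\omega)\,\rho_{\delta_n}(x-y)\,dy = \hat p_n(x,\tau_z\omega).
\]

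For condition (i), the key is the interplay between Lipschitz continuity of $G$ in $x$ and quasiconvexity of $G$ in $p$. By (H3) and the Lipschitz property of $V$ from Lemma \ref{appA lemma Lipschitz min H}, the map $x\mapsto G(x,p,\omega)$ is $L_n$-Lipschitz uniformly for $p\in[-R_n,R_n]$ and $\omega\in\Omega$, with $L_n=L_n(\alpha_1,\gamma,\hat\kappa,R_n)$ depending only on $n$. Choose $\delta_n := 1/(nL_n)$. Then for every $y$ with $|x-y|\leqslant \delta_n$ and every $\omega\in\Omega$,
\[
G(x,p_n(y,\omega),\omega) \leqslant G(y,p_n(y,\omega),\omega) + L_n\delta_n \leqslant 1/n + 1/n = 2/n,
\]
so $p_n(y,\omega)$ lies in the sublevel set $S(x,\omega):=\{p\in\R:G(x,p,\omega)\leqslant 2/n\}$. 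Because $H(x,\cdot,\omega)\in\Hamqc$ and $V(x,\omega)$ is a constant in $p$, the function $G(x,\cdot,\omega)$ is quasiconvex, so $S(x,\omega)$ is a closed convex subset of $[-R_n,R_n]$. Since $\hat p_n(x,\omega)$ is an integral average, with respect to the probability measure $\rho_{\delta_n}(x-\cdot)\,dy$ supported in $[x-\delta_n,x+\delta_n]$, of values lying in this closed bounded convex set, we conclude that $\hat p_n(x,\omega)\in S(x,\omega)$, which is exactly (i).

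Condition (ii) is then a standard mollifier estimate: using $|p_n|\leqslant R_n$ together with the one-dimensional identity $\int_\R|\rho_{\delta_n}(x-y)-\rho_{\delta_n}(x'-y)|\,dy \leqslant \|\rho'\|_{L^1}|x-x'|/\delta_n$, one obtains
\[
|\hat p_n(x,\omega)-\hat p_n(x',\omega)| \leqslant \frac{R_n\|\rho'\|_{L^1}}{\delta_n}|x-x'|,
\]
so one may take $\hat\kappa_n := R_n\|\rho'\|_{L^1}/\delta_n$. The genuine point of the argument, and the only delicate one, is the joint use of $x$-Lipschitz regularity of $G$ and $p$-quasiconvexity of $G$ to control the mollified selection: either ingredient alone is insufficient, since we can neither convex-combine membership in sublevel sets of $G(\,\cdot\,,p,\omega)$ for distinct $x$'s, nor replace values of $p_n(y,\omega)$ by values at a single base point $x$ without first absorbing the $x$-drift into an enlarged sublevel.
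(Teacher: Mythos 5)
Your proposal is correct and follows essentially the same route as the paper: mollify the measurable selection $p_n$ at a scale dictated by the $x$-Lipschitz constant of $G$ on the relevant bounded $p$-range, so that all values entering the average lie in the convex sublevel set $\{p:G(x,p,\omega)\leqslant 2/n\}$ at the base point, then conclude (i) by convexity and (ii) by a standard kernel estimate. The only cosmetic differences are your explicit choice $\delta_n=1/(nL_n)$ versus the paper's $r_n=1/(2nC)$ and your translation estimate on the kernel in place of differentiating it.
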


\begin{proof}
For every $k\in\N$, we set $Z_k(x,\omega):=\{p\in\R\,:\,G(x,p,\omega)\leqslant 2/k\}$ for every $(x,\omega)\in\R\times\Omega$.  
We know by construction that $p_n(x,\omega)\in Z_{2n}(x,\omega)$ for every $(x,\omega)\in\R\times\Omega$. From the fact that $G(\cdot,\cdot,\omega)\in\Hamall(\tilde\alpha_0,\tilde\alpha_1,\gamma)$ for every $\omega\in\Omega$, we infer that there exists a radius $\tilde R=\tilde R(\tilde\alpha_0,\gamma)>0$ such that $Z_1(x,\omega)$ is contained in $(-\tilde R,\tilde R)$. Let us denote by $C=C(\tilde R)$ a Lipschitz constant of $G(\cdot,\cdot,\omega)$ in $\R\times [-\tilde R,\tilde R]$, for every $\omega\in\Omega$. Let us set $r_n:=1/(2n C)$. We claim that
\begin{equation*}\label{appA eq regularization}
Z_{2n}(y,\omega)\subset Z_n{(x,\omega)}\qquad\hbox{for all $x,y\in\R$ with $|x-y|<r_n$. }
\end{equation*}
%Let us first show that the inclusion \eqref{appA eq regularization} holds with $\{0\}$ in place of the interval $[-r_n,r_n]$. 
Indeed, if $p\in Z_{2n}(y,\omega)$, then
\[
G(x,p,\omega)<G(y,p,\omega)+Cr_n< \frac1n+\frac{1}{2n}=\frac{3}{2n}<\frac2n
\qquad\hbox{for every $|y-x|<r_n$.}
\]
%Let us prove claim \eqref{appA eq regularization}. Let $p_1\in\partial Z_{2n}(y,\omega)$ and $p_2\in\partial Z_{n}(x,\omega)$. Then $G(y,p_1,\omega)=1/n$ and $G(x,p_2,\omega)=2/n$. We infer 
%\[
%\frac1n=\frac{2}{n}-\frac1n=G(x,p_2,\omega)-G(y,p_1,\omega)\leqslant C(|p_2-p_1|+|x-y|),
%\] 
%hence if $|y-x|<r_n$ we get 
%\[
%|p_2-p_1|\geqslant \frac{1}{nC}-|x-y|=2r_n-|x-y|>r_n.
%\]
%This proves \eqref{appA eq regularization}. 
Take now a standard positive and even convolution kernel $\rho$ supported in $(-r_n,r_n)$ and set 
\[
\hat p_n(x,\omega):=(\rho * p_n)(x,\omega)=\int_{-r_n}^{r_n} \rho(y)p_n(x-y,\omega)\,dy\qquad\hbox{for every $(x,\omega)\in\R\times\Omega$}.
\] 
Since $p_n(x-y, \omega)\in Z_{2n}(x-y,\omega)\subset Z_n(x,\omega)$ for every $(x,\omega)\in\R\times\Omega$ and $|y|<r_n$, from the convexity of $Z_n(x,\omega)$ we get that $\hat p_n(x,\omega)\in Z_n(x,\omega)$ for all $(x,\omega)\in\R\times\Omega$. The function $\hat p_n$ is jointly measurable and stationary by construction. Let us prove assertion (ii). From the fact that 
$|p_n(z,\omega)|\leqslant \tilde R$ for every $(z,\omega)\in \R\times\Omega$, we infer 
\[
|\hat p_n'(x,\omega)|=|(\rho' *p_n)(x,\omega)|\leqslant \int_{-r_n}^{r_n} |\rho'(y)p_n(x-y,\omega)|\, dy
\leqslant
\tilde R\|\rho'\|_{L^1(\R)}\quad\hbox{for every $(x,\omega)\in\R\times\Omega$.}
\]   
Assertion (ii) follows by setting $\tilde \kappa_n:=\tilde R\|\rho'\|_{L^1(\R)}$. Note that the choice of $\rho$ depends on $n$ since we are requiring that the support of $\rho$ is contained in $(-r_n,r_n)$. 
\end{proof}

For every $n\in\N$, we define a stationary Hamiltonian $G_n:\R\times\R\times\Omega\to\R$ by setting 
\[
G_n(x,p,\omega):=G(x,p,\omega)+\eta_n |p-\hat p_n(x,\omega)|^4\qquad\hbox{for all $(x,p,\omega)\in\R\times\R\times\Omega$,}
\] 
with $\eta_n$ suitably chosen in the interval $(0,1/n)$. It is clear that each $G_n$ satisfies (sqC) with $\eta:=\eta_n$. 
Furthermore, 
\[
\lim_n\|G(\cdot,\cdot,\omega)-G_n(\cdot,\cdot,\omega)\|_{L^\infty\left(\R\times [-R,R]\right)}=0\qquad
\hbox{for every $R>0$ and $\omega\in\Omega$.}
\]
A tedious but otherwise standard computation shows that the constants $\eta_n$ can be chosen in such a way that $G_n(\cdot,\cdot,\omega)\in\Hamall(\ol\alpha_0,\ol\alpha_1,\ol \gamma,\eta_n)$ for every $\omega$,  where $\ol\alpha_i=\tilde\alpha_i+1$ for each $i\in\{1,2\}$ and $\ol\gamma:=\max\{\gamma,4\}$.   

\section{PDE results}\label{app:PDE}

In this appendix, we collect some known PDE results that we need in the paper. Throughout this section, we will denote by 
$\D{UC}(X)$, $\D{LSC}(X)$ and $\D{USC}(X)$ the space of uniformly continuous, lower semicontinuous and upper semicontinuous real functions on a metric space $X$, respectively. 
We will denote by $H$ a continuous function defined on $\R\times\R$. If not otherwise stated, we shall assume that 
$H$ belongs to the class $\Ham$  introduced in Definition \ref{def:Ham}, for some constants $\alpha_0,\alpha_1>0$ and $\gamma>1$. 

We will assume that $a:\R\to [0,1]$ is a function satisfying the following assumption, for some constant $\kappa  > 0$: 
\begin{itemize}
\item[(A)]  $\sqrt{a}:\R\to [0,1]$\ is $\kappa $--Lipschitz continuous.
\end{itemize}
Note that (A) implies that $a$ is $2\kappa$--Lipschitz in $\R$.\smallskip 

We record here  the following trivial remark, which is used in some of our arguments.  
\begin{lemma}\label{lemma 1/a}
Let $I=(\ell_1,\ell_2)$ be a bounded interval such that $a>0$ in $I$ and $a(\ell_1)=a(\ell_2)=0$. For every $x\in I$ we have
\[
\lim_{y\to\ell_1^+} \int_y^x \frac{1}{a(z)}\, dz=+\infty,
\qquad
\lim_{y\to\ell_2^-} \int_x^y \frac{1}{a(z)}\, dz=+\infty.
\]
\end{lemma}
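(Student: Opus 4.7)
The plan is to exploit assumption (A) --- that $\sqrt{a}$, rather than just $a$, is $\kappa$-Lipschitz continuous --- to obtain the sharpest possible vanishing rate for $a$ near the zeros $\ell_1$ and $\ell_2$. Specifically, since $a(\ell_1)=0$, I would apply the Lipschitz bound to $\sqrt{a}$ at the points $z$ and $\ell_1$ to get $\sqrt{a(z)}\leqslant \kappa(z-\ell_1)$, and hence the quadratic upper bound $a(z)\leqslant \kappa^2(z-\ell_1)^2$ for every $z\in I$.

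From this, the divergence of the integral is immediate. For $x\in I$ fixed and $y\in(\ell_1,x)$, I would estimate
\[
\int_y^x \frac{dz}{a(z)} \geqslant \frac{1}{\kappa^2}\int_y^x \frac{dz}{(z-\ell_1)^2} = \frac{1}{\kappa^2}\left(\frac{1}{y-\ell_1}-\frac{1}{x-\ell_1}\right),
\]
which tends to $+\infty$ as $y\to\ell_1^+$. This gives the first limit.

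The second limit is handled symmetrically: the same reasoning applied at $\ell_2$ yields $a(z)\leqslant \kappa^2(\ell_2-z)^2$, so
\[
\int_x^y \frac{dz}{a(z)}\geqslant \frac{1}{\kappa^2}\left(\frac{1}{\ell_2-y}-\frac{1}{\ell_2-x}\right)\longrightarrow +\infty \qquad \text{as } y\to\ell_2^-.
\]
There is no real obstacle in this proof; the entire content of the lemma is that hypothesis (A) --- rather than mere Lipschitz continuity of $a$ itself, which would only yield a linear bound and an integrable singularity of type $1/|z-\ell_i|$ --- forces a truly non-integrable singularity of $1/a$ at each boundary point where $a$ vanishes. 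This is exactly the strength that is used repeatedly in the body of the paper (via Gronwall-type arguments at the boundary of connected components of $A$).
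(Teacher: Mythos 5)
Your proof is correct: since $\sqrt{a(\ell_1)}=0$, assumption (A) indeed gives $\sqrt{a(z)}\leqslant\kappa(z-\ell_1)$, hence $a(z)\leqslant\kappa^2(z-\ell_1)^2$, and your integral estimate diverges as $y\to\ell_1^+$; the argument at $\ell_2$ is symmetric. The paper's proof takes a slightly different (and weaker-hypothesis) route: it only uses the $2\kappa$-Lipschitz continuity of $a$ itself (a consequence of (A), as noted in the footnote to (A2)), writing $a(z)=a(z)-a(\ell_i)\leqslant 2\kappa|z-\ell_i|$ and therefore $\frac{1}{a(z)}\geqslant\frac{1}{2\kappa|z-\ell_i|}$, whose integral over $(y,x)$ already diverges (logarithmically) as $y\to\ell_i$. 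Your quadratic bound buys a faster divergence rate, of order $(y-\ell_1)^{-1}$ instead of $\log\frac{1}{y-\ell_1}$, but this extra strength is not needed for the statement.

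One point in your closing commentary is mathematically wrong and worth correcting: plain Lipschitz continuity of $a$ does \emph{not} produce ``an integrable singularity of type $1/|z-\ell_i|$''. The function $z\mapsto 1/|z-\ell_i|$ is not integrable near $\ell_i$; its integral diverges logarithmically, and this is precisely the mechanism the paper's proof relies on. So the content of this lemma does not hinge on the square-root Lipschitz hypothesis (A) beyond its consequence that $a$ is Lipschitz and vanishes at $\ell_1,\ell_2$; the full strength of (A) is used elsewhere in the paper (for instance in the well-posedness and Lipschitz estimates for the parabolic equation), not here.
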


\begin{proof}
For every $z\in I$ we have
\[
\frac{1}{a(z)}
=
\frac{1}{a(z)-a(\ell_2)}
\geqslant 
\frac{1}{2\kappa |z-\ell_2|},
\qquad
\frac{1}{a(z)}
=
\frac{1}{a(z)-a(\ell_1)}
\geqslant 
\frac{1}{2\kappa |z-\ell_1|},
\]
By integrating the above inequalities we get the assertion.
\end{proof}

%
% satisfying the 
%following set of assumptions, for some fixed constants $\alpha_0,\alpha_1>0$ and $\gamma>1$:\smallskip
%%
%\begin{itemize}
%\item[(H1)] $\alpha_0|p|^\gamma-1/\alpha_0\leqslant H(x,p)\leqslant\alpha_1(|p|^\gamma+1)$\quad for all ${x,p\in\R}$;\medskip
%%
%\item[(H2)] $|H(x,p)-H(x,q)|\leqslant\alpha_1\left(|p|+|q|+1\right)^{\gamma-1}|p-q|$\quad for all $p,q,x\in\R$;\medskip
%%
%\item[(H3)] $|H(x,p)-H(y,p)|\leqslant\alpha_1\left(|p|^\gamma+1\right)|x-y|$\quad for all $x,y,p\in\R$. \medskip
%\end{itemize}
%%
%

\subsection{Stationary equations} Let us consider a stationary viscous HJ equation of the form
\begin{equation}\label{eq PDE}
	a(x)u''(x)+H(x,u')=\lambda\qquad \hbox{ in $\R$,}
		\tag{HJ$_\lambda$}
\end{equation}
where $\lambda\in\R$, the nonlinearity $H$ belongs to $\Ham$, and  $a:\R\to [0,1]$ satisfies condition (A). 
The following holds.

\begin{prop}\label{prop regularity solutions}
Let $u\in\CC(\R)$ be a viscosity solution of \eqref{eq PDE}. Let us assume that $H\in\Ham$ for some constants $\alpha_0,\alpha_1>0$ and $\gamma>1$. Then 
\[
|u(x)-u(y)| \leqslant K |x-y|
\qquad\hbox{for all $x,y\in\R$,}
\]
where $K>0$ is given explicitly by 
\begin{equation}\label{eq Lipschitz bound}
K:=C\left( 
		\left(
			\kappa  
			\dfrac{\sqrt{1+\alpha_1+|\lambda|}}{\alpha_0}
		\right)^{\frac{2}{\gamma-1}} + 
	\left(
	\dfrac{1+\lambda\alpha_0}{\alpha^2_0}
	\right)^{\frac1\gamma}
\right)
\end{equation}
with $C>0$ depending only on $\gamma$.
Furthermore, $u$ is of class $C^2$ (and hence a pointwise solution of \eqref{eq PDE}) in every open interval $I$ where $a(\cdot)$ is strictly positive. 
\end{prop}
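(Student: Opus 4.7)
I would split the proof into two parts: the global Lipschitz estimate and the interior $C^2$ regularity on $\{a>0\}$.

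\textbf{Part 1: Lipschitz estimate.} The target constant decomposes as $K=C(\gamma)(M_0+M_1)$ with
\[
M_0 := \left(\frac{1+|\lambda|\alpha_0}{\alpha_0^2}\right)^{1/\gamma},
\qquad
M_1 := \left(\kappa\,\frac{\sqrt{1+\alpha_1+|\lambda|}}{\alpha_0}\right)^{2/(\gamma-1)}.
\]
The term $M_0$ is the pure coercivity threshold: by (H1), $|p|>M_0$ already implies $H(x,p)>\lambda$ pointwise in $x$. The exponent $2/(\gamma-1)$ in $M_1$ is the classical Bernstein-type scaling that balances the superlinear growth of $H$ (of order $\gamma$ in $p$) against the $x$-variation of the degenerate diffusion. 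My plan is to run a doubling-of-variables argument: fix $x_0\neq y_0\in\R$ and, for $\delta,\epsilon>0$ small, consider
\[
\Phi(x,y) := u(x)-u(y)-K\phi_\delta(x-y)-\epsilon\bigl(|x-x_0|^2+|y-y_0|^2\bigr),
\]
with $\phi_\delta(s):=\sqrt{s^2+\delta^2}-\delta$. Assuming by contradiction $u(x_0)-u(y_0)>K|x_0-y_0|$, the penalization forces $\sup\Phi>0$ to be attained at some $(\bar x,\bar y)$ with $\bar x\neq\bar y$. Writing the subsolution inequality at $\bar x$ and the supersolution inequality at $\bar y$ for the $C^2$ test functions extracted from $\Phi$ and subtracting yields the master estimate
\[
H(\bar y,p_-)-H(\bar x,p_+) \leq \bigl(a(\bar x)+a(\bar y)\bigr)\bigl(K\phi_\delta''(\bar x-\bar y)+2\epsilon\bigr),
\]
where $p_\pm=K\phi_\delta'(\bar x-\bar y)+O(\epsilon)$, so $|p_\pm|\leq K+O(\epsilon)$.

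The quantitative crux is the sharp control of both sides. By (A) one has $(\sqrt{a(\bar x)}-\sqrt{a(\bar y)})^2\leq \kappa^2(\bar x-\bar y)^2$, and together with the identity $|a(\bar x)-a(\bar y)|=(\sqrt{a(\bar x)}+\sqrt{a(\bar y)})|\sqrt{a(\bar x)}-\sqrt{a(\bar y)}|$ this allows exchanging the diffusion weight for a power of $|\bar x-\bar y|$; an elementary computation also gives $\sup_{s\in\R}s^2\phi_\delta''(s)=O(\delta)$. On the left-hand side, (H2) absorbs the $O(\epsilon)$ discrepancy between $p_+$ and $p_-$, (H3) controls the $x$-variation $H(\bar y,\cdot)-H(\bar x,\cdot)$, and the coercivity excess $H(x,p)-\lambda\geq\alpha_0(|p|^\gamma-M_0^\gamma)$ from (H1) provides a genuine gain whenever $|p|>M_0$. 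Balancing the residual diffusive term against this coercivity excess and optimizing in $\delta$ yields the quantitative bound $|p|\leq C(\gamma)(M_0+M_1)$, contradicting $|p|\to K$ as $\delta,\epsilon\to 0$.

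\textbf{Part 2: Interior $C^2$ regularity on $\{a>0\}$.} On any open interval $I$ where $a>0$, the equation is uniformly elliptic, so the Caffarelli--Cabr\'e interior theory \cite{CaCa95}, combined with the global Lipschitz bound just obtained and the growth (H1)--(H2), yields $u\in C^{1,\alpha}_{\mathrm{loc}}(I)$ for some $\alpha\in(0,1)$. With $v:=u'$ continuous, the equation recasts as the first-order ODE $v'(x)=(\lambda-H(x,v(x)))/a(x)$ on $I$, whose right-hand side is continuous in $x$ and locally Lipschitz in $v$ by (H2). A classical Cauchy--Lipschitz bootstrap then gives $v\in C^1(I)$, whence $u\in C^2(I)$ is a pointwise classical solution of \eqref{eq PDE}.

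The main obstacle I anticipate is the sharp balancing in Part 1: extracting precisely the $2/(\gamma-1)$ Bernstein exponent in $M_1$ requires coupling $\delta$ to the coercivity gap in $H$ so that the diffusive and coercive contributions decouple cleanly. The fact that $\sqrt{a}$ (and not merely $a$) is $\kappa$-Lipschitz is decisive here: it is precisely this assumption that, via the factorization of $|a(\bar x)-a(\bar y)|$ used above, allows the diffusion coefficient to be traded against the shrinking $|\bar x-\bar y|$ at the supremum of $\Phi$, which is the degeneracy mechanism underlying the $M_1$ contribution to the gradient bound.
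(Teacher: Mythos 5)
For the record, the paper does not reprove the Lipschitz bound at all: it quotes it directly from \cite[Theorem 3.1]{AT}, and then obtains the $C^2$ regularity on $\{a>0\}$ by a different chain than yours (Lipschitz bound $\Rightarrow$ viscosity bounds on $u''$ $\Rightarrow$ distributional bounds via \cite{Is95} $\Rightarrow$ $W^{2,p}$ by Calder\'on--Zygmund \cite[Corollary 9.18]{GilTru01} $\Rightarrow$ $C^{1,\sigma}$ $\Rightarrow$ $C^{2,\sigma}$ by Schauder \cite[Theorem 5.20]{HL97}). Your Part 2 (uniform ellipticity on $I$, $C^{1,\alpha}_{\rm loc}$ interior regularity, then reading the equation as the ODE $v'=(\lambda-H(x,v))/a$ for $v=u'$ and bootstrapping) is a legitimate alternative route in dimension one, provided you justify that $v$ actually solves that ODE (e.g.\ by comparing $u$ on small subintervals with the classical solution of the Dirichlet problem, or by first getting $u''\in L^\infty_{\rm loc}$ as the paper does); as written this passage is only sketched, but it is repairable.

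The genuine gap is in Part 1, at exactly the point you call the quantitative crux. The ``master estimate'' you derive by using the two test functions extracted from $\Phi$ separately is correct but useless: its right-hand side is $\big(a(\bar x)+a(\bar y)\big)\big(K\phi_\delta''(\bar x-\bar y)+2\epsilon\big)$, and since $a$ can be of order one while $\phi_\delta''\sim\delta^{-1}$ near the diagonal, this term is of size $K/\delta$ and cannot be absorbed by the coercivity excess, nor can it be ``exchanged for a power of $|\bar x-\bar y|$'': the factorization $|a(\bar x)-a(\bar y)|=(\sqrt{a(\bar x)}+\sqrt{a(\bar y)})|\sqrt{a(\bar x)}-\sqrt{a(\bar y)}|$ applies to the \emph{difference} of the diffusion coefficients, whereas your estimate contains their \emph{sum}. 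To make the mechanism you describe work one must invoke the Crandall--Ishii theorem on sums \cite{users}, which replaces the naive second derivatives by matrices (here scalars) $X\leqslant Y$ satisfying the doubled matrix inequality; testing that inequality with the vector $\big(\sqrt{a(\bar x)},\sqrt{a(\bar y)}\big)$ is what produces $a(\bar x)X-a(\bar y)Y\leqslant \big(\sqrt{a(\bar x)}-\sqrt{a(\bar y)}\big)^2\,O\big(\phi_\delta''\big)\leqslant \kappa^2|\bar x-\bar y|^2\,O\big(\phi_\delta''\big)$, and only then do your two auxiliary facts ($\kappa$-Lipschitz $\sqrt a$ and $\sup_s s^2\phi_\delta''(s)=O(\delta)$) become usable. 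Two further points need attention: the supremum of $\Phi$ need not be attained, since no a priori growth bound on $u$ is available (work on balls with a localization penalty, as in the interior estimate of \cite{AT}, rather than globally with a quadratic penalization); and the concluding contradiction ``$|p|\to K$'' is unjustified, because $\phi_\delta'(\bar x-\bar y)$ need not be close to $\pm1$ at the maximizer --- one either uses the cone $K|x-y|$ itself (smooth off the diagonal, where the maximum is shown to lie) or adds an argument forcing $|\bar x-\bar y|\gg\delta$.
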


\begin{proof}
The Lipschitz character of $u$ is direct consequence of \cite[Theorem 3.1]{AT}, to which we refer for a proof.  
Let us now assume that $a(\cdot)$ is strictly positive on some open interval $I$. Without loss of generality, we can assume that $I$ is bounded and $\inf_I a>0$. From the Lipschitz character of $u$ we infer that $-C\leq u''\leq C$ in $I$ in the viscosity sense 
for some constant $C>0$, or, equivalently, in the distributional sense, in view of  \cite{Is95}. Hence, 
$u'' \in L^\infty(I)$. The elliptic regularity theory, see \cite[Corollary 9.18]{GilTru01}, ensures that $u\in W^{2,p}(I)$ for any $p>1$
and, hence, $u\in \D{C}^{1,\sigma}(I)$ for any $0<\sigma<1$. Since $u$ is a viscosity solution to \eqref{eq PDE} in $I$, 
by Schauder theory \cite[Theorem 5.20]{HL97}, we conclude that $u\in \D{C}^{2,\sigma}(I)$ for any $0<\sigma<1$.
\end{proof}

The next result, together with Proposition \ref{prop regularity solutions}, yields in particular that any continuous viscosity solution of \eqref{eq PDE} satisfies the viscous HJ 
equation pointwise on the set of its differentiability points.

\begin{prop}\label{prop pointwise solution}
Let $u\in\CC(\R)$ be viscosity supersolution (respectively, subsolution) of \eqref{eq PDE}. If $u$ is differentiable at a point $x_0\in\R$ where  $a(x_0)=0$, then 
\[
H(x_0,u'(x_0)) \leqslant \lambda \qquad \hbox{(resp., $H(x_0,u'(x_0))\geqslant \lambda$).}
\]
\end{prop}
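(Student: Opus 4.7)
The plan is to treat the supersolution case (the subsolution case will be symmetric) by a one-parameter family of quadratic test functions, crucially exploiting the Lipschitz continuity of $\sqrt a$ to neutralize the diffusion term in the limit. Fix $x_0$ with $a(x_0)=0$ and write $p_0:=u'(x_0)$. Two ingredients will be used: the modulus of differentiability
\[
\omega(r):=\sup_{0<|x-x_0|\leqslant r}\frac{|u(x)-u(x_0)-p_0(x-x_0)|}{|x-x_0|},
\]
which is bounded (thanks to Lipschitzianity of $u$ from Proposition \ref{prop regularity solutions}) and satisfies $\omega(r)\to 0$ as $r\to 0^+$; and the pointwise estimate $a(x)\leqslant\kappa^2(x-x_0)^2$, which follows at once from assumption (A) and $a(x_0)=0$.

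For each large $M$ I will use the $C^2$ test function $\varphi_M(x):=u(x_0)+p_0(x-x_0)-M(x-x_0)^2$. On a fixed small interval $[x_0-\delta,x_0+\delta]$, the Lipschitz character of $u$ makes $u-\varphi_M$ strictly positive at the endpoints once $M$ is large enough, while $(u-\varphi_M)(x_0)=0$. Hence $u-\varphi_M$ attains its minimum over the closed interval at an interior point $x_M$ where $(u-\varphi_M)(x_M)\leqslant 0$, and $\varphi_M$ is then an admissible $C^2$ subtangent to $u$ at $x_M$. The viscosity supersolution condition therefore yields
\[
H\bigl(x_M,\,p_0-2M(x_M-x_0)\bigr)\leqslant\lambda+2M\,a(x_M).
\]

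To conclude I would pass to the limit $M\to+\infty$. The inequality $(u-\varphi_M)(x_M)\leqslant 0$ combined with the definition of $\omega$ gives, whenever $x_M\neq x_0$,
\[
M\,|x_M-x_0|\leqslant\omega(|x_M-x_0|),
\]
which forces both $x_M\to x_0$ and $|2M(x_M-x_0)|\leqslant 2\omega(|x_M-x_0|)\to 0$. The quadratic bound on $a$ then yields
\[
2M\,a(x_M)\leqslant 2\kappa^2\,M(x_M-x_0)^2\leqslant 2\kappa^2\,|x_M-x_0|\,\omega(|x_M-x_0|)\longrightarrow 0,
\]
so that by continuity of $H$ the displayed supersolution inequality delivers $H(x_0,p_0)\leqslant\lambda$ in the limit. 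The subsolution case will be obtained by the mirror construction with $\varphi_M(x):=u(x_0)+p_0(x-x_0)+M(x-x_0)^2$ and reversed inequalities.

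The main conceptual obstacle is that mere differentiability of $u$ at $x_0$ does not in general produce a $C^2$ subtangent with matching slope (think of a $|x-x_0|^{3/2}$-type dip from below), so $\varphi_M$ cannot be expected to be a subtangent at $x_0$ itself. The argument must instead work through the auxiliary minimizer $x_M$ and simultaneously control two potentially diverging quantities: the slope correction $M(x_M-x_0)$ and the diffusion residual $M\,a(x_M)$. The Lipschitz continuity of $\sqrt a$ is precisely what converts the latter into $O(M(x_M-x_0)^2)$, which combined with the differentiability estimate $M(x_M-x_0)^2\leqslant |x_M-x_0|\,\omega(|x_M-x_0|)$ produces the required vanishing.
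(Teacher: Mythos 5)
Your proof is correct: the one-parameter quadratic penalization $\varphi_M(x)=u(x_0)+p_0(x-x_0)-M(x-x_0)^2$, the interior touching point $x_M$, and the estimate $M(x_M-x_0)^2\leqslant |x_M-x_0|\,\omega(|x_M-x_0|)$ do exactly what you claim, and the degeneracy $a(x_0)=0$ together with (A) kills the term $2Ma(x_M)$ in the limit. The paper's proof is built on the same basic idea (a quadratic subtangent near the degenerate point) but organizes the limits differently: it normalizes $u'(x_0)=0$ and uses the two-parameter family $\varphi_r(x)=-\eps x^2/r$, where differentiability is invoked once, through $\omega(r)<\eps$, to force the touching point $x_r$ into the interior of $B_r$; after that it only needs \emph{boundedness} of the rescaled quantities ($|x_r/r|\leqslant 1$, $a(x_r)/r\leqslant 2\kappa$ by mere Lipschitzness of $a$ at the zero $x_0$), and concludes by compactness, sending $r\to0$ and then $\eps\to0$. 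Your route instead keeps the interval fixed, gets interior touching for free when $M$ is large, and uses the differentiability modulus \emph{quantitatively at the touching point} so that both the slope correction $2M(x_M-x_0)$ and the diffusion residual $2Ma(x_M)$ vanish in a single limit $M\to+\infty$; you pay for this with the quadratic bound $a\leqslant\kappa^2(x-x_0)^2$ (the full strength of (A)), although in fact plain $2\kappa$-Lipschitzness of $a$ would already give $2Ma(x_M)\leqslant 4\kappa M|x_M-x_0|\leqslant 4\kappa\,\omega(|x_M-x_0|)\to 0$ in your scheme. One small repair: you cannot invoke Proposition \ref{prop regularity solutions} for Lipschitzianity of $u$, since $u$ is only assumed to be a supersolution (that proposition concerns solutions); but nothing is lost, because continuity of $u$ on the fixed compact interval already gives the strict positivity of $u-\varphi_M$ at the endpoints for large $M$, and finiteness of $\omega(\delta)$ (or, bypassing $\omega$ altogether, the bound $M(x_M-x_0)^2\leqslant \sup_{[x_0-\delta,x_0+\delta]}|u-u(x_0)-p_0(\cdot-x_0)|$, which forces $x_M\to x_0$) follows from continuity plus differentiability at $x_0$.
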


\begin{proof}
Without any loss in generality, we can assume $x_0=0$ and $u(0)=u'(0)=0$. Then 
\begin{equation}\label{eq small o}
|u(x)|\leqslant |x|\omega(|x|)\qquad\hbox{in a neighborhood of $x=0$,}
\end{equation} 
where $\omega$ is a continuity modulus. Let us assume that $u$ is a supersolution. Fix $\eps>0$ and $r>0$ and  set $\varphi_r(x):=-\eps {x^2}/{r}$. By \eqref{eq small o}, there exists $r_0>0$ such that $\omega(r_0)<\eps$. For every $r\leqslant r_0$ we have 
\[
\max_{\partial B_r} (\varphi_r-u) 
\leqslant
r(-\eps +\omega(r))
<0=(\varphi_r-u)(0),
\]
hence there exists $x_r\in B_r$ such that $\varphi_r-u$ attains a local maximum at $x_r$. Being $u$ a supersolution of \eqref{eq PDE}, we infer
\[
-a(x_r)\dfrac{2\eps}{r}+H\left(x_r,-2\eps\frac{x_r}{r}\right)\leqslant \lambda.
\]
Now $|x_r/r|\leqslant 1$ and 
$\displaystyle \left|\frac{a(x_r)}{r}\right| =  \left|\frac{a(x_r)-a(0)}{r}\right|\leqslant 2\kappa  \left|\frac{x_r}{r}\right|\leqslant 2\kappa$. By sending $r\to 0^+$ and by extracting convergent subsequences, we can find $|p_\eps|\leqslant 1$ and $|\alpha_\eps|\leqslant 2\kappa$ such that
\[
2\eps \alpha_\eps +H(0,2\eps p_\eps)\leqslant \lambda.
\]
By sending $\eps\to 0^+$ we finally obtain $H(0,0)\leqslant \lambda$, as it was to be shown. The case when $u$ is a subsolution can be handled analogously.
\end{proof}

We shall also need the following H\"older estimate for supersolutions of \eqref{eq PDE}.

\begin{prop}\label{prop Holder estimate}
Let us assume that $H\in\Ham$ with $\gamma>2$. Let $u\in\CC(\R)$ be a supersolution of \eqref{eq PDE} for some 
$\lambda\in\R$.  Then 
\begin{equation*}%\label{eq Holder bounds}
|u(x)-u(y)| \leqslant K |x-y|^{\frac{\gamma-2}{\gamma-1}}\qquad\hbox{for all $x,y\in\R$,}
\end{equation*}
where $K>0$ is given explicitly by 
\begin{equation}\label{eq Holder bound}
K:=C\left( 
		\left(
			\dfrac{1}{\alpha_0}
		\right)^{\frac{1}{\gamma-1}} + 
	\left(
	\dfrac{1+\lambda\alpha_0}{\alpha^2_0}
	\right)^{\frac1\gamma}
\right)
\end{equation}
with $C>0$ depending only on $\gamma$.
\end{prop}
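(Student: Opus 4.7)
The plan is to prove the H\"older estimate via the classical doubling of variables technique applied to supersolutions of coercive second-order equations (as in Capuzzo-Dolcetta--Leoni--Porretta). I set $\beta := (\gamma-2)/(\gamma-1) \in (0, 1)$, which is strictly positive since $\gamma > 2$, and $M := \bigl((1+\lambda\alpha_0)/\alpha_0^2\bigr)^{1/\gamma}$; this $M$ is precisely the $L^\infty$ bound one would get on $|u'|$ if $u$ were a pointwise supersolution at a convexity point (or if $a \equiv 0$), via the coercivity (H1).

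I argue by contradiction. Fix $x_0, y_0 \in \R$ and suppose $u(x_0) - u(y_0) > K|x_0 - y_0|^\beta$, with $K$ as in \eqref{eq Holder bound}. For $\mu > 0$ small I consider the doubled auxiliary function
$$\Phi_\mu(x,y) := u(x) - u(y) - K|x-y|^\beta - \mu\bigl[(x-x_0)^2 + (y-y_0)^2\bigr].$$
After a preliminary localization step (needed since $u$ is only assumed continuous, not bounded on $\R$; concretely, one restricts to a large ball $B(x_0, N)$ and later sends $N \to \infty$), the supremum of $\Phi_\mu$ is attained at some $(\bar x, \bar y)$ with $\Phi_\mu(\bar x, \bar y) \geq \Phi_\mu(x_0, y_0) > 0$. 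Positivity forces $\bar x \neq \bar y$, and up to switching the roles of $x$ and $y$ I may assume $\bar x > \bar y$; write $r := \bar x - \bar y > 0$.

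The crucial step is the viscosity supersolution test at $\bar y$. The function $\tilde\varphi(y) := -K(\bar x - y)^\beta - \mu(y-y_0)^2$ is smooth near $\bar y$ (since $r > 0$) and is a subtangent to $u$ at $\bar y$ modulo an additive constant. A direct computation gives $\tilde\varphi'(\bar y) = K\beta r^{\beta-1} - 2\mu(\bar y - y_0)$ and $\tilde\varphi''(\bar y) = K\beta(1-\beta) r^{\beta-2} - 2\mu$, which is strictly positive for $\mu$ small. Since $a(\bar y) \geq 0$, I drop the nonnegative term $a(\bar y)\tilde\varphi''(\bar y)$ and deduce $H(\bar y, \tilde\varphi'(\bar y)) \leq \lambda$. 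The coercivity bound in (H1) then yields $|\tilde\varphi'(\bar y)| \leq M$, and letting $\mu \to 0^+$ along a subsequence gives $K\beta\, r^{-1/(\gamma-1)} \leq M$, that is $r \geq (K\beta/M)^{\gamma-1}$.

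The contradiction is extracted by tuning $K$ as in \eqref{eq Holder bound}. The forced lower bound on $r$ combined with the positivity $u(\bar x) - u(\bar y) > Kr^\beta$ at the maximum gives $u(\bar x) - u(\bar y) \geq K^{\gamma-1}\beta^{\gamma-2}/M^{\gamma-2}$; once balanced against the a priori modulus of continuity of $u$ inside $B(x_0, N)$ provided by the localization step, a choice of $K$ of the form $C\bigl((1/\alpha_0)^{1/(\gamma-1)} + M\bigr)$ produces the contradiction. The two summands in $K$ account respectively for the regime in which the binding constraint is the lower bound on $r$ (producing the exponent $1/(\gamma-1)$ via the relation $K^{\gamma-1}\sim M/\beta$) and for the regime of small $|x_0-y_0|$, in which the coercivity bound $M$ directly controls the H\"older modulus. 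I expect the hardest step to be the preliminary localization: because $u$ is not a priori bounded, ensuring that the sup of $\Phi_\mu$ is attained on $\R^2$ requires a quantitative preliminary growth estimate for continuous supersolutions distilled from the coercivity (H1), which in turn justifies passing to the limit in the bounded-ball version of the argument.
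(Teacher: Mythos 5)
Your central viscosity step is sound, and it is the same mechanism that powers the result the paper invokes: at the maximum point you test the supersolution at $\bar y$ with the H\"older-cone subtangent, observe that its second derivative $K\beta(1-\beta)r^{\beta-2}-2\mu$ is nonnegative for small $\mu$ so the degenerate term $a(\bar y)\tilde\varphi''(\bar y)\geqslant 0$ can be discarded, and then (H1) gives $|\tilde\varphi'(\bar y)|\leqslant M$ with $M=\big((1+\lambda\alpha_0)/\alpha_0^2\big)^{1/\gamma}$. The paper, however, does none of this by hand: it sets $v:=-u$, uses $0\leqslant a\leqslant 1$ to absorb the diffusion into $-|v''|$, notes that $v$ satisfies $-|v''|+\alpha_0|v'|^\gamma\leqslant \lambda+1/\alpha_0$ in the viscosity sense, and concludes by quoting the superquadratic H\"older estimate of \cite[Lemma 3.2]{AT} (a result of Capuzzo-Dolcetta--Leoni--Porretta type), which already carries the explicit constant \eqref{eq Holder bound}.

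The gap is in how you propose to close the contradiction, and it is not cosmetic. After the test you know only that $r\geqslant (K\beta/M)^{\gamma-1}$ and that $u(\bar x)-u(\bar y)>Kr^{\beta}$; these two facts are mutually consistent for an arbitrary continuous supersolution, so no contradiction arises unless an independent upper bound on $u(\bar x)-u(\bar y)$ is injected. The only such bound in your sketch is ``the a priori modulus of continuity of $u$ inside $B(x_0,N)$'', but no quantitative modulus is available: $u$ is merely continuous, its oscillation on $B(x_0,N)$ depends on $u$, and balancing against it would make $K$ depend on $u$, whereas \eqref{eq Holder bound} must depend only on $\alpha_0,\gamma,\lambda$ (indeed producing such an oscillation-free estimate is precisely the point of the proposition). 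The localization suffers from the same circularity: with the soft penalization $\mu[(x-x_0)^2+(y-y_0)^2]$ nothing prevents the maximum over the closed ball from sitting on its boundary, where the viscosity test is unavailable, and excluding this again requires a growth bound for $u$ that you acknowledge you do not have. The missing idea --- the one that makes the cited estimate independent of the oscillation of $u$ and is the true role of $\gamma>2$ --- is to build the confinement into the comparison function itself (a H\"older cone corrected by a barrier blowing up at the boundary of the ball, whose Hessian is dominated at every scale by the coercive gradient term); a vanishing quadratic penalization cannot substitute for it. Finally, no tuning of ``regimes'' can give a pure bound $|u(x)-u(y)|\leqslant K|x-y|^{\beta}$ at large $|x-y|$: affine supersolutions such as $u(x)=-Mx$ with $a\equiv 1$, $H(x,p)=|p|^{\gamma}$, $\lambda=M^{\gamma}$ have linear growth, so the substance of the estimate (and of \cite[Lemma 3.2]{AT}, which is an interior, bounded-scale statement) lies at unit scales, and that is what a repaired argument should target.
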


\begin{proof}
The function $v(x):=-u(x)$ is a viscosity subsolution of \eqref{eq PDE} with $-a(\cdot)$ in place of $a(\cdot)$ and $\check H(x,p):=H(x,-p)$ in place of $H$. By the fact that $a\leqslant 1$ and $H\in\Ham$, we derive that $v$ satisfies the following inequality in the viscosity sense:
\[
-|v''|+\alpha_0|v'|^\gamma\leqslant \lambda+\dfrac{1}{\alpha_0}\quad\hbox{in $\R$}.
\]
The conclusion follows by applying \cite[Lemma 3.2]{AT}.
\end{proof}

\subsection{Parabolic equations}
Let us consider a parabolic PDE of the form
\begin{equation}\label{appB eq parabolic HJ}
\partial_{t }u=a(x) \partial^2_{xx} u +H(x,\partial_x u), \quad(t,x)\in (0,+\infty)\times\R.
\end{equation}

We start with a comparison principle stated in a form which is the one we need in the paper.

\begin{prop}\label{appB prop comparison}
Suppose $a$ satisfies (A) and  $H\in\D{UC}\left(B_r\times\R\right)$ for every $r>0$. Let $v\in\D{USC}([0,T]\times\R)$ and $w\in\D{LSC}([0,T]\times\R)$ be, respectively, a sub and a supersolution of \eqref{appB eq parabolic HJ} in $(0,T)\times \R$ such that 
\begin{equation}\label{hyp 2}
\limsup_{|x|\to +\infty}\ \sup_{t\in [0,T]}\frac{v(t,x)-\theta x}{1+|x|}\leqslant 0 
\leqslant 
\liminf_{|x|\to +\infty}\ \sup_{t\in [0,T]}\frac{w(t,x)-\theta x}{1+|x|}
\end{equation}
for some $\theta\in\R$. Let us furthermore assume that either $\partial_x v$ or $\partial_x w$ belongs to $L^\infty\left((0,T)\times \R\right)$. Then, 
\[
v(t,x)-w(t,x)\leqslant \sup_{\R}\big(v(0,\,\cdot\,) - w(0,\,\cdot\,)\big)\quad\hbox{for every  $(t,x)\in (0,T)\times \R$.} 
\]
\end{prop}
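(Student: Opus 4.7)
The strategy is the classical doubling--of--variables argument for parabolic viscosity inequalities, tailored to the present hypotheses. First, by replacing $v,w$ with $v(t,x)-\theta x$ and $w(t,x)-\theta x$ (which satisfy the same type of equation with $\widetilde H(x,p):=H(x,p+\theta)$ in place of $H$, still in $\D{UC}(B_r\times\R)$ for every $r$), I reduce to the case $\theta=0$, turning the linear--growth hypothesis \eqref{hyp 2} into genuinely sublinear growth, uniformly in $t$. Setting $M:=\sup_{\R}\bigl(v(0,\cdot)-w(0,\cdot)\bigr)$, which we may assume finite (otherwise the conclusion is vacuous), I argue by contradiction and suppose that $\sigma_0:=\sup_{(0,T)\times\R}(v-w)-M>0$. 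For small parameters $\epsilon,\delta,\eta>0$, I consider on $[0,T]\times\R\times[0,T]\times\R$ the penalized functional
\[
\Phi_{\epsilon,\delta,\eta}(t,x,s,y) := v(t,x) - w(s,y) - \frac{(x-y)^2 + (t-s)^2}{2\epsilon} - \frac{\eta}{T-t} - \delta\bigl(\varphi(x)+\varphi(y)\bigr),
\]
with $\varphi(x):=\sqrt{1+x^2}$. The sublinear growth of $v$ and of $w$ combined with the coercivity of $\delta\varphi$ ensure that $\Phi_{\epsilon,\delta,\eta}$ attains its supremum at a point $(t^*,x^*,s^*,y^*)$ with $|x^*|+|y^*|\leq R_\delta$ uniformly in $\epsilon\leq\epsilon_0(\delta)$; choosing $\delta$ and $\eta$ small enough so that the gap $\sigma_0$ is preserved, one also obtains $t^*>0$, so that the supremum is not attained on the parabolic boundary. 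Standard penalization estimates then give $|x^*-y^*|^2/\epsilon\to 0$ and $(t^*-s^*)^2/\epsilon\to 0$ as $\epsilon\to 0^+$.

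The parabolic Crandall--Ishii lemma produces jets $(\alpha,p^+,X)\in\overline{\mathcal{P}}^{2,+}v(t^*,x^*)$ and $(\beta,p^-,Y)\in\overline{\mathcal{P}}^{2,-}w(s^*,y^*)$ with $\alpha-\beta = \eta/(T-t^*)^2$, $p^+=(x^*-y^*)/\epsilon+\delta\varphi'(x^*)$, $p^-=(x^*-y^*)/\epsilon-\delta\varphi'(y^*)$, and $X\leq Y+C\delta$ in the usual sense. Subtracting the sub-- and supersolution inequalities yields
\[
\frac{\eta}{(T-t^*)^2} \leq \bigl(a(x^*)X-a(y^*)Y\bigr) + \bigl(H(x^*,p^+)-H(y^*,p^-)\bigr).
\]
Hypothesis (A) handles the first bracket through the standard estimate $a(x^*)X-a(y^*)Y \leq \bigl(\sqrt{a(x^*)}-\sqrt{a(y^*)}\bigr)^2/\epsilon \leq \kappa^2(x^*-y^*)^2/\epsilon$, which vanishes as $\epsilon\to 0$.

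The main obstacle is the Hamiltonian difference, since $H$ is only assumed uniformly continuous on bounded sets of $\R\times\R$: the momenta $p^\pm$ must be kept inside a compact interval independent of $\epsilon$, and this is exactly where the hypothesis on $\partial_x v$ or $\partial_x w$ enters. If, for instance, $\partial_x v\in L^\infty$ with bound $L$, the viscosity subsolution property forces $|p^+|\leq L$, whence also $|p^-|\leq L+2\delta$ since $|\varphi'|\leq 1$. Uniform continuity of $H$ on the compact set $\overline{B}_{R_\delta}\times[-L-1,L+1]$ then gives $|H(x^*,p^+)-H(y^*,p^-)|\to 0$ as first $\epsilon\to 0^+$ (using $|x^*-y^*|\to 0$) and then $\delta\to 0^+$. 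Letting finally $\eta\to 0^+$ produces the contradiction $\eta/T^2\leq 0$. The same argument works symmetrically when $\partial_x w\in L^\infty$, swapping the role of the two momenta.
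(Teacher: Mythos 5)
Your scheme (reduce to $\theta=0$, double variables with the penalizations $\delta(\varphi(x)+\varphi(y))$ and $\eta/(T-t)$, use the Lipschitz bound on one of the two functions to confine the momenta, then Crandall--Ishii plus the estimate $a(x^*)X-a(y^*)Y\lesssim \kappa^2|x^*-y^*|^2/\epsilon$) is a genuinely different route from the paper, which simply passes to $v-\theta x$, $w-\theta x$ and invokes the comparison result of [D19, Proposition 1.4]. However, as written your argument has a gap at the decisive last step, where you claim that $|H(x^*,p^+)-H(y^*,p^-)|\to 0$ ``as first $\epsilon\to0^+$ and then $\delta\to0^+$'' by uniform continuity of $H$ on $\overline B_{R_\delta}\times[-L-1,L+1]$. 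The hypothesis here is only $H\in\D{UC}(B_r\times\R)$ for every $r>0$, with no uniformity of the modulus as $r\to+\infty$ (conditions (H2)--(H3) are not in force in this proposition). The localization radius $R_\delta$ produced by the penalization blows up as $\delta\to0^+$ (typically like $1/\delta$, since \eqref{hyp 2} only gives sublinear growth), so the modulus you invoke is $\omega_{R_\delta}$ and it degrades exactly as the momentum discrepancy $|p^+-p^-|\leqslant 2\delta$ shrinks: after $\epsilon\to0^+$ the surviving error is of the form $\omega_{R_\delta}(2\delta)+O(\delta)$, and nothing forces $\omega_{R_\delta}(2\delta)\to 0$. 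For instance $H(x,p)=\sin\big(\sqrt{1+x^2}\,\sin p\big)$ lies in $\D{UC}(B_r\times\R)$ for every $r$ but its Lipschitz constant in $p$ on $B_r\times\R$ is of order $r$, so $\omega_{R_\delta}(2\delta)\approx \delta R_\delta$ stays of order one when $R_\delta\sim 1/\delta$. Hence the concluding contradiction $\eta/T^2\leqslant 0$ is not justified at the stated level of generality; your proof would be complete if $H$ had a modulus of continuity on $\R\times[-K,K]$ uniform in $x\in\R$ (as it does for $H\in\Ham$ via (H2)--(H3)), but not under the mere assumption $H\in\D{UC}(B_r\times\R)$. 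Some additional mechanism is needed to prevent the doubling points from escaping to infinity relative to the modulus of $H$; this is precisely what the citation of [D19, Proposition 1.4] supplies in the paper.

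Two smaller points: the bound $|p^+|\leqslant L$ comes from the Lipschitz bound in $x$ on $v$ (i.e.\ from $\partial_x v\in L^\infty$, interpreted as $v$ being $L$-Lipschitz in $x$), not from ``the viscosity subsolution property''; and the boundary cases $t^*=0$, $s^*=0$ or $s^*=T$ require the usual extra care (here $w$ is only LSC and the supersolution inequality is assumed on $(0,T)$ only), although these are routine compared with the modulus issue above.
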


\begin{proof}
The functions $\tilde v(t,x):=v(t,x)-\theta x$ and $\tilde w(t,x):=w(t,x)-\theta x$ are, respectively, a subsolution and a supersolution of \eqref{appB eq parabolic HJ} in $(0,T)\times \R$ with $H(\,\cdot\,,\theta +\,\cdot\,)$ in place of $H$. The assertion follows by applying \cite[Proposition 1.4]{D19} to $\tilde v$ and $\tilde w$.
\end{proof}

Let us now assume that $H$ belongs to the class $\Ham$ for some fixed constants $\alpha_0,\alpha_1>0$ and $\gamma>1$.
%\smallskip
%%
%\begin{itemize}
%\item[(H1)] $\alpha_0|p|^\gamma-1/\alpha_0\leqslant H(x,p)\leqslant\alpha_1(|p|^\gamma+1)$\quad for all ${x,p\in\R}$;\medskip
%%
%\item[(H2)] $|H(x,p)-H(x,q)|\leqslant\alpha_1\left(|p|+|q|+1\right)^{\gamma-1}|p-q|$\quad for all $p,q,x\in\R$;\medskip
%%
%\item[(H3)] $|H(x,p)-H(y,p)|\leqslant\alpha_1\left(|p|^\gamma+1\right)|x-y|$\quad for all $x,y,p\in\R$. \medskip
%\end{itemize}
%
%
The following holds. 

\begin{theorem}\label{appB teo well posed}
Suppose $a$ satisfies (A) and $H\in\Ham$. Then, for every $g\in\D{UC}(\R)$, there exists a unique function $u\in \D{UC}(\ccyl)$ that solves the equation \eqref{appB eq parabolic HJ}
subject to the initial condition $u(0,\,\cdot\,)=g$ on $\R$. If $g\in W^{2,\infty}(\R)$, then $u$ is Lipschitz continuous in $\ccyl$ and satisfies
\begin{equation*}
\|\partial_t u\|_{L^\infty(\ccyl)}\leqslant K 
\quad\text{and}\quad
\|\partial_x u\|_{L^\infty(\ccyl)}\leqslant K 
\end{equation*}
for some constant $K $ that depends only on $\|g'\|_{L^\infty(\R)}$, $\|g''\|_{L^\infty(\R)}$, $\kappa , \alpha_0,\alpha_1$ and $\gamma$. Furthermore, the dependence of $K $ on $\|g'\|_{L^\infty(\R)}$ and $\|g''\|_{L^\infty(\R)}$ is continuous. 
\end{theorem}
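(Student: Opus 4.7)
The plan is to establish the theorem via a classical three-step scheme: a priori Lipschitz estimates from explicit barriers when the initial datum is smooth, existence by Perron's method (or, equivalently, vanishing viscosity), and extension to $\D{UC}$ data via uniform approximation, with Proposition \ref{appB prop comparison} handling all uniqueness and stability issues.

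\textbf{Lipschitz bounds for $g\in W^{2,\infty}$.} Set $L_1:=\|g'\|_{L^\infty}$, $L_2:=\|g''\|_{L^\infty}$ and
$\Lambda:=L_2+\alpha_1(L_1^\gamma+1)$. Because $0\leq a\leq 1$ and $H$ satisfies (H1), the affine-in-time profiles $w^\pm(t,x):=g(x)\pm\Lambda t$ are classical super- and sub-solutions of \eqref{appB eq parabolic HJ} with $w^\pm(0,\cdot)=g$. Applying Proposition \ref{appB prop comparison} (with $\theta$ matching the asymptotic slope of $g$, or after a preliminary truncation of the datum) yields $|u(t,x)-g(x)|\leq\Lambda t$; comparing $u(t+s,\cdot)$ with $u(t,\cdot)\pm\Lambda s$ then propagates this to $\|\partial_t u\|_{L^\infty(\ccyl)}\leq\Lambda$. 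For the spatial bound, fix $t>0$ and observe that the slice $u(t,\cdot)$ is simultaneously a viscosity sub- and super-solution of the stationary equation $a v''+H(x,v')=\mu$ with $\mu=\pm\Lambda$, so the coercive Lipschitz estimate of \cite[Theorem 3.1]{AT} (the very tool used in the proof of Proposition \ref{prop regularity solutions}) delivers $\|\partial_x u(t,\cdot)\|_{L^\infty(\R)}\leq K$ with $K$ of the form \eqref{eq Lipschitz bound} at $\lambda:=\Lambda$. The explicit formulas for $\Lambda$ and $K$ manifestly depend continuously on $L_1$ and $L_2$.

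\textbf{Existence and extension to $\D{UC}$ data.} For $g\in W^{2,\infty}$, existence follows by Perron's method between the barriers $w^\pm$: set
\[
u:=\sup\big\{v\in\D{USC}(\ccyl)\,:\,v\text{ is a viscosity subsolution},\ v\leq w^+,\ v(0,\cdot)\leq g\big\};
\]
its upper semicontinuous envelope is a subsolution and its lower semicontinuous relaxation is a supersolution, both agreeing with $g$ at $t=0$, and Proposition \ref{appB prop comparison} forces them to coincide (vanishing viscosity, adding $\varepsilon\partial_{xx}^2 u$ to uniformly parabolize and then extracting a limit, is an equivalent alternative). For general $g\in\D{UC}(\R)$, approximate $g$ uniformly by $g_n\in W^{2,\infty}(\R)$ (mollification composed with a smooth cutoff/extension), let $u_n$ be the Lipschitz solutions from the previous step, and apply Proposition \ref{appB prop comparison} to $u_n$ and $u_m\pm\|g_n-g_m\|_{L^\infty}$ to obtain
\[
\|u_n-u_m\|_{L^\infty(\ccyl)}\leq\|g_n-g_m\|_{L^\infty(\R)}.
\]
Hence $(u_n)_n$ is Cauchy in $L^\infty(\ccyl)$ and converges uniformly to some $u\in\CC(\ccyl)$, which solves \eqref{appB eq parabolic HJ} with $u(0,\cdot)=g$ by viscosity stability. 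Uniqueness within $\D{UC}(\ccyl)$ is again immediate from Proposition \ref{appB prop comparison}.

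\textbf{Main obstacle.} The delicate point is ensuring that the limit $u$ is \emph{uniformly} continuous on $\ccyl$ and not merely continuous: the Lipschitz norms of the $u_n$ typically diverge as $n\to\infty$, so the uniform modulus cannot be read off the Lipschitz bounds in the first step. I would instead recover a uniform spatial modulus of continuity for $u$ directly by a doubling-of-variables argument in the spirit of \cite{users}, choosing a penalization $\psi(|x-y|,t)$ that encodes the modulus of continuity of $g$ and simultaneously absorbs the $x$-dependence of $a$ and $H$ (which blocks the naive translation trick, since $u(t,\,\cdot+h)$ does not solve the same PDE as $u(t,\cdot)$). Temporal uniform continuity then follows from this spatial modulus combined with the PDE itself. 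The correct tuning of $\psi$, together with the routine verification of the growth conditions required to apply Proposition \ref{appB prop comparison} at each stage, is the main technical point.
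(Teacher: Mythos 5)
Your plan is not the paper's: the paper proves this theorem by citing \cite[Theorem 3.2]{D19} (see also \cite[Proposition 3.5]{AT}) for \emph{bounded} uniformly continuous data, and then reduces a general $g\in\D{UC}(\R)$ to that case by picking a smooth $\tilde g$ with $\|g-\tilde g\|_{L^\infty(\R)}<1$ and passing to the unknown $u-\tilde g$, i.e.\ replacing $H$ by $\tilde H(x,p):=a(x)\tilde g''+H(x,p+\tilde g')$, which still lies in a class $\Hamall(\alpha_0',\alpha_1',\gamma)$. Your self-contained scheme is a legitimate alternative in principle, but as written it has a genuine gap: every key step (comparison with the barriers $w^\pm$, the Perron step, uniqueness, and the contraction estimate $\|u_n-u_m\|_{L^\infty(\ccyl)}\leqslant\|g_n-g_m\|_{L^\infty(\R)}$) invokes Proposition \ref{appB prop comparison}, whose hypothesis \eqref{hyp 2} demands that the sub- and supersolution both match a \emph{single} linear profile $\theta x$ up to $o(|x|)$ at spatial infinity. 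For a general $g\in\D{UC}(\R)$, and even for $g\in W^{2,\infty}(\R)$, no such $\theta$ exists (take a Lipschitz $g$ whose slope oscillates between $-1$ and $1$ on dyadic scales: then $\limsup(g-\theta x)/(1+|x|)>0$ or $\liminf(g-\theta x)/(1+|x|)<0$ for every $\theta$), so the comparison principle you rely on simply does not apply to the functions you feed it. Your parenthetical remedies do not close this: ``truncating the datum'' changes the solution (there is no finite speed of propagation to quote here), and the approximants $g_n$ inherit the same defect. The clean repair is precisely the paper's reduction: after subtracting $\tilde g$, all functions involved are bounded, \eqref{hyp 2} holds trivially with $\theta=0$, and either the cited result of \cite{D19} or your Perron/approximation scheme goes through.

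Two smaller points. First, the issue you single out as the ``main obstacle'' is not one: if the contraction estimate were available, the $u_n$ would converge \emph{uniformly on all of} $\ccyl$, and a uniform limit of (Lipschitz, hence uniformly continuous) functions is uniformly continuous regardless of the blow-up of the Lipschitz constants; no doubling-of-variables modulus argument is needed, while the genuinely delicate point (validity of comparison in the class you work in) is the one left unaddressed. Second, your barrier constant is slightly off: for $w^-=g-\Lambda t$ to be a subsolution you need $-\Lambda\leqslant a g''+H(x,g')$, and (H1) only gives $H\geqslant -1/\alpha_0$, so $\Lambda$ should be taken of the form $\|g''\|_{L^\infty}+\alpha_1(\|g'\|_{L^\infty}^\gamma+1)+1/\alpha_0$; this is harmless and still depends continuously on $\|g'\|_{L^\infty}$ and $\|g''\|_{L^\infty}$, as required. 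The passage from the bound on $\partial_t u$ to the spatial bound via the stationary inequalities $a u''+H(x,u')\lessgtr\pm\Lambda$ and \cite[Theorem 3.1]{AT} is in the spirit of the quoted references and is fine once the comparison issue is settled.
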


\begin{proof}
A proof of this result when the initial datum $g$ is furthermore assumed to be bounded is given in \cite[Theorem 3.2]{D19}, see also \cite[Proposition 3.5]{AT}. 
This is enough, since we can always reduce to this case by possibly picking a function $\tilde g\in W^{3,\infty}(\R)\cap\CC^\infty(\R)$ such that $\|g-\tilde g\|_{L^\infty(\R)}<1$ (for instance, by mollification) and by considering equation \eqref{appB eq parabolic HJ} with
$\tilde H(x,p):=a(x)(\tilde g)'' +H(x,p + (\tilde g)')$ in place of $H$, and initial datum $g-\tilde g$. 
\end{proof}

For every fixed $\theta\in\R$, we will denote by $u_\theta$ the unique solution of \eqref{appB eq parabolic HJ} in $\D{UC}(\ccyl)$ satisfying  $u_\theta(0,x)=\theta x$ for all $x\in\R$. 
Theorem \ref{appB teo well posed} tells us that $u_\theta$ is Lipschitz in $\ccyl$ and that its Lipschitz constant depends continuously on $\theta$. 
%
%In particular,  for every $r>0$, there exists a constant $\kappa_r>0$, depending only on 
%$r, \hat\kappa_a, \alpha_0,\alpha_1$ and $\gamma$, such that 
%\begin{equation}\label{appB eq Lipschitz estimate}
%\|\partial_t u_\theta\|_{L^\infty(\ccyl)}\leqslant \kappa_r
%\quad\text{and}\quad
%\|\partial_x u_\theta\|_{L^\infty(\ccyl)}\leqslant \kappa_r
%\quad 
%\hbox{for every $\theta\in B_r$.} 
%\end{equation}
%
Let us define 
\begin{eqnarray}\label{appB eq infsup}
	\HF^L(H) (\theta):=\liminf_{t\to +\infty}\ \frac{u_\theta(t,0)}{t}\quad\text{and}
	\quad
	\HF^U(H) (\theta):=\limsup_{t\to +\infty}\ \frac{u_\theta(t,0)}{t}.
\end{eqnarray}
By definition, we have \ $\HF^L(H) (\theta)\leqslant \HF^U(H) (\theta)$\ for all $\theta\in\R$. 
Furthermore, the following holds, see \cite[Proposition B.3]{DKY23} for a proof.

\begin{prop}\label{appB prop HF}
Suppose $a$ satisfies (A) and $H\in\Ham$. Then the functions $\HF^L(H)$ and $\HF^U(H)$ satisfy (H1) and are locally Lipschitz on $\R$. 
\end{prop}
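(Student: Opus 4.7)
The plan is to establish both assertions via explicit super- and subsolution comparisons against the solutions $u_\theta$ themselves, invoking the comparison principle in Proposition \ref{appB prop comparison}. For the bounds (H1), I would build affine barriers: the function $\theta x+\alpha_1(|\theta|^\gamma+1)\,t$ is a classical supersolution of \eqref{appB eq parabolic HJ} by the upper bound in (H1), while $\theta x+(\alpha_0|\theta|^\gamma-1/\alpha_0)\,t$ is a classical subsolution by the lower bound. Both match $u_\theta(0,\,\cdot\,)=\theta x$, and $\partial_x u_\theta\in L^\infty(\ccyl)$ by Theorem \ref{appB teo well posed}, so the comparison principle traps $u_\theta$ between them. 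Dividing by $t$ and letting $t\to+\infty$ yields the two-sided bound (H1) on both $\HF^L(H)(\theta)$ and $\HF^U(H)(\theta)$.

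For local Lipschitz continuity, fix $R>0$ and $\theta_1,\theta_2\in[-R,R]$. The key idea is to tilt $u_{\theta_2}$ by the linear correction $(\theta_1-\theta_2)x$ and add a time penalty $Ct$; the resulting function $w(t,x):=u_{\theta_2}(t,x)+(\theta_1-\theta_2)x+Ct$ agrees with $u_{\theta_1}(0,\,\cdot\,)=\theta_1 x$ at $t=0$ and is a supersolution of \eqref{appB eq parabolic HJ} as long as $C$ dominates the local variation of $H(x,\,\cdot\,)$ around $\partial_x u_{\theta_2}$. By (H2) this variation is controlled by $\alpha_1\bigl(2\|\partial_x u_{\theta_2}\|_\infty+|\theta_1-\theta_2|+1\bigr)^{\gamma-1}\,|\theta_1-\theta_2|$, so the penalty $C$ can be chosen of the form $C_R|\theta_1-\theta_2|$ with $C_R$ depending only on $R,\alpha_0,\alpha_1,\gamma,\kappa$, provided the Lipschitz bound $\|\partial_x u_{\theta_2}\|_\infty$ is uniform in $\theta_2\in[-R,R]$. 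Comparing $u_{\theta_1}$ with $w$, evaluating at $x=0$, dividing by $t$, letting $t\to+\infty$ and swapping $\theta_1,\theta_2$ will then yield the desired Lipschitz estimate for both $\HF^L(H)$ and $\HF^U(H)$ on $[-R,R]$.

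The principal point to check is the uniform Lipschitz control $\|\partial_x u_\theta\|_\infty\leqslant K(R)$ for $|\theta|\leqslant R$, which is exactly the content of the continuous-dependence clause of Theorem \ref{appB teo well posed} applied to the $W^{2,\infty}(\R)$ initial datum $g(x)=\theta x$ (with $\|g'\|_\infty=|\theta|\leqslant R$ and $\|g''\|_\infty=0$). The growth hypothesis \eqref{hyp 2} in the comparison principle is then straightforward to verify in each application above: $w$ differs from $\theta_1 x$ by $u_{\theta_2}(t,x)-\theta_2 x+Ct$, which is bounded on strips $[0,T]\times\R$ thanks to the Lipschitz character of $u_{\theta_2}$, so the hypothesis is met with $\theta=\theta_1$, and all gradients involved lie in $L^\infty$. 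Beyond this bookkeeping no further analytic input is required, and the argument follows the lines of \cite[Proposition B.3]{DKY23}.
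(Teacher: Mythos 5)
Your proof is correct, and it is essentially the standard comparison argument that the paper invokes by citing \cite[Proposition B.3]{DKY23}: affine barriers via (H1) trap $u_\theta$ to give the bounds, and tilting $u_{\theta_2}$ by $(\theta_1-\theta_2)x$ plus a time penalty $C_R|\theta_1-\theta_2|t$, justified through (H2) and the $\theta$-uniform Lipschitz bound from Theorem \ref{appB teo well posed}, gives local Lipschitz continuity of both $\HF^L(H)$ and $\HF^U(H)$. Your verification of the hypotheses of Proposition \ref{appB prop comparison} (strip-boundedness of the differences and bounded gradients) is exactly the bookkeeping needed, so no gap remains.
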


According to \cite[Theorem 3.1]{DK17}, equation \eqref{appB eq parabolic HJ} homogenizes if and only if the functions 
$u^\eps_\theta(t,x):=\eps u_\theta(t/\eps,x/\eps)$ converge, locally uniformly in $\ccyl$, to a continuous function $\overline u_\theta(t,x)$. In this instance, we have $\HF^L(F)(\theta)=\HF^U(F)(\theta)$ and 
\[
\overline u_\theta(t,x)=\theta x+t\HF(F)(\theta),
\quad
\hbox{$(t,x)\in\ccyl$,}
\]
with $\HF(F)(\theta):=\HF^L(F)(\theta)=\HF^U(F)(\theta)$. 

We end this appendix by stating the following stability result for homogenization, see \cite[Theorem B.4]{DKY23} for a proof. 

\begin{theorem}\label{appB teo stability}
Suppose $a$ satisfies (A), the Hamiltonians $H$, $(H_n)_{n\in\N}$ belong to $\Ham$, and
\[
\lim_{n\to +\infty}\|H_n-H\|_{L^\infty(\R\times [-R,R])}=0
\quad
\hbox{for every $R>0$.}
\]
If equation \eqref{appB eq parabolic HJ} homogenizes for each $H_n$ with effective Hamiltonian $\HV(H_n)$, then it homogenizes for $H$ too, with effective Hamiltonian 
\[
\HV(H)(\theta)=\lim_{n\to +\infty} \HV(H_n)(\theta)
\quad
\hbox{for all $\theta\in\R$.}
\]
Furthermore, this convergence is locally uniform in $\R$. 
%=\Fam(\alpha_0,\alpha_1,\gamma)$ for fixed consants $\alpha_0,\alpha_1>0$ and $\gamma>1$.
\end{theorem}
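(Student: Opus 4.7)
\textbf{Proof proposal for Theorem \ref{appB teo stability}.} The strategy is to turn uniform closeness of the Hamiltonians on bounded sets into a linear-in-time error on the solutions with linear initial data, and then divide by $t$.

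Fix $\theta\in\R$ and let $u_\theta^n$, $u_\theta$ denote the solutions of \eqref{appB eq parabolic HJ} with Hamiltonians $H_n$, $H$ respectively, each with initial datum $x\mapsto \theta x$. Since the initial datum has first derivative $\theta$ and vanishing second derivative, and since all Hamiltonians lie in $\Ham$ with the same triple $(\alpha_0,\alpha_1,\gamma)$, Theorem \ref{appB teo well posed} supplies a common Lipschitz constant $K=K(\theta,\kappa,\alpha_0,\alpha_1,\gamma)$ for $u_\theta^n$ and $u_\theta$ on $\ccyl$; in particular $\|\partial_x u_\theta^n\|_\infty,\|\partial_x u_\theta\|_\infty\leqslant K$ uniformly in $n$. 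Set $M_n:=\|H_n-H\|_{L^\infty(\R\times[-K,K])}$, which tends to $0$ by hypothesis. Since $\partial_x u_\theta^n$ takes values in $[-K,K]$, the function $u_\theta^n-M_n t$ satisfies
\[
\partial_t(u_\theta^n-M_n t)\leqslant a(x)\,\partial^2_{xx}(u_\theta^n-M_n t)+H(x,\partial_x(u_\theta^n-M_n t))\quad\text{in $\cyl$}
\]
in the viscosity sense, with initial datum $\theta x$. The comparison principle in Proposition \ref{appB prop comparison} applies: both $u_\theta^n-M_n t$ and $u_\theta$ have $x$-derivative in $L^\infty$, and each equals $\theta x$ at $t=0$ and is Lipschitz in $t$, so $\sup_{t\in[0,T]}|v(t,x)-\theta x|$ is bounded uniformly in $x$, whence \eqref{hyp 2} holds for $v=u_\theta^n-M_n t$ and $w=u_\theta$. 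We conclude $u_\theta^n(t,x)-M_n t\leqslant u_\theta(t,x)$ for all $(t,x)\in\ccyl$, and by symmetry
\[
|u_\theta^n(t,x)-u_\theta(t,x)|\leqslant M_n\,t\qquad\text{for all $(t,x)\in\ccyl$.}
\]

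Dividing by $t$ at $x=0$ and using that \eqref{appB eq parabolic HJ} with $H_n$ homogenizes, i.e.\ $u_\theta^n(t,0)/t\to \HV(H_n)(\theta)$ as $t\to+\infty$, we obtain
\[
\HV(H_n)(\theta)-M_n\leqslant \HF^L(H)(\theta)\leqslant \HF^U(H)(\theta)\leqslant \HV(H_n)(\theta)+M_n
\]
for every $n\in\N$. Letting $n\to+\infty$ yields $\HF^L(H)(\theta)=\HF^U(H)(\theta)=:\HV(H)(\theta)$ and simultaneously $\HV(H_n)(\theta)\to\HV(H)(\theta)$. By the criterion recalled after Proposition \ref{appB prop HF} (i.e.\ \cite[Theorem 3.1]{DK17}), the equality $\HF^L(H)(\theta)=\HF^U(H)(\theta)$ for every $\theta\in\R$ is precisely homogenization for $H$.

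Finally, the locally uniform convergence comes for free from Proposition \ref{appB prop HF}: since $H_n,H\in\Ham$ with common constants $\alpha_0,\alpha_1,\gamma$, the effective Hamiltonians $\HV(H_n)$ are locally Lipschitz with constants depending only on $R$ and on $(\alpha_0,\alpha_1,\gamma)$, hence equi-Lipschitz on each compact interval. Combined with the pointwise convergence just established, Arzel\`a--Ascoli upgrades this to local uniform convergence of $\HV(H_n)$ to $\HV(H)$ on $\R$. The only delicate point in the whole argument is the verification of the growth condition \eqref{hyp 2} in the comparison step; this is why it was essential to choose linear initial data and to rely on the time-Lipschitz estimate from Theorem \ref{appB teo well posed}, which makes $u_\theta^n(t,x)-\theta x$ uniformly bounded on $[0,T]\times\R$.
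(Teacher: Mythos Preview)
Your proof is correct and follows the standard approach; the paper itself does not give a proof but refers to \cite[Theorem B.4]{DKY23}, where essentially this argument appears. Two minor refinements are worth making. First, the citation for ``$\HF^L(H)=\HF^U(H)$ for all $\theta$ implies homogenization'' should be \cite[Lemma 4.1]{DK17} (as invoked in Section~\ref{sec:outline}), not \cite[Theorem 3.1]{DK17}; the latter gives the equivalence with local uniform convergence of $u^\eps_\theta$, which is a different statement. Second, the locally uniform convergence of $\HV(H_n)$ to $\HV(H)$ follows more directly from your own estimate than from Arzel\`a--Ascoli: since the Lipschitz bound $K=K(\theta)$ from Theorem~\ref{appB teo well posed} depends continuously on $\theta$, for $|\theta|\leqslant R$ one may take a common $K_R$, whence $|\HV(H_n)(\theta)-\HV(H)(\theta)|\leqslant \|H_n-H\|_{L^\infty(\R\times[-K_R,K_R])}\to 0$ uniformly on $[-R,R]$. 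This bypasses the need to read uniform local Lipschitz constants out of Proposition~\ref{appB prop HF}, which the proposition as stated does not make explicit.
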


\bibliography{viscousHJ}
\bibliographystyle{siam}
\end{document}